\documentclass[preprint]{elsarticle}

\usepackage[utf8x]{inputenc}
\usepackage[english]{babel}

\usepackage{fleqn}
\usepackage{geometry}
\usepackage{tikz}
\usepackage{tikz-cd}
\usetikzlibrary{matrix}
\usepackage{mathrsfs}
\usepackage{natbib}
\usepackage{ifpdf}
\usepackage{amsmath}
\usepackage{amsthm, amssymb}
\usepackage{hyperref}
\usepackage{float}
\usepackage{color}
\usepackage[all,cmtip]{xy}

\hypersetup{
	unicode=true,
	colorlinks=true,
	citecolor=black,
	linkcolor=black,
	anchorcolor=black
}
\usepackage{upgreek}

\usepackage{aliascnt}
\numberwithin{equation}{section}

\newtheorem{mainthm}{Theorem}

\newtheorem{theo}{Theorem}[subsection]	
\newtheorem*{theo*}{Theorem}

\newaliascnt{lem}{theo}
\newtheorem{lem}[lem]{Lemma}
\aliascntresetthe{lem}

\newaliascnt{propo}{theo}
\newtheorem{propo}[propo]{Proposition}
\aliascntresetthe{propo}

\newaliascnt{corol}{theo}
\newtheorem{corol}[corol]{Corollary}
\aliascntresetthe{corol}

\addto\extrasenglish{

}

\theoremstyle{remark}
\newaliascnt{rem}{theo}
\newtheorem{rem}[rem]{Remark}
\aliascntresetthe{rem}

\newaliascnt{exam}{theo}
\newtheorem{exam}[exam]{Example}
\aliascntresetthe{exam}

\theoremstyle{definition}
\newaliascnt{defi}{theo}
\newtheorem{defi}[defi]{Definition}
\aliascntresetthe{defi}

\newaliascnt{nota}{theo}
\newtheorem{nota}[nota]{Notation}
\aliascntresetthe{nota}

\newcommand{\diri}{\mfr{D}}
\newcommand{\nil}{\Upsilon}
\newcommand{\spring}[1]{\kk\langle#1\rangle}
\newcommand{\cma}{\Xi}
\newcommand{\invv}{{\flat}}
\newcommand{\inv}{\star}
%\MakePerPage{footnote}
%\renewcommand{\thefootnote}{\fnsymbol{footnote}}
\newcommand{\A}{S}
\newcommand{\B}{T}

\newcommand{\BB}{{\mbf{J}}}
\newcommand{\bkk}{{\kk^\alg}}
\newcommand{\alg}{{\mrm{alg}}}
\newcommand{\pd}[1]{\widehat{#1}}
\newcommand{\GGamma}{\mbf{\Gamma}}
\newcommand{\ggamma}{\upgamma}
\newcommand{\Lie}{\mrm{Lie}}

%\renewcommand{\hat}{\widehat}
%\renewcommand{\tilde}{\widetilde}
%\renewcommand{\bar}{\overline}

%%%%Shorter Fontnames
\newcommand{\mfr}[1]{\mathfrak{#1}}
\newcommand{\mbf}[1]{\mathbf{#1}}
\newcommand{\msf}[1]{\mathsf{#1}}
\newcommand{\mrm}[1]{\mathrm{#1}}
\newcommand{\mcal}[1]{\mathcal{#1}}

%%%%Common Unary Functions and Sets

\newcommand{\set}[1]{\left\{{#1}\right\}}

\newcommand{\abs}[1]{\left|#1\right|}

%%%Famous Fields, Rings, Sets etc.

\newcommand{\dbN}{\mathbb N}
\newcommand{\dbZ}{\mathbb Z}
\newcommand{\dbF}{\mathbb F}

\newcommand{\dbC}{\mathbb C}
\newcommand{\dbA}{\mathbb A}

\newcommand{\sym}{\mathrm{Sym}}

%%%%Famous Groups and Lie Algebras
\newcommand{\GL}{\mrm{GL}}
\newcommand{\matr}{\mrm{M}}
\newcommand{\SL}{\mrm{SL}}
\newcommand{\UU}{\mrm{U}}

\newcommand{\SO}{\mrm{SO}}
\newcommand{\Sp}{\mrm{Sp}}
\newcommand{\gl}{\mfr{gl}}

\newcommand{\HH}{\mbf{H}}
\newcommand{\GG}{\mbf{G}}
\newcommand{\g}{{\mathbf{g}}}

\newcommand{\pp}{\mfr{p}}
\newcommand{\oo}{\mfr{o}}
\newcommand{\PP}{\mfr{P}}
\newcommand{\OO}{\mfr{O}}
\newcommand{\kk}{\msf{k}}

\newcommand{\Gal}{\mathrm{Gal}}
\newcommand{\TT}{\mbf{T}}
\newcommand{\CC}{\mbf{C}}

\newcommand{\reg}{\mathrm{reg}}

\newcommand{\mat}[1]{\left(\begin{matrix}#1\end{matrix}\right)}

\newcommand{\smat}[1]{\left(\begin{smallmatrix}#1\end{smallmatrix}\right)}

\newcommand{\ZZ}{\mathbf Z}

\renewcommand{\ker}{\mathrm{Ker}}

\newcommand{\irr}{\mathrm{Irr}}
\renewcommand{\hom}{\mathrm{Hom}}
\newcommand{\End}{\mathrm{End}}
\newcommand{\aut}{\mathrm{Aut}}

\newcommand{\Span}{\textstyle\mathop{Span}}
\newcommand{\spec}{\mathrm{Spec\:}}

\newcommand{\im}{\mathrm{Im}}
\newcommand{\Tr}{\mathrm{Tr}}
\newcommand{\Nr}{\mathrm{Nr}}

\newcommand{\cay}{\mathrm{cay}}
\newcommand{\diag}{\mathrm{diag}}

\newcommand{\ttau}{\boldsymbol{\tau}}

\newcommand{\llog}{\mathrm{log}}

\newcommand{\ad}{\mathrm{ad}}
\newcommand{\Ad}{\mathrm{Ad}}
\newcommand{\Char}{{\mathrm{char}}}

\newcommand{\rk}{\mathrm{rk}}

\renewcommand{\setminus}{\smallsetminus}

\newcommand{\id}{\mathbf{1}}

\begin{document}
\title{Regular characters of classical groups over complete discrete valuation rings}

\author{Shai Shechter}
\ead{shais@post.bgu.ac.il}
\address{Department of Mathematics\\ Ben Gurion University of the Negev\\ Beer-Sheva 84105\\Israel}

%\email{shais@post.bgu.ac.il}
%\subjclass{20C15, 20G05, 11M41}%; Secondary  15B57, 20F69, 20G25, 20G40.}
\begin{abstract}

Let $\oo$ be a complete discrete valuation ring with finite residue field $\kk$ of odd characteristic, and let $\GG$ be a symplectic or special orthogonal group scheme over $\oo$. For any $\ell\in\dbN$ let $G^\ell$ denote the $\ell$-th principal congruence subgroup of $\GG(\oo)$. An irreducible character of the group $\GG(\oo)$ is said to be \textbf{regular }if it is trivial on a subgroup $G^{\ell+1}$ for some $\ell$, and if its restriction to $G^\ell/G^{\ell+1}\simeq \Lie(\GG)(\kk)$ consists of characters of minimal $\GG(\bkk)$-stabilizer dimension
. In the present paper we consider the regular characters of such classical groups over $\oo$, and construct and enumerate all regular characters of $\GG(\oo)$, when the characteristic of $\kk$ is greater than two. As a result, we compute the regular part of their representation zeta function.
\end{abstract}
\begin{keyword}
Representations of compact p-adic groups\sep Representation zeta functions\sep Classical groups.
\MSC[2010] 20C15\sep 20G05\sep 11M41.
\end{keyword}
\maketitle 
%\tableofcontents
\section{Introduction}\label{section:introduction}

Let $K$ be a non-archimedean local field, and let $\oo$ be its valuation ring, with maximal ideal $\pp$ and finite residue field $\kk$ of odd characteristic. Let $q$ and $p$ denote the cardinality and characteristic of $\kk$, respectively. Fix $\pi$ to be a uniformizer of $\oo$. Let $\GG\subseteq\SL_N$ be a symplectic or a special orthogonal group scheme over $\oo$, i.e.~the group of automorphisms of determinant $1$, preserving a fixed non-degenerate anti-symmetric or symmetric $\oo$-defined bilinear form. In this article we study the set of irreducible regular characters of the group of $\oo$-points $G=\GG(\oo)$, the definition of which we now present.

\subsection{The basic definitions}\label{subsection:definitions}

Let $\irr(G)$ denote the set of irreducible complex characters of $G$ which afford a continuous representation with respect to the profinite topology. The \textbf{level} of a character $\chi\in \irr(G)$ is the minimal number $\ell\in\dbN_0=\dbN\cup\set{0}$ such that the restriction of any representation associated to $\chi$ to the principal congruence subgroup $G^{\ell+1}=\ker\left(G\to\GG(\oo/\pp^{\ell+1})\right)$ is trivial. For example, the set of characters of level~$0$ is naturally identified with the set of irreducible complex characters of $\GG(\kk)$. 

\subsubsection{The residual orbit of a character}\label{subsubsection:res-orbit}
Let $\g=\Lie(\GG)\subseteq\mfr{gl}_N$ denote the Lie algebra scheme of $\GG$. The smoothness of $\GG$ implies the equality $G/G^{\ell+1}=\GG(\oo/\pp^{\ell+1})$, and moreover, the existence of an isomorphism of abelian groups between $\g(\kk)$ and the quotient group $G^\ell/G^{\ell+1}$, for any $\ell\ge 1$ (see \cite[II,~\S 4,~no.~3]{DemazureGabriel}). In the notation of \cite{DemazureGabriel}, this isomorphism is denoted $x\mapsto e^{\pi^{\ell}x}$. The action of $G$ by conjugation on the quotient $G^\ell/G^{\ell+1}$ factors through its quotient $\GG(\kk)$, making the isomorphism above $\GG(\kk)$-equivariant, with respect to the action given by $\Ad\circ \alpha_\ell$, where $\Ad$ denotes the adjoint action of $\GG(\kk)$ on $\g(\kk)$, and $\alpha_\ell:\GG(\kk)\to\GG(\kk)$ is a bijective endomorphism of $\GG(\kk)$, determined by a field automorphism of $\kk$ (see, e.g.~\cite[Lemma~3]{McNinch-Faithful} and the reference therein). Additionally, by the assumption $\Char(\kk)\ne 2$ and \cite[I,\:Lemma~5.3]{SteinbergSpringer}, the underlying additive group of $\g(\kk)$ can be naturally identified with its Pontryagin dual in a $\GG(\kk)$-equivariant manner. Consequently, there exists an isomorphism of $\GG(\kk)$-spaces 
\begin{equation}
\label{equation:identification}\g(\kk)\xrightarrow{\sim}\irr\left(G^\ell/G^{\ell+1}\right).
\end{equation}
%Note that two elements of $\g(\kk)$ are $\Ad(\GG(\kk))$-conjugate if and only if they are $\Ad\circ\alpha_\ell(\GG(\kk))$-conjugate, and hence $\Ad\circ\alpha_\ell(\GG(\kk))\backslash \g(\kk)=\Ad(\GG(\kk))\backslash \g(\kk)$.

Let $\chi\in\irr(G)$ have level $\ell>0$. Consider the restriction $\chi_{G^\ell}$ of $\chi$ to $G^\ell$. By Clifford's Theorem and the definition of level, the restricted character $\chi_{G^\ell}$ is equal to a multiple of the sum over a single $\GG(\kk)$-orbit of characters of $G^{\ell}/G^{\ell+1}$. Using \eqref{equation:identification}, this orbit corresponds to a single $\GG(\kk)$-orbit in $\g(\kk)$, which we call the\textbf{ residual orbit of $\chi$}, and denote $\Omega_1(\chi)\in~\Ad\circ\alpha_\ell(\GG(\kk))\backslash\g(\kk)=\Ad\left(\GG(\kk)\right)\backslash \g(\kk)$.

\subsubsection{Regular characters}\label{subsubsection:reg-chars}
Let $\bkk$ be a fixed algebraic closure of $\kk$. An element of $\g(\bkk)$ is said to be \textbf{regular} if its centralizer in $\GG(\bkk)$ has minimal dimension among such centralizers (cf. \cite[\S~3.5]{SteinbergConjugacy}). By extension, an element of $\g(\kk)$ is said to be regular if its image under the natural inclusion of $\g(\kk)$ into $\g(\bkk)$ is regular. 

\begin{defi}[Regular Characters]\label{defi:regularity} A character $\chi\in\irr(G)$ of positive level is said to be \textbf{regular} if its residual orbit $\Omega_1(\chi)$ consists of regular elements of $\g(\kk)$.
\end{defi}

For a general overview of regular elements in reductive algebraic groups over algebraically closed fields, we refer to \cite[Ch.~III]{SteinbergSpringer}. The definition of regular characters  goes back to Shintani \cite{Shintani} and Hill \cite{Hill}. An overview of the history of regular characters of $\GL_N(\oo)$ can be found in \cite{StasinskiOverview}. Also- see \cite{KOS,StasinskiStevens} and \cite{TakaseGLn} for the analysis of regular characters of isotropic groups of type $\msf{A}_n$, as well as \cite{Shechter}, for a partial treatment of anisotropic groups of type $\msf{A}_n$.

\subsection{Regular elements and regular characters}

Following \cite{Hill}, we begin our investigation of regular characters with the study of regular elements in the finite Lie rings $\g(\oo_r)$, where $\oo_r=\oo/\pp^r$ (see \autoref{defi:regular-elements}). 

A central feature of the analysis undertaken in \cite{Hill} is the introduction and application of geometric methods to the study of regular characters. Given $x\in\matr_N(\oo)$ and $r\in\dbN$, let $x_r$ denote the image of $x$ in $\matr_N(\oo_r)$ under the reduction map. The condition of commuting with $x_r$ defines a closed $\oo_r$-group subscheme of the fiber product\footnote{The notation $\GG\times\oo_r$ is shorthand for the fiber product $\GG\times_{\spec{\oo}}\spec{\oo_r}$. Similar notation is used whenever the base change being performed is between spectra of rings, and the base ring of the given schemes is understood from context.} $\GL_N\times\oo_r$, which, upon application of the Greenberg functor, defines a $\kk$-group scheme \cite[\S~4,\: Main Theorem.(5)]{GreenbergI}. The element $x_r$ is said to be \textbf{regular} if the group scheme thus obtained is of minimal dimension among such group schemes (see \cite[Definition~3.2]{Hill}). In \cite[Theorem~3.6]{Hill}, Hill proved that $x_r\in\matr_n(\oo_r)$ is regular if and only if its image $x_1\in\matr_n(\kk)$ is regular. Additionally, regularity of $x_r$ was shown to be equivalent to the cyclicity of the module $\oo_r^N$ over the ring $\oo_r[x_r]\subseteq\matr_N(\oo_r)$. We note that Hill's definition of regularity is equivalent to Shintani's definition of \textit{quasi-regularity} \cite[\S~2]{Shintani}.

The equivalence of regularity over the ring $\oo_r$ and over $\kk$ was recently extended to general semisimple groups of type $\msf{A}_n$ in \cite{KOS}. In \autoref{subsection:reg-characters} we further extend this equivalence of to the generality of classical groups of type $\msf{B}_n,\:\msf{C}_n$ and $\msf{D}_n$ in odd characteristic. However, the equivalence of regularity of an element $x_r\in \g(\oo_r)$ with the cyclicity of the module $\oo_r^N$ over $\oo_r[x_r]$, while true in $\GL_N$ and generically true in $\GG$ (see \autoref{corol:reg-g1-reg-glN-nonsing}), is not a general phenomenon and in fact fails in certain cases (see \autoref{lem:nilpotent-reg-glN-not-reg-soN}). Nevertheless, in the present setting, it is possible to prove a supplementary result (\autoref{theo:inverse-lim}), which specializes to the above equivalence in the case of $\GG=\GL_N$,  and provides us with the information needed in order to describe the inertia subgroup of such a character and enumerate the characters of $G$ lying above a given regular orbit. Consequently, we deduce the first main result of this article.
\begin{mainthm}\label{mainthm:enumeration+dimension} Let $\oo$ be a discrete valuation ring with finite residue field of odd characteristic, and let $\GG$ be a symplectic or a special orthogonal group over $\oo$ with generic fiber of absolute rank $n$. Let $\Omega\subseteq\g(\kk)$ be an $\Ad(\GG(\kk))$-orbit consisting of regular elements and let $\ell\in\dbN$.
\begin{enumerate}
\item The number of regular characters $\chi\in\irr(G)$ of level $\ell$ whose residual orbit is equal to $\Omega$ is $\frac{\abs{\GG(\kk)}}{\abs{\Omega}}\cdot q^{(\ell-1)n}$.
\item Any such character has degree $\abs{\Omega}\cdot q^{(\ell-1)\alpha},$ where $\alpha=\frac{\dim\GG-n}{2}$.
\end{enumerate}
\end{mainthm}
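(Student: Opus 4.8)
The plan is to follow the Clifford-theoretic strategy that has become standard for constructing characters of level $\ell$ of congruence quotients, reducing everything to an analysis of the inertia subgroup of a linear character of $G^\ell/G^{2\ell}$ and an application of a suitable stabilizer-dimension computation. Fix $\Omega\subseteq\g(\kk)$ a regular adjoint orbit. The key input that must be in place (and which the earlier sections, via \autoref{theo:inverse-lim} and the $\oo_r$-vs-$\kk$ equivalence of regularity, are designed to supply) is a description of the ``centralizer data'' of a representative $\beta\in\Omega$ at every level: concretely, the pre-image of $\Omega$ under the reduction $\g(\oo_\ell)\to\g(\kk)$, the structure of the centralizer of a lift $\beta_\ell\in\g(\oo_\ell)$ in $\GG(\oo_\ell)$, and the fact that this centralizer is a smooth $\oo_\ell$-group scheme of relative dimension $n$ whose group of $\oo_\ell$-points has order $\abs{C_{\GG(\kk)}(\beta)}\cdot q^{(\ell-1)n}$.

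**First I would** treat the even-level case $\ell=2m$, which is the cleaner one. A character $\chi$ of level $\ell$ with residual orbit $\Omega$ restricts on $G^m$ to a sum of linear characters; via the identification \eqref{equation:identification} applied one level up, the relevant linear character $\psi$ of $G^m/G^{2m}$ corresponds to a functional ``$\langle \beta,-\rangle$'' for $\beta$ a lift of the orbit $\Omega$ to $\g(\oo_m)$, and $\psi$ extends to its stabilizer $\stab_G(\psi)$. One shows $\stab_G(\psi)=C_\GG(\beta_m)(\oo_m)\cdot G^m$ (this is where the $\oo_m$-regularity and the centralizer-scheme description enter), so $[G:\stab_G(\psi)] = \abs{\Omega}\cdot q^{(\ell-1)\alpha}$ after a dimension count using $\dim\GG - n = 2\alpha$. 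Since $\psi$ extends to $\stab_G(\psi)$ — the quotient $\stab_G(\psi)/G^m$ being abelian (it surjects onto a subgroup of $\g(\kk)$ and $C$ is a torus-like object), one can extend $\psi$ — the irreducible characters of $G$ lying over $\psi$ are in bijection with $\irr(\stab_G(\psi)/\ker\psi|_{G^m})$ of the appropriate kind, equivalently with $\irr\bigl(C_\GG(\beta_m)(\oo_m)/(\text{something of index }q^{(\ell-1)n})\bigr)$-many choices; induction then gives degree $\abs{\Omega}\cdot q^{(\ell-1)\alpha}$ and the count $\frac{\abs{\GG(\kk)}}{\abs{\Omega}}q^{(\ell-1)n}$ after summing over the $\abs{G}/\abs{\stab_G(\psi)}$ conjugates of $\psi$ and dividing appropriately.

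**For the odd-level case** $\ell=2m+1$ one uses the standard Heisenberg/Weil-representation device: the linear character $\psi$ of $G^{m+1}/G^{2m+1}$ associated to a lift $\beta$ has stabilizer $H=\stab_G(\psi)$, and $H^m/H^{m+1}\cdot\ker$ carries a non-degenerate alternating form whose radical is exactly the centralizer direction; the regularity of $\beta$ guarantees this radical has dimension $n$, so the symplectic complement has dimension $\dim\GG - n = 2\alpha$ over $\kk$, contributing the factor $q^\alpha$ per odd step. The extraspecial-type quotient has a unique irreducible representation over $\psi$, of dimension $q^\alpha$, and one assembles $\chi$ as an induced representation; the remaining abelian part again contributes $q^{(\ell-1)n}/\abs{\Omega}\cdot\abs{\GG(\kk)}$-many characters by the same bookkeeping. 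The two cases are then uniformly packaged.

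**The main obstacle** I anticipate is not the Clifford machinery, which is by now routine, but rather the precise determination of $\stab_G(\psi)$ and the order of the centralizer $C_\GG(\beta_m)(\oo_m)$ \emph{when the cyclicity of $\oo_m^N$ over $\oo_m[\beta_m]$ fails} — precisely the situation flagged in the introduction (\autoref{lem:nilpotent-reg-glN-not-reg-soN}). In $\GL_N$ one reads off the centralizer as $(\oo_m[\beta_m])^\times$ directly from cyclicity; here one must instead feed in the structural result \autoref{theo:inverse-lim} to control $C_\GG(\beta_m)(\oo_m)$ and, crucially, to verify that $\psi$ still extends to its stabilizer (equivalently, that the relevant obstruction cocycle in $H^2$ vanishes) even in the non-cyclic cases. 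Handling the special orthogonal groups in small rank, and the interaction between the bijective endomorphism $\alpha_\ell$ and the identification of $\g(\kk)$ with its dual, will require care but should not change the counts.
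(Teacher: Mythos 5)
Your proposal follows essentially the same route as the paper: identify a linear character $\psi$ on the middle congruence quotient via the trace-form duality, determine its inertia group as $G^m_r\cdot C_{G_r}(\hat y_r)$, count orbits of such $\psi$ lying over $\Omega$, count extensions, and induce — with a Heisenberg/Weil-type lift supplying the extra factor of $q^\alpha$ in the odd-level case. The paper packages this as \autoref{theo:charcaters-of-Grm} and deduces \autoref{mainthm:enumeration+dimension} exactly as you outline, and it indeed relies on the machinery of \autoref{theo:inverse-lim}, \autoref{corol:fin-index-abelian} and \autoref{corol:surjective-Gm} to control the centralizer at every level in the absence of cyclicity, precisely the obstacle you anticipate.

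One point deserves correction: you justify the extension of $\psi$ to $\stab_G(\psi)$ in the even-level case by saying "the quotient $\stab_G(\psi)/G^m$ being abelian \dots one can extend $\psi$." An abelian quotient does not by itself force the $H^2$-obstruction to vanish (Heisenberg groups are the standard counterexample). You do flag the $H^2$ issue as the main concern at the end, which is the right instinct; but the resolution in the paper is not an abstract cohomological vanishing statement. Rather, it uses the specific structure $I_{G_r}(\sigma)=G^m_r\cdot C_{G_r}(\hat y_r)$ together with \autoref{corol:fin-index-abelian} to observe that the inertia group is generated by two abelian subgroups, one of which is normal in $G_r$, and then imports the explicit extension construction of [KOS, \S 3.5] verbatim. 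So the argument you would need to supply here is a concrete construction, not the heuristic you wrote; the rest of your outline matches the paper's proof.
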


\subsection{Regular representation zeta functions}\label{subsection:reg-zeta} Taking the perspective of representation growth, given a group $H$, one is often interested in understanding the asymptotic behaviour of the sequence $\set{r_m(H)}_{m=1}^\infty$, where $r_m(H)\in\dbN\cup \set{0,\infty}$ denotes the number of elements of $\irr(H)$ of degree $m$. In the case where the sequence $r_m(H)$ is bounded above by a polynomial in $m$, the representation zeta function of $H$ is defined to be the Dirichlet generating function
\begin{equation}
\label{equation:rep-zeta}\zeta_H(s)=\sum_{m=1}^\infty r_m(H)m^{-s},\quad(s\in\dbC).
\end{equation}

In the specific case $H=G=\GG(\oo)$, one may initially restrict to a description of the regular representation zeta function, i.e. the Dirichlet function counting only regular characters of $G$. In this respect, \autoref{mainthm:enumeration+dimension} implies that the rate of growth of regular characters of $G$ is polynomial of degree $\frac{2n}{\dim\GG-n}$. Furthermore, %, noting that the aforementioned twist of the adjoint action of $\GG(\kk)$ on $\g(\kk)$ does not change the space of orbits of $\GG(\kk)$ on $\g(\kk)$, 
we obtain the following.
\begin{corol}\label{corol:reg-zeta-function} Let $X\subseteq \Ad(\GG(\kk))\backslash \g(\kk)$ denote the set of orbits consisting of regular elements, and let \begin{equation}\label{equation:orbit-dirichlet-poly}
\diri_{\g(\oo)}(s)=\sum_{\Omega\in X}
{\abs{\GG(\kk)}}\cdot \abs{\Omega}^{-(s+1)},\quad(s\in\dbC).
\end{equation}

The regular zeta function of $G=\GG(\oo)$ is of the form
\begin{equation*}
\label{equation:reg-rep-zeta}\zeta^\reg_G(s)=\frac{\diri_{\g(\oo)}(s)}{1-q^{n-\alpha s}}
\end{equation*}
where $n$ and $\alpha$ are as in \autoref{mainthm:enumeration+dimension}. \end{corol}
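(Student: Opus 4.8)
The plan is to substitute the enumeration and degree formulas of \autoref{mainthm:enumeration+dimension} directly into the definition \eqref{equation:rep-zeta} of the zeta function, restricted to regular characters, and to sum the resulting double series. First I would record that the level $\ell$ and the residual orbit $\Omega_1(\chi)$ are genuine invariants of a regular character $\chi$, and that by \autoref{defi:regularity} every regular character has level $\ell\ge1$ and residual orbit lying in the set $X$ of orbits of regular elements. Hence the collection of all regular characters of $G$ is the disjoint union, over pairs $(\Omega,\ell)\in X\times\dbN$, of the sets counted in part~(1) of \autoref{mainthm:enumeration+dimension}; and every character in the set indexed by $(\Omega,\ell)$ has the single degree $\abs{\Omega}\cdot q^{(\ell-1)\alpha}$ given in part~(2). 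Grouping \eqref{equation:rep-zeta} according to this partition yields
\[
\zeta^\reg_G(s)=\sum_{\Omega\in X}\sum_{\ell=1}^\infty \frac{\abs{\GG(\kk)}}{\abs{\Omega}}\,q^{(\ell-1)n}\cdot\bigl(\abs{\Omega}\,q^{(\ell-1)\alpha}\bigr)^{-s}.
\]

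Next comes the elementary bookkeeping. Pulling the $\ell$-independent factor $\abs{\GG(\kk)}\cdot\abs{\Omega}^{-(s+1)}$ out of the inner sum leaves the geometric series $\sum_{\ell\ge1}q^{(\ell-1)(n-\alpha s)}$, which converges to $\bigl(1-q^{n-\alpha s}\bigr)^{-1}$ once $\Re(s)>n/\alpha$; the remaining outer sum is then precisely $\diri_{\g(\oo)}(s)$ as defined in \eqref{equation:orbit-dirichlet-poly}, a finite sum since $\g(\kk)$ has only finitely many adjoint orbits. This gives the asserted closed form in that right half-plane, and since both sides are meromorphic the identity persists by analytic continuation. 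I would also note that \autoref{mainthm:enumeration+dimension} forces $r_m^{\reg}(G)$ to be finite for every $m$ and bounded by a polynomial in $m$ (the degrees $\abs{\Omega}q^{(\ell-1)\alpha}$ tend to infinity with $\ell$, uniformly over the finitely many $\Omega$), so that $\zeta^\reg_G$ is a legitimate Dirichlet series.

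There is no real obstacle here: all the substance is contained in \autoref{mainthm:enumeration+dimension}, and \autoref{corol:reg-zeta-function} is a purely formal consequence. The only point that warrants a line of justification is that the pair $(\Omega,\ell)$ partitions the set of regular characters without repetition or omission, which is immediate from the definitions of the level and of the residual orbit together with \autoref{defi:regularity}; the rearrangement of the double series is then justified by absolute convergence in the half-plane $\Re(s)>n/\alpha$.
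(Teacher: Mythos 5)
Your proof is correct and is essentially the same argument the paper has in mind: the corollary is stated as an immediate consequence of \autoref{mainthm:enumeration+dimension}, and the computation you perform — partitioning regular characters by the pair $(\Omega,\ell)$, inserting the count $\frac{\abs{\GG(\kk)}}{\abs{\Omega}}q^{(\ell-1)n}$ and degree $\abs{\Omega}q^{(\ell-1)\alpha}$, factoring out the $\ell$-independent term, and summing the resulting geometric series — is precisely the bookkeeping the paper leaves implicit. The only superfluous remark is the appeal to analytic continuation at the end; the identity is just the closed form of the Dirichlet series in its half-plane of convergence, which is all that is claimed.
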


\subsection{Classification of regular orbits in $\g(\kk)$}
The second goal of this article is to compute the regular representation zeta function of the symplectic and special orthogonal groups over~$\oo$. In view of \autoref{corol:reg-zeta-function}, to do so, one must classify and enumerate the regular orbits in $\g(\kk)$, under the adjoint action of $\GG(\kk)$. This classification is undertaken in \autoref{section:classical-groups}, and its consequences are summarized in \autoref{theo:orbits-sp2nso2n+1} and \autoref{theo:orbits-so2n}. The classification of regular adjoint classes in the $\g(\kk)$ is closely related to the question of classifying conjugacy classes in classical groups over a finite field, a question which was solved in complete generality by Wall in \cite[\S~2.6]{WallIsometries}. Taking a enumerative prespective, the regular semisimple conjugacy classes in finite classical groups were enumerated in \cite{FulmanGuralnick}, using generating functions. Another closely related question is that of enumerating \textit{cyclic} conjugacy classes in finite classical groups. This question is addressed in \cite{FNP-Generating, NeumannPraeger-Classical}; see \autoref{subsection:summary-classical} for further discussion. The enumeration carried out in this paper yields uniform formulae for the function $\diri_{\g(\oo)}$ (and, consequently, for the regular representation zeta function) of each of the classical groups in question, which are independent of the cardinality of $\kk$.

Given $n\in\dbN$ let $\mcal{X}_n$ denote the set of triplets $\ttau=(r,\A,\B)$, in which $r\in\dbN_0$ and $\A=~(\A_{d,e})$ and $\B=~(\B_{d,e})$ are $n\times n$ matrices with non-negative integer entires, satisfying the condition \begin{equation}\label{equation:t-condition}
r+\sum_{d,e=1}^n de\left(\A_{d,e}+\B_{d,e}\right)=n.\end{equation}

Given $\ttau=(r,S,T)\in\mcal{X}_n$, define the following polynomial in $\dbZ[t]$
\begin{align}
c^{\ttau}(t)&=t^n\prod_{d,e}(1+t^{-d})^{S_{d,e}}(1-t^{-d})^{T_{d,e}}\label{equation:v-u-tau}
\end{align}
and let $u_1(q)=\abs{\Sp_{2n}(\kk)}=\abs{\SO_{2n+1}(\kk)}$. Note that the value $u_1(q)$ is given by evaluation at $t=q$ of a  polynomial $u_1(t)\in\dbZ[t]$, which is independent of $q$ (see, e.g., \cite[\S~3.5 and \S~3.2.7]{WilsonFinite}). Additionally, for any $\ttau\in \mcal{X}_n$, let $M_{\ttau}(q)$ denote the number of polynomials of type $\ttau$ over a field of $q$ elements; see \autoref{defi:poly-type}. An explicit formula for $M_{\ttau}(q)$ is computed in \autoref{subsubsection:enumerative}. We remark that the value of $M_{\ttau}(q)$ is given by evaluation at $t=q$ of a uniform polynomial formula which is independent of $q$ as well; see~\eqref{equation:M_tau}.

\begin{mainthm}\label{mainthm:dirichlet-polnomials-sp-odd-orth} Let $\oo$ be a complete discrete valuation ring of odd residual characteristic. Let $n\in\dbN$ and $\GG$ be one of the $\oo$-defined algebraic group schemes $\Sp_{2n}$ or $\SO_{2n+1}$, with $\g=\Lie(\GG)$.  

Given $\ttau=(r,S,T)\in\mcal{X}_n$ let \[\nu(\ttau)=\nu_{\GG}(\ttau)=\begin{cases}1&\text{if }\GG=\Sp_{2n}\text{ and }r>0,\\0&\text{otherwise}.\end{cases}\]

The Dirichlet polynomial $\diri_{\g(\oo)}(s)$ (see \eqref{equation:orbit-dirichlet-poly}) is given by

\begin{equation}\label{equation:diri-sp2n}
\diri_{\g(\oo)}(s)=\sum_{\ttau\in \mcal{X}_n} 4^{\nu(\ttau)} M_{\ttau}(q)\cdot  c^{\ttau}(q)\cdot \left(\frac{ u_1(q)}{2^{\nu(\ttau)}c^{\ttau}(q)}\right)^{-s}.
\end{equation}
\iffalse

\begin{multline}\label{equation:diri-sp2n}
\diri_{\mfr{sp}_{2n}(\oo)}=\\q^{n}\sum_{(0,\A,\B)\in\mcal{X}_n^0}M_{(0,\A,\B)}\prod_{d,e}(1+q^{-d})^{\A_{d,e}}(1-q^{-d})^{\B_{d,e}}\left(q^{2n^2}\frac{\prod_{i=1}^n(1-q^{-2i})}{\prod_{d,e}(1+q^{-d})^{\A_{d,e}}(1-q^{-d})^{\B_{d,e}}}\right)^{-s}\\
+4 q^n  \sum_{(r,\A,\B)\in\mcal{X}_n\setminus\mcal{X}_n^0}M_{(r,\A,\B)}{\prod_{d,e}(1+q^{-d})^{\A_{d,e}}(1-q^{-d})^{\B_{d,e}}} \left(q^{2n^2}\frac{ \prod_{i=1}^n(1-q^{-2i})}{2\cdot\prod_{d,e}(1+q^{-d})^{\A_{d,e}}(1-q^{-d})^{\B_{d,e}}}\right)^{-s},\\~
\end{multline}
\begin{multline}\label{equation:diri-so2n+1}
\diri_{\mfr{so}_{2n+1}(\oo)}=\\q^{n}\sum_{(r,\A,\B)\in\mcal{X}_n}M_{(r,\A,\B)}\prod_{d,e}(1+q^{-d})^{\A_{d,e}}(1-q^{-d})^{\B_{d,e}}\left(q^{2n^2}\frac{\prod_{i=1}^n(1-q^{-2i})}{\prod_{d,e}(1+q^{-
d})^{\A_{d,e}}(1-q^{-d})^{\B_{d,e}}}\right)^{-s}.\\~\end{multline}
\fi
\end{mainthm}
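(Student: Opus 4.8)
\textbf{Proof strategy for \autoref{mainthm:dirichlet-polnomials-sp-odd-orth}.} The plan is to combine the abstract shape of the regular Dirichlet polynomial from \eqref{equation:orbit-dirichlet-poly} with the explicit classification of regular adjoint orbits in $\g(\kk)$ carried out in \autoref{section:classical-groups}. Recall from \eqref{equation:orbit-dirichlet-poly} that $\diri_{\g(\oo)}(s)=\sum_{\Omega\in X}\abs{\GG(\kk)}\cdot\abs{\Omega}^{-(s+1)}$, so the entire task is: (i) partition the set $X$ of regular orbits according to a combinatorial invariant $\ttau\in\mcal{X}_n$; (ii) for each $\ttau$, count how many orbits share that invariant; and (iii) express $\abs{\Omega}$ uniformly in terms of $\ttau$. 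The invariant $\ttau=(r,S,T)$ records the \emph{type} of the characteristic polynomial of a representative $x\in\Omega$: how it factors (over $\kk$, in the symmetric/antisymmetric variable) into irreducible blocks of each degree $d$ and multiplicity $e$, with $r$ counting the contribution of the $0$-eigenspace (the nilpotent part, which can occur only in the $\Sp_{2n}$ case and accounts for the factor $\nu(\ttau)$). Condition \eqref{equation:t-condition} is exactly the bookkeeping that the degrees add up to $n$ (half the ambient dimension).

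First I would invoke the orbit classification (\autoref{theo:orbits-sp2nso2n+1}): a regular element is, up to conjugacy, a direct sum of ``companion-type'' blocks, one for each irreducible factor of its characteristic polynomial, and the block is determined by that factor together with a discriminant/sign datum in the orthogonal and symplectic factors. The centralizer of a regular $x$ in $\GG(\bkk)$ is abelian of dimension $n$, and over $\kk$ its order is the product over blocks of local centralizer orders; a block attached to an irreducible polynomial of degree $d$ appearing with multiplicity $e$, sitting inside an orthogonal form, contributes essentially $q^{de}(1\pm q^{-d})$ up to the multiplicity-$e$ refinement, and inside a symplectic form the analogous factor. This is precisely what \eqref{equation:v-u-tau} packages: $c^{\ttau}(q)=\abs{C_{\GG(\kk)}(x)}$ for $x$ of type $\ttau$, with the sign in $(1\pm t^{-d})$ dictated by whether the block is of orthogonal ($+$, i.e.\ the $S$-matrix) or symplectic ($-$, i.e.\ the $T$-matrix) nature — the $t^n$ prefactor being the ``semisimple rank'' part $q^n$. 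Hence $\abs{\Omega}=\abs{\GG(\kk)}/c^{\ttau}(q)=u_1(q)/c^{\ttau}(q)$, except that in the $\Sp_{2n}$ nilpotent case a centralizer-component subtlety (the regular nilpotent orbit in $\mfr{sp}$ splits, its stabilizer meeting $\GG(\kk)$ in an index-$2$ subgroup, or equivalently there are two such orbits) forces the correction by $2^{\nu(\ttau)}$: there the orbit has size $u_1(q)/\bigl(2^{\nu(\ttau)}c^{\ttau}(q)\bigr)$, matching the exponent base in \eqref{equation:diri-sp2n}.

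Next I would carry out the counting step: for a fixed type $\ttau$, the number of regular orbits realizing it is the number of ways to choose the actual irreducible polynomials filling the prescribed slots, times the number of admissible sign/discriminant data on the blocks. The polynomial-choice count is exactly $M_{\ttau}(q)$ as defined in \autoref{defi:poly-type} and computed in \autoref{subsubsection:enumerative} (formula \eqref{equation:M_tau}), where the ``$\star$-invariance'' of the polynomials — they must be fixed by the involution $f(t)\mapsto t^{\deg f}f(1/t)$ coming from the bilinear form — is what splits the irreducibles into the two families indexed by $S$ (self-dual-with-a-$+$) and $T$ (self-dual-with-a-$-$), and governs which degrees can appear. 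The sign-data count, combined over all blocks, produces a clean power of $4$ only in the symplectic-with-nilpotent case, giving the $4^{\nu(\ttau)}$ coefficient (in the remaining cases the analogous local choices are already absorbed into $M_{\ttau}$ and $c^{\ttau}$ by the classification). Assembling: $\diri_{\g(\oo)}(s)=\sum_{\ttau}\bigl(\#\{\Omega:\text{type }\ttau\}\bigr)\cdot u_1(q)\cdot\abs{\Omega}^{-(s+1)} = \sum_{\ttau} 4^{\nu(\ttau)}M_{\ttau}(q)\cdot c^{\ttau}(q)\cdot\bigl(u_1(q)/(2^{\nu(\ttau)}c^{\ttau}(q))\bigr)^{-s}$, where I have used $u_1(q)\cdot\abs{\Omega}^{-1}=c^{\ttau}(q)\cdot 2^{\nu(\ttau)}$ to pull out one factor of $\abs{\Omega}$ and rewrite the coefficient, which is exactly \eqref{equation:diri-sp2n}.

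The main obstacle, and where the real work sits, is the uniform computation of the local centralizer order $c^{\ttau}(q)$ and of the orbit-count $M_{\ttau}(q)$ \emph{simultaneously across the $\Sp_{2n}$ and $\SO_{2n+1}$ cases with a single formula independent of $q$} — in particular, correctly tracking (a) the $\star$-duality constraint on characteristic polynomials and the resulting partition of irreducibles into the $S$- and $T$-families including the self-paired versus paired-up dichotomy, and (b) the finite-field subtleties at the $2$-torsion: the splitting of the regular nilpotent orbit in $\mfr{sp}_{2n}$, the discriminant conditions distinguishing $\SO^+$ from $\SO^-$ type blocks, and making sure these contribute the stated powers of $2$ and $4$ rather than something $q$-dependent. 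Once the bijection ``regular orbits of type $\ttau$'' $\leftrightarrow$ ``$\ttau$-typed polynomials $\times$ admissible block data'' is pinned down (this is the content of \autoref{theo:orbits-sp2nso2n+1}) and the two polynomial identities $\abs{C_{\GG(\kk)}(x)}=c^{\ttau}(q)$ (up to the $2^{\nu}$) and $\#=4^{\nu(\ttau)}M_{\ttau}(q)$ are established, \eqref{equation:diri-sp2n} is a one-line substitution into \eqref{equation:orbit-dirichlet-poly}.
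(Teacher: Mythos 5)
Your proof is essentially the paper's proof: the paper itself closes the argument by noting that \autoref{mainthm:dirichlet-polnomials-sp-odd-orth} follows ``by direct computation'' from \autoref{theo:orbits-sp2nso2n+1} substituted into \eqref{equation:orbit-dirichlet-poly}, and your assembly line does exactly that. Concretely, you correctly use that (a) the similarity classes meeting $\mfr{g}_1$ in regular elements are parametrized by polynomials $m_x$ with $\deg m_x=N$ and $m_x(-t)=(-1)^Nm_x(t)$, there being $M_\ttau(q)$ of them of type $\ttau$; (b) each such class splits into $2^{\nu(\ttau)}$ adjoint $G_1$-orbits; and (c) $\abs{\Omega}=u_1(q)\big/\bigl(2^{\nu(\ttau)}c^{\ttau}(q)\bigr)$, equivalently $\abs{\CC_{G_1}(x)}=2^{\nu(\ttau)}c^{\ttau}(q)$. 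Thus each $\ttau$ contributes $2^{\nu}M_\ttau\cdot u_1\cdot\abs{\Omega}^{-1}\cdot\abs{\Omega}^{-s}=4^{\nu}M_\ttau c^\ttau\cdot\abs{\Omega}^{-s}$, which is \eqref{equation:diri-sp2n}.

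That said, several background descriptions in your write-up are off and should be corrected, even though they do not invalidate the final substitution. First, the relevant self-duality constraint on minimal polynomials comes from the eigenvalue pairing $\lambda\leftrightarrow-\lambda$ of Lie-algebra elements, so the involution on $\kk[t]$ is $t\mapsto -t$ (yielding $m_x(-t)=(-1)^N m_x(t)$), not the palindromic involution $f(t)\mapsto t^{\deg f}f(1/t)$ you invoke; the latter governs conjugacy classes in the group, not adjoint orbits in $\g(\kk)$. Second, the dichotomy encoded by $S$ (the paper's $\A$) versus $T$ (the paper's $\B$) is not ``orthogonal versus symplectic'' block: $S_{d,e}$ counts monic irreducible \emph{even} factors of degree $2d$, whose contribution to the centralizer is a unitary group $\UU_1(\spring{\varphi_i^{l_i}})$ of order $q^{de}(1+q^{-d})$, whereas $T_{d,e}$ counts coprime pairs $\{\tau(t),\tau(-t)\}$, contributing $\GL_1(\spring{\tau_i^{r_i}})$ of order $q^{de}(1-q^{-d})$; this is the actual source of the $\pm$ signs in $c^\ttau$. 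Third, the factor $4^{\nu(\ttau)}$ is not a ``sign-data count'' produced by a single mechanism: it is the product of \emph{two} independent factors of $2^{\nu(\ttau)}$, one from the number of $\Ad(G_1)$-orbits per $\GL_N(\kk)$-similarity class (\autoref{theo:orbits-sp2nso2n+1}.(2)) and one from $\abs{\CC_{G_1}(x)}=2^{\nu}c^\ttau(q)$ (\autoref{corol:centralizer-size-sp2nso2n+1}) entering via $\abs{\GG(\kk)}\cdot\abs{\Omega}^{-1}$; your final displayed equality uses these two facts correctly, but your prose explanation conflates them.

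A minor point: the parenthetical remark that the nilpotent/$0$-eigenvalue part ``can occur only in the $\Sp_{2n}$ case'' is misleading. In the $\SO_{2n+1}$ case every regular element has $m_x(0)=0$ (since $m_x$ is odd of degree $2n+1$), and $r(m_x)$ may well be positive; it is only the factor $\nu_\GG(\ttau)$ that vanishes identically there, because in odd dimension the determinant obstruction shows $\Theta_x$ maps non-surjectively and the similarity class does not split.
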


Recall that a symmetric bilinear form over a finite field of odd characteristic is determined by the \textit{Witt index} of the form, i.e. the dimension of a maximal totally isotropic subspace with respect to the form. Following standard notation, we write $\SO_{2n}^+$ and $\SO_{2n}^-$ to the group schemes whose group of $\kk$-points are associated with a symmetric bilinear form of Witt index $n$ and $n-1$ respectively. Also, for convenience, we often use the notation $\SO_{2n}^{\pm 1}$ for $\SO_{2n}^\pm$. 

Given $\epsilon\in\set{\pm 1}$, let $u_2^\epsilon(q)=\abs{\SO_{2n}^\epsilon(\kk)}$. As in the previous case, note that the value $u_2^{\epsilon}(q)$ is given by evaluation at $t=q$ of a polynomial $u_2^\epsilon(t)\in\dbZ[t]$, which is independent of $q$ (see \cite[\S~3.2.7]{WilsonFinite})

\begin{mainthm}\label{mainthm:dirichlet-polnomials-even-orth} Let $\oo$ be a complete discrete valuation ring of odd residual characteristic and whose residue field has more than $3$ elements. Let $n\in\dbN$ and $\epsilon \in\set{\pm 1}$. Let $\GG^\epsilon=\SO_{2n}^\epsilon$ be the $\oo$-defined special orthogonal group scheme, as described above, and let $\g^\epsilon=\Lie(\GG^\epsilon)$. 

Let $\mcal{X}_n^{0}$ denote the set of triplets $\ttau=(r,\A,\B)\in\mcal{X}_n$ with $r=0$, and let $\mcal{X}_n^{0,+1}$ denote the subset of $\mcal{X}_n^0$ consisting of elements $(0,\A,\B)$ such that $\sum_{d,e}e\A_{d,e}$ is even and $\mcal{X}_n^{0,-1}=\mcal{X}_n^0\setminus\mcal{X}_n^{0,+1}$. 

The Dirichlet polynomial $\diri_{\g(\oo)}$ (see~\eqref{equation:orbit-dirichlet-poly}) is given by
\begin{equation}
\diri_{\g^\epsilon(\oo)}(s)=\sum_{\ttau\in \mcal{X}_n^{0,\epsilon}} M_{\ttau}(q) \cdot c^{\ttau}(q)\cdot \left(\frac{u_2^\epsilon(q)}{c^\tau(q)}\right)^{-s}+\sum_{\ttau\in\mcal{X}_n\setminus\mcal{X}_n^{0}} 4\cdot M_{\ttau}(q) \cdot c^{\ttau}(q)\cdot \left(\frac{u_2^\epsilon(q)}{2\cdot c^\tau(q)}\right)^{-s}.
\end{equation}
\end{mainthm}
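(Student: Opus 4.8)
The plan is to derive the formula for $\diri_{\g^\epsilon(\oo)}(s)$ by combining the general structure supplied by \autoref{corol:reg-zeta-function} with the explicit classification of regular adjoint orbits in $\g^\epsilon(\kk)$ carried out in \autoref{section:classical-groups}. Recall that $\diri_{\g(\oo)}(s)=\sum_{\Omega\in X}\abs{\GG(\kk)}\cdot\abs{\Omega}^{-(s+1)}$, so the task reduces to sorting the regular $\Ad(\GG^\epsilon(\kk))$-orbits $\Omega$ in $\g^\epsilon(\kk)$ into finitely many families, each indexed by a combinatorial datum $\ttau\in\mcal{X}_n$, and then computing, for each family, the number of orbits in it and the common value of the centralizer order $\abs{\GG^\epsilon(\kk)}/\abs{\Omega}=\abs{\CC_{\GG^\epsilon(\kk)}(x)}$ for $x\in\Omega$.

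First I would invoke the orbit classification: a regular element $x\in\g^\epsilon(\kk)=\mfr{so}_{2n}^\epsilon(\kk)$ is determined, up to $\Ad(\GG^\epsilon(\kk))$-conjugacy, by its characteristic polynomial together with the Witt-type invariants of the form restricted to the generalized eigenspaces, and by regularity the primary decomposition is as ``thin'' as possible. Following Wall's description (as used in \autoref{subsection:summary-classical}), I would encode this data by the triple $\ttau=(r,\A,\B)$: the entry $\A_{d,e}$ (resp. $\B_{d,e}$) counts the number of ``self-dual'' (resp. ``dual-pair'') irreducible factors of degree-parameter $d$ occurring with multiplicity-parameter $e$, while $r$ records the nilpotent/radical part — which for the even orthogonal Lie algebra is forced to be $r=0$, explaining why only $\ttau$ with $r=0$ contribute a genuinely orthogonal block and why the other $\ttau$ enter through a ``general linear'' block of rank $r$ sitting over a smaller orthogonal form. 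The number of orbits realizing a given $\ttau$ is then $M_{\ttau}(q)$, the number of polynomials of type $\ttau$ (\autoref{defi:poly-type}), possibly times a factor accounting for the choice of Witt index on each self-dual block — this is where the factor $4$ on the $r\neq 0$ summands comes from (two binary choices: the discriminant type of the two orthogonal ``halves'' split off by the general-linear part), while for $r=0$ the overall form has fixed type $\epsilon$, which constrains the parity of $\sum_{d,e}e\A_{d,e}$ and hence partitions $\mcal{X}_n^0$ into $\mcal{X}_n^{0,+1}$ and $\mcal{X}_n^{0,-1}$.

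Next I would compute the centralizer order. For $x$ of type $\ttau$, $\CC_{\GG^\epsilon(\kk)}(x)$ decomposes as a product over the primary blocks: the self-dual blocks contribute orthogonal or symplectic factors over extension fields, the dual-pair blocks contribute general-linear factors, and the nilpotent part contributes a unipotent piece. Assembling these and extracting the polynomial dependence on $q$, one gets $\abs{\CC_{\GG^\epsilon(\kk)}(x)}=c^{\ttau}(q)$ when $r=0$ and $\abs{\CC_{\GG^\epsilon(\kk)}(x)}=2\,c^{\ttau}(q)$ when $r>0$ — the extra factor $2$ being the component group of the orthogonal centralizer of the regular general-linear-type block, i.e. the determinant-$\pm1$ ambiguity that is a genuine symmetry here but not when it is absorbed into $\SO$. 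Plugging $\abs{\Omega}=u_2^\epsilon(q)/c^\tau(q)$ (resp. $u_2^\epsilon(q)/(2c^\tau(q))$) and $\abs{\GG^\epsilon(\kk)}=u_2^\epsilon(q)$ into the definition of $\diri$ and summing over the two families gives exactly the claimed expression. The hypothesis $q>3$ enters precisely to rule out the small-field coincidences in the orbit classification (degenerate behaviour of low-dimensional orthogonal groups and the failure of certain $q$-polynomial formulas for $q\le 3$), so I would flag at the outset that the validity of the type-$\ttau$ parametrization over $\kk$ requires $\abs{\kk}>3$.

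The main obstacle I expect is the bookkeeping of the factors of $2$ and the parity condition on $\sum_{d,e}e\A_{d,e}$: one must carefully track, for each $\ttau$, whether the Witt index (equivalently the discriminant) of the ambient even-dimensional form is freely choosable or is pinned down by $\epsilon$ together with the types of the blocks, and simultaneously whether the centralizer picks up a component group of order $2$. Getting these two effects to line up — so that the $r>0$ terms carry a $4$ in front and a $2$ inside the $(\cdot)^{-s}$, while the $r=0$ terms split by $\epsilon$ with no such factors — is the delicate point, and I would do it by reducing to the regular case of Wall's classification and checking the two effects block by block, treating the general-linear part and each self-dual part separately and then multiplying.
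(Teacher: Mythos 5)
Your overall skeleton is the right one and matches the paper: apply \autoref{corol:reg-zeta-function}, sort regular orbits by the type $\ttau(c_x)$ of the characteristic polynomial, use \autoref{theo:orbits-so2n} to count orbits and centralizer orders per type, and sum. However, the explanation you give for the two decisive combinatorial factors is incorrect, and if you tried to establish them from scratch along the lines you describe you would not reproduce the theorem.

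First, the factor $4$ on the $\ttau\in\mcal{X}_n\setminus\mcal{X}_n^0$ summands does \emph{not} come from ``two binary choices of discriminant type of two orthogonal halves split off by a general-linear block.'' It decomposes as $2\times 2$ from two independent phenomena, both established in the paper for singular ($r>0$) regular elements: (a) the $\GL_N(\kk)$-similarity class splits into exactly two $\Ad(G_1^\epsilon)$-orbits (\autoref{propo:orbit-decomposition=so2n}, via the $\abs{\im\Lambda}=2$ computation and \autoref{lem:difference-of-squares} — this is also precisely where $\abs{\kk}>3$ is needed); and (b) the centralizer order is $2\,c^{\ttau}(q)$ rather than $c^{\ttau}(q)$ (\autoref{corol:size-of-centralizer-so2n}). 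Since each orbit contributes $\abs{\GG^\epsilon(\kk)}\,\abs{\Omega}^{-(s+1)}=\abs{\CC_{G_1^\epsilon}(x)}\,\abs{\Omega}^{-s}=2c^\ttau(q)\,(u_2^\epsilon(q)/(2c^\ttau(q)))^{-s}$ and there are $2M_\ttau(q)$ orbits, the coefficient is $4M_\ttau(q)c^\ttau(q)$. Neither of the two $2$'s is a ``choice of discriminant.'' Relatedly, your statement that ``$r$ is forced to be $0$ for the even orthogonal Lie algebra'' is wrong: $r>0$ precisely parametrizes the singular regular elements (with $m_x$ odd of degree $N-1$ and a nontrivial nilpotent block), and these do occur — they are exactly what generates the second summand.

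Second, the origin of the extra factor $2$ in the centralizer order is also misattributed. In \autoref{lem:short-exact-sequence-so2n} the determinant map onto $\set{\pm 1}$ \emph{cuts down} the product $\mcal{A}^\epsilon\times\prod\UU_1(\cdot)\times\prod\GL_1(\cdot)$ by a factor of $2$; it does not contribute one. The surviving factor of $2$ in $\abs{\CC_{G_1^\epsilon}(x)}=2\,c^\ttau(q)$ comes from the fact that the nilpotent-block centralizer $\mcal{A}^\epsilon$ (a group of order $4q^{r}$, computed explicitly in the proof of \autoref{corol:size-of-centralizer-so2n}) is twice as large as the analogue $\UU_1(\spring{t^{d_1}})$ in the $\Sp_{2n}/\SO_{2n+1}$ cases. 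So the bookkeeping you identified as the delicate point is, in fact, where your proposed argument goes wrong; the correct accounting requires the specific orbit-splitting and centralizer statements of \autoref{theo:orbits-so2n} rather than the ``Witt index of two halves'' heuristic you invoke.
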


\subsection{Organization}
\autoref{section:notation} gathers necessary preliminary results and sets up notation. \autoref{section:reg-elements-and-chars} contains basic structural results regarding the regular orbits of $\g(\oo)$ and regular characters of $\GG(\oo)$, and the proof of \autoref{mainthm:enumeration+dimension}. Finally, in  \autoref{section:classical-groups} we classify the regular adjoint orbits of $\g(\kk)$ and compute the regular representation zeta function of $\GG(\oo)$.

\section{Notation, preliminaries and basic definitions} \label{section:notation}

\subsection{The symplectic and orthogonal groups}\label{subsection:group-notation}

Fix $N\in\dbN$ and a matrix $\BB\in\GL_N(\oo)$ such that $\BB^t=\epsilon\BB$, with $\epsilon=-1$ in the symplectic case and $\epsilon=1$ in the special orthogonal case. The group scheme $\GG$ is defined by 
\begin{equation}\label{equation:G-definition}\GG(R)=\set{\mbf{x}\in\matr_N(R)\mid \mbf{x}^t\BB\mbf{x}=\BB\:\text{and }\det(\mbf{x})=1},
\end{equation}
where $R$ is a commutative $\oo$-algebra and the notation $\mbf{x}^t$ stands for the transpose matrix of $\mbf{x}$. A standard computation (see, e.g.~\cite[\S~12.3]{waterhouse}) shows that the Lie-algebra scheme $\g=\mrm{Lie}(\GG)$ is given by 
\begin{equation}\label{equation:Lie(G)-definition}
\g(R)=\set{\mbf{x}\in\matr_N(R)\mid \mbf{x}^t\BB+\BB\mbf{x}=0}.
\end{equation}
Let $n$ and $d$ denote the dimension and the absolute rank of the generic fiber of $\GG$. Note that the absolute rank and dimension of the generic fiber of $\GG$ are equal to those of its special fiber, by flatness of $\GG$ and of its maximal tori (see \cite[$\rm VI_B$,\: Corollary~4.3]{SGA3}).

\subsubsection{Adjoint operators}\label{subsubsection:adjoint-operators}
Let $R^N$ denote the $N$-th cartesian power of $R$, identified with the space $\matr_{N\times 1}(R)$ of column vectors, and define a non-degenerate bilinear form on $R^N$ by $B_R(u,v)=u^t\BB v$. One defines an $R$-anti-involution on $\matr_N(R)=\End_R(R^N)$ by \begin{equation}\label{equation:circ-definition}A^\inv=\BB^{-1}A^t\BB \quad(A\in\matr_N(R)),\end{equation} or equivalently, by letting $A^\inv$ be the unique matrix satisfying $B_R(A^\inv u,v)=B_R(u,Av)$, for all $u,v\in R^N$. In this notation, we have that $A\in \GG(R)$ if and only if $\det(A)=1$ and $A^\inv A= 1$, and that $A\in\g(R)$ if and only if $A^\inv+A=0$.

\subsubsection{Maximal tori and centralizers over algebraically closed fields}\label{subsubsection:tori}
Let $\TT$ be a maximal torus of $\GG$ and let $\mbf{t}\subseteq \g$ be its Lie-algebra. Given an algebraically closed field $L$, which is an $\oo$-algebra, we may assume that $\TT(L)$ is the group of $N\times N$ diagonal matrices. Moreover, upto possibly replacing $\BB$ with a congruent matrix, which amounts to conjugation of the given embedding $\GG\subseteq\GL_N$ by a fixed matrix over $\oo$, we may assume that $\TT(L)$ is mapped onto the subgroup of diagonal matrices $\diag(\nu_1,\ldots,\nu_{N})$, satisfying $\nu_{2i}=\nu_{2i-1}^{-1}$ for all $i=1,\ldots,\lfloor N/2\rfloor$, and with $\nu_N=1$ if $N$ is odd. In particular, the absolute rank of the generic fiber of $\GG$ is $n=\dim( \TT\times_{\spec \oo} \spec L)=\lfloor N/2\rfloor$, for $L=K^\alg$ the algebraic closure of $K$. 

Under this embedding, the Lie-algebra $\mbf{t}(L)$ consists of diagonal matrices of the form $\diag(\nu_1,\ldots,\nu_N)$, with $\nu_{2i}=-\nu_{2i-1}$ for all $i=1,\ldots,n$ and $\nu_N=0$ if $N$ is odd. We require the following well-known result.

\begin{propo}\label{propo:centralizer-semisimple-element}
Let $s\in \g(L)$ be a semisimple element. % Let $\lambda_1,\ldots,\lambda_t\in L$ be such that $\set{\pm\lambda_1,\ldots,\pm\lambda_t}$ is the set of all non-zero eigenvalues of $s$, with $\lambda_i\ne \pm\lambda_j$ whenever $i\ne j$, and let $r_1,\ldots,r_t$ be their respective algebraic multiplicities. 
 The centralizer of $s$ under the adjoint action of $\GG(L)$ is of the form
\[\CC_{\GG(L)}(s)\simeq\prod_{j=1}^t\GL_{m_j}\left(L\right)\times \Delta(L),\]
where $\Delta$ is the $L$-algebraic group of isometries of the restriction of $B_L$ to (a 
non-degenerate bilinear form on) $\ker(s)$, the eigenspace associated with the eigenvalue $0$, and the values $m_1,\ldots,m_t$ are the algebraic multiplicities of all non-zero eigenvalues of $s$ such that for any such eigenvalue $\lambda$, there exists a unique $j=1,\ldots,t$ such that $m_j$ is the algebraic multiplicity of $\lambda$ and $-\lambda$.
\end{propo}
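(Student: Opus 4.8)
The plan is to reduce the statement to a concrete linear-algebra computation by diagonalizing $s$. Since $s$ is semisimple over the algebraically closed field $L$, we may write $L^N=\bigoplus_\lambda V_\lambda$ as a direct sum of eigenspaces of $s$. The first key point is that the anti-involution $\inv$ (equivalently, the form $B_L$) interacts with this decomposition in a controlled way: because $s^\inv=-s$ (as $s\in\g(L)$), one has $B_L(s u,v)=B_L(u,sv)$, hence $B_L(V_\lambda,V_\mu)=0$ unless $\mu=-\lambda$. Consequently $B_L$ restricts to a non-degenerate pairing $V_\lambda\times V_{-\lambda}\to L$ for $\lambda\ne 0$ with $\lambda\ne-\lambda$ (note $\lambda=-\lambda$ forces $\lambda=0$ since $\Char(\kk)\ne 2$), and to a non-degenerate form on $V_0=\ker(s)$; in particular $\dim V_\lambda=\dim V_{-\lambda}$, which is the assertion that the multiplicities of $\lambda$ and $-\lambda$ coincide.

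Next I would compute the centralizer. An element $g\in\GG(L)$ commuting with $s$ must preserve each eigenspace $V_\lambda$, so $g$ is determined by its components $g_\lambda\in\GL(V_\lambda)$. The condition $g^\inv g=1$, i.e. $B_L(gu,gv)=B_L(u,v)$, decouples across the pairs: on $V_0$ it says $g_0$ is an isometry of $B_L|_{V_0}$, i.e. $g_0\in\Delta(L)$; on a pair $\{V_\lambda,V_{-\lambda}\}$ with $\lambda\ne 0$ it says $B_L(g_\lambda u,g_{-\lambda}v)=B_L(u,v)$ for $u\in V_\lambda$, $v\in V_{-\lambda}$, which, using the perfect pairing to identify $V_{-\lambda}$ with the dual of $V_\lambda$, forces $g_{-\lambda}=(g_\lambda^\ast)^{-1}$ (the inverse transpose); thus $g_\lambda$ ranges freely over $\GL(V_\lambda)\cong\GL_{m_j}(L)$ and $g_{-\lambda}$ is then determined. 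Choosing one representative $\lambda$ from each pair $\{\lambda,-\lambda\}$ of non-zero eigenvalues gives exactly the factors $\GL_{m_1}(L)\times\cdots\times\GL_{m_t}(L)$. Finally, the determinant-one condition cutting $\GG$ out of the isometry group must be checked to be automatic or harmless: on each $\GL_{m_j}$ factor the contribution of $g_{-\lambda}=(g_\lambda^\ast)^{-1}$ cancels that of $g_\lambda$ in the total determinant, so $\det g=\det g_0$, and one reduces the $V_0$-part to $\Delta$ defined with the determinant-one condition; since the statement only claims an abstract isomorphism of the centralizer, it suffices to absorb this into the definition of $\Delta$ (or to note that in the symplectic/orthogonal cases the relevant $\GL$-factors and the isometry group of $V_0$ already lie in $\SL_N$ up to at most an index-two adjustment, which I would handle with a short remark).

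I expect the genuine obstacle to be the bookkeeping around the zero eigenspace and the determinant: one must verify that $B_L|_{V_0}$ is non-degenerate (this follows from non-degeneracy of $B_L$ together with the orthogonality $B_L(V_0,V_\lambda)=0$ for $\lambda\ne 0$, since $V_0^\perp=\bigoplus_{\lambda\ne0}V_\lambda$ forces $V_0\cap V_0^\perp=0$), that it is symmetric or anti-symmetric according to the type of $\BB$ so that $\Delta$ is again a classical group of the same flavour, and that the $\SL$ versus $\GL$ discrepancy is correctly accounted for in $\Delta$. All of this is routine but needs care. The rest of the argument — the eigenspace decomposition, the orthogonality of distinct non-opposite eigenspaces, and the identification of the $\GL$-factors via the duality pairing — is standard and I would present it compactly, citing \cite[\S~2]{SteinbergConjugacy} or \cite{WallIsometries} for the isometry-group computation if a reference is preferred over a direct proof.
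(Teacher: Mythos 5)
Your proof is correct and follows essentially the same route as the paper's: decompose $L^N$ into eigenspaces of $s$, establish the orthogonality of $V_\lambda$ and $V_\mu$ unless $\mu=-\lambda$, and read off the centralizer structure blockwise, with the $\GL$-factors arising from the duality between $V_\lambda$ and $V_{-\lambda}$ (the paper routes this last step through Witt's theorem and a reference to \cite{BCM}, whereas you carry out the dual-pairing computation directly — a minor stylistic difference). One sign slip to fix in the writeup: from $s^\inv=-s$ one gets $B_L(su,v)=-B_L(u,sv)$, not $B_L(su,v)=B_L(u,sv)$; your stated identity would yield the wrong orthogonality $\mu=\lambda$, though the conclusion you actually draw ($\mu=-\lambda$) shows you used the correct sign. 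Your explicit note that $\det g_\lambda\cdot\det g_{-\lambda}=1$ so the determinant-one condition only constrains the $V_0$-block is a point the paper's proof glosses over, and is worth keeping.
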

\begin{proof} 
Let $V=L^{N}$ the be the fixed $L$-vector space on which $\GG(L)\subseteq\GL_N(L)$ acts. The element $s$ is thus considered as an endomorphism of $V$. The decomposition of $V$ into eigenspaces of $s$ gives rise to a direct decomposition into isotypic $\CC_{\GL_N(L)}(s)$-modules, $V=\oplus_{\lambda\in L} W_\lambda$, where $W_\lambda=\ker(s-\lambda\id)$. For any non-zero $\lambda\in L$, put $W_{[\lambda]}=W_\lambda\oplus W_{-\lambda}$. A simple computation reveals that the spaces $W_0$ and $W_{[\lambda]}$ are non-degenerate with respect to the ambient symmetric or anti-symmetric bilinear form. Since $\CC_{\GG(L)}(s)=\CC_{\GL_N(L)}(s)\cap \GG(L)$, it holds that $x\in \CC_{\GG(L)}(s)$ if and only if $x\in \CC_{\GL_N(L)}(s)$ and $x$ acts as an isometry with respect to the restriction of $B$ to the spaces $W_0$ and $W_{[\lambda]}$ (for $\lambda\ne 0$). 

Arguing as in \cite[III, \S~2.4]{BCM}, one verifies that for any $\lambda\ne 0$ the decomposition $W_{[\lambda]}=W_{\lambda}\oplus W_{-\lambda}$ is into maximal isotropic subspaces, and in particular $\dim_L W_{\lambda}=\dim_{L} W_{-\lambda}=m_j$, for some $j=1,\ldots,t$. Invoking Witt's Theorem \cite[\S~1.2]{WallIsometries}, and the $\CC_{\GL_N(L)}(s)$-isotipicity of the decomposition, we obtain that any automorphism of $W_{\lambda}$ extends uniquely to an isometric automorphism of $W_{[\lambda]}$ which commutes with the action of $s$, and that the action of $\CC_{\GG(L)}(s)$ on $W_{[\lambda]}$ is determined in this manner. Furthermore, it holds that any automorphism of $W_0=\ker(s)$ which preserves the restriction of $B$ to $W_0$ necessarily commutes with $s$, and that the action of $\CC_{\GG(L)}(s)$ on this subspace is by such automorphisms. The proposition follows. 
\end{proof}

\subsection{Artinian local principal ideal rings}\label{subsubsection:artinian-rings}
Let $K^\alg$ be a fixed algebraic closure of $K$ and let $K^\mrm{unr}$ be the maximal unramified extension of $K$ in $K^\alg$. Let $\OO$ be the valuation ring of $K^\mrm{unr}$, and $\PP=\pi\OO$ its maximal ideal. The residue field of $\OO$ is identified with the algebraic closure $\bkk$ of $\kk$. Given $r\in\dbN$ we put $\oo_r:=\oo/\pp^r$ and $\OO_r:=\OO/\PP^r$ and write $\eta_r:\OO\to\OO_r$ and $\eta_{r,m}:\OO_r\to \OO_m$ for the reduction maps, for any $1\le m\le r$. The notation $\eta_r$ and $\eta_{r,m}$ is also used to denote the coordinatewise reduction map on $\matr_N(\OO)$ and $\matr_N(\OO_r)$, respectively. 

The map $\eta_1$ admits a canonical splitting map $s:\bkk\to\OO$, which restricts to a homomorphic embedding of $(\bkk)^\times$ into $\OO^\times$, and satisfies $s(0)=0$ ; see \cite[Ch.~II,\:\S~4,\:Proposition~8]{SerreLocal}. 

Let $\sigma:K^{\mrm{unr}}\to K^{\mrm{unr}}$ be the local Frobenius map whose fixed field is $K$. Then $\sigma$ restricts to a ring automorphism of $\OO$, with fixed subring $\OO^\sigma=\oo$, and induces a map $\OO_r\to \OO_r$ for any $r\ge 1$ whose fixed subring is $\oo_r$. In the special case $r=1$, the map $\sigma:\bkk\to\bkk$ is given by the $q$-power map $x\mapsto x^q$, where $q=\abs{\kk}$.
\subsection{The Greenberg functor}\label{subsection:greenberg}

The Greenberg functor was introduced in \cite{GreenbergI} and \cite{GreenbergII}, as a generalization of Shimura's reduction mod $\pp$ functor to higher powers of $\pp$. Given an artinian local principal ideal ring $R$ (or more generally, an artinian local ring) with a perfect residue field $\mfr{k}$, the Greenberg functor $\mcal{F}_R$ associates to any $R$-scheme $\mbf{Y}$ locally of finite type a scheme $\mcal{F}_R(\mbf{Y})$ locally of finite type over the residue field $\mfr{k}$. Given another such ring $R'$ with residue field $\mfr{k}$ and a ring homomorphism $R\to R'$, the functors $\mcal{F}_R$ and $\mcal{F}_{R'}$ are related via \textit{connecting morphisms}, on which we expand further below.

A defining property of the functor is the existence of a canonical bijection\begin{equation}
\label{equation:greenberg-main}
\mcal{F}_{R}(\mbf{Y})(\mfr{k})=\mbf{Y}(R).
\end{equation}
More generally, if $A$ is a perfect commutative unital $\mfr{k}$-algebra, then either $\mcal{F}_R(\mbf{Y})(A)=\mbf{Y}(R\otimes_{\mfr{k}} A)$, in the case where $R$ is a $\mfr{k}$-algebra, or otherwise
\begin{equation}\label{equation:greenberg-general}
\mcal{F}_{R}(\mbf{Y})(A)=\mbf{Y}(R\otimes_{W(\mfr{k})}W(A))
\end{equation}
where $W(\cdot)$ denotes the ring of $p$-typical Witt vectors~\cite[Ch.~II,\S~6]{SerreLocal}. For further introduction we refer to \cite[p.~276]{BLR-Neron}.

Our application of the Greenberg functor is focused on the artininan principal ideal rings $\OO_r$. For any $r$, we let $\GG_{\OO_r}=\GG\times \OO_r$ and $\g_{\OO_r}=\g\times\OO_r$ denote the base change of the group and Lie-algebra schemes $\GG$ and $\g$. Put $\GGamma_r=\mcal{F}_{\OO_r}(\GG_{\OO_r})$ and $\ggamma_r=\mcal{F}_{\OO_r}(\g_{\OO_r})$. Given $m\le r$, we write $\eta_{r,m}^*$ to denote the connecting maps $\GGamma_r\to \GGamma_m$ and $\ggamma_r\to\ggamma_m$, and put $\GGamma^m_r=(\eta_{r,m}^*)^{-1}(1)=\spec(\kappa(1))\times_{\GGamma_m}\GGamma_r$ (the scheme-theoretic group kernel) and $\ggamma_{r}^m=(\eta_{r,m}^*)^{-1}(0)=\spec(\kappa(0))\times_{\ggamma_m}\ggamma_r$ (the scheme-theoretic Lie-algebra kernel). Here, the notation $\kappa(\cdot)$ stands for the residue field at a rational point of a scheme.

%\textsc{Also mention $s$, and \cite[\S~1,\:Proposition~4]{GreenbergI}. Change notation from $\eta_{r,m}$ to $\eta_{r,m}^*$. Make sure about $(\eta_{r,m}^*)^{-1}$ notation.}

Note that, a priori, the connecting morphism between a scheme and its base change is dependent on the scheme in question as well. The apparent abuse of notation in writing $\eta_{r,m}^*$ for the connecting morphisms of different schemes is permissible by \cite[\S~5,\:Corollary~4]{GreenbergI}, applied for $g$ the inclusion morphism (see also Assertion~2 of the Main Theorem of \textit{loc. cit.}).

%\begin{lem} Let $\mbf{X}$ be a locally closed sub-$\OO_r$-scheme of $\dbA_{\OO_r}^k$, the affine $k$-space over~$\OO_r$, and let $\psi:\mcal{F}_{\OO_r}(\mbf{X})\to\mcal{F}_{\OO_m}(\mbf{X}\times\OO_m)$ be the associated connecting morphism. The map $\psi$ coincides pointwise with restriction of the coordinatewise reduction map from $\mcal{F}_{\OO_r}(\dbA_{\OO_r}^k)$ to $\mcal{F}_{\OO_r}(\mbf{X})$.
%\end{lem}
%\begin{proof}
%The case $\mbf{X}=\dbA_{\OO_r}^k$ is \cite[Corollary~2, \S~5]{GreenbergI}. In the general case, note that by properties of base change, the inclusion $\mbf{X}\times\OO_m\subseteq\dbA_{\OO_m}^k$ is simply the base change over $\spec(\OO_m)$ of the inclusion morphism $\mbf{X}\subseteq\dbA_{\OO_r}^k$. The lemma follows from Assertion 2 of the Main Theorem (preservation of arbitrary immersions) and Corollary~4, \S~5, of~\cite{GreenbergI}.
%\end{proof}

The main properties which we require are summarized in the following lemma. 

\begin{lem}\label{lem:greenberg-props}For $r\in\dbN$ fixed, we have
\begin{enumerate}
\item The rings $\OO_r$ are the $\bkk$-points of an $r$-dimensional algebraic ring scheme $\mbf{O}_r$ over $\bkk$. 

The canonical map $s:\bkk\to\OO_r$ defines a closed embedding $s^*:\dbA_\bkk^1\to\mbf{O}_r$ of the affine line over $\bkk$ into this ring variety. The restriction of $s^*$ to the multiplicative group $\mathbb{G}_m\subseteq\dbA_{\bkk}^1$ is a monomorphism of $\bkk$-linear algebraic groups, satisfying $\eta_{r,1}^*\circ s^*=\id_{\dbA_{\bkk}^1}$.
\item The group $\GGamma_r$ is a $d\cdot r$-dimensional linear  algebraic group over $\bkk$. 
\item  The Greenberg functor maps smooth closed sub-$\OO_r$-group schemes of $\GG_{\OO_r}$ to closed algebraic $\bkk$-subgroups of $\GGamma_r$.
%Moreover, if $\Delta\subseteq \GG_{\OO_r}$ is a  smooth closed sub-$\OO_r$-group scheme of dimension $d'$, then $\mcal{F}_{\OO_r}(\Delta)$ is a smooth closed sub-$\bkk$-group of $\GGamma_r$ of dimension~${d'\cdot r}$.
\item The scheme $\ggamma_r$ is a $d\cdot r$-dimensional affine space over $\bkk$, which is naturally endowed with the structure of a Lie-algebra scheme over the ring scheme $\mbf{O}_r$. 
\item The connecting morphisms $\eta_{r,m}^*$, for $m\le r$, give rise to surjective $\bkk$-group scheme $\GGamma_r\to~\GGamma_m$ morphisms. Similarly, for $\ggamma_r\to \ggamma_m$, these are surjective Lie-ring morphisms.
\item The adjoint action of $\GG_{\OO_r}$ on the Lie-ring scheme $\g_{\OO_m}$ with $m\le~r$, induces an action of the algebraic group $\GGamma_r$ on $\ggamma_m$. The application of $\mcal{F}_{\OO_r}$ preserves centralizers of $\OO_m$-rational points of $\g_{\OO_m}$.
\end{enumerate}
\end{lem}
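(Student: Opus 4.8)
The strategy is to derive each assertion from the construction of the Greenberg functor in \cite{GreenbergI, GreenbergII} together with the standard properties of Witt vectors, rather than to re-prove these results from scratch; most of the work consists of translating the cited statements into the form convenient for our later use. I would begin with (1): the ring scheme $\mbf{O}_r$ is by definition $\mcal{F}_{\OO_r}(\spec\OO_r)$ applied to the affine line, i.e. the $\bkk$-scheme whose functor of points is $A\mapsto W_r(A)$ (truncated Witt vectors) when $\OO$ is absolutely unramified, and more generally $A\mapsto \OO_r\otimes_{W(\bkk)}W(A)$; in both cases this is $r$-dimensional affine space as a scheme, and the ring operations are polynomial, giving the ring-scheme structure. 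The splitting $s:\bkk\to\OO$ of \cite[Ch.~II,\S~4,\:Prop.~8]{SerreLocal} is the Teichm\"uller lift; it is multiplicative and sends $0$ to $0$, and functoriality of the construction turns it into the claimed closed embedding $s^*:\dbA^1_\bkk\to\mbf{O}_r$ whose restriction to $\mathbb{G}_m$ is a group monomorphism, with $\eta^*_{r,1}\circ s^*=\id$ because reduction mod $\PP$ inverts the Teichm\"uller map.

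For (2) and (4), I would invoke the general fact \cite[\S~4, Main Theorem]{GreenbergI} that $\mcal{F}_{\OO_r}$ sends affine schemes of finite type to affine $\bkk$-schemes of finite type and commutes with fiber products, hence sends group (resp. Lie-algebra) schemes to group (resp. Lie-algebra) schemes; the dimension count $\dim\mcal{F}_{\OO_r}(\mbf{Y})=r\cdot\dim_{\OO_r}\mbf{Y}$ for $\mbf{Y}$ smooth is part of that theorem, and applied to $\GG_{\OO_r}$ and $\g_{\OO_r}$ (which have relative dimension $d=\dim\GG$ over $\OO$, by the flatness remark in \autoref{subsection:group-notation}) it yields the value $d\cdot r$. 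Linearity of $\GGamma_r$ follows since $\GG\subseteq\GL_N$ is a closed subgroup scheme and $\mcal{F}_{\OO_r}$ preserves closed immersions, so $\GGamma_r$ embeds as a closed subgroup of $\mcal{F}_{\OO_r}((\GL_N)_{\OO_r})$, which is linear; that $\ggamma_r$ is an affine space (not merely affine of finite type) uses that $\g_{\OO_r}$ is a free $\OO_r$-module scheme, so $\mcal{F}_{\OO_r}$ of it is $\mbf{O}_r^{\,d}$ as a scheme. The $\mbf{O}_r$-Lie-algebra structure on $\ggamma_r$ is the image under $\mcal{F}_{\OO_r}$ of the $\OO_r$-Lie-algebra structure on $\g_{\OO_r}$, again using compatibility with products.

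Assertion (3) is exactly the content of \cite[\S~4, Main Theorem.(5)]{GreenbergI}, already quoted in \autoref{subsection:reg-characters} of the excerpt: $\mcal{F}_{\OO_r}$ carries closed (smooth) subgroup schemes to closed algebraic subgroups. For (5), surjectivity of the connecting morphisms $\eta^*_{r,m}:\GGamma_r\to\GGamma_m$ and $\ggamma_r\to\ggamma_m$ follows from the smoothness of $\GG$ and $\g$: smoothness guarantees that $\GG(\OO_r)\to\GG(\OO_m)$ is surjective on $\bkk$-points (Hensel/formal smoothness), and since both sides are the $\bkk$-points of algebraic groups and the map is a morphism of algebraic groups over the perfect field $\bkk$, surjectivity on $\bkk$-points implies surjectivity of schemes; that the map is a homomorphism of group schemes (resp. of Lie rings) is functoriality of $\mcal{F}$ applied to the reduction morphism $\eta_{r,m}$, which is a ring homomorphism. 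Finally (6): the adjoint action is a morphism of $\OO_r$-schemes $\GG_{\OO_r}\times\g_{\OO_m}\to\g_{\OO_m}$ (after base change along $\OO_r\to\OO_m$), so applying $\mcal{F}_{\OO_r}$ and using compatibility with products produces a morphism $\GGamma_r\times\ggamma_m\to\ggamma_m$ satisfying the action axioms; preservation of centralizers of rational points is \cite[\S~5, Corollary~4]{GreenbergI} (centralizers are defined by a fiber product with a point, and $\mcal{F}$ commutes with fiber products), which is also the statement invoked in the footnote about $\eta^*_{r,m}$ being well-defined independently of the ambient scheme.

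The main obstacle, such as it is, is bookkeeping rather than mathematics: one must be careful about the two regimes in \eqref{equation:greenberg-general} (equal characteristic versus mixed characteristic, i.e. $\OO$ a $\bkk$-algebra or not) and check that every assertion holds uniformly in both, and one must verify that the various structures produced by $\mcal{F}_{\OO_r}$ — ring, group, Lie algebra, action — are mutually compatible, which all reduces to the single fact that $\mcal{F}_{\OO_r}$ commutes with finite fiber products. No step requires an idea beyond what is already in \cite{GreenbergI}; accordingly I would keep the write-up short, citing \textit{loc. cit.} for (2)--(6) and spelling out only the Teichm\"uller-lift assertions in (1), which are the ones we use most directly in the sequel.
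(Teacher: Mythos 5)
Your overall strategy mirrors the paper's: each assertion is translated into a cited property of the Greenberg functor from \cite{GreenbergI,GreenbergII}, with (2)--(6) essentially lookups and (1) spelled out via the Teichm\"uller lift. Minor deviations --- e.g.\ deriving linearity of $\GGamma_r$ from the closed embedding $\GG\subseteq\GL_N$ rather than from \cite[11.6]{waterhouse}, or arguing surjectivity in (5) from formal smoothness plus the closed-image theorem rather than citing \cite[Corollary~2, p.~262]{GreenbergII} --- are harmless alternatives.

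There is, however, a genuine problem in your sketch of (6). You propose to apply $\mcal{F}_{\OO_r}$ to a ``morphism of $\OO_r$-schemes $\GG_{\OO_r}\times\g_{\OO_m}\to\g_{\OO_m}$''. But $\g_{\OO_m}$ is an $\OO_m$-scheme, and applying $\mcal{F}_{\OO_r}$ to it viewed as an $\OO_r$-scheme by restriction of scalars does \emph{not} yield $\ggamma_m=\mcal{F}_{\OO_m}(\g_{\OO_m})$; the Greenberg functor is not compatible with restriction of scalars along $\OO_r\to\OO_m$ in this naive sense (for instance $\g_{\OO_m}(\OO_r)=\emptyset$ when $\g_{\OO_m}$ is regarded as an $\OO_r$-scheme and $m<r$). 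The fix --- and the one the paper adopts --- is to remain entirely over $\OO_m$: apply $\mcal{F}_{\OO_m}$ to the adjunction map $\alpha_m:\GG_{\OO_m}\times\g_{\OO_m}\to\g_{\OO_m}$, then precompose with the connecting morphism, so the action is $\mcal{F}_{\OO_m}(\alpha_m)\circ(\eta_{r,m}^*\times\id_{\ggamma_m})$. Relatedly, preservation of centralizers is not a direct consequence of \cite[\S~5, Corollary~4]{GreenbergI}, which concerns compatibility of connecting morphisms with morphisms of schemes; the paper instead invokes \cite[Proposition~3.6]{Stasinski}, which is stated precisely for stabilizers under the Greenberg-lifted action.
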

\begin{proof}
\newcounter{list}	
\begin{list}{\arabic{list}.}{\usecounter{list}\setlength{\leftmargin}{5pt}
   \setlength{\itemsep}{2pt} \setlength{\parsep}{0pt}\setlength{\labelwidth}{-5pt}}
\item See \cite[\S~1,\:Proposition~4]{GreenbergI}.
\item The group $\GGamma_r$ is a smooth affine group scheme of finite type over $\bkk$ (see \cite[\S~4,\:Theorem.(5)]{GreenbergI} and \cite[Corollary~1,\:p.~263]{GreenbergII}). Thus, by \cite[11.6]{waterhouse}, $\GGamma_r$ is a linear algebraic $\bkk$-group (see also \cite[\S~4]{Stasinski}). The dimension of $\GGamma_r$ may be computed by induction on $r$, using Greenberg's Structure Theorem \cite{GreenbergII} (see remark on p.~266 of \textit{loc. cit.}; also, see \cite[Lemma~4.1.1]{Begueri-Dualite} for an explicit argument in the case where $r$ is divisible by the absolute ramification index of $\oo$). %Similarly $\mcal{F}_{\OO_r}(\Delta)$ is a $d'\cdot r$-dimensional linear algebraic $\bkk$-group, and is a closed subgroup of $\GGamma_r$ by 
\item Let $\Delta\subseteq\GG_{\OO_r}$ be a closed smooth sub-$\OO_r$-scheme. The argument of the previous assertion shows that $\mcal{F}_{\OO_r}(\Delta)$ is a linear algebraic group over $\bkk$. That $\mcal{F}_{\OO_r}(\Delta)$ is a closed subgroup of $\GGamma_r$ follows from Assertions (2) and (5) of the Main Theorem of \cite[p.~643]{GreenbergI}.
%\textit{Add explanation/citation regarding the dimension of $\GGamma_r$. Same argument works for $\Delta$.}

% The Greenberg functor maps $\OO_r$-group schemes of finite type to $\bkk$-group schemes of finite type by Assertion~(5) of the Main Theorem~(\S~4) of \cite{GreenbergI}. By \cite[Corollary~1, p.~263]{GreenbergII}, the group $\GGamma_r$ is smooth over $\bkk$, and hence reduced \cite[11.6]{waterhouse}. Thus $\GGamma_r$ is a linear algebraic $\bkk$-group. Given $\Delta\subseteq \GG_{\OO_r}$ as above, $\mcal{F}_{\OO_r}(\Delta)$ is closed by \cite[\S~4, Proposition~8]{GreenbergI} and smooth by \cite[Corollary~1,p.~236]{GreenbergII}. The assertion regarding the dimension of $\mcal{F}_r(\Delta)$ may be proved by induction on $r$, the case $r=1$ being trivially true. For $r>1$, by \cite[Corollary~2, p. 262]{GreenbergII}, we have an exact sequence
%\[1\to \mfr{d}\to \mcal{F}_{\OO_r}\left(\Delta\right)\xrightarrow{\eta_{r,r-1}^*} \mcal{F}_{\OO_{r-1}}\left(\Delta\times\OO_{r-1}\right)\to 1\]
%of $\bkk$-algebraic group schemes. By Greenberg's Structure Theorem \cite{GreenbergII}, the kernel $\mfr{d}$ is an affine $d'$-dimensional unipotent group over $\bkk$. As $\Delta\times\OO_{r-1}$ is a smooth and closed subgroup scheme of $\GGamma_{r-1}$, by flatness of the map $\OO_r\to\OO_{r-1}$, the assertion follows. 

\item The Lie-algebra scheme $\g_{\OO_r}$ is isomorphic to the affine $d$-dimensional space $\dbA^d_{\OO_r}$ over $\OO_r$, and is endowed with $\OO_r$-regular maps, defining an $\OO_r$-module structure and an $\OO_r$-bilinear Lie-bracket on $\g_{\OO_r}$. It follows from the Main Theorem \cite[p.~643]{GreenbergI}, that $\ggamma_r=\mcal{F}_{\OO_r}(\g_{\OO_r})$ is isomorphisc to $\dbA_{\bkk}^{dr}$, the affine space of dimension $d\cdot r$ over $\bkk$. Multiplication by scalars from $\mbf{O}_r$, the Lie-bracket and addition on $\g_{\OO_r}$ are transported by $\mcal{F}_{\OO_r}$ to schematic morphisms $\mathbf{O}_r\times\ggamma_r\to \ggamma_r$ and ${\ggamma_r\times\ggamma_r}\to\ggamma_r$ by \cite[Corollary~3, p.641]{GreenbergI}. The Lie-axioms on $\mcal{F}_{\OO_r}(\ggamma_r)$ may be verified using compatibility of the Greenberg functor with preimages \cite[Corollary~3,\:p.~641]{GreenbergI}. For example, the Jacobi identity can be reformulated using the  equality $\g_{\OO_r}\times\g_{\OO_r}\times\g_{\OO_r}=J^{-1}(0)$, where $J:\g_{\OO_r}\times\g_{\OO_r}\times\g_{\OO_r}\to\g_{\OO_r}$ is the morphism satisfying $J(R)(x,y,z)=[[x,y],z]+[[y,z],x]+[[z,x],y]$ for any $\OO_r$-algebra $R$ and $x,y,z\in\g_{\OO_r}(R)$.

\item The connecting map is shown to be a group homomorphism in \cite[\S~5,\:Corollary~5]{GreenbergI}, and the preservation of the Lie-bracket follows similarly from \cite[\S~5,\:Corollary~2]{GreenbergI}. Its surjectivity follows from the smoothness of $\GG_{\OO_r}$ (resp. $\ggamma_{\OO_r}$), and \cite[Corollary 2, p. 262]{GreenbergII}. 
\item The action of $\GGamma_r$ on $\ggamma_m$ is given by $\mcal{F}_{\OO_m}(\alpha_{m})\circ (\eta_{r,m}^*\times \id_{\ggamma_m}):\GGamma_r\times\gamma_m\to\ggamma_m$, where $\alpha_{m}:\GG_{\OO_m}\times\g_{\OO_m}\to \g_{\OO_m}$ is the adjunction map; see \cite[\S~3]{Stasinski}. One notes easily that, since the group $\GGamma^m_r$ acts trivially on $\ggamma_m$, this action commutes pointwise with the bijection \eqref{equation:greenberg-general}. The preservation of centralizers follows from \cite[Proposition~3.6]{Stasinski}, by taking $\mbf{Y}$ and $\mbf{Z}$ to be the sub-schemes defined by the spectrum of the reside field of $\g_{\OO_m}$ at the given rational point. %Note that, by definition of an $\OO_m$-rational point, and by \eqref{equation:greenberg-main}, the $\OO_m$- rational points of an $\OO_m$-scheme $\mbf{X}$ are in canonical bijection with the $\bkk$-points of the scheme $\mcal{F}_{\OO_m}(\mbf{X})$, and the image of such a point under this bijection coincides with the image of the spectrum of its residue field under $\mcal{F}_{\OO_m}$.

 %Note that, by definition, the image of an $\OO_m$-rational point of an $\OO_m$ scheme $\mbf{X}$ under the functor $\mcal{F}_{\OO_m}$ coincides with the $\bkk$-rational point of the image of the point within the associated scheme under the bijection \eqref{equation:greenberg-main}.
%We note that the scheme obtained by applying the functor $\mcal{F}_{\OO_m}$ to such a rational point is canonically identified with the $\OO_m$-rational point corresponding to the image of the rational point in $\ggamma_m$ under the canonical bijection~\eqref{equation:greenberg-main}.
\end{list}
\end{proof}

\begin{rem} In the case where $\OO_r$ is a $\bkk$-algebra, \autoref{lem:greenberg-props}.(3) may be somewhat strengthened, as in this case $\gamma_r$ can be shown to coincide with the Lie-algebra of $\GGamma_r$. In the case of unequal characteristic, the equality $\ggamma_r=\Lie(\GGamma_r)$ is generally false. For example, in the case of $\GG=\mathbb{G}_a$, the additive group scheme, we have that $\ggamma_2(\bkk)=\Lie(\mathbb{G}_a)(W_2(\bkk))=W_2(\bkk)$ is a ring of characteristic $p^2$, while $\Lie(\GGamma_2)(\bkk)$ is a two-dimensional $\bkk$-Lie-algebra and, in particular, has $p$-torsion. 
\end{rem}

We also require the following lemma.
\begin{lem}\label{lem:verschiebung}For any $m,r\in\dbN$ with $m\le r$, there exists an injective homomorphism of the underlying additive  group schemes $v_{r,m}^*:\ggamma_{r-m}\to \ggamma_r$, such that 
\begin{enumerate}
\item for any $y\in\ggamma_{r-m}(\bkk)=\g(\OO_{r-m})$, it holds that $v_{r,m}^*(\bkk)(y)=\pi^{m}\tilde{y}$, where $\tilde{y}\in\g(\OO_r)$ is such that $\eta_{r,r-m}(\tilde{y})=y$;
\item the sequence $0\to \ggamma_{r-m}\xrightarrow{v_{r,m}^*}\ggamma_{r}\xrightarrow{\eta_{r,{m}}^*}\ggamma_{m}\to 0$ is exact;%, an in particular $\im(v_{r,m}^*)=\ker(\eta_{r,m}\mid_{\ggamma_m})=\ggamma^m_r$;
\item for any $y\in \ggamma_{r-m}(\bkk)$ the square \eqref{equation:verschiebung} commutes
\begin{equation}
\label{equation:verschiebung}
\begin{tikzpicture}
 \matrix (m) [matrix of math nodes,row sep=2em,column sep=4.5em,minimum width=2em,nodes=
        {minimum height=2em, 
        text height=1.5ex,
        text depth=.25ex,
        anchor=center}]
{\ggamma_r&\ggamma_r\\
\ggamma_{r-m}&\ggamma_{r-m},\\
};
\path[->]{
(m-1-1)	edge[thin]	node[auto]	{$\ad(v_{r,m}^*(y))$}	(m-1-2)
(m-2-1)	edge[thin]	node[auto]	{$\ad(y)$}	(m-2-2)
(m-1-1)	edge[thin]	node[left]	{$\eta_{r,r-m}^*$}	(m-2-1)
(m-2-2)	edge[thin]	node[right]	{$v_{r,m}^*$}	(m-1-2)
}
;
\end{tikzpicture}\end{equation}
where $\ad(z):\ggamma_j\times\ggamma_j\to \ggamma_j$ (for $j\in\dbN$ and $z\in\ggamma_j(\bkk)$) is the map defined by $\ad(z)(A)(x)=[z,x]$ for any commutative unital $\bkk$-algebra $A$ and $x\in \ggamma_j(A)$;
\item The equality $\eta_{r,m+1}^*\circ v_{r,m}^*=v_{m+1,1}^*\circ \eta_{r-m,1}^*$ holds.
\end{enumerate} 
\end{lem}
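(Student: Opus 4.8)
The map $v_{r,m}^*$ should be the Greenberg-functor incarnation of multiplication by $\pi^m$ (the Verschiebung-type map), and the whole lemma is a matter of translating the scheme-theoretic exactness and compatibility statements into the language of $\mcal{F}_{\OO_r}$. First I would construct $v_{r,m}^*$ at the level of schemes: on $\g_{\OO_r}$ the morphism ``multiplication by $\pi^m$'' $\mu_{\pi^m}:\g_{\OO_r}\to\g_{\OO_r}$ is $\OO_r$-linear, and its scheme-theoretic image is the $\OO_r$-submodule scheme $\pi^m\g_{\OO_r}$; since $\pi^m\OO_r\cong\OO_{r-m}$ as $\OO_r$-modules (via $\pi^m\tilde y\leftrightarrow \eta_{r,r-m}(\tilde y)$, which is well-defined because $\pi^m\cdot\pi^{r-m}=0$ in $\OO_r$), one gets a monomorphism of additive $\OO_r$-module schemes $\g_{\OO_{r-m}}\to\g_{\OO_r}$. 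Applying $\mcal{F}_{\OO_r}$ and using that the Greenberg functor carries closed immersions to closed immersions and is compatible with the additive group structure (by \autoref{lem:greenberg-props}.(4) and \cite[Corollary~3, p.~641]{GreenbergI}), I obtain $v_{r,m}^*:\ggamma_{r-m}\to\ggamma_r$, injective, with property (1) being exactly the description of the underlying $\bkk$-points via \eqref{equation:greenberg-main}.

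For part (2), the sequence $0\to\pi^m\g_{\OO_r}\to\g_{\OO_r}\xrightarrow{\eta_{r,m}}\g_{\OO_m}\to 0$ is exact as $\OO_r$-module schemes (it is the affine-space version of $0\to\pi^m\OO_r\to\OO_r\to\OO_m\to 0$), and $\mcal{F}_{\OO_r}$ preserves this: left-exactness is formal from compatibility with fiber products/preimages, and the surjectivity of $\eta_{r,m}^*$ is already \autoref{lem:greenberg-props}.(5). So the only point to check is that $\im(v_{r,m}^*)=\ker(\eta_{r,m}^*)=\ggamma_r^m$, which again follows by applying $\mcal{F}_{\OO_r}$ to the scheme-theoretic equality $\pi^m\g_{\OO_r}=\ker(\g_{\OO_r}\to\g_{\OO_m})$ together with \cite[\S~5, Corollary~4]{GreenbergI} (exact sequences go to exact sequences for smooth schemes). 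Part (4) is the identity $\eta_{r,m+1}\circ\mu_{\pi^m}=\mu_{\pi^m}\circ\eta_{r-m,1}$ viewed in the rings $\OO_\bullet$ (both sides send $\tilde y\in\g(\OO_r)$ to the class of $\pi^m\tilde y$ in $\g(\OO_{m+1})$, which depends only on $\tilde y\bmod\pi^{r-m}$ reduced mod $\pi$), applied with the Greenberg functor; it is bookkeeping once the functoriality of the connecting maps in \autoref{lem:greenberg-props}.(5) is invoked.

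For part (3), the commuting square says that $\ad(\pi^m\tilde y)$ on $\g(\OO_r)$ descends, under $\eta_{r,r-m}$, to $\ad(y)$ on $\g(\OO_{r-m})$ composed back with $v_{r,m}^*$; concretely, $[\pi^m\tilde y, x]=\pi^m[\tilde y, x]$, and for $x\in\g(\OO_r)$ the class of $\pi^m[\tilde y,x]$ is determined by $[\eta_{r,r-m}(\tilde y),\eta_{r,r-m}(x)]\in\g(\OO_{r-m})$, which is exactly $v_{r,m}^*$ applied to $\ad(y)\circ\eta_{r,r-m}^*(x)$. This is a $\OO_r$-bilinearity identity on the Lie-bracket morphism $\g_{\OO_r}\times\g_{\OO_r}\to\g_{\OO_r}$, and I would phrase it as an equality of scheme morphisms $\g_{\OO_r}\to\g_{\OO_r}$ and then transport it by $\mcal{F}_{\OO_r}$ using that the functor commutes with composition and with the bracket morphism (\autoref{lem:greenberg-props}.(4) and \cite[Corollary~3, p.~641]{GreenbergI}).

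**Main obstacle.** No single step is deep; the friction is entirely in making the scheme-level manipulations rigorous in the unequal-characteristic setting, where $\ggamma_r\ne\Lie(\GGamma_r)$ and one cannot simply argue on $\bkk$-points. The delicate point is the identification $\pi^m\OO_r\cong\OO_{r-m}$ and hence $\pi^m\g_{\OO_r}\cong\g_{\OO_{r-m}}$ \emph{as $\OO_r$-module schemes} — one must check this is a morphism of $\OO_r$-schemes (not just an abstract bijection), so that $\mcal{F}_{\OO_r}$ may be applied and yields a $\bkk$-morphism; after that, exactness, the commuting square, and compatibility (4) all reduce to applying the cited corollaries of \cite{GreenbergI, GreenbergII} on compatibility of $\mcal{F}$ with preimages, closed immersions, and finite morphisms. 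I would therefore spend most of the write-up carefully setting up the scheme morphism in (1) and then treat (2)–(4) as short consequences.
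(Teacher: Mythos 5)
Your construction of $v_{r,m}^*$ and your treatment of (1) and (2) are essentially the paper's: take the $\OO_r$-linear map $\g(\OO_{r-m})\to\g(\OO_r)$ underlying multiplication by $\pi^m$, fit it into the exact sequence of $\OO_r$-modules, and apply the Greenberg functor (the paper cites the module-variety version, \cite[\S~1, Proposition~3.(6)]{GreenbergI}, rather than the corollary on p.~641, but the mechanism is the same). So far you are on the same track.

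Where you diverge is in (3), and you over-caution yourself. You warn that in the unequal-characteristic case "one cannot simply argue on $\bkk$-points"; but the difficulty the paper's remark flags is that $\ggamma_r\ne\Lie(\GGamma_r)$, which is about transporting \emph{Lie-theoretic structure}, not about detecting equality of morphisms. In fact the paper's proof of (3) proceeds \emph{exactly} by checking equality on $\bkk$-points: both $\ad(v_{r,m}^*(y))$ and $v_{r,m}^*\circ\ad(y)\circ\eta_{r,r-m}^*$ are already morphisms of $\bkk$-schemes (they are parametrized by the $\bkk$-rational point $y$), and since $\ggamma_r\simeq\dbA_{\bkk}^{dr}$ is a reduced affine variety over an algebraically closed field, an endomorphism of its coordinate ring is determined by its effect on $\bkk$-points via Nullstellensatz. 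The pointwise check is then your computation $[\pi^m\tilde y,x]=\pi^m[\tilde y,x]$ together with the fact that $\pi^m z$ depends only on $z\bmod\pi^{r-m}$. Your alternative — packaging (3) as an equality of $\OO_r$-scheme morphisms $\g_{\OO_r}\to\g_{\OO_r}$ and pushing through $\mcal{F}_{\OO_r}$ — can be made to work, but it forces you to juggle schemes over several different base rings (the middle factor $\ad(y)$ lives over $\OO_{r-m}$) and is strictly more bookkeeping than the Nullstellensatz route, which is available here precisely because you are comparing $\bkk$-morphisms between affine $\bkk$-varieties. Apart from this misplaced caution, the proposal is correct, and (4) does follow in the same vein as (3) as you say.
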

\begin{proof}
%Given $k\le r$ and an $\OO_k$-scheme $X$, let $X^{(r)}$ be the $\OO_r$-scheme obtained by applying the forgetful functor, i.e. by composing the structure morphism $X\to \spec(\OO_k)$ with the natural injection $\spec(\OO_k)\to\spec(\OO_r)$. Unfolding the definitions of the forgetful functor and of the functors in the bijection in the first assertion to the Main Theorem of  \cite{GreenbergI}, one verifies that there is a canonical isomorphism $\mcal{F}_{\OO_m}(X)=\mcal{F}_{\OO_r}(X^{(r)})$.

The map $x\mapsto \pi^{m}x\colon\g(\oo_r)\to\g(\oo_r)$ gives rise to an injective $\oo_r$-module map $v_{r,m}:\g(\oo_{r-m})\to\g(\oo_r)$, which in turn extends to a map of $\OO_r$-modules, giving rise to the exact sequence
\[0\to\g_{\OO_{r-m}}(\OO_r)\xrightarrow{v_{r,m}}\g_{\OO_r}(\OO_r)\xrightarrow{\eta_{r,m}}\g_{\OO_{m}}(\OO_r)\to 0.\]
Applying \cite[\S~1,\:Proposition 3.(6)]{GreenbergI} to both maps of the above sequence, these define $\kk$-regular maps between associated module variety structures over $\bkk$ of the modules above, which, in turn, define an exact sequence of $\bkk$-schemes
\[0\to\ggamma_{r-m}\xrightarrow{v_{r,m}^*}\ggamma_r\xrightarrow{\eta_{r,m}^*}\ggamma_{m}\to 0,\]
where the right-most map coincides with $\eta_{r,m}^*$ by same proposition and by \cite[\S~5,\:Corollary~2]{GreenbergI}. The first and second assertions of the lemma follow.

As for the third assertion, in order to prove that the morphisms ${v_{r,m}^*\circ \ad(y)\circ \eta_{r,r-m}^*}$ and $\ad(v_{r,m}(y))$ coincide, it is enough to show that, upon passing to their associated comorphisms, they induce the same endomorphism of the coordinate ring of $\ggamma_r$. Invoking the isomorphism $\ggamma_r\simeq \dbA_{\bkk}^{dm}$ of \autoref{lem:greenberg-props}.(3), since an endomorphism of a polynomial algebra in $dm$ variables is determined by specifying the images of $t_1,\ldots,t_{dm}$ in $\bkk[t_1,\ldots,t_{dm}]$, by Nullstellensatz, it is enough to show that the two endomorphisms above coincide pointwise on $\ggamma_r(\bkk)$. This is immediate by the first two assertions and the linearity of $\ad(\cdot)$ over $\OO_r$. The fourth assertion may be proved in a similar vein as Assertion (3).
\end{proof}
\begin{rem}
In the case where $\OO$ is either a $\bkk$-algebra, or is absolutely unramified (i.e. $\PP=p\OO$), and thus isomorphic to the ring $W(\bkk)$, the map $v_{r,m}^*$ of the lemma may be described explicitly, by fixing a suitable coordinate system for $\ggamma_r$ over $\bkk$ and taking $v^*_{r,m}$ to be either a coordinate shift in the former case, or given by successive applications of the verschiebung and Frobenius maps coordinatewise (see \cite{SerreLocal}) in the latter.
\end{rem}

\subsection{The Cayley map}\label{subsubsection:cayley}
Let $D$ be the affine $\oo$-scheme $\spec(\oo[t_{1,1},\ldots,t_{N,N},(\det(\mbf{t}+1))^{-1}])$, where $t_{1,1},\ldots,t_{N,N}$ are indeterminates and $\mbf{t}+1$ is the $N\times N$ matrix whose $(i,j)$-th entry is $t_{i,j}+\delta_{i,j}$, with $\delta_{i,j}$ the Kronecker delta function. Note that for any commutative unital $\oo$-algebra $R$, the set of $R$-points of $D$ is naturally identified with the set
\begin{equation}\label{equation:D(R)-definition}\set{\mbf{x}\in\matr_N(R)\mid \det\left(1+\mbf{x}\right)\in R^\times}.\end{equation}

Let $\cay:D\to D$ be the $\oo$-scheme morphism with associated comorphism $\cay^\sharp$ given on generators of $\oo[t_{1,1},\ldots,t_{N,N},(\det(1+\mbf{t}))^{-1}]$ by mapping $t_{i,j}$ to the $(i,j)$-th entry of the matrix $(1-\mbf{t})(1+\mbf{t})^{-1}$. Note that $\cay^\sharp(\det(1+\mbf{t})^{-1})=2^{-N}\det(1+\mbf{t})$. A direct computation shows that, as $2$ is invertible in $\oo$, the map $\cay^\sharp$ is its own inverse and thus $\cay$ is an isomorphism of $D$ onto itself.

Under the identification \eqref{equation:D(R)-definition} for $R$ an $\oo$-algebra as above, the action of $\cay$ on the set of $R$-points of $D$ is given explicitly by
\begin{equation}
\cay(R)(\mbf{x})=(1-\mbf{x})(1+\mbf{x})^{-1}.\label{equation:cayley-definition}\end{equation}

In the specific case $R=\bkk$, the sets $(D\cap\g)(\bkk)$ and $(D\cap\GG)(\bkk)$ are principal open subsets of $\ggamma(\bkk)$ and $\GGamma(\bkk)$ respectively\footnote{Here $\cap$ denotes the scheme-theoretic intersection, $D\cap\g=D\times_{\spec(\oo[t_{1,1},\ldots,t_{N,N}])}\ggamma$. Note that, in the present setting, as $\g\subseteq\matr_N\times\oo=\spec(\oo[t_{1,1},\ldots,t_{N,N}])$, for any $\oo$-algebra $R$, $(D\cap \g)(R)$ is simply the set of matrices $\mbf{x}\in \g(R)$ such that the matrix $\id+\mbf{x}$ is invertible. Likewise for $D\cap \GG$, using the inclusions $\GG\subseteq\SL_N\times\oo\subseteq\matr_N\times\oo$.}. Using the description of $\g$ and $\GG$ given in \autoref{subsubsection:adjoint-operators}, one verifies that the restriction of $\cay({\bkk})$ to $(D\cap \g)(\bkk)$ defines an algebraic isomorphism of affine varieties onto $(D\cap \GG)(\bkk)$, and hence a birational map $\cay({\bkk}):\g(\bkk)\dashrightarrow \GG(\bkk)$. Additionally, being given by a rational function in $\mbf{x}$ on $(D\cap\g)(\bkk)$, the map $\cay(\bkk)$ is equivariant with respect to the conjugation action of $\GG(\bkk)$. The properties listed in this paragraph carry over to the associated $\bkk$-group schemes described in the previous section, as noted in \autoref{lem:props-of-cayley-map} below.

The Cayley map was introduced in \cite{Cayley}. Its generalization to groups arising as the set of unitary transformations with respect an anti-involution of an associative algebra is attributed to A. Weil \cite[\S~4]{WeilAlgebras}. See also \cite{LPR-Cayley} for a more generalized treatment of the Cayley map.

\subsubsection{Properties of the Cayley map}\label{subsection:cayley-map}\newcommand{\DDelta}{\boldsymbol{\Delta}}
Given $r\in\dbN$, put $D_r=D\times\OO_r$ and let $\cay_r=\cay\times \id_{\OO_r}$ be the base change of $\cay$. Let $\DDelta_r=\mcal{F}_{\OO_r}(D_r)$ and $\widehat{\cay}_r=\mcal{F}_{\OO_r}(\cay_r)$. Note that, by construction and by the Main Theorem of \cite{GreenbergI}, $\DDelta_{r}$ is an open affine subscheme of $\dbA_{\bkk}^{N^2m}$.

\begin{lem}\label{lem:props-of-cayley-map} %Let $\GGamma$ be a classical group over $\oo$, as in sub\autoref{subsection:group-notation}, and let $\OO$ be the maximal unramified extension of $\oo$. 
Let $1\le m\le r$. The map $\widehat{\cay}_r$ has the following properties.
\begin{enumerate}\renewcommand{\theenumi}{Cay\arabic{enumi}}
\item\label{cayley-property-1} The map $\widehat{\cay}_r$ is a birational equivalence $\ggamma_r\dashrightarrow\GGamma_r$. Furthermore, its restriction to the kernel $\ggamma_r^m$ is an isomorphism of $\bkk$-varieties onto $\GGamma_r^m$, and is an isomorphism of abelian groups if $2m\ge r$.
\item\label{cayley-property-2} The map $\widehat{\cay}_r$ is $\GGamma_r$-equivariant with respect to the action given in \autoref{lem:greenberg-props}.(6) on $\ggamma_r$ and with respect to group conjugation on $\GGamma_r$.
\item\label{cayley-property-3} The diagram in \eqref{figure:cayley-square} commutes.
\begin{equation}
\begin{tikzpicture}
 \matrix (m) [matrix of math nodes,row sep=3em,column sep=4.5em,minimum width=2em,nodes=
        {minimum height=2em, 
        text height=1.5ex,
        text depth=.25ex,
        anchor=center}]
{\ggamma_r&\GGamma_r\\
\ggamma_m&\GGamma_m.\\
};
\path[->]{
(m-1-1)	edge[dashed]	node[auto]	{$\widehat{\cay}_r$}	(m-1-2)
(m-2-1)	edge[dashed]	node[auto]	{$\widehat{\cay}_m$}	(m-2-2)
(m-1-1)	edge[thin]	node[left]	{$\eta_{r,m}^*$}	(m-2-1)
(m-1-2)	edge[thin]	node[auto]	{$\eta_{r,m}^*$}	(m-2-2)
}
;
\end{tikzpicture}\label{figure:cayley-square}\end{equation}\end{enumerate}
\end{lem}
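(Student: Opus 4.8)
The plan is to deduce everything from the corresponding properties of $\cay$ over $\oo$ (equivalently over $\OO$, $\OO_r$) together with the formal properties of the Greenberg functor already collected in \autoref{lem:greenberg-props} and \autoref{lem:verschiebung}. First I would establish \eqref{cayley-property-1}. Since $\cay_r$ is an isomorphism of $D_r$ onto itself (as $2\in\OO_r^\times$) and restricts, on the open subschemes $D_r\cap\g_{\OO_r}$ and $D_r\cap\GG_{\OO_r}$, to an isomorphism between them (by the computation in \autoref{subsubsection:cayley}, valid over any $\oo$-algebra in which $2$ is invertible), applying $\mcal{F}_{\OO_r}$ and using that the Greenberg functor commutes with open immersions and with fibre products over the base \cite[Corollary~3, p.~641]{GreenbergI} gives that $\widehat{\cay}_r$ restricts to an isomorphism of $\bkk$-varieties $(\DDelta_r\cap\ggamma_r)\xrightarrow{\sim}(\DDelta_r\cap\GGamma_r)$; since both of these are principal open subsets of $\ggamma_r$ and $\GGamma_r$ respectively (using \autoref{lem:greenberg-props}.(2),(4) and that $\cay^\sharp(\det(1+\mbf t))$ is a unit multiple of $\det(1+\mbf t)$), $\widehat{\cay}_r$ is a birational equivalence $\ggamma_r\dashrightarrow\GGamma_r$. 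For the restriction to $\ggamma_r^m$: every $x\in\g(\OO_r)$ with $\eta_{r,m}(x)=0$ has $x=\pi^m\tilde x$, hence $1+x$ is a unit, so $\ggamma_r^m\subseteq\DDelta_r\cap\ggamma_r$; similarly $\GGamma_r^m$ (consisting of matrices $\equiv 1\bmod\PP^m$) lies in $\DDelta_r\cap\GGamma_r$, and the explicit formula $\cay(\mbf x)=(1-\mbf x)(1+\mbf x)^{-1}$ shows $\cay$ carries $\g(\OO_r)\cap(1+\PP^m)$ bijectively onto $\GG(\OO_r)\cap(1+\PP^m)$, whence $\widehat{\cay}_r$ restricts to an isomorphism of $\bkk$-varieties $\ggamma_r^m\xrightarrow{\sim}\GGamma_r^m$ by the same Greenberg-functor argument.

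For the group-homomorphism claim when $2m\ge r$: on $\OO_r$-points, for $x,y\in\g(\OO_r)$ with $x,y\equiv 0\bmod\PP^m$ one has $xy\equiv 0\bmod\PP^{2m}\equiv 0\bmod\PP^r$, so modulo $\PP^r$ the expansions $(1\mp\mbf x)(1\pm\mbf x)^{-1}=1\mp 2\mbf x+\cdots$ truncate and $\cay(x)\cay(y)=(1-2x)(1-2y)\equiv 1-2(x+y)\equiv\cay(x+y)\bmod\PP^r$; thus $\cay_r$ is an abelian-group isomorphism $\g_{\OO_r}\cap(1+\PP^m)\to\GG_{\OO_r}\cap(1+\PP^m)$ (the group laws being addition, resp. multiplication). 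Functoriality of $\mcal{F}_{\OO_r}$ applied to the addition/multiplication morphisms, exactly as in the proof of \autoref{lem:greenberg-props}.(4),(5), transports this to the statement that $\widehat{\cay}_r\colon\ggamma_r^m\to\GGamma_r^m$ is a homomorphism of abelian group schemes. Property \eqref{cayley-property-2}, $\GGamma_r$-equivariance, follows the same template: over $\OO_r$, conjugation $g\mapsto g^{-1}(\cdot)g$ on $\GG_{\OO_r}$ and the adjoint action on $\g_{\OO_r}$ intertwine $\cay_r$ because $\cay(g^{-1}xg)=(1-g^{-1}xg)(1+g^{-1}xg)^{-1}=g^{-1}(1-x)(1+x)^{-1}g=g^{-1}\cay(x)g$; applying $\mcal{F}_{\OO_r}$ to this identity of morphisms $\GG_{\OO_r}\times(D_r\cap\g_{\OO_r})\to D_r\cap\GG_{\OO_r}$ and using the description of the $\GGamma_r$-action in \autoref{lem:greenberg-props}.(6) yields equivariance of $\widehat{\cay}_r$. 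Property \eqref{cayley-property-3} is immediate from $\cay_r\circ(\id_{D}\times\eta_{r,m})=(\id_D\times\eta_{r,m})\circ\cay_r$ at the level of $\OO$-scheme morphisms (the defining rational formula is the same in every ring), together with the compatibility of the connecting morphisms $\eta^*_{r,m}$ with the Greenberg functor applied to a morphism, i.e. \cite[\S~5, Corollary~4]{GreenbergI}.

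In all three items the substantive content is elementary — it is the two identities $\cay(g^{-1}xg)=g^{-1}\cay(x)g$ and $\cay(x)\cay(y)\equiv\cay(x+y)\bmod\PP^r$ (for $x,y\in\PP^m$, $2m\ge r$) — and everything else is bookkeeping: one must only check that the relevant maps are morphisms of $\OO_r$-schemes between schemes to which the Greenberg functor applies (affine, locally of finite type, with the fibre-product/open-immersion compatibilities), so that identities of morphisms over $\OO_r$ pass to identities of morphisms over $\bkk$. The main obstacle, such as it is, is bookkeeping of a different flavour: verifying cleanly that the Greenberg image $\DDelta_r\cap\ggamma_r$ of the principal open set $D_r\cap\g_{\OO_r}$ really is a \emph{dense} open subset of $\ggamma_r$ (so that ``birational equivalence'' is justified and not merely ``isomorphism of open subschemes''), and — for the $2m\ge r$ clause — being careful that the truncation argument is performed with the module-variety structures of \autoref{lem:greenberg-props}.(4), so that $\widehat{\cay}_r\vert_{\ggamma_r^m}$ is genuinely additive and not merely additive on $\bkk$-points. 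Density follows since $1+\mbf x$ is invertible whenever $\mbf x$ is topologically nilpotent — in particular on the non-empty open $\ggamma_r^1$ — and a non-empty open subscheme of the irreducible (indeed affine-space, by \autoref{lem:greenberg-props}.(4)) scheme $\ggamma_r$ is dense; additivity on the nose follows, as in the proof of \autoref{lem:verschiebung}.(3), by the Nullstellensatz reduction to pointwise equality of the two regular maps $\ggamma_r^m\times\ggamma_r^m\to\GGamma_r^m$.
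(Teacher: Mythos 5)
Your proposal follows essentially the same route as the paper's proof: establish birationality by applying $\mcal{F}_{\OO_r}$ to the isomorphism of open subschemes $D_r\cap\g_{\OO_r}\xrightarrow{\sim}D_r\cap\GG_{\OO_r}$; establish the restriction to $\ggamma_r^m$ via inclusion into $\DDelta_r$; prove additivity for $2m\ge r$ via the truncated identity $\cay(x)\equiv 1-2x$; deduce $\GGamma_r$-equivariance by transporting the intertwining identity $\cay(g^{-1}xg)=g^{-1}\cay(x)g$ through the Greenberg functor; and get the square \eqref{figure:cayley-square} from \cite[\S~5, Corollary~4]{GreenbergI}. All of this matches.

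The one place where you are a notch less careful than the paper is in the passage from $\bkk$-point inclusions to scheme-theoretic inclusions. You write ``every $x\in\g(\OO_r)$ with $\eta_{r,m}(x)=0$ has $x=\pi^m\tilde x$, hence $1+x$ is a unit, so $\ggamma_r^m\subseteq\DDelta_r\cap\ggamma_r$''; but that inference goes from a statement about $\bkk$-points of $\ggamma_r^m$ (via the Greenberg bijection \eqref{equation:greenberg-main}) to a scheme-theoretic inclusion, and that step needs $\ggamma_r^m$ (and $\GGamma_r^m$) to be reduced so that the Nullstellensatz applies. You flag the analogous concern at the end for additivity (``genuinely additive and not merely additive on $\bkk$-points''), but the same issue is live already for the inclusion into $\DDelta_r$, and you never actually justify reducedness. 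The paper does: it remarks that by applying Greenberg's Structure Theorem \cite{GreenbergII} inductively, both $\ggamma_r^m$ and $\GGamma_r^m$ are reduced, hence $\bkk$-varieties, which is what licenses the $\bkk$-point argument in both places. This is a small gap in exposition rather than in mathematics, and your instinct (pointed to by the reference to \autoref{lem:verschiebung}.(3)) is right; you should state the reducedness of $\ggamma_r^m$ and $\GGamma_r^m$ explicitly and cite the source.

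One further minor point: the expression ``$\g(\OO_r)\cap(1+\PP^m)$'' is a slip — the Lie-algebra kernel lives in $\PP^m\matr_N(\OO_r)$, not in $1+\PP^m\matr_N(\OO_r)$. The group kernel is the one congruent to $1$.
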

\begin{proof}
\begin{list}{\arabic{list}.}{\usecounter{list}\setlength{\leftmargin}{0pt}
   \setlength{\itemsep}{2pt} \setlength{\parsep}{0pt}\setlength{\labelwidth}{-5pt}}
\item The inclusion map $D_r\cap \g_{\OO_r}\subseteq \g_{\OO_r}$ is an open immersion, and thus by Assertion (2) and (3) of the Main Theorem of \cite{GreenbergI}, the $\bkk$-scheme $\DDelta_r\cap \ggamma_r$ is immersed as an open subscheme of $\ggamma_r$. Similarly for $\DDelta_r \cap \GGamma_r$. By functoriality, the morphism $\widehat{\cay}_r$ is an isomorphism of $\DDelta_r\cap \ggamma_r$ onto $\DDelta_r\cap \GGamma_r$, and hence $\ggamma_r$ and $\GGamma_r$ are birationally equivalent. 

To prove that $\widehat{\cay}_r$ restricts to an isomorphism of $\ggamma^m_r$ onto $\GGamma^m_r$, it would be enough that both are embedded as sub-schemes of $\DDelta_r$ under the given inclusions into $\dbA_{\bkk}^{N^2m}$. Note that by applying Greenberg's Structure Theorem \cite{GreenbergII} inductively, both $\ggamma^m_r$ and $\GGamma_r^m$ are reduced, and thus are $\bkk$-varieties. Thus, by Nullstellensatz, they are determined by their $\bkk$-points and it suffices to show they are included in the reduced subscheme $(\DDelta_r)_{\mrm{red}}\subseteq\DDelta_r$. This follows from the bijection~\eqref{equation:greenberg-main}, as $\ggamma^m_r(\bkk)=\g_{\OO_r}(\OO_r)\cap \eta_{r,m}^{-1}(0)$ is included in the nilradical of the matrix algebra~$\matr_N(\OO_r)$, and hence included in $D_r(\OO_r)$, and since $\GGamma_r^m(\bkk)=\GG_{\OO_r}(\OO_r)\cap \eta_{r,m}^{-1}(1)\subseteq 1+\pi\matr_N(\OO_r)\subseteq \GL_N(\OO_r)$, and thus (since $\Char( \bkk)\ne 2$) included in $D_r(\OO_r)$.

Lastly, to prove that $\widehat{\cay}_r$ is a group homomorphism whenever $2m\ge r$, it is equivalent to show that it preserves comultiplication in the Hopf-algebra structure of the coordinate ring of $\ggamma_r^m$ in this case. Arguing as in the proof \autoref{lem:verschiebung}, it sufficient to verify this on the $\bkk$-points of the variety. This follows from the definition of $\cay$ \eqref{equation:cayley-definition}, as in this case $\ggamma_r^m(\bkk)\subseteq\g_{\OO_r}(\OO_r)$ is included in an ideal of vanishing square in $\matr_N(\OO_r)$ and the map $\widehat{\cay}_r$ coincides with the map $\mbf{x}\mapsto 1-2\mbf{x}$.

%{\color{red} To prove the second assertion of \eqref{cayley-property-1} we note that $\ggamma_r^m$ and $\GGamma_r^m$ are in fact sub-$\bkk$-schemes of $\DDelta_r$. This may be verified easily by considering the points of these schemes, as for any commutative unital $\bkk$-algebra $A$ the set $\ggamma_r^m(A)\subseteq \g_{\OO_r}(\OO_r\otimes_{W(\bkk)}W(A))$ is a subset of the nilradical of $\g_{\OO_r}(\OO_r\otimes_{W(\bkk)}W(A))$, and hence is contained in the set $\DDelta_r(A)=D_r(\OO_r\otimes_{W(\bkk)}W(A))$. Similarly, recalling that $\Char(\bkk)\ne 2$, we have that $\GGamma_r^m(A)\subseteq \DDelta_r(A)$. 
%
%In the case where $2m\ge r$, one verifies that for any such $\bkk$-algebra $A$, the set $\ggamma_r^{m}(A)$ is included in the set of elements of vanishing square in $\g_{\OO_r}(\OO_r\otimes_{W(\bkk)}W(A))$. Consequently, the map $\widehat{\cay_r}(A)$ is a group homomorphism, and hence $\widehat{\cay}_r$ is a homomorphism of abelian group schemes.
%}
%For the second assertion, note that for any $m\ge 1$, $\ggamma^m_r$ is contained in the nilradical of $\ggamma_r$, and hence included in the domain $D(\OO_r)\cap \ggamma_r$, and that the Cayley map restricts to regular map from $\ggamma_r^m$ into the unipotent radical $\GGamma^1_r\subseteq \GGamma_r$ (see \cite[Proposition~4.3]{Stasinski}). Furthermore, as $\Char(\bkk)\ne 2$, the Cayley map serves as its own inverse on the image of $\ggamma_r^m$ under ${\cay_{\OO_r}}$, and is a group isomorphism in the case where $2m\ge r$. By applying the functor $\mcal{F}_r$, we deduce the second assertion of \eqref{cayley-property-1}. 

\item Property \eqref{cayley-property-2} holds since $\mcal{F}_{\OO_r}$ maps the cartesian square \eqref{equation:Gm-equivariance-cayley}, which states the $\GG_{\OO_r}$-equivariance of $\cay_r$, to a corresponding cartesian square, stating the $\GGamma_r$-equivariance of $\widehat{\cay}_r$.
\begin{equation}
\label{equation:Gm-equivariance-cayley}\begin{tikzpicture}
 \matrix (m) [matrix of math nodes,row sep=3em,column sep=7em,minimum width=2em,
 nodes={minimum height=2em, 
        text height=1.5ex,
        text depth=.25ex,
        anchor=center}]
{\GG_{\OO_r}\times\g_{\OO_r}&\g_{\OO_r}
\\\GG_{\OO_r}\times\GG_{\OO_r}&\GG_{\OO_r}\\
};
\node (1,1) [right] {$\square$};
\path[->]{
(m-1-1)	edge[thin]	node[auto]	{$\alpha_{\GG_{\OO_r},\g_{\OO_r}}$}	(m-1-2)
(m-2-1)	edge[thin]	node[auto]	{$\alpha_{\GG_{\OO_r},\GG_{\OO_r}}$}	(m-2-2)
(m-1-1)	edge[dashed]	node[left]	{$\id_{\GG_{\OO_r}}\times \cay_r$}	(m-2-1)
(m-1-2)	edge[dashed]	node[auto]	{$\cay_r$}	(m-2-2)
}
;
\end{tikzpicture}\end{equation}
Here $\alpha_{\GG_{\OO_r},\mbf{X}}$ denotes the action map of $\GG_{\OO_r}$ on $\mbf{X}\in\set{\GG_{\OO_r},\g_{\OO_r}}$ by either conjugation or by the adjoint action. 

\item Finally, property \eqref{cayley-property-3} is simply an application of \cite[Corollary~4, p.645]{GreenbergI}, to the case $R=~\oo_r$, $R'=\oo_{m},\:\varphi=\eta_{r,m}$, $X_1=\ggamma_r\cap D_r$, $X_2=\GGamma_r\cap D_r$ and $g=\cay_r$.
\end{list}
\end{proof}

%As an immediate consequence of \autoref{lem:props-of-cayley-map}, we have that, upon application of the functor $\mcal{F}_r$, the Cayley map restricts to a $\bkk$-birational equivalence between the centralizer of an element $x\in \ggamma_r$ within $\ggamma_r$, and its centralizer in $\GGamma_r$ under the adjoint action.

\subsection{Groups, Lie algebras and characters}\label{subsection:characters}

In general, given finite groups $\Delta\subseteq \Gamma$ and characters $\sigma\in\irr(\Delta)$ and $\chi\in\irr(\Gamma)$, we denote by $\chi_\Delta$ the restriction of $\chi$ to $\Delta$, and by $\sigma^\Gamma$ the character induced from $\sigma$ in $\Gamma$. %We also write $\gen{f,g}_\Gamma=\frac{1}{\abs{\Gamma}}\sum_{\ggamma\in\Gamma}f(\ggamma)\overline{g(\ggamma)}$ for the standard $\Gamma$-invariant inner-product on $\dbC[\Gamma]$.
Group commutators are denoted by $(x,y)=xyx^{-1}y^{-1}$. Lie-algebra commutators are denoted by $[x,y]=xy-yx$. The center of a group $\Gamma$ is denoted by $\ZZ(\Gamma)$.

The Pontryagin dual of a finite abelian group $\Delta$ is denoted by $\widehat{\Delta}=\hom(\Delta,\dbC^\times)$. If $\Delta$ is endowed with an additional structure (e.g. a ring or a Lie-algebra), then $\widehat{\Delta}$ refers to the Pontryagin dual of the abelian group underlying $\Delta$.

\section{Regular elements and regular characters}\label{section:reg-elements-and-chars}
\subsection{Regular elements}\label{subsection:regular-elements}

%For the duration of \autoref{section:reg-elements-and-chars}, we assume that the group $\GG$ is locally Cayley-type, and let $\ggamma$ be its Lie-algebra. 
We begin our analysis of regular characters by inspecting the group $\GG(\OO)$. To do so, we first consider the regular orbits for the action of $\GG(\OO_r)$ on $\g(\OO_r)$, or, equivalently (see \cite[\S~3]{Stasinski}), of $\GGamma_r(\bkk)$ on $\ggamma_r(\bkk)$, via the action described in \autoref{lem:greenberg-props}.(6). The methods which we apply are influenced by \cite{Hill}. 

Recall that an element of a reductive algebraic group over an algebraically closed field is said to be \textbf{regular} if its centralizer is an algebraic group of minimal dimension among such centralizers \cite[\S3.5]{SteinbergConjugacy}. Following \cite{Hill}, this definition is extended to elements of $\ggamma_r$.% with respect to the conjugation action of $\GGamma_r$, described in \autoref{lem:greenberg-props}.(6).

\begin{defi}\label{defi:regular-elements} Let $r\ge 1$. An element $x\in\g(\OO_r)$ is said to be \textbf{regular} if the group scheme $\mcal{F}_{\OO_r}\left(\CC_{\GG_{\OO_r}}(x)\right)=\CC_{\GGamma_r}(x)$, obtained by applying the Greenberg functor to the centralizer group scheme of $x$ in $\GG_{\OO_r}$, is of minimal dimension among such group schemes.
\end{defi}

The following theorem lists the main properties of regular elements of $\ggamma_r$, which are proved in this section.

\begin{theo}\label{theo:properties}
Let $\GG$ be a symplectic or a special orthogonal group scheme over a complete discrete valuation ring $\oo$ of odd residue field characteristic, with Lie-algebra $\g=\Lie(\GG)$. Fix $r\in\dbN$ and let $x\in \g(\OO_r)$. 
\begin{enumerate}
\item If $x_r$ is a regular element of $\g_{\OO_r}(\OO_r)$, then $\CC_{\GGamma_{r}}(x_r)$ is a $\bkk$-group scheme of dimension $r\cdot n$, where $n=~\rk(\GG\times~ K^{\alg})$. 
\item The element $x_r$ is regular if and only if $x_1=\eta_{r,1}(x_r)$ is a regular element of~$\g(\bkk)$.
\item Suppose $x_r\in\g(\OO_r)$ is regular. The restriction of the reduction map $\eta_{r,1}$ to $\CC_{\GG(\OO_r)}(x_r)$ is surjective onto $\CC_{\GG(\bkk)}(x_1)$.
\end{enumerate}
\end{theo}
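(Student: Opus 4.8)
The plan is to prove the three assertions more or less in the order stated, since (2) and (3) build on structural facts about centralizers that also underlie (1). First I would set up the basic reduction to a question about the Lie algebra over $\bkk$ and over the Artinian rings $\OO_r$, using \autoref{lem:greenberg-props}.(6) to identify $\CC_{\GGamma_r}(x_r) = \mcal{F}_{\OO_r}(\CC_{\GG_{\OO_r}}(x_r))$ and to reduce all dimension statements to statements about the $\bkk$-points of Greenberg schemes. For assertion (1), the key input is \autoref{propo:centralizer-semisimple-element}: over $\bkk$ the centralizer of a \emph{semisimple} element has a completely explicit shape $\prod_j \GL_{m_j} \times \Delta$, and one computes directly that its dimension equals $n = \lfloor N/2\rfloor = \rk(\GG\times K^\alg)$ exactly when the $m_j$ are all $1$ and $\Delta$ has semisimple rank $0$ (a torus), i.e.\ precisely for \emph{regular semisimple} $s$. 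One then needs that over $\bkk$ there \emph{exist} regular elements — e.g.\ regular semisimple elements, which are dense — so that the minimal centralizer dimension is indeed $n$; this pins down the minimum for $\g(\bkk)$, and then by the flatness/dimension-counting argument with $\eta_{r,1}^*$ (using that $\CC_{\GGamma_r}(x_r)$ surjects onto $\CC_{\GGamma_1}(x_1)$ with kernel a subgroup of $\GGamma_r^1$ of dimension $\le n(r-1)$, with equality for regular $x_r$ by the usual ``fibers of a surjection of smooth schemes'' estimate applied inductively) one gets $\dim \CC_{\GGamma_r}(x_r) = rn$.

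For assertion (2), the ``if'' direction is the crux: assuming $x_1$ is regular in $\g(\bkk)$ I must show $\dim\CC_{\GGamma_r}(x_r) = rn$, hence $x_r$ is regular (minimal). The strategy I would use is to filter $\CC_{\GGamma_r}(x_r)$ by the subgroups $\CC_{\GGamma_r}(x_r)\cap\GGamma_r^m$ and analyze the successive quotients, which embed via $\eta^*_{m+1,m}$-type maps and the verschiebung $v^*_{r,m}$ of \autoref{lem:verschiebung} into the kernel $\CC_{\ggamma_1}(x_1) := \ker(\ad x_1)$ on $\g(\bkk) \cong \ggamma_r^m/\ggamma_r^{m+1}$; the commuting square \eqref{equation:verschiebung} is exactly what identifies the relevant subquotient of the centralizer with $\ker(\ad(\eta_{r,r-m}^*\text{-image of }x))$ on a copy of $\g(\bkk)$. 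Since $x_1$ is regular, $\dim\ker(\ad x_1) = n$; summing over $m = 0,\dots,r-1$ gives the upper bound $\dim\CC_{\GGamma_r}(x_r) \le rn$, and the lower bound $\ge rn$ is automatic from assertion (1) (any centralizer in $\GGamma_r$ has dimension at least the minimum, which is $rn$). Hence equality and regularity. The ``only if'' direction then follows by contraposition: if $x_1$ is not regular then $\dim\ker(\ad x_1) > n$, and by the same filtration (now the bottom layer alone already contributes $>n$, using that the map $\CC_{\GGamma_r}(x_r)\to\CC_{\GGamma_1}(x_1)$ has image inside the centralizer and the innermost layer injects a space of dimension $\dim\ker(\ad x_1)$) one gets $\dim\CC_{\GGamma_r}(x_r) > rn$, so $x_r$ is not of minimal dimension.

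For assertion (3), assuming $x_r$ regular, I want surjectivity of $\eta_{r,1}\colon \CC_{\GG(\OO_r)}(x_r)\to\CC_{\GG(\bkk)}(x_1)$. Here I would argue that $\CC_{\GG_{\OO_r}}(x_r)$ is a \emph{smooth} $\OO_r$-group scheme: by assertion (1) plus assertion (2) the function $r'\mapsto\dim\CC_{\GGamma_{r'}}(\eta_{r,r'}(x_r))$ is linear in $r'$ with slope $n$, which by the standard characterization of smoothness via the Greenberg functor (each connecting map $\CC_{\GGamma_{r'+1}}\to\CC_{\GGamma_{r'}}$ is surjective with fibers of the expected dimension — this is the Greenberg-functor analogue of the infinitesimal lifting criterion, cf.\ \cite{GreenbergII}, Corollary~2, p.~262, and \cite{Stasinski}) forces smoothness; then surjectivity of $\eta_{r,1}^*\colon\CC_{\GGamma_r}(x_r)\to\CC_{\GGamma_1}(x_1)=\CC_{\GG(\bkk)}(x_1)$ on $\bkk$-points follows from \autoref{lem:greenberg-props}.(5) applied to this smooth subgroup scheme, and $\bkk$-points of $\CC_{\GGamma_r}(x_r)$ are exactly $\CC_{\GG(\OO_r)}(x_r)$ by \eqref{equation:greenberg-main}. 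The main obstacle I anticipate is the smoothness of the centralizer scheme $\CC_{\GG_{\OO_r}}(x_r)$ in unequal characteristic: the passage from ``centralizer has the expected (minimal, linearly growing) dimension at every level'' to ``the centralizer group scheme is smooth'' must be done carefully, and it is here that the hypothesis $\Char(\kk)\neq 2$ and the explicit description of $\CC_{\GG(\bkk)}(s)$ from \autoref{propo:centralizer-semisimple-element} — together with the general fact (over $\bkk$) that centralizers of regular elements in the classical Lie algebras are smooth of dimension $n$ — really get used; I would isolate the over-$\bkk$ smoothness statement as a lemma and then bootstrap to $\OO_r$ via the Greenberg-functor dimension count.
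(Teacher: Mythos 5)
Your proposal has the right scaffolding (Greenberg functor, filtration by congruence kernels, the verschiebung of \autoref{lem:verschiebung}, Cayley-map transfer) but misses the paper's key intermediate lemma for assertion~(2) and proposes for assertion~(3) exactly the approach the paper deliberately avoids.

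For assertion (3), you propose to deduce surjectivity by first showing that $\CC_{\GG_{\OO_r}}(x_r)$ is a smooth $\OO_r$-group scheme and then applying \autoref{lem:greenberg-props}.(5). The paper flags this in the remark immediately following \autoref{theo:properties}: it states explicitly that assertions (1) and (3) would be ``formal consequences'' of the smoothness of the centralizer scheme for regular $x$, but that this smoothness, ``while plausible, is not proved in this article.'' Your proposed bootstrap from the linear growth of $\dim\CC_{\GGamma_{r'}}$ to smoothness is not established --- equidimensional fibers over the levels do not by themselves give the infinitesimal lifting property --- and you yourself identify this as the main obstacle. The paper instead proves (3) by a more hands-on argument: \autoref{propo:reg-element-reduction-groups} and Chevalley's theorem force $\eta_{r,1}^*(\CC_{\GGamma_r}(x_r))$ to contain $\CC_{\GGamma_1}(x_1)^\circ$; the center $\ZZ(\GGamma_r)$ maps onto $\ZZ(\GGamma_1)$; and \autoref{propo:centralizer-of-reg-over-bkk} shows $\CC_{\GGamma_1}(x_1)=\CC_{\GGamma_1}(x_1)^\circ\cdot\ZZ(\GGamma_1)$. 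That last proposition --- and hence the detailed structure of centralizers of regular elements over algebraically closed fields, not just their dimension --- is doing real work that the smoothness route does not replace.

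For assertion (2), the ``if'' direction of your filtration argument is sound: when $x_1$ is regular each layer of the filtration embeds in an $n$-dimensional space, so $\dim\CC_{\GGamma_r}(x_r)\le rn$, and equality follows from (1). But the contrapositive is where the gap is. You claim the bottom layer contributes $>n$ and conclude $\dim\CC_{\GGamma_r}(x_r)>rn$; this requires the other $r-1$ layers to contribute $\ge n$ each, and you do not argue that. A priori those layers --- which are isomorphic to $\eta_{i,1}^*(\CC_{\ggamma_i}(x_i))$ for $i<r$ --- could be much smaller than $n$ and absorb the excess from the bottom layer. The paper closes this by \autoref{lem:reg-element-reduction}, which shows $\dim\eta_{r,1}^*(\CC_{\ggamma_r}(x_r))\ge n$ for \emph{all} $x_r$, proved by a contradiction argument: assuming some level has image of dimension $<n$, the filtration identity \eqref{equation:dim-filtration-lie} together with the lower bound $\dim\CC_{\ggamma_m}(x_m)\ge mn$ from (1) gives a bound that fails for $m$ large. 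This lemma (transferred to the group setting as \autoref{propo:reg-element-reduction-groups} via the Cayley map) is the linchpin of both (2) and (3), and it is absent from your proposal.

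For assertion (1), your argument is circular in two places. You invoke ``$\CC_{\GGamma_r}(x_r)$ surjects onto $\CC_{\GGamma_1}(x_1)$,'' which is essentially assertion (3) and not yet available; and your kernel bound ``$\le n(r-1)$, with equality for regular $x_r$'' presupposes what regularity means dimensionally at level $r$, which is what (1) is meant to determine. Moreover you only address an upper bound for the minimal centralizer dimension (by exhibiting a regular semisimple lift); the matching lower bound $\ge rn$ for arbitrary $x_r$ is not argued. The paper sidesteps all of this: it invokes the alternative proof of \cite[Ch.~III,\:\S~3.5, Proposition~1]{SteinbergConjugacy}, which identifies the minimal centralizer dimension with the dimension of a Cartan subgroup once one knows (a) Cartan subgroups of $\GGamma_r$ are abelian (\cite[Theorem~4.5]{Stasinski}) and (b) their union is dense (\cite[IV~12.1]{Borel}); the Cartan dimension $nr$ comes from \autoref{lem:GGamma-properties}.(4). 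Your direct reading of \autoref{propo:centralizer-semisimple-element} gives the right number $n$ at level $1$, but the paper's route via Cartan subgroups is what makes the level-$r$ statement go through without circularity.
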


%We note that the proof of Assertion (1) of \autoref{theo:properties} does not rely on any specific properties of classical groups, and that all key statements appear in \cite[Section~4]{Stasinski}. Assertion (2) is the main point in which the assumption of $\GG$ being a classical group is invoked, whereby the dimension of the variety $\eta_1\left(\CC_{\GGamma_r}(x_r)\right)$ is shown to be bounded from below using the properties of the Cayley map. The analogous statement regarding Lie-algebra stabilizers, however, is quite general and is proved here without any assumption on the group $\GG$ (see \autoref{lem:reg-element-reduction} below). For the third assertion of the theorem, we require an explicit description of the centralizers of regular element in classical groups over $\bkk$ (\autoref{propo:centralizer-of-reg-over-bkk}), which we complete in \autoref{subsection:centralizers-res-field}.

\begin{rem}Assertions (1) and (3) of \autoref{theo:properties}, as well as Assertion (1) of \autoref{theo:inverse-lim} below, are formal consequences of the stronger statement that the centralizer group scheme $\CC_{\GG_{\OO_r}}(x)$ is smooth over $\OO_r$, whenever $x\in\g(\OO_r)$ is regular. This statement, while plausible, is not proved in this article.
% A direct proof of this statement is not attempted in this manuscript, as our ultimate goal is the analysis of regular characters, rather than the geometry of the regular locus of $\g_{\OO_r}$.% A direct proof the smoothness statement seems plausible, and possibly requires an equivalent substitute to \autoref{defi:regular-elements}, which does not necessitate the passage to $\bkk$-group schemes.
\end{rem}

The proofs of Assertions (1),\:(2) and (3) of \autoref{theo:properties} are given, respectively, in sections \ref{subsubsection:general-props-Gamma_r}, \ref{subsubsection:reduction-map} and \ref{subsubsection:image-eta_r,1} below. Once the proof of \autoref{theo:properties} is complete, we return to analyze the case of regular elements of $\mfr{g}_r=\ggamma_r(\OO_r)^\sigma$.

 %Recall that an element $x\in\mfr{g}_r$ is said to be regular, if it is regular as an element of $\ggamma_r$. 
\begin{propo}\label{theo:inverse-lim} Let $\GG$ be a symplectic or a special orthogonal group over $\oo$ with $\g=\Lie(\GG)$ and let $x\in \mfr{g}=\g(\oo)$. Assume $x_r=\eta_r(x)$ is regular for some $r\in\dbN$. Then
\begin{enumerate}
\item $\CC_{G}(x)=\varprojlim_r\CC_{G_r}(x_r),$
where $G=\GG(\oo)$ and $G_r=\GG(\oo_r)$
\item Furthermore, $x$ is a regular element of $\g(K^\alg)$.
\end{enumerate} 
\end{propo}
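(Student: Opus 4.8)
The plan is to deduce both assertions from \autoref{theo:properties}, exploiting the completeness of $\oo$ to pass to the inverse limit. For Assertion (1), first I would observe that the reduction maps $\eta_{r+1,r}$ induce a compatible system of group homomorphisms $\CC_{G_{r+1}}(x_{r+1})\to\CC_{G_r}(x_r)$, so that $\varprojlim_r\CC_{G_r}(x_r)$ makes sense and receives a canonical map from $\CC_G(x)$; this map is injective because $G=\varprojlim_r G_r$ (as $\oo$ is complete and $\GG$ is of finite type). For surjectivity, I would take a compatible sequence $(g_r)_r$ with $g_r\in\CC_{G_r}(x_r)$, lift it to an element $g\in G=\varprojlim G_r$, and note that $g$ centralizes $x$ because $gxg^{-1}$ and $x$ have the same image in every $\g(\oo_r)$. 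The one subtlety is that the transition maps $\CC_{G_{r+1}}(x_{r+1})\to\CC_{G_r}(x_r)$ must be \emph{surjective} for the inverse limit to be nonempty and to surject; but since $x_r$ is regular for the fixed $r$ in the hypothesis, $x_{r'}$ is regular for all $r'\le r$ by \autoref{theo:properties}.(2), and $x_{r'}$ is regular for all $r'>r$ as well — because regularity of $x_{r'}$ is equivalent to regularity of $x_1=\eta_{r',1}(x_r)=\eta_{r,1}(x_r)$, again by \autoref{theo:properties}.(2). Hence $x_{r'}$ is regular for \emph{all} $r'\in\dbN$, and the surjectivity of each transition map $\CC_{G_{r'+1}}(x_{r'+1})\to\CC_{G_{r'}}(x_{r'})$ follows from \autoref{theo:properties}.(3) applied with $\OO$ replaced by $\oo$ (the same proof works over $\oo$, or one argues $\sigma$-equivariantly over $\OO$: an element of $\CC_{\GG(\OO_{r'})}(x_{r'})$ fixed by $\sigma$ lifts to one in $\CC_{\GG(\OO_{r'+1})}(x_{r'+1})$, using that the kernel of $\eta_{r'+1,r'}$ on the centralizer, being the $\bkk$-points of the connected unipotent kernel $\CC_{\GGamma_{r'+1}}(x_{r'+1})\cap\GGamma_{r'+1}^{r'}$, has vanishing $\sigma$-cohomology by Lang's theorem). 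This surjectivity, together with the Mittag-Leffler condition it provides, gives $\CC_G(x)=\varprojlim_r\CC_{G_r}(x_r)$.

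For Assertion (2), the point is to transfer regularity from the finite quotients to $K^\alg$. Here I would argue by contradiction: suppose $x\in\g(\oo)\subseteq\g(K^\alg)$ is not regular, so $\dim\CC_{\GG(K^\alg)}(x) > n$, say equal to $n+\delta$ with $\delta\ge 1$. The centralizer $\CC_{\GG}(x)$ is a closed $\oo$-group subscheme of $\GG_\OO$ whose generic fiber has dimension $n+\delta$; by semicontinuity of fiber dimension (or by flatness considerations), the special fiber $\CC_{\GG}(x)\times\bkk$ then has dimension at least $n+\delta$. On the other hand, applying the Greenberg functor over $\OO_r$ and using \autoref{theo:properties}.(1) — valid since $x_r$ is regular — gives $\dim\CC_{\GGamma_r}(x_r) = rn$. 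The key estimate I would establish is a lower bound $\dim\CC_{\GGamma_r}(x_r) \ge r\cdot\dim\bigl(\CC_{\GG}(x)\times\bkk\bigr)$, coming from the fact that $\mcal{F}_{\OO_r}$ multiplies dimensions of smooth schemes by $r$, and more generally that the dimension of $\mcal{F}_{\OO_r}$ applied to any $\OO_r$-scheme is at least $r$ times the dimension of its special fiber — this is exactly the sort of bound provided by Greenberg's Structure Theorem \cite{GreenbergII} used in the proof of \autoref{lem:greenberg-props}.(2). Combining, $rn = \dim\CC_{\GGamma_r}(x_r)\ge r(n+\delta)$, forcing $\delta=0$, a contradiction. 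Thus $x$ is regular in $\g(K^\alg)$.

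I expect the main obstacle to be the dimension-counting argument for Assertion (2): one must be careful that the Greenberg functor genuinely multiplies dimension by $r$ on the relevant (possibly non-smooth) centralizer scheme, or else bound things below by passing to an appropriate smooth locus or to the reduced special fiber, and relate $\dim\bigl(\CC_{\GG_{\OO_r}}(x)\times\bkk\bigr)$ to $\dim\bigl(\CC_{\GG}(x)\times\bkk\bigr)$ correctly. An alternative route that sidesteps this, which I would mention as a fallback, is to use Assertion (1) together with \autoref{theo:properties}: since $\CC_G(x)=\varprojlim_r\CC_{G_r}(x_r)$ and each $\CC_{G_r}(x_r)$ has the structure of the $\sigma$-fixed points of an $rn$-dimensional $\bkk$-group scheme, a direct comparison of the pro-$p$ part of $\CC_G(x)$ with the known structure of centralizers in $\GG(\oo)$ pins down $\dim\CC_{\GG(K^\alg)}(x)$; but the semicontinuity argument above is cleaner and I would present that as the main line.
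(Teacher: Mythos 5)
Your overall strategy matches the paper's: Assertion (1) by lifting compatible sequences through the surjective centralizer maps, and Assertion (2) by a semicontinuity argument on fiber dimension of the centralizer scheme over $\OO$.

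For Assertion (1), be careful with the citation: \autoref{theo:properties}.(3) only asserts that $\eta_{r,1}$ maps $\CC_{\GG(\OO_r)}(x_r)$ \emph{onto} $\CC_{\GG(\bkk)}(x_1)$, i.e.\ surjectivity down to level $1$, not surjectivity of the single-step maps $\CC_{G_{r'+1}}(x_{r'+1})\to\CC_{G_{r'}}(x_{r'})$. The paper supplies these intermediate surjectivities by a separate argument (its \autoref{lem:surjective-r-to-m}, which combines \autoref{corol:surjective-Gm}, the shift map $y\mapsto \pi^{m-1}y$, and a Four Lemma induction). Your parenthetical remark --- that the kernel $\CC_{\GGamma_{r'+1}}(x_{r'+1})\cap\GGamma_{r'+1}^{r'}$ is connected unipotent and hence has trivial $\sigma$-cohomology by Lang --- is indeed the right mechanism, and is exactly what underlies the paper's \autoref{lem:lang}; so the idea is there, but as written you are crediting the wrong statement.

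For Assertion (2), your argument contains an unnecessary and potentially problematic detour. After invoking upper semicontinuity to conclude that the special fiber of $\CC_{\GG\times\OO}(x)$ has dimension $\ge n+\delta$, you should simply observe that this special fiber \emph{is} $\CC_{\GGamma_1}(x_1)$ (base change of the centralizer scheme to $\bkk$, then the Greenberg functor is the identity at level $1$), and since $x_1$ is regular (by \autoref{theo:properties}.(2)), $\dim\CC_{\GGamma_1}(x_1)=n$. The contradiction $n\ge n+\delta$ is immediate. Instead, you apply the Greenberg functor at level $r\ge 2$ and try to compare $\dim\CC_{\GGamma_r}(x_r)=rn$ with $r\cdot\dim(\CC_{\GG\times\OO}(x)\times\bkk)$ via a claimed bound ``$\dim\mcal{F}_{\OO_r}(X)\ge r\cdot\dim(X\times\bkk)$ for any $\OO_r$-scheme $X$''; but this is not established for non-smooth schemes in the paper (Greenberg's Structure Theorem is applied there only in the smooth case), and you yourself flag this as the main obstacle. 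That obstacle dissolves if you stay at level $1$. This is also the route the paper takes, citing McNinch for the semicontinuity statement and \cite[3.5, Proposition~1]{SteinbergConjugacy} for the lower bound $n\le\dim\CC_{\GG(K^\alg)}(x)$ --- the latter is needed to conclude not just $\dim\le n$ but exact equality, i.e.\ regularity; your contradiction argument reaches the same conclusion but should still quote it.
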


\autoref{theo:inverse-lim} has the following corollary.
\begin{corol}\label{corol:fin-index-abelian}In the notation of \autoref{theo:inverse-lim}, let $x\in\mfr{g}$ such that $x_r=\eta_r(x)$ is a regular element of $\mfr{g}_{r}$, for some $r\in\dbN$. Then $\CC_{G_r}(x_r)$ is abelian.
\end{corol}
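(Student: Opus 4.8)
The plan is to deduce \autoref{corol:fin-index-abelian} directly from \autoref{theo:inverse-lim} together with the structural description of centralizers of semisimple (and, here, regular) elements. The key point is that a regular element of a reductive group over an algebraically closed field has an \emph{abelian} connected centralizer component group issues aside, but it is cleaner to argue via the finite-level inverse limit description. So first I would observe that, by \autoref{theo:inverse-lim}.(1), the group $\CC_G(x)$ is the inverse limit of the finite groups $\CC_{G_r}(x_r)$ along the reduction maps $\eta_{r,m}$, and by \autoref{theo:properties}.(3) each of these transition maps is surjective. Hence $\CC_G(x)$ surjects onto $\CC_{G_r}(x_r)$ for every $r$, so it suffices to prove that $\CC_G(x)$ itself is abelian; the quotient $\CC_{G_r}(x_r)$ of an abelian group is then automatically abelian.

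Next I would pass to the algebraic closure. By \autoref{theo:inverse-lim}.(2) the element $x$, viewed in $\g(K^\alg)$, is regular. Now $\CC_G(x) = \CC_{\GG(\oo)}(x)$ embeds into $\CC_{\GG(K^\alg)}(x)$ (compatibly with the inclusion $\oo \hookrightarrow \OO \hookrightarrow K^\alg$), so it is enough to show that the centralizer in $\GG(K^\alg)$ of a regular element of $\g(K^\alg)$ is abelian. Writing $x = s + \nu$ for the Jordan decomposition into commuting semisimple and nilpotent parts, the centralizer of $x$ equals the centralizer of $\nu$ inside $\CC_{\GG(K^\alg)}(s)$. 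By \autoref{propo:centralizer-semisimple-element}, $\CC_{\GG(K^\alg)}(s) \simeq \prod_j \GL_{m_j}(K^\alg) \times \Delta(K^\alg)$ where $\Delta$ is a classical group (symplectic or orthogonal) of the appropriate type on $\ker(s)$; and regularity of $x$ forces $\nu$ to be a regular nilpotent element of the Lie algebra of this reductive group (regularity is inherited by the Jordan parts in the sense that the centralizer dimension is minimal exactly when $\nu$ is regular in $\Lie(\CC_{\GG}(s))$). The centralizer of a regular nilpotent (or, equivalently, regular unipotent) element in a connected reductive group is a product of a finite central piece with a connected abelian unipotent group — in each factor $\GL_{m_j}$ the centralizer of a regular nilpotent is $\{f(\nu) : f \in K^\alg[t]\}^\times$, which is abelian, and similarly for the classical group factor $\Delta$. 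Therefore $\CC_{\GG(K^\alg)}(x)$ is abelian, and the result follows.

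The main obstacle, and the step that needs genuine care, is the claim that regularity of $x$ as an element of $\g(K^\alg)$ forces the nilpotent part $\nu$ to be regular in the reductive centralizer $\CC_{\GG(K^\alg)}(s)$, and that the centralizer of such a $\nu$ is abelian in the disconnected group appearing here (the orthogonal factor need not be connected, so one must check that the component group does not introduce non-commutativity — this is where one uses that $\SO$ rather than $\mathrm O$ appears, or that the regular unipotent lies in the identity component and its centralizer meets only the identity component, possibly up to the small center). Concretely, I would reduce to the two model cases: $\GL_m$, where the centralizer of a regular nilpotent is visibly the abelian group of units of $K^\alg[\nu]$; and a classical group $\Delta$, where one invokes the standard fact (e.g.\ from \cite{SteinbergSpringer} or by a direct computation with the bilinear form as in the proof of \autoref{propo:centralizer-semisimple-element}) that the centralizer of a regular unipotent/nilpotent element is abelian of dimension equal to the rank. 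Dimension counting then confirms consistency with \autoref{theo:properties}.(1): the centralizer has dimension $n = \rk(\GG\times K^\alg)$, exactly the rank, matching an abelian group of that dimension. Once this is in place, the two-line descent argument via \autoref{theo:inverse-lim}.(1) and surjectivity of reduction completes the proof.
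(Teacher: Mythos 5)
Your argument is correct and follows the same route as the paper: invoke \autoref{theo:inverse-lim}.(2) to lift regularity to $\g(K^\alg)$, show $\CC_{\GG(K^\alg)}(x)$ is abelian via Jordan decomposition and the centralizer description of \autoref{propo:centralizer-semisimple-element}, deduce that $\CC_G(x)$ is abelian, and descend to the finite quotients $\CC_{G_r}(x_r)$ using \autoref{theo:inverse-lim}.(1). The one structural difference is that the paper simply cites \autoref{propo:centralizer-of-reg-over-bkk} (stated for $L=K^\alg$), which was established earlier precisely to package the Jordan-decomposition/component-group argument you carry out inline; your re-derivation is sound but redundant given that proposition, and the component-group issue you flag for the orthogonal factor $\Delta$ is exactly what that proposition handles via the equality $\CC_\Delta(h)=\CC_\Delta(h)^\circ\cdot\ZZ(\Delta)$ with $\ZZ(\Delta)=\{\pm1\}$ central. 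A minor imprecision: for surjectivity of $\CC_G(x)\to\CC_{G_r}(x_r)$ the relevant result at the level of $\oo_r$-points is \autoref{lem:surjective-r-to-m} (or the content of the proof of \autoref{theo:inverse-lim}.(1)), not \autoref{theo:properties}.(3), which concerns $\OO_r$-points; this does not affect the correctness of your argument.
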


\subsubsection{General properties of the groups $\GGamma_r$}\label{subsubsection:general-props-Gamma_r} 
 We begin by examining some basic properties of the group $\GGamma_r$ ($r\in\dbN$) and of centralizers of elements of $\ggamma_r$, when considered as algebraic group schemes over $\bkk$. The following lemma summarizes the necessary components for the proof of \autoref{theo:properties}.(1), and is mostly included in \cite{Stasinski}.

\begin{lem}\label{lem:GGamma-properties}\begin{enumerate}
\item The group scheme $\GGamma_r$ is a connected linear algebraic group over $\bkk$.% of dimension $r\cdot d$, where $d=\dim \GG$;
\item The unipotent radical of $\GGamma_r$ is $\GGamma_r^1$.
\item Let $\TT$ be a maximal torus of $\GG$, defined over $\OO$, and let $\TT_1=\TT\times\bkk\subseteq \GGamma_1$. The restriction of the map $s^*:~\dbA_{\bkk}^1\to~\mbf{O}_r$ of \autoref{lem:greenberg-props}.(1) to $\mathbb{G}_m$ extends to an embedding of $\TT_1$ as a maximal torus in $\GGamma_r$. 
\item The centralizer of $s^*(\TT_1)$ in $\GGamma_r$ is the Cartan subgroup $\mcal{F}_{\OO_r}(\TT\times\OO_r)$. Moreover, $\mcal{F}_r(\TT\times~\OO_r)$ is a linear algebraic $\bkk$-group of dimension $n\cdot r$.
\end{enumerate}
\end{lem}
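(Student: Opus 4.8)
The plan is to prove the four assertions of \autoref{lem:GGamma-properties} by reducing each, via the Greenberg functor, to a corresponding statement about the group scheme $\GG_{\OO_r}$ over $\OO_r$, and then invoking the general structure theory of reductive groups over complete local rings together with Greenberg's Structure Theorem. Most of this is available in \cite{Stasinski}, so the proof will largely consist of citing the relevant results and filling in the dimension bookkeeping.

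For assertion (1), connectedness follows because $\GGamma_r^1=\ker(\eta_{r,1}^*)$ is connected — it is built by successive extensions by the vector groups $\ggamma_{j+1}^j\simeq\g(\bkk)$ via Greenberg's Structure Theorem \cite{GreenbergII} (or \autoref{lem:verschiebung}.(2)), hence unipotent and connected — while the quotient $\GGamma_r/\GGamma_r^1\simeq\GGamma_1=\GG(\bkk)$ is connected since $\GG$ is a symplectic or special orthogonal group, hence has connected special fiber. An extension of a connected group by a connected group is connected, giving (1). For assertion (2), the same filtration shows $\GGamma_r^1$ is unipotent and normal; it is the unipotent radical because the quotient $\GGamma_r/\GGamma_r^1\simeq\GG(\bkk)$ is reductive (here one uses $\Char(\bkk)\neq 2$ so that $\GG\times\bkk$ is smooth and reductive of the expected type), so no larger connected normal unipotent subgroup can exist.

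For assertions (3) and (4), I would take a maximal torus $\TT\subseteq\GG$ defined over $\OO$ — its existence and flatness over $\OO$ are guaranteed by \cite[$\mathrm{VI_B}$, Corollary~4.3]{SGA3} as already noted in \autoref{subsection:group-notation} — and use \autoref{lem:greenberg-props}.(1) to embed $\mathbb{G}_m^n$ via $s^*$. The composite of $s^*$ on each coordinate with the diagonal embedding of $\TT_1$ gives a subtorus of $\GGamma_r$, and one must check it is maximal; this follows by comparing dimensions, since a maximal torus of $\GGamma_r$ has dimension equal to that of a maximal torus of the reductive quotient $\GG(\bkk)$, namely $n$ (the kernel $\GGamma_r^1$ being unipotent contributes no torus). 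The centralizer $\CC_{\GGamma_r}(s^*(\TT_1))$ is computed by applying \autoref{lem:greenberg-props}.(6) on preservation of centralizers under $\mcal{F}_{\OO_r}$: the centralizer of the diagonal torus in $\GG_{\OO_r}$ is exactly the maximal torus $\TT\times\OO_r$ (this is where one uses that $\TT$ is its own centralizer in $\GG$, a scheme-theoretic statement valid over $\OO_r$), so $\CC_{\GGamma_r}(s^*(\TT_1))=\mcal{F}_{\OO_r}(\TT\times\OO_r)$. Its dimension is $n\cdot r$ by \autoref{lem:greenberg-props}.(2) applied to $\TT$ in place of $\GG$ (a torus of absolute rank $n$ has $n$-dimensional generic fiber), which also shows it is a linear algebraic group.

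The main obstacle I anticipate is assertion (4): one must be careful that "centralizer" is taken in the scheme-theoretic sense throughout, and that the identity $\CC_{\GG_{\OO_r}}(\TT\times\OO_r)=\TT\times\OO_r$ genuinely holds as $\OO_r$-schemes — this requires that $\TT$ be a maximal torus in the strong sense (its own scheme-theoretic centralizer), which for symplectic and special orthogonal groups over $\OO$ with $2$ invertible is standard but should be invoked explicitly, e.g. via \cite{SGA3}. A secondary subtlety is ensuring that $s^*(\TT_1)$, rather than merely being \emph{a} torus, is a \emph{maximal} torus; the dimension count via assertions (1) and (2) handles this cleanly, so no deeper input is needed there.
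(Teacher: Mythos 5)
Your arguments for assertions (1)--(3) are sound and are in fact more self-contained than the paper's, which simply cites \cite[Lemma~4.2, Proposition~4.3]{Stasinski} for (1)--(2) and \cite[Proposition~2.2.(2)]{Hill} for (3). Your filtration-by-vector-groups argument for connectedness and unipotence of $\GGamma_r^1$, and your dimension count for maximality of $s^*(\TT_1)$, are exactly the arguments that underlie those citations. (Minor notational slip: $\GGamma_1$ is the $\bkk$-group scheme $\GG\times\bkk$, not the abstract group $\GG(\bkk)$.)

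There is, however, a real gap in your treatment of assertion (4). You invoke \autoref{lem:greenberg-props}.(6) for ``preservation of centralizers,'' but that statement concerns centralizers of $\OO_m$-\emph{rational points of the Lie algebra} $\g_{\OO_m}$, not of subgroup schemes. More importantly, even with a correct group-theoretic preservation statement for the Greenberg functor, the conclusion you want does not follow: what such preservation would give is $\CC_{\GGamma_r}\bigl(\mcal{F}_{\OO_r}(\TT\times\OO_r)\bigr)=\mcal{F}_{\OO_r}(\TT\times\OO_r)$, but the torus you are centralizing is $s^*(\TT_1)$, which is only $n$-dimensional and sits as a \emph{proper} subgroup of the $nr$-dimensional $\mcal{F}_{\OO_r}(\TT\times\OO_r)$ when $r>1$. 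Since $s^*(\TT_1)$ is not itself the image under $\mcal{F}_{\OO_r}$ of any closed sub-$\OO_r$-scheme of $\GG_{\OO_r}$, there is no direct way to transport its centralizer. The a priori possibility that $\CC_{\GGamma_r}(s^*(\TT_1))$ is strictly larger than $\mcal{F}_{\OO_r}(\TT\times\OO_r)$ must be ruled out. The paper closes this gap with \cite[Theorem~4.5]{Stasinski}: $\mcal{F}_{\OO_r}(\TT\times\OO_r)$ is a \emph{Cartan subgroup} of $\GGamma_r$, i.e., the centralizer of its own unique maximal torus; since $s^*(\TT_1)$ is a maximal torus of $\GGamma_r$ contained in this Cartan subgroup, the Cartan subgroup must be precisely $\CC_{\GGamma_r}(s^*(\TT_1))$. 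Without this ingredient (or an equivalent argument bounding $\CC_{\GGamma_r}(s^*(\TT_1))$ from above), your proof of (4) is incomplete.
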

\begin{proof}
\begin{list}{\arabic{list}.}{\usecounter{list}\setlength{\leftmargin}{0pt}
   \setlength{\itemsep}{2pt} \setlength{\parsep}{0pt}\setlength{\labelwidth}{-5pt}}
\item Connectedness is proved in \cite[Lemma~4.2]{Stasinski}. The fact that $\GGamma_r$ is linear algebraic over $\bkk$ is shown in \autoref{lem:greenberg-props}.(1).
\item See \cite[Proposition~4.3]{Stasinski}.
\item May be proved by following the argument of \cite[Proposition~2.2.(2)]{Hill}, practically verbatim, making use of the fact that $\GGamma_r$ and $\GGamma_1=\eta_{r,1}^*(\GGamma_r)$ are of the same rank by the previous assertion, and that $s^*(\TT_1)$ is a connected abelian subgroup of $\GGamma_r$ of dimension $n=\rk(\GGamma_1)$.
\item The inclusion $\TT(\OO_r)\subseteq\CC_{\GGamma_r(\bkk)}(s^*(\TT_1)(\bkk))$ is clear, since $\TT(\OO_r)$ is abelian and contains $s^*(\TT_1)(\bkk)$, by \autoref{lem:greenberg-props}.(1). The inclusion $\TT\times\OO_r\subseteq\CC_{\GGamma_r}(s^*(\TT_1))$ follows (see \cite[Proposition~3.2]{Stasinski}). By \cite[Theorem~4.5]{Stasinski}, $\mcal{F}_{\OO_r}(\TT\times\OO_r)$ is a Cartan subgroup of $\GGamma_r$ and hence is equal to the centralizer of $s^*(\TT_1)$. Finally, the statement regarding the dimension of $\mcal{F}_r(\TT\times\OO_r)$ follows from \autoref{lem:greenberg-props}.(1).
\end{list}
\end{proof}

\begin{proof}[Proof of \autoref{theo:properties}.(1)]
The alternative proof of \cite[Ch.~III, \S~3.5, Proposition~1]{SteinbergConjugacy} shows that the minimal centralizer dimension of an element of $\g(\OO_r)$ is equal to that of a Cartan subgroup of $\GGamma_r$, provided that the Cartan subgroups of $\GGamma_r$ are abelian and that their union forms a dense subset of $\GGamma_r$. The former of these conditions holds by \cite[Theorem~4.5]{Stasinski}, and the latter by \cite[IV~12.1]{Borel}.%\footnote{\red{ Thank Alex for pointing out a problem?}}
\end{proof}

\subsubsection{Regularity and the reduction maps}\label{subsubsection:reduction-map} The first step towards the proof of the second assertion of \autoref{theo:properties} is an analogous result to \cite[Lemma~3.5]{Hill} in the Lie-algebra setting. Following this, we use the properties of the Cayley map in order to transfer the result to the group setting and to deduce the equivalence of regularity of an element of $\ggamma_r$ and of its image in $\ggamma_1$.
\begin{lem}\label{lem:reg-element-reduction} Let $x\in \g(\OO)$ be fixed, and for any $r\in \dbN$ put $x_r=\eta_{r}(x)\in\g(\OO_r)$. Let $\CC_{\ggamma_r}(x_r)$ denote the Lie-algebra centralizer of $x_r$, i.e. $\CC_{\ggamma_r}(x_r)(A)=\set{y\in\ggamma_r(A)\mid \ad(x_r)(A)(y)=0},$ for any commutative unital $\bkk$-algebra $A$. The image of $\CC_{\ggamma_r}(x_r)$ under the connecting morphism $\eta_{r,1}^*$ is a $\bkk$-group scheme of dimension greater or equal to $n$.%, as a vector space over $\bkk$.
\end{lem}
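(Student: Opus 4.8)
The statement to prove is \autoref{lem:reg-element-reduction}: for $x\in\g(\OO)$ with reductions $x_r = \eta_r(x)$, the image $\eta_{r,1}^*\bigl(\CC_{\ggamma_r}(x_r)\bigr)$ is a $\bkk$-group scheme of dimension at least $n = \rk(\GG\times K^\alg)$. The natural approach, following the philosophy of \cite[Lemma~3.5]{Hill}, is to exhibit a large enough subgroup of $\CC_{\ggamma_r}(x_r)$ that survives reduction to $\ggamma_1$, namely by lifting a Cartan subalgebra of $\ggamma_1$ through $x$. First I would reduce to the case where $x_1 = \eta_{r,1}(x_r)$ is a regular semisimple (or at least semisimple) element of $\g(\bkk)$; indeed, by \autoref{propo:centralizer-semisimple-element} and \autoref{lem:GGamma-properties}.(3)--(4), if $s$ is semisimple then its Lie-centralizer $\mbf{t} = \Lie(\CC_{\GG}(s))$ contains a maximal torus' Lie algebra, of dimension $\ge n$. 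The point is that this lower bound is inherited: one wants to show that when $x_1$ is not semisimple the centralizer can only be larger, or — more robustly — to argue directly at the level of $x$ rather than $x_1$.

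\textbf{Main steps.} The cleanest route is: (i) Pass to the generic fiber. The element $x\in\g(\OO)\subseteq\g(K^{\mrm{unr}})$ has a well-defined centralizer $\CC_{\g(K^\alg)}(x)$, which by the theory of centralizers in reductive Lie algebras over a field of characteristic zero (or good characteristic) has dimension $\ge n$. (ii) Spread out: the $\OO$-group scheme $\CC_{\GG_\OO}(x)$ has Lie algebra $\CC_{\g_\OO}(x) = \{y\in\g(\OO) : [x,y]=0\}$, which is an $\OO$-submodule of $\g(\OO)$; since $\OO$ is a DVR, it is free, and its rank equals $\dim_{K^{\mrm{unr}}} \CC_{\g(K^{\mrm{unr}})}(x) \ge n$. (iii) Reduce modulo $\PP^r$ and then modulo $\PP$: a free $\OO$-submodule $\mfr{c}\subseteq\g(\OO)$ of rank $m$ maps, under $\eta_{r,1}$, onto a $\bkk$-subspace of $\g(\bkk)$ — but here one must be careful, because $\eta_r(\mfr{c})$ need not be a \emph{direct summand} of $\g(\OO_r)$, so its image in $\g(\bkk)$ could in principle drop dimension. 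The correct fix is to choose an $\OO$-basis of $\mfr{c}$ that is part of an $\OO$-basis of a \emph{saturation} of $\mfr{c}$ in $\g(\OO)$, i.e. replace $\mfr{c}$ by $\mfr{c}^{\mrm{sat}} = \g(\OO)\cap (\mfr{c}\otimes K^{\mrm{unr}})$, which has the same rank $m\ge n$ and \emph{is} a direct summand; its image under any $\eta_{r,1}$ is then an honest $m$-dimensional $\bkk$-subspace. (iv) Finally, translate this subspace-level statement into the scheme-theoretic statement of the lemma: by \autoref{lem:greenberg-props}.(3)--(4) and the description of $\eta_{r,1}^*$ as a surjective Lie-ring morphism, $\CC_{\ggamma_r}(x_r)$ is a closed $\bkk$-subscheme of the affine space $\ggamma_r$ containing (the $\bkk$-points corresponding to) the lift of $\mfr{c}^{\mrm{sat}}$, and its image under $\eta_{r,1}^*$ is a $\bkk$-group scheme whose dimension is bounded below by $\dim_\bkk(\text{image of }\mfr{c}^{\mrm{sat}}) \ge n$.

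\textbf{Main obstacle.} The delicate point is step (iii): ensuring no dimension is lost under reduction. The naive image $\eta_{r,1}^*(\CC_{\ggamma_r}(x_r))$ is the set of $\bkk$-points $y_1\in\g(\bkk)$ that lift to some $y_r\in\g(\OO_r)$ commuting with $x_r$ — a priori a smaller set than $\eta_{r,1}(\text{full }\OO\text{-module of elements commuting with }x)$, and it is not obvious it has the right dimension as a \emph{scheme} rather than merely as a set of $\bkk$-points (though the reducedness afforded by Greenberg's structure theorem, as used in the proof of \autoref{lem:props-of-cayley-map}, lets one pass freely between the two via the Nullstellensatz). The resolution is precisely that elements of $\mfr{c}^{\mrm{sat}}$, being genuine elements of $\g(\OO)$ commuting with $x$, reduce mod $\PP^r$ to elements of $\g(\OO_r)$ commuting with $x_r$, so $\eta_r(\mfr{c}^{\mrm{sat}})\subseteq\CC_{\ggamma_r}(x_r)(\bkk)$ and hence $\eta_{r,1}(\mfr{c}^{\mrm{sat}}) = \eta_{r,1}^*\circ\eta_r(\mfr{c}^{\mrm{sat}}) \subseteq \eta_{r,1}^*(\CC_{\ggamma_r}(x_r))(\bkk)$; since $\mfr{c}^{\mrm{sat}}$ is a direct summand of rank $\ge n$, its reduction mod $\PP$ is an $n$-dimensional subspace, giving the bound. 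A secondary technical check is that $\dim_{K^{\mrm{unr}}}\CC_{\g(K^{\mrm{unr}})}(x)\ge n$ for \emph{every} $x$, not just regular ones — this is the standard fact that in a reductive Lie algebra over a field (in characteristic zero or sufficiently large) every centralizer contains a Cartan subalgebra and hence has dimension at least the rank; for classical $\g$ in odd characteristic one can also read it off \autoref{propo:centralizer-semisimple-element} applied to the semisimple part of $x$ together with the fact that the nilpotent part only enlarges the centralizer.
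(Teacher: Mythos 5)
Your proposal is correct, but it takes a genuinely different route from the paper. The paper argues by contradiction: assuming $r$ is minimal with $\dim\eta_{r,1}^*\left(\CC_{\ggamma_r}(x_r)\right)<n$, it expresses $\dim\CC_{\ggamma_m}(x_m)$ for $m\ge r$ as a telescoping sum $\sum_{j=1}^m\dim\eta_{j,1}^*\left(\CC_{\ggamma_j}(x_j)\right)$ via the verschiebung filtration of \autoref{lem:verschiebung}, bounds this sum above by $d(r-1)+(n-\alpha)(m-r)$ for some $\alpha\ge 1$, and plays this off against the lower bound $\dim\CC_{\ggamma_m}(x_m)=\dim\CC_{\GGamma_m}(x_m)\ge mn$ (transported from the group side by the Cayley map and \autoref{theo:properties}.(1)); letting $m\to\infty$ gives the contradiction. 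Your argument is direct and in some ways cleaner: since $\OO$ is a DVR, the $\OO$-module $\mfr{c}=\set{y\in\g(\OO):[x,y]=0}$ is free, its rank equals $\dim_{K^{\mrm{unr}}}\CC_{\g(K^{\mrm{unr}})}(x)\ge n$, and its reductions $\eta_r(\mfr{c})\subseteq\CC_{\ggamma_r}(x_r)(\bkk)$ push down to an $n$-dimensional subspace inside $\eta_{r,1}^*\left(\CC_{\ggamma_r}(x_r)\right)$. This avoids both the Cayley map and the verschiebung filtration. Two small remarks: the saturation step is superfluous, since $\mfr{c}$ is automatically saturated (if $\pi^k y\in\mfr{c}$ for $y\in\g(\OO)$, then $\pi^k[x,y]=0$ forces $[x,y]=0$ by torsion-freeness of $\g(\OO)$); and the claim that $\Lie\CC_{\GG_\OO}(x)=\CC_{\g_\OO}(x)$ requires smoothness of the centralizer scheme and is not needed, since you only manipulate the module $\mfr{c}$ directly. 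The paper's heavier machinery is not gratuitous, however: the verschiebung and Cayley computations introduced for this lemma are reused immediately in the proofs of \autoref{propo:reg-element-reduction-groups} and \autoref{theo:properties}.(2)--(3), whereas your argument, being tied to the characteristic-zero generic fiber, would have to be supplemented with those tools anyway for the subsequent group-theoretic statements.
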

\begin{proof}

%Recall the definition, $\CC_{\ggamma_r}(x_r)=\set{y\in\ggamma_r\mid [x_r,y]=0}$. 

Assume towards a contradiction that the statement of the lemma is false, and let $r$ be minimal such that $\dim\eta_{r,1}^*\left(\CC_{\ggamma_r}(x_r)\right)<n$. Note that, since $\eta_{r,1}^*\circ\eta_{m,r}^*=\eta_{m,1}^*$ for all $m>r$ (by \cite[Proposition~3,\:\S~5]{GreenbergI}) we also have that $\dim\eta_{m,1}^*\left(\CC_{\ggamma_m}(x_m)\right)<n$ for all $m\ge r$.

Fix $m\ge r$, and consider the sequence of immersions
\begin{equation}\CC_{\ggamma_m}(x_m)\supseteq\CC_{\ggamma_m^1}(x_m)\supseteq\ldots\supseteq\CC_{\ggamma_{m}^{m-1}}(x_m)\supseteq 0,\end{equation}
where $\CC_{\ggamma_m^i}(x_m)=\CC_{\ggamma_m}(x_m)\cap\ggamma^i_m$. Then
\begin{equation}\label{equation:dim-filtration-lie}
\dim\CC_{\ggamma_m}(x_m)=\sum_{i=0}^{m-1}\left(\dim\CC_{\ggamma^{i}_m}(x_m) -\dim \CC_{\ggamma^{i+1}_m}(x_m)\right),\end{equation}
where $\ggamma_m^0=\ggamma_m$ and $\ggamma_m^m=\spec(\kappa(0))$. 

For any $0\le i\le m-1$, the map $v_{i,m}^*:\ggamma_{m-i}\to\gamma_{m}$ of \autoref{lem:verschiebung} restricts, by Assertion (3) of the lemma, to an isomorphism of abelian $\bkk$-group schemes $\CC_{\ggamma_{m-i}}(x_{m-i})\simeq \CC_{\ggamma_{m}^i}(x_m)$, which restricts further, by Assertion (4) of the lemma, to an isomorphism $\CC_{\ggamma^{i+1}_m}(x_m)\simeq ~\CC_{\ggamma_{m-i}^1}(x_{m-i})$. Using these isomorphisms and the exact sequence
\[0\to \CC_{\ggamma_{m-i}^1}(x_{m-i})\to \CC_{\ggamma_{m-i}}(x_{m-i})\xrightarrow{\eta_{m-i,1}^*}\eta_{m-i,1}^*\left(\CC_{\ggamma_{m-i}}(x_{m-i})\right)\to 0,\]
we deduce 
%From the definition of the maps $\eta_{m',1}$ $(m'\in\dbN)$ and the isomorphisms $y\mapsto \pi^i y:\CC_{\ggamma_{m-i}}(x_{m-i})\xrightarrow{\sim}\CC_{\ggamma_m^i}(x_m)$, we have that \[\CC_{\ggamma_m^{i}}(x_m)/\CC_{\ggamma^{i+1}_m}(x_m)\simeq\CC_{\ggamma_{m-i}}(x_{m-i})/\CC_{\ggamma_{m-i}^1}(x_{m-i})\simeq\eta_{m-i,1}\left(\CC_{\ggamma_{m-i}}(x_{m-i})\right),\] for all $0\le i\le m-1$. In particular, in \eqref{equation:dim-filtration-lie}, we get that 
\begin{align}
\dim\CC_{\ggamma_m}(x_m)&=\sum_{i=0}^{m-1}\left(\dim\CC_{\ggamma_{m-i}}(x_{m-i})-\dim\CC_{\ggamma_{m-i}^1}(x_{m-i})\right)=\sum_{i=0}^{m-1}\dim\eta_{m-i,1}^*\left(\CC_{\ggamma_{m-i}}(x_{m-i})\right)\notag\\
&=\sum_{i=1}^{r-1}\dim\eta_{i,1}^*\left(\CC_{\ggamma_i}(x_i)\right)+\sum_{i=r}^{m}\dim\eta_{i,1}^*\left(\CC_{\ggamma_{i}}(x_i)\right)\notag\\
&\le d\cdot (r-1)+(n-\alpha)\cdot (m-r),\label{equation:centralizer-dim-ineq}
\end{align}\nopagebreak
for some integer $\alpha\ge 1$, where $d=\dim\ggamma_1=\dim\GGamma_1$.
%Note that, for any $m\in\dbN$, $\CC_{\ggamma_m}(x_m)$ necessarily contains the Lie-algebra of the group $\CC_{\GGamma_m}(x_m)$ \cite[(1.10)]{HumphreysConj}. 

For any $m\in\dbN$, by Property~\eqref{cayley-property-2} of the Cayley map and the preservation of open immersions of the Greenberg functor, the Cayley map restricts to a birational equivalence of the Lie-centralizer~$\CC_{\ggamma_m}(x_m)$ and the group-centralizer $\CC_{\GGamma_m}(x_m)$ of $x_m$.  In particular, by \autoref{theo:properties}.(1), we have that~$\dim\CC_{\ggamma_m}(x_m)=\dim\CC_{\GGamma_m}(x_m)\ge m\cdot n$. Manipulating the inequality \eqref{equation:centralizer-dim-ineq}, we get that
\begin{equation}\label{equation:ineq-reduction-dim}\alpha\cdot m\le d\cdot (r-1)-r\cdot(n-\alpha)
\end{equation}
for all $m>r$. A contradiction, since $m$ can be chosen to be arbitrarily large while the right-hand side of \eqref{equation:ineq-reduction-dim} remains constant.
\end{proof}

%\begin{rem}
%The inequality $\dim\CC_{\ggamma_m}(x_m)\ge \dim\CC_{\GGamma_m}(x_m)$, which was used in the proof of \autoref{lem:reg-element-reduction}, can in fact be shown to be an equality in the case of classical groups (or more generally, Cayley groups, in the terminology of \cite{LPR-Cayley}). In the proof above we refrained from invoking this equality so as not to restrict the generality of the argument.
%\end{rem}

Using \autoref{lem:props-of-cayley-map}, we now pass to the group setting.
\begin{propo}\label{propo:reg-element-reduction-groups} Let $x\in\ggamma$ and $x_r=\eta_{r}(x)$ for all $r\in\dbN$. The group scheme $\eta_{r,1}^*\left(\CC_{\GGamma_r}(x_r)\right)$ is a linear algebraic $\bkk$-group of dimension greater or equal to $n$.
\end{propo}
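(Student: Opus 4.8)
The plan is to deduce \autoref{propo:reg-element-reduction-groups} from \autoref{lem:reg-element-reduction} by transporting the statement about Lie-algebra centralizers across the Cayley map. First I would recall from \autoref{lem:props-of-cayley-map} that $\widehat{\cay}_r$ is a $\GGamma_r$-equivariant birational equivalence $\ggamma_r \dashrightarrow \GGamma_r$ which, moreover, restricts to an isomorphism of $\bkk$-varieties $\ggamma_r^m \xrightarrow{\sim} \GGamma_r^m$. Equivariance (property \eqref{cayley-property-2}) means that on the common open dense subset $\DDelta_r \cap \ggamma_r$ where it is defined, $\widehat{\cay}_r$ intertwines the adjoint action of $\GGamma_r$ on $\ggamma_r$ with conjugation on $\GGamma_r$; hence it identifies the scheme-theoretic fixed-point loci, i.e.\ the Lie-centralizer $\CC_{\ggamma_r}(x_r)$ maps isomorphically (as a $\bkk$-variety) onto an open dense subset of the group-centralizer $\CC_{\GGamma_r}(x_r)$ — this is exactly the observation already used inside the proof of \autoref{lem:reg-element-reduction}. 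In particular $\dim \CC_{\ggamma_r}(x_r) = \dim \CC_{\GGamma_r}(x_r)$.

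The remaining point is to pass from the equality of centralizer dimensions to an equality (or at least a lower bound) for the dimensions of their images under $\eta_{r,1}^*$. Here I would argue as follows. The compatibility square \eqref{figure:cayley-square} says $\eta_{r,1}^* \circ \widehat{\cay}_r = \widehat{\cay}_1 \circ \eta_{r,1}^*$ wherever both sides are defined, and $\widehat{\cay}_1 : \ggamma_1 \dashrightarrow \GGamma_1$ is itself a birational equivalence (indeed defined on the open set of $x \in \g(\bkk)$ with $1+x$ invertible). Since $\widehat{\cay}_r$ restricted to $\CC_{\ggamma_r}(x_r)$ is an isomorphism onto a dense open subset $U$ of $\CC_{\GGamma_r}(x_r)$, its image $\eta_{r,1}^*(U)$ is a constructible subset of $\eta_{r,1}^*(\CC_{\GGamma_r}(x_r))$ whose closure is all of $\eta_{r,1}^*(\CC_{\GGamma_r}(x_r))$, and by the commuting square this image is $\widehat{\cay}_1$ applied to (an open subset of) $\eta_{r,1}^*(\CC_{\ggamma_r}(x_r))$. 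Because $\widehat{\cay}_1$ is birational, dimension is preserved, so
\[
\dim \eta_{r,1}^*\!\left(\CC_{\GGamma_r}(x_r)\right) = \dim \eta_{r,1}^*\!\left(\CC_{\ggamma_r}(x_r)\right) \ge n,
\]
the last inequality being precisely \autoref{lem:reg-element-reduction}. That $\eta_{r,1}^*(\CC_{\GGamma_r}(x_r))$ is a linear algebraic $\bkk$-group follows from \autoref{lem:greenberg-props}: it is the image of a closed subgroup scheme of $\GGamma_r$ under the group morphism $\eta_{r,1}^*$, hence a closed subgroup of the linear algebraic group $\GGamma_1$.

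The step I expect to require the most care is the bookkeeping of domains of definition: $\widehat{\cay}_r$ is only a birational map, so one must check that the open set $\DDelta_r \cap \ggamma_r$ on which it is an isomorphism meets $\CC_{\ggamma_r}(x_r)$ densely (which is true since, as noted in the proof of \autoref{lem:reg-element-reduction} and in the proof of \ref{cayley-property-1}, the nilradical-type elements make $1+\mbf{x}$ automatically invertible on the relevant kernels, and more generally one reduces to $\bkk$-points where the condition $\det(1+x)\neq 0$ is open and nonempty on an irreducible centralizer component), and likewise that $\widehat{\cay}_1$ is defined on a dense open subset of $\eta_{r,1}^*(\CC_{\ggamma_r}(x_r))$. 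Once these density statements are in place, the dimension comparison is formal: birational morphisms and the commuting square \eqref{figure:cayley-square} transport the bound of \autoref{lem:reg-element-reduction} verbatim into the group setting, and \autoref{lem:greenberg-props}.(3) and (5) guarantee the object in question is genuinely a linear algebraic group over $\bkk$.
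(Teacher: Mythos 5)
Your proof is correct and follows essentially the same route as the paper: both transport the bound of \autoref{lem:reg-element-reduction} across the Cayley map, relying on its $\GGamma_r$-equivariance \eqref{cayley-property-2}, its birationality \eqref{cayley-property-1}, and its compatibility with the reduction maps \eqref{cayley-property-3}. The only difference is in the final step, where the paper asserts that the induced square \eqref{equation:square-cay-reductions} of centralizers is cartesian and invokes properties of the fiber product to equate the dimensions of the bottom row, while you carry out the same dimension comparison directly via dense open subsets, constructibility, and preservation of dimension under birational maps --- a more spelled-out phrasing of the same argument rather than a genuinely different one.
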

\begin{proof}
Properties \eqref{cayley-property-2} and \eqref{cayley-property-3} of the Cayley map imply the commutativity of the square \eqref{equation:square-cay-reductions}
\begin{equation}
\label{equation:square-cay-reductions}
\begin{tikzpicture}
 \matrix (m) [matrix of math nodes,row sep=2em,column sep=4.5em,minimum width=2em,nodes=
        {minimum height=2em, 
        text height=1.5ex,
        text depth=.25ex,
        anchor=center}]
{\CC_{\ggamma_r}(x_r)&\CC_{\GGamma_r}(x_r)\\
\eta_{r,1}^*\left(\CC_{\ggamma_r}(x_r)\right)&\eta_{r,1}^*\left(\CC_{\GGamma_r}(x_r)\right).\\
};
\path[->]{
(m-1-1)	edge[dashed]	node[auto]	{$\widehat{\cay}_r$}	(m-1-2)
(m-2-1)	edge[dashed]	node[auto]	{$\cay_\bkk$}	(m-2-2)
(m-1-1)	edge[thin]	node[left]	{$\eta_{r,1}^*$}	(m-2-1)
(m-1-2)	edge[thin]	node[auto]	{$\eta_{r,1}^*$}	(m-2-2)
}
;
\end{tikzpicture}\end{equation}
A short computation, using Property \eqref{cayley-property-3}, shows that this square is cartesian. Thus, by \eqref{cayley-property-1}, and the properties of the fiber product, it follows that the two terms of the bottom row are of the same dimension.
\end{proof}

\begin{proof}[Proof of \autoref{theo:properties}.(2)]The assertion is proved by induction on $r$, similarly to \cite[Theorem~3.6]{Hill}, the case $r=1$ being trivially true. Consider the following exact sequence
\begin{equation}\label{equation:ses1}
\xymatrix{1\ar[r]& \CC_{\GGamma^1_r}(x_r)\ar[r]&\CC_{\GGamma_r}(x_r)\ar[r]^{\eta_{r,1}^*}&\CC_{\GGamma_1}(x_1).}
\end{equation}

Properties \eqref{cayley-property-1} and \eqref{cayley-property-2} imply that the map $\widehat{\cay}_r$ is defined on $\CC_{\GGamma^1_r}(x_r)$ and is mapped onto $\CC_{\ggamma^1_{r}}(x_{r})$. Combined with \autoref{lem:verschiebung}, we get that $\dim\CC_{\GGamma_r^1}(x_r)=\dim \CC_{\ggamma_{r-1}}(x_{r-1})$. Moreover, since $\DDelta_{r-1}\cap \CC_{\ggamma_{r-1}}(x_{r-1})$ is a non-trivial open subscheme of $\CC_{\ggamma_{r-1}}(x_{r-1})$, and is mapped by $\widehat{\cay}_r$ to an open subscheme of $ \CC_{\GGamma_{r-1}}(x_{r-1})$, we deduce the equality
\begin{equation}
\label{equation:dim-equality}
\dim \CC_{\GGamma_{r}^1}(x_r)=\dim\CC_{\GGamma_{r-1}}(x_{r-1}).
\end{equation}

If $x_1$ is regular then by induction we have that $\dim\CC_{\GGamma_{r-1}}(x_{r-1})=n(r-1)$ and hence, by \eqref{equation:ses1} and \eqref{equation:dim-equality},
\[\dim\CC_{\GGamma_r}(x_r)\le\dim\CC_{\GGamma_{r-1}}(x_{r-1})+\dim\CC_{\GGamma_1}(x_1)=r\cdot n.\]

Conversely, if $x_1$ is not regular, then by induction $x_{r-1}$ is not regular, and the dimension of $\CC_{\GGamma_{r-1}}(x_{r-1})$ is strictly greater than $n(r-1)$. By \autoref{propo:reg-element-reduction-groups} and \eqref{equation:dim-equality}, have
\[\dim\CC_{\GGamma_r}(x_r)=\dim\CC_{\GGamma_{r-1}}(x_{r-1})+\dim\eta_1\left(\CC_{\GGamma_r}(x_r)\right)>n(r-1)+n=n\cdot r,\]
and $x_r$ is not regular.
 %Thus, by Proposition~\ref{propo:reduction-of-centralizer}, we have that 
%\[\dim_{\bkk}\CC_{\GGamma_{r}}(x_r)=\dim_{\bkk}\CC_{\GGamma_{r-1}}(x_{r-1})+\dim_{\bkk}\eta_1\left(\CC_{\GGamma_r}(x_r)\right)\ge \dim_{\bkk}\CC_{\GGamma_{r-1}}(x_{r-1})+n\]
\end{proof}

Before discussing the final assertion of \autoref{theo:properties}, let us observe a simple corollary of \autoref{lem:reg-element-reduction}, which is the Lie-algebra version of the assertion.

\begin{corol}\label{corol:reduction-surjective-lie-alg} Let $r\in\dbN$ and $x_r\in\ggamma_r(\bkk)$ be regular. The restriction of $\eta_{r,1}$ to $\CC_{\g(\OO_r)}(x_r)$ is onto $\CC_{\g(\bkk)}(x_1)$, where $x_1=\eta_{r,1}(x_r)\in\g(\bkk)$.
\end{corol}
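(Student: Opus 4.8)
The plan is to deduce the corollary from \autoref{lem:reg-element-reduction} by a dimension count among sub-vector-groups of $\ggamma_1$. First I would record the linear-algebraic picture. Since $\g\cong\dbA^d$ is smooth, the reduction $\g(\OO)\to\g(\OO_r)$ is surjective, so I may fix $x\in\g(\OO)$ with $\eta_r(x)=x_r$. By the $\mbf{O}_r$-bilinearity of the bracket on $\ggamma_r$ (\autoref{lem:greenberg-props}.(4)) together with the ring embedding $s^*\colon\dbA^1_\bkk\to\mbf{O}_r$, the operator $\ad(x_r)$ is $\bkk$-linear, so $\CC_{\ggamma_r}(x_r)=\ker\ad(x_r)$ is a sub-vector-group of the affine space $\ggamma_r$, and likewise $\CC_{\ggamma_1}(x_1)=\ker\ad(x_1)$ is a sub-vector-group of $\ggamma_1$. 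On $\bkk$-points these are, via \eqref{equation:greenberg-main} and the bracket transported by $\mcal{F}_{\OO_r}$ (\autoref{lem:greenberg-props}.(4)), exactly $\CC_{\g(\OO_r)}(x_r)$ and $\CC_{\g(\bkk)}(x_1)$.

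Next I would use that $\eta_{r,1}^*\colon\ggamma_r\to\ggamma_1$ is a surjective $\bkk$-linear map which is at the same time a morphism of Lie-rings (\autoref{lem:greenberg-props}.(5)). Hence $\eta_{r,1}^*\bigl(\CC_{\ggamma_r}(x_r)\bigr)\subseteq\CC_{\ggamma_1}(x_1)$, since $[x_r,y]=0$ forces $[x_1,\eta_{r,1}^*(y)]=\eta_{r,1}^*([x_r,y])=0$. So it remains to upgrade this inclusion of sub-vector-groups of $\ggamma_1$ to an equality. As \autoref{lem:reg-element-reduction} already gives $\dim\eta_{r,1}^*\bigl(\CC_{\ggamma_r}(x_r)\bigr)\ge n$, this reduces to checking that $\dim\CC_{\ggamma_1}(x_1)=n$.

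For that I would invoke regularity. Because $x_r$ is regular, \autoref{theo:properties}.(2) says $x_1$ is a regular element of $\g(\bkk)$, so by \autoref{theo:properties}.(1) applied with $r=1$ the group centralizer $\CC_{\GGamma_1}(x_1)$ has dimension $n$; and, exactly as in the proof of \autoref{lem:reg-element-reduction}, the Cayley map \eqref{cayley-property-2} restricts to a birational equivalence $\CC_{\ggamma_1}(x_1)\dashrightarrow\CC_{\GGamma_1}(x_1)$, whence $\dim\CC_{\ggamma_1}(x_1)=n$ as well. Therefore $\eta_{r,1}^*\bigl(\CC_{\ggamma_r}(x_r)\bigr)$ is a linear subspace of the $n$-dimensional vector space $\CC_{\ggamma_1}(x_1)$ of dimension at least $n$, hence equals it. Both sides are reduced affine $\bkk$-schemes, so passing to $\bkk$-points gives $\eta_{r,1}\bigl(\CC_{\g(\OO_r)}(x_r)\bigr)=\CC_{\g(\bkk)}(x_1)$, which is the claim.

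The argument is essentially formal once \autoref{lem:reg-element-reduction} is available; the only step requiring a little care is the identification $\dim\CC_{\ggamma_1}(x_1)=n$, i.e.\ transporting the dimension of the group centralizer of the regular element $x_1$ to its Lie-algebra centralizer via the Cayley map — but this is precisely the $m=1$ instance of a computation already performed inside the proof of \autoref{lem:reg-element-reduction}.
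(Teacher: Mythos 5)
Your proof is correct and follows essentially the same path as the paper's: deduce regularity of $x_1$ from \autoref{theo:properties}.(2), identify $\dim\CC_{\ggamma_1}(x_1)=n$ (the paper states this directly, you pass through the Cayley map and \autoref{theo:properties}.(1) to justify it), and conclude surjectivity from \autoref{lem:reg-element-reduction} by a dimension count inside the $n$-dimensional vector group $\CC_{\ggamma_1}(x_1)$. The extra care you take in verifying that the image is a sub-vector-group is a reasonable unpacking of what the paper compresses into the word ``subspace.''
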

\begin{proof}
\autoref{theo:properties}.(2) implies that $x_1$ is regular and hence $\CC_{\ggamma_1}(x_1)(\bkk)=\CC_{\g(\bkk)}(x_1)$ is a $\bkk$-vector space of dimension $n=\dim\CC_{\GGamma_1}(x_1)$. By \autoref{lem:reg-element-reduction}, the $\bkk$-points of the image of $\CC_{\ggamma_r}(x_r)$ under $\eta_{r,1}^*$ comprise a subspace of $\CC_{\g(\bkk)}(x_1)$ of the same dimension.
\end{proof}
\subsubsection{The image of $\eta_{r,1}$ on $\CC_{\GGamma_r}(x_r)$}\label{subsubsection:image-eta_r,1}

To complete the proof of the third assertion of \autoref{theo:properties} we require the following proposition, which is stated here in a slightly more general setting than necessary at the moment, and will also be applied later on in the proof of \autoref{corol:fin-index-abelian}.

\begin{propo}\label{propo:centralizer-of-reg-over-bkk} Let $L$ be either $\bkk$ or $K^\alg$, and let $\mbf{H}=\GG\times \spec(L)$ and $\mbf{h}=\Lie(\HH)$ its Lie-algebra. Put $H=\HH(L)$ and $\mfr{h}=\mbf{h}(L)$. Let $x\in\mbf{h}(L)$ be regular. Then 
\[\CC_{H}(x)=\CC_{\HH}(x)^\circ(L)\cdot \ZZ(H),\]
where $\CC_{\HH}(x)^\circ$ is the connected component of $1$. In particular, $\abs{\CC_{H}(x):\CC_{\HH}(x)^\circ(L)}\le 2$ and $\CC_{H}(x)$ is abelian.
\end{propo}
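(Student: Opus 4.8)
The plan is to reduce everything to the structure of $\CC_{\HH}(x)$ as an algebraic group over the algebraically closed field $L$, and to control its component group. Since $x$ is regular and semisimple plus nilpotent parts commute with everything commuting with $x$, I would first reduce to the semisimple and nilpotent building blocks only insofar as needed; but the cleanest route is to invoke directly the structure of centralizers. First I would recall the general fact (Steinberg, \cite[\S 3.5]{SteinbergConjugacy}, or \cite[Ch.~III]{SteinbergSpringer}) that the centralizer of a regular element in a reductive group has dimension equal to the rank $n$, and that for the adjoint action on the Lie algebra the analogous statement holds in odd characteristic (which is why the hypothesis $\Char\kk\neq 2$ is in force). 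Combined with \autoref{propo:centralizer-semisimple-element}, which already pins down $\CC_{\HH}(s)$ for the semisimple part $s$ of $x$ as a product of general linear groups and an orthogonal/symplectic factor $\Delta$ attached to $\ker(s)$, the regularity of $x$ forces each $\GL_{m_j}$ factor to have $m_j=1$ and forces $x$ to be regular within the small factor $\Delta$; so the analysis is pushed down to the case where $x$ is a regular nilpotent (or zero) element of a symplectic or orthogonal Lie algebra, where the centralizer and its component group are classically known.

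The key steps, in order, are: (i) decompose $x=s+\nu$ into commuting semisimple and nilpotent parts, observe $\CC_{\HH}(x)=\CC_{\CC_{\HH}(s)}(\nu)$, and use \autoref{propo:centralizer-semisimple-element} to write $\CC_{\HH}(s)\simeq \prod_j \GL_{m_j}(L)\times\Delta(L)$; (ii) use that $\dim\CC_{\HH}(x)=n$ (regularity) to conclude $m_j=1$ for all $j$ and that $\nu$ is a regular nilpotent element of $\Lie(\Delta)$, so that $\CC_{\HH}(x)=\prod_j \GL_1(L)\times\CC_{\Delta(L)}(\nu)$; (iii) in each torus factor $\GL_1(L)=L^\times$ the group is connected, so the only possible disconnectedness comes from $\CC_{\Delta(L)}(\nu)$, and here I would quote the classical description of centralizers of regular nilpotents in classical Lie algebras — for $\mfr{sp}$ and $\mfr{so}$ of odd rank the centralizer is connected, while for $\mfr{so}_{2m}$ (type $\msf{D}$) the component group has order at most $2$, generated by an element of $\OO_{2m}\setminus\SO_{2m}$; (iv) identify that disconnecting element, when it exists, with (a representative of) the central element $-1\in\ZZ(H)$ or more precisely show that the non-identity component meets $\ZZ(H)$, so that $\CC_H(x)=\CC_{\HH}(x)^\circ(L)\cdot\ZZ(H)$; (v) conclude abelianness: $\CC_{\HH}(x)^\circ(L)$ is a connected abelian group (being of dimension $n$ equal to the rank and contained in — in fact, for $x$ regular semisimple, equal to — a maximal torus, and in the general regular case still abelian by the same Steinberg-type argument used in the proof of \autoref{theo:properties}.(1)), and extending an abelian group by a central element keeps it abelian; the bound $\abs{\CC_H(x):\CC_{\HH}(x)^\circ(L)}\le 2$ then falls out of (iii).

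The main obstacle I anticipate is step (iii)–(iv): getting the component group of the centralizer of a regular element exactly right in the even orthogonal case and, more importantly, verifying that the extra component is realized by a \emph{central} element of $H=\SO_{2m}(L)$ rather than merely by an element of the ambient $\GL$ or $\OO$. For $\GG=\SO_{2m}$ the center is $\{\pm 1\}$ when $m$ is even and trivial when $m$ is odd, so one has to be careful: the claim $\CC_H(x)=\CC_{\HH}(x)^\circ(L)\cdot\ZZ(H)$ really says the component group is killed once we pass to $\PGL$, or equivalently that any element of $\CC_H(x)$ lies in $\CC_{\HH}(x)^\circ(L)$ up to sign. I would handle this by an explicit normal-form computation for a regular nilpotent $\nu$ in $\mfr{so}_{2m}$ (a single Jordan block of size $2m$ is impossible for orthogonal type, so $\nu$ has Jordan type $(2m-1,1)$), compute $\CC_{\SO_{2m}(L)}(\nu)$ directly, and exhibit the representative of its non-identity component as $\diag(\dots)$ conjugate to $-1$ on the relevant summand; alternatively, and perhaps more cleanly, one notes that $\CC_{\OO_{2m}(L)}(\nu)$ is connected of dimension $n$ by a standard computation, whence $\CC_{\SO_{2m}(L)}(\nu)=\CC_{\OO_{2m}(L)}(\nu)\cap\SO_{2m}$ has index $1$ or $2$ in a connected group, and any coset representative differs from the identity component by $\det=-1$, which on a regular element forces it to act as the central involution. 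Everything else — the reduction via semisimple parts, the dimension count, the passage $\CC_{\HH}(x)=\CC_{\CC_{\HH}(s)}(\nu)$ — is routine given \autoref{propo:centralizer-semisimple-element} and \autoref{theo:properties}.(1).
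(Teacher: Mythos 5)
The overall architecture of your argument — Jordan decomposition $x=s+\nu$, reduction via \autoref{propo:centralizer-semisimple-element} to a product of $\GL$-type factors and one symplectic/orthogonal factor $\Delta$, and then a component-group analysis — matches the paper's. But there is a genuine error in step (ii), and your component-group bookkeeping in step (iii) has incorrect claims that would need substantial repair.

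The claim that regularity of $x$ forces each $m_j=1$ is false. The multiplicities $m_j$ are the dimensions of the eigenspaces $W_{\lambda_j}=\ker(s-\lambda_j\id)$ for the semisimple part $s$; regularity of $x$ only forces the restriction $\nu|_{W_{\lambda_j}}$ to be a \emph{regular nilpotent} element of $\gl_{m_j}(L)$ (cf.\ \cite[3.5, Proposition~5]{SteinbergConjugacy}), not $m_j=1$. For instance, with $\GG=\Sp_4$ and $x$ having characteristic polynomial $(t^2-\lambda^2)^2$ and a nontrivial nilpotent part, one gets $m_1=2$. Consequently your identification of the first block of $\CC_{\HH}(x)$ with a torus $\prod_j \GL_1(L)$ is not correct; what one actually has is $\prod_j\CC_{\GL_{m_j}(L)}(\nu|_{W_{\lambda_j}})$, a product of centralizers of regular nilpotents in $\GL_{m_j}$, which are connected and abelian but not tori. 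The paper handles this by citing \cite[III, 3.2.2]{SteinbergSpringer}, which gives connectedness of these $\GL$-centralizers uniformly in $m_j$; that is the ingredient your version silently assumes away.

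In step (iii) you assert that for $\mfr{sp}$ the regular-nilpotent centralizer in $\Sp$ is connected. This is not so: for a regular nilpotent $\nu\in\sp_{2n}$, $\CC_{\Sp_{2n}(L)}(\nu)=\ZZ(\Sp_{2n})\cdot U$ with $U$ connected unipotent and $\ZZ(\Sp_{2n})=\{\pm 1\}$, which has two components — precisely the allowance the proposition itself builds in via $\ZZ(H)$. The paper avoids the whole type-by-type discussion by invoking \cite[III, 1.14]{SteinbergSpringer}, which states directly (in odd characteristic) that $\CC_\Delta(\nu)=\CC_\Delta(\nu)^\circ\cdot\ZZ(\Delta)$ for regular $\nu$, uniformly across the orthogonal/symplectic factor $\Delta$. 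If you insist on a hands-on computation, it can be made to work, but your claims about which types give connected centralizers need correcting, and the explicit normal-form verification you sketch for $\mfr{so}_{2m}$ would have to be carried out in full; the uniform Steinberg reference is both cleaner and less error-prone.
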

\begin{proof}

Let $x=s+h$ be the Jordan decomposition of $x$, with $s,h\in\mfr{h}$, $s$ semisimple, $h$ nilpotent and $[s,h]=0$. Note that, as an element of $H$ commutes with $x$ if and only if it commutes with both $s$ and $h$, we have that $\CC_{H}(x)=\CC_{\CC_H(s)}(h)$. From \autoref{propo:centralizer-semisimple-element}, it follows that 
\begin{equation}\label{equation:centralizer-decomposition-reg}
\CC_{H}(x)=\CC_{\CC_H(s)}(h)=\prod_{j=1}^t\CC_{\GL_{m_j}\left(L\right)}\left(h\mid_{W_{\lambda_j}}\right)\times \CC_{\Delta(L)}\left(h\mid_{\ker(s)}\right),
\end{equation}
where $\Delta$ is a classical linear algebraic group over $L$ of automorphisms preserving a non-degenerate bilinear form on a subspace of $L^N$, and $\pm\lambda_1,\ldots,\pm\lambda_t$ are the non-zero  eigenvalues of $s$, as described in \autoref{propo:centralizer-semisimple-element}, with respective multiplicities $m_1,\ldots,m_t$, and $W_{\lambda_j}=\ker(s-\lambda_j\id)$. Additionally, by \cite[3.5, Proposition~5]{SteinbergConjugacy}, the restricted operators $h\mid_{W(\lambda_j)}$ and $h\mid_{\ker(s)}$ are regular as elements of the Lie-algebras of $\GL_{m_j}$ and of $\Delta$ over $L$, respectively.

%Computing the connected component of $\CC_H(x)$ is quite simple. 
By \cite[III,~3.2.2]{SteinbergSpringer} it is known that all factors in \eqref{equation:centralizer-decomposition-reg}, apart from $\CC_{\Delta}(h\mid_{\ker(s)})$, are connected. Furthermore, by \cite[III,~1.14]{SteinbergSpringer} and the assumption $\Char(L)\ne 2$, we have \[\CC_{\Delta}\left(h\mid_{\ker(s)}\right)=\CC_{\Delta}\left(h\mid_{\ker(s)}\right)^\circ\cdot \ZZ(\Delta),\]
(see \cite[I,~4.3]{SteinbergSpringer}). Taking into account the fact that, as $\Char(L)\ne 2$, $\ZZ(\Delta(L))$ is the finite group $\set{\pm 1}$, one easily deduces from this the equality
\[\CC_{H}(x)=\CC_{\HH}(x)^\circ(L)\cdot \ZZ(H).\]
Lastly, $\CC_{H}(x)^\circ$ is abelian by \cite[Corollary~1.4]{SteinbergSpringer}, and $\abs{\CC_H(x):\CC_{H}(x)^\circ}\le\abs{\ZZ(H)}=2$. \end{proof}

\begin{proof}[Proof of \autoref{theo:properties}.(3)]
By \autoref{propo:reg-element-reduction-groups} and Chevalley's Theorem \cite[IV,\:1.8.4]{EGA}, the image of $\CC_{\GGamma_r}(x_r)$ under $\eta_{r,1}^*$ contains the connected component $\CC_{\GGamma_1}(x_1)^\circ$ of the identity in $\CC_{\GGamma_1}(x)$. Additionally, the center $\ZZ(\GGamma_r)$ of $\GGamma_r$ is clearly contained in $\CC_{\GGamma_r}(x_r)$ and is mapped by $\eta_{r,1}^*$ onto $\ZZ(\GGamma_1)$. This implies the inclusion
\[\CC_{\GGamma_1}(x_1)\supseteq \eta_{r,1}^*\left(\CC_{\Gamma_r}(x_r)\right)\supseteq\left(\CC_{\Gamma_1}(x_1)\right)^\circ\cdot\ZZ(\GGamma_1).\]
Evaluating the above inclusions at $\bkk$-points, by \autoref{propo:centralizer-of-reg-over-bkk}, we deduce the equality.
\end{proof}

\subsubsection{Returning to the $\oo$-rational setting} In this section we prove \autoref{theo:inverse-lim}. An initial step towards this goal is to show that the third assertion of \autoref{theo:properties} remains true when replacing the groups $\GG(\OO_r)$ and Lie-rings $\g(\OO_r)$ with the group and Lie-rings of $\oo_r$-rational points, i.e. $G_r=\GG(\oo_r)$ and $\mfr{g}_r=\g(\oo_r)$. Given $1\le m\le r$, we write $G^m_r$ and $\mfr{g}^m_r$ to denote the congruence subgroup $\ker(G_r\xrightarrow{\eta_{r,m}}G_m)=G_r\cap\eta_{r,m}^{-1}(1)$ and congruence subring $\ker(\mfr{g}_r\xrightarrow{\eta_{r,m}}\mfr{g}_m)=\mfr{g}_r\cap\eta_{r,m}^{-1}(0)$, respectively.

 Recall that $\sigma:\OO\to \OO$ was defined in \autoref{subsubsection:artinian-rings} to be the local Frobenius automorphism of $\OO$ over $\oo$, given on its quotient $\bkk$ by $\sigma(\xi)=\xi^{\abs{\kk}}$. This automorphism gives rise to an automorphism of $\GG(\OO)$, and of its quotients $\GG(\OO_r)$ and their Lie-algebras. By definition, an element $x\in\mfr{g}_r$ is regular if and only if it is a regular $\sigma$-fixed element of $\g(\OO_r)=\ggamma_r(\bkk)$. We require the following variant of Lang's Theorem.

\begin{lem}\label{lem:lang} Let $r\in\dbN$ and let $x_r\in \mfr{g}_r$ be a regular element and $x_1=\eta_{r,1}(x_r)$. Given $g\in\CC_{G_1}(x_1)=\CC_{\GG(\bkk)}(x_1)\cap G_1$, let $F_g=\eta_{r,1}^{-1}(g)\cap \CC_{\GG(\OO_r)}(x_r)$, and let $\mcal{L}_g$ be the map defined by
\[h\mapsto h\cdot \sigma(h)^{-1}.\]
Then $\mcal{L}_g:F_g\to F_1$ is a well-defined surjective map.
\begin{proof}
The sets $F_{g'}$ ($g'\in\CC_{G_1}(x_1)$) are simply cosets of the subgroup $F_1=\CC_{\GGamma^1_r(\bkk)}(x_r)$. In particular, by \eqref{cayley-property-1} and \eqref{cayley-property-2}, the $F_{g'}$'s are the $\bkk$-points of algebraic varieties, isomorphic to $\CC_{\ggamma^1_r(\bkk)}(x_r)$ and hence affine ${(r-1)n}$-dimensional spaces over~$\bkk$.

Since the reduction map $\eta_{r,1}$ commutes with the Frobenius maps, and since $g$ is assumed fixed by $\sigma$, we have that $\mcal{L}_g$ is well-defined. The surjectivity of $\mcal{L}_g$ now follows as in the proof of the classical Lang Theorem \cite{Lang}, using the fact that $F_1$ is a connected linear algebraic group over $\bkk$ (see also \cite[I,\:2.2]{SteinbergSpringer} and \cite[\S~3]{GreenbergII}).
\end{proof}
\end{lem}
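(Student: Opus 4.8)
The plan is to verify the two assertions in turn: well-definedness is a formal check using that $x_r$ and $g$ are $\sigma$-fixed, while surjectivity is a repackaging of the classical Lang theorem. For well-definedness I would first note that, since $\GG\subseteq\SL_N$ and $\g\subseteq\matr_N$, the adjoint action at play is ordinary matrix conjugation and $\sigma$ acts entrywise, so $\sigma$ maps $\CC_{\GG(\OO_r)}(x_r)=\CC_{\GG(\OO_r)}(\sigma(x_r))$ into itself because $x_r\in\mfr{g}_r=\g(\OO_r)^{\sigma}$. Hence for $h\in F_g$ both $h$ and $\sigma(h)^{-1}$ lie in $\CC_{\GG(\OO_r)}(x_r)$, and so does $h\sigma(h)^{-1}$. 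As reduction commutes with $\sigma$ and $g\in\GG(\kk)=\GG(\OO_1)^{\sigma}$ is $\sigma$-fixed, $\eta_{r,1}\!\left(h\sigma(h)^{-1}\right)=g\,\sigma(g)^{-1}=1$, so $h\sigma(h)^{-1}\in F_1$, where $F_1=\eta_{r,1}^{-1}(1)\cap\CC_{\GG(\OO_r)}(x_r)=\CC_{\GGamma_r^1}(x_r)(\bkk)$.

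For surjectivity the structural input is that $F_1$ is the group of $\bkk$-points of a connected linear algebraic group over $\bkk$: $\CC_{\GGamma_r^1}(x_r)$ is a closed subgroup of the linear algebraic group $\GGamma_r$ (\autoref{lem:greenberg-props}.(2)), and by \eqref{cayley-property-1} and \eqref{cayley-property-2} the Cayley isomorphism $\widehat{\cay}_r\colon\ggamma_r^1\xrightarrow{\ \sim\ }\GGamma_r^1$ identifies it with $\CC_{\ggamma_r^1}(x_r)$, the kernel of the $\OO_r$-linear operator $\ad(x_r)$ on the affine space $\ggamma_r^1$, which is connected. I would then observe that $F_g$ is non-empty by \autoref{theo:properties}.(3), that it is a coset of $F_1$ (if $h,h'\in F_g$ then $h'^{-1}h\in\CC_{\GG(\OO_r)}(x_r)\cap\ker\eta_{r,1}=F_1$, and conversely $h'F_1\subseteq F_g$), and that it is $\sigma$-stable because $g$ is $\sigma$-fixed. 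A $\sigma$-stable coset of a connected algebraic group contains a $\sigma$-fixed point — the standard consequence of Lang's theorem applied to the $F_1$-torsor $F_g$ — so I may fix $h_0\in F_g$ with $\sigma(h_0)=h_0$; then $\mcal{L}_g(h_0)=1$ and, for $k\in F_1$,
\[
\mcal{L}_g(h_0k)=h_0k\,\sigma(k)^{-1}\sigma(h_0)^{-1}=h_0\bigl(k\,\sigma(k)^{-1}\bigr)h_0^{-1}\in F_1,
\]
since $F_1$ is normal in $\CC_{\GG(\OO_r)}(x_r)$. As $k$ runs over $F_1$, the element $k\sigma(k)^{-1}$ runs over all of $F_1$ by Lang's theorem (the Frobenius $\sigma$ restricts to a surjective endomorphism of the connected group $\CC_{\GGamma_r^1}(x_r)$, whose fixed-point group $\CC_{\GGamma_r^1}(x_r)^{\sigma}\subseteq G_r$ is finite), hence $\mcal{L}_g$ is surjective.

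The only substantive step is the geometric input of the second paragraph — that $F_1$ is the set of $\bkk$-points of a \emph{connected} algebraic group, and that $\sigma$ restricts to an honest Frobenius (equivalently, a surjective endomorphism with finite fixed-point set) on it; everything else is coset bookkeeping plus a direct appeal to Lang. Both ingredients are available: connectedness through the Cayley map and \eqref{cayley-property-1}, and the Frobenius structure through the $\oo$-definition of $\GG$ and the functoriality of the Greenberg functor, under which $\GGamma_r(\bkk)^{\sigma}=\GG(\oo_r)=G_r$ is finite. I would be careful only to keep the plain conjugation action used throughout distinct from the twisted action $\Ad\circ\alpha_\ell$ appearing in the introduction; in the present matrix setting no twist intervenes.
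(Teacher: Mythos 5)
Your proof is correct and follows the same route as the paper's: identify $F_1$ as the $\bkk$-points of a connected linear algebraic group (via the Cayley identification of $\CC_{\GGamma_r^1}(x_r)$ with $\CC_{\ggamma_r^1}(x_r)$), check well-definedness from the $\sigma$-invariance of $x_r$ and of $g$, and invoke Lang. What you do is unpack the paper's terse ``as in the proof of the classical Lang Theorem'' into two clean black-box applications of Lang --- once in its torsor form (the $\sigma$-stable $F_1$-coset $F_g$ contains a $\sigma$-fixed $h_0$, using non-emptiness from \autoref{theo:properties}.(3)) and once for the connected group $F_1$ itself (surjectivity of $k\mapsto k\sigma(k)^{-1}$, combined with normality of $F_1$) --- which is arguably a cleaner packaging than re-running Lang's dominance-and-irreducibility argument on the coset directly. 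The observations that no twist by $\alpha_\ell$ enters, and that $\sigma$ is genuinely a Frobenius on $\CC_{\GGamma_r^1}(x_r)$ with finite fixed-point set inside $G_r$, are both correct and worth recording.
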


\begin{corol}\label{corol:surjective-Gm} Let $x_r\in\mfr{g}_r$ be regular and $x_1=\eta_{r,1}(x_r)$. The restriction of $\eta_{r,1}$ to $\CC_{G_r}(x_r)$ is onto $\CC_{G_1}(x_1)$.
\end{corol}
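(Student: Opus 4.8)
The plan is to deduce the statement formally from \autoref{theo:properties}.(3) together with the variant of Lang's theorem recorded in \autoref{lem:lang}. The inclusion $\eta_{r,1}\left(\CC_{G_r}(x_r)\right)\subseteq\CC_{G_1}(x_1)$ is immediate, since $\eta_{r,1}$ is a group homomorphism carrying $G_r$ to $G_1$ and $x_r$ to $x_1$; the content is surjectivity. Since $x_r\in\mfr{g}_r$ is fixed by the Frobenius $\sigma$, the centralizer $\CC_{\GG(\OO_r)}(x_r)$ is $\sigma$-stable and $\CC_{G_r}(x_r)=\CC_{\GG(\OO_r)}(x_r)^\sigma$; hence, given $g\in\CC_{G_1}(x_1)$, it suffices to exhibit a $\sigma$-fixed element in the fibre $F_g=\eta_{r,1}^{-1}(g)\cap\CC_{\GG(\OO_r)}(x_r)$.

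First I would note that $F_g\neq\varnothing$: this is exactly the assertion of \autoref{theo:properties}.(3), since $g\in\CC_{G_1}(x_1)\subseteq\CC_{\GG(\bkk)}(x_1)$ lies in the image of $\eta_{r,1}$ restricted to $\CC_{\GG(\OO_r)}(x_r)$. Next I would apply \autoref{lem:lang} to $g$: the map $\mcal{L}_g\colon F_g\to F_1$, $h\mapsto h\sigma(h)^{-1}$, is well defined (here one uses that $g$ is $\sigma$-fixed, so that $\eta_{r,1}(h\sigma(h)^{-1})=g\sigma(g)^{-1}=1$, and that $\sigma$ fixes $x_r$, so $h\sigma(h)^{-1}$ still centralizes $x_r$) and surjective. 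Since $F_1=\CC_{\GGamma^1_r(\bkk)}(x_r)$ contains the identity, there exists $h\in F_g$ with $\mcal{L}_g(h)=1$, i.e. $h=\sigma(h)$. Then $h\in\GG(\OO_r)^\sigma=\GG(\oo_r)=G_r$, $h$ centralizes $x_r$, and $\eta_{r,1}(h)=g$, so $h\in\CC_{G_r}(x_r)$ maps onto $g$, proving surjectivity.

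I do not expect any genuine obstacle here: all the substantive work has already been carried out in \autoref{theo:properties}.(3) (non-emptiness of the fibres) and in \autoref{lem:lang} (the Lang-type surjectivity, which rests on the connectedness of $F_1$ as a linear algebraic $\bkk$-group). The only care required is the bookkeeping with $\sigma$-fixed points — identifying $\CC_{G_r}(x_r)$ with $\CC_{\GG(\OO_r)}(x_r)^\sigma$ and checking that $\mcal{L}_g$ takes values in $F_1$ — which is routine once $g$ is known to be $\sigma$-fixed.
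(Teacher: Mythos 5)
Your proof is correct and follows essentially the same route as the paper: invoke \autoref{theo:properties}.(3) to guarantee $F_g\neq\varnothing$, then apply the Lang-type surjectivity of $\mcal{L}_g$ from \autoref{lem:lang} to produce a $\sigma$-fixed preimage of $g$ in $\CC_{\GG(\OO_r)}(x_r)$. The only difference is that you spell out the $\sigma$-fixed-point bookkeeping (well-definedness of $\mcal{L}_g$, the identification $\CC_{G_r}(x_r)=\CC_{\GG(\OO_r)}(x_r)^\sigma$) that the paper leaves implicit.
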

\begin{proof}
\autoref{lem:lang} and \autoref{theo:properties}.(3) imply that for any $g\in\CC_{G_1}(x_1)$, there exists an element $h\in \CC_{\GGamma_r}(x_r)$ such that $\eta_{r,1}(h)=g$ and such that $\mcal{L}_g(h)=h\sigma (h)^{-1}=1$. In particular, $h$ is fixed under $\sigma$ and hence $h\in \CC_{G_r}(x_r)\cap \eta_{r,1}^{-1}(g)$.
\end{proof}

%\begin{rem} Note that the analogous statement for the Lie algebras $\CC_{\mfr{g}_r}(x_r)$ and $\CC_{\mfr{g}_1}(x_1)$ does not require the machinery of \autoref{lem:lang}, and can be deduced directly from the surjectivity of the map $\eta_{r,1}:\CC_{\ggamma_r}(x_r)\to \CC_{\ggamma_1}(x_1)$.
%\end{rem}

Another necessary ingredient in the proof of \autoref{theo:inverse-lim} is the connection between the groups $\CC_{G_r}(x_r)$ and $\CC_{G_m}(x_m)$, where $m\le r$ and $x\in\mfr{g}$ is such that $x_r$ is regular.
\begin{lem}\label{lem:surjective-r-to-m} Let $r\in\dbN$ and $x_r\in\mfr{g}_r$ be regular. For any $1\le m\le r$ write $x_m=\eta_{r,m}(x_r)$.
\begin{enumerate}
\item The map $\eta_{r,m}:\CC_{\mfr{g}_r}(x_r)\to \CC_{\mfr{g}_m}(x_m)$ is surjective.
\item The map $\eta_{r,m}:\CC_{G_r}(x_r)\to \CC_{G_m}(x_m)$ is surjective.
\end{enumerate}
\end{lem}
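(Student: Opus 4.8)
The plan is to prove the Lie-algebra statement (1) first, working over $\bkk$ and then descending to $\sigma$-fixed points, and to deduce the group statement (2) from (1), from \autoref{corol:surjective-Gm}, and from the Cayley map on congruence kernels. Throughout, observe that $x_1=\eta_{r,1}(x_r)$ is regular by \autoref{theo:properties}.(2), hence so are $x_m$ and $x_{r-m}$ for every $1\le m\le r$.

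For (1), first I would show that the connecting morphism $\eta_{r,m}^*\colon\CC_{\ggamma_r}(x_r)\to\CC_{\ggamma_m}(x_m)$ is surjective as a morphism of $\bkk$-group schemes. Its kernel is $\CC_{\ggamma_r}(x_r)\cap\ggamma_r^m=\CC_{\ggamma_r^m}(x_r)$, and by \autoref{lem:verschiebung}.(2) together with the conjugation formula~\eqref{equation:verschiebung}, the Verschiebung $v_{r,m}^*$ restricts to an isomorphism of abelian $\bkk$-group schemes $\CC_{\ggamma_{r-m}}(x_{r-m})\xrightarrow{\sim}\CC_{\ggamma_r^m}(x_r)$; by \autoref{theo:properties}.(1) and the fact that the Cayley map identifies the dimensions of the Lie- and group-centralizers (as in the proof of \autoref{lem:reg-element-reduction}), this kernel has dimension $(r-m)n$, while $\CC_{\ggamma_r}(x_r)$ has dimension $rn$. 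Hence the image $\eta_{r,m}^*\CC_{\ggamma_r}(x_r)$, being the image of a homomorphism of algebraic groups, is a closed subgroup of $\CC_{\ggamma_m}(x_m)$ of dimension $mn$. Since $\CC_{\ggamma_m}(x_m)=\ker(\ad(x_m))$ is a linear subspace of the affine space $\ggamma_m$, hence a connected algebraic group of dimension $mn$ (again by \autoref{theo:properties}.(1)), the image is all of $\CC_{\ggamma_m}(x_m)$. Finally, since $x_r\in\ggamma_r(\bkk)^\sigma$, the short exact sequence $0\to\CC_{\ggamma_r^m}(x_r)\to\CC_{\ggamma_r}(x_r)\xrightarrow{\eta_{r,m}^*}\CC_{\ggamma_m}(x_m)\to 0$ consists of $\sigma$-stable commutative $\bkk$-groups with connected (vector-group) kernel, so applying Lang's theorem to the kernel exactly as in the proof of \autoref{lem:lang} yields that $\CC_{\mfr{g}_r}(x_r)=\CC_{\ggamma_r}(x_r)^\sigma\to\CC_{\ggamma_m}(x_m)^\sigma=\CC_{\mfr{g}_m}(x_m)$ is surjective.

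For (2), I would first reduce to the congruence kernels. Given $g\in\CC_{G_m}(x_m)$, \autoref{corol:surjective-Gm} applied with reduction to level $1$ provides $h\in\CC_{G_r}(x_r)$ with $\eta_{r,1}(h)=\eta_{m,1}(g)$; then $g^{-1}\eta_{r,m}(h)\in\CC_{G_m}(x_m)\cap G_m^1=\CC_{G_m^1}(x_m)$, so it suffices to prove that $\eta_{r,m}\colon\CC_{G_r^1}(x_r)\to\CC_{G_m^1}(x_m)$ is onto. Here I would invoke the Cayley map: since $\Char(\kk)\ne 2$, both $\mfr{g}_r^1$ and $G_r^1$ lie in $D(\oo_r)$, and $\cay(\oo_r)$ restricts to a conjugation-equivariant bijection $\mfr{g}_r^1\to G_r^1$ that commutes with the reduction $\eta_{r,m}$ and, by equivariance (an element commutes with $x_r$ if and only if its Cayley transform does, since $\cay$ is its own inverse), restricts further to a bijection $\CC_{\mfr{g}_r^1}(x_r)\to\CC_{G_r^1}(x_r)$ lying over the corresponding bijection at level $m$. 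Thus it is enough to show $\eta_{r,m}\colon\CC_{\mfr{g}_r^1}(x_r)\to\CC_{\mfr{g}_m^1}(x_m)$ is onto; but this follows from (1), since a preimage in $\CC_{\mfr{g}_r}(x_r)$ of an element of $\CC_{\mfr{g}_m^1}(x_m)=\ker(\eta_{m,1}|_{\CC_{\mfr{g}_m}(x_m)})$ automatically reduces to $0$ at level $1$, hence lies in $\CC_{\mfr{g}_r^1}(x_r)$.

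The main obstacle — and where the real content lies — is the dimension count in (1): identifying the kernel of reduction on the centralizer with $\CC_{\ggamma_{r-m}}(x_{r-m})$ via the Verschiebung, pinning down all three dimensions using regularity and \autoref{theo:properties}.(1), and then recognising that the level-$m$ Lie-centralizer is an irreducible (linear) variety, so that a closed subgroup of full dimension exhausts it. One must also check that every scheme to which Lang's theorem is applied is $\sigma$-stable and connected, but here this is automatic since $x_r$ is $\sigma$-fixed and the kernels in question are vector groups, so the situation is no worse than in \autoref{lem:lang}. By contrast, once (1) and \autoref{corol:surjective-Gm} are in hand, part (2) is essentially bookkeeping with the Cayley map.
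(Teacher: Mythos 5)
Your strategy is sound and takes a genuinely different route from the paper. The paper proves both assertions by induction on $m$ via the Four Lemma applied to diagrams of finite abelian groups (resp.\ finite groups) of $\oo_r$-points, reducing each inductive step to surjectivity on the top congruence layer $\CC_{\mfr{g}_r^{m-1}}(x_r)\to\CC_{\mfr{g}_m^{m-1}}(x_m)$, which is identified with a level-$1$ statement via the Verschiebung map (and, in the group case, the Cayley map). You instead establish (1) in a single pass by a dimension count over $\bkk$ followed by one application of Lang's theorem, and then deduce (2) by splitting off $\CC_{G_m^1}(x_m)$ with \autoref{corol:surjective-Gm} and transferring the congruence-kernel problem to the Lie-algebra side via the Cayley bijection. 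Your reduction of (2) to (1), the Cayley transfer, and the remark that a preimage under (1) of an element of $\CC_{\mfr{g}_m^1}(x_m)$ automatically lies in $\CC_{\mfr{g}_r^1}(x_r)$ are all correct and rather cleaner than the paper's iterated Four Lemma for groups.

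There is, however, a gap in the justification of the dimension count in (1). You assert that $\CC_{\ggamma_m}(x_m)=\ker\left(\ad(x_m)\right)$ is ``a linear subspace of the affine space $\ggamma_m$,'' hence connected, and later that the relevant kernels are ``vector groups.'' This is valid when $\OO_m$ is a $\bkk$-algebra (equal characteristic), where the Greenberg functor takes the $\OO_m$-linear map $\ad(x_m)$ to a $\bkk$-linear endomorphism of $\ggamma_m\simeq\dbA^{dm}_\bkk$. In mixed characteristic, however, $\OO_m$ is a ring of truncated Witt vectors, and multiplication by a fixed element of $\OO_m$ is not $\bkk$-linear in the $\dbA^{dm}_\bkk$-coordinates; it is merely an additive polynomial map, and in characteristic $p$ such maps can have disconnected or non-reduced kernels (compare the Artin--Schreier map on $\mathbb{G}_a$). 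Since connectedness and reducedness of $\CC_{\ggamma_m}(x_m)$ are precisely what you need — both to conclude that a closed subgroup of full dimension exhausts it, and to apply Lang's theorem to the kernel — the claim requires a different justification. The fact does hold: by the Verschiebung isomorphisms of \autoref{lem:verschiebung}, $\CC_{\ggamma_m}(x_m)$ admits a filtration whose successive quotients are $n$-dimensional $\bkk$-subspaces of $\g(\bkk)$, so it is a smooth connected unipotent group; this is exactly the mechanism exploited in the proof of \autoref{lem:reg-element-reduction}, which you could invoke. The paper's inductive proof sidesteps the issue entirely by working with finite groups of $\oo_r$-points and only ever invoking the vector-space structure at level $1$, where it is genuine; with that patch in place your direct argument is equally correct.
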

\begin{proof}
We prove both assertions by induction on $m$.
\begin{list}{\arabic{list}.}{\usecounter{list}\setlength{\leftmargin}{5pt}
   \setlength{\itemsep}{2pt} \setlength{\parsep}{0pt}\setlength{\labelwidth}{-5pt}}
\item The case $m=1$ follows in~\autoref{corol:reduction-surjective-lie-alg} and Lang's Theorem, as $\CC_{\ggamma_1}(x_1)$ and $\eta^*_{r,1}(\CC_{\ggamma_r}(x_r))$ are both affine $n$-spaces over $\bkk$. Consider the commutative diagram in~\eqref{figure:reduction-lie-algebra}, in which both rows are exact by induction hypothesis.
\begin{equation}
\begin{tikzpicture}
 \matrix (m) [matrix of math nodes,row sep=1.5em,column sep=3em,minimum width=2em,
 nodes=
        {minimum height=2em, 
        text height=1.5ex,
        text depth=.25ex,
        anchor=center}]
{\CC_{\mfr{g}^{m-1}_{r}}(x_r)&\CC_{\mfr{g}_r}(x_r)&\CC_{\mfr{g}_{m-1}}(x_{m-1})&0\\
\CC_{\mfr{g}^{m-1}_{m}}(x_m)&\CC_{\mfr{g}_m}(x_m)&\CC_{\mfr{g}_{m-1}}(x_{m-1})&0\\
};
\path[->,font=\scriptsize]
(m-1-1)	edge[thin]	node[auto]	{} 	(m-1-2)
(m-1-2)	edge[thin]	node[auto]	{$\eta_{r,m-1}$} 	(m-1-3)
(m-1-3)	edge[thin]	node[auto]	{} 	(m-1-4)
(m-2-1)	edge[thin]	node[auto]	{} 	(m-2-2)
(m-2-2)	edge[thin]	node[below]	{$\eta_{m,m-1}$} 	(m-2-3)
(m-2-3)	edge[thin]	node[auto]	{} 	(m-2-4)
(m-1-2) edge[thin]	node[auto]	{$\eta_{r,m}$}	(m-2-2)
(m-1-1) edge[thin]	node[auto]	{}	(m-2-1)
;
\path[-]
(m-1-3)	edge[double equal sign distance, thin]	node	{} 	(m-2-3)
(m-1-4) edge[double equal sign distance, thin]	node[auto]	{}	(m-2-4);
\end{tikzpicture}\label{figure:reduction-lie-algebra}\end{equation}
By the Four Lemma (on epimorphisms), in order to prove the surjectivity of the map $\eta_{r,m}:\CC_{\mfr{g}_r}(x_r)\to \CC_{\mfr{g}_m}(x_m)$, it suffices to show that the restricted map $\eta_{r,m}:\CC_{\mfr{g}^{m-1}_r}(x_r)\to \CC_{\mfr{g}_m^{m-1}}(x_m)$ is surjective. This follows from the commutativity of the square in \eqref{figure:square-lie}, in which the maps on the top and bottom rows are given the $\oo$-module isomorphism $y\mapsto \pi^{m-1}y$ (cf. \autoref{lem:verschiebung}), and the map on the left column is surjective by the base of induction.
\begin{equation}
\begin{tikzpicture}
\matrix (m) [matrix of math nodes, row sep=1.5em, column sep=3em, minimum width=2em,nodes=
        {minimum height=2em, 
        text height=1.5ex,
        text depth=.25ex,
        anchor=center}]
{\CC_{\mfr{g}_{r-m+1}}(x_{r-m+1})&\CC_{\mfr{g}^{m-1}_r}(x_r)\\
\CC_{\mfr{g}_1}(x_1)&\CC_{\mfr{g}_m^{m-1}}(x_m)\\
}
;
\path[->,font=\scriptsize]
(m-1-1)	edge[thin]	node[auto]	{$\sim$}	(m-1-2)
(m-2-1)	edge[thin]	node[auto]	{$\sim$}	(m-2-2)
(m-1-2)	edge[thin]	node[right]	{$\eta_{r,m}$}	(m-2-2)
;

\path[->>,font=\scriptsize]
(m-1-1)	edge[thin]	node[left]	{$\eta_{r-m+1,1}$}	(m-2-1)
;
\end{tikzpicture}\label{figure:square-lie}\end{equation}
\item  In the current setting, one invokes \autoref{lem:lang} in order to prove the induction base $m=1$. The case $m>1$ is handled in a manner completely analogous to the first case, applying the Four Lemma for a suitable diagram of groups. The main difference from the previous case is that in proving the surjectivity of the map $\eta_{r,m}:\CC_{G_r^{m-1}}(x_r)\to \CC_{G_m^{m-1}}(x_m)$, one considers the commutative square in \eqref{figure:square-groups}
in which the leftmost vertical arrow is shown to be surjective in the previous case, and the horizontal arrows are given by the suitable Cayley maps. Note that the fact that the top horizontal arrow in \eqref{figure:square-groups} is not necessarily a group homomorphism does not affect the proof of the assertion.
\end{list}
\begin{equation}
\begin{tikzpicture}
\matrix (m) [matrix of math nodes, row sep=1.5em, column sep=3em, minimum width=2em,nodes=
        {minimum height=2em, 
        text height=1.5ex,
        text depth=.25ex,
        anchor=center}]
{\CC_{\mfr{g}^{m-1}_r}(x_{r})&\CC_{G^{m-1}_r}(x_r)\\
\CC_{\mfr{g}^{m-1}_m}(x_m)&\CC_{G_m^{m-1}}(x_m)\\
}
;
\path[->,font=\scriptsize]
(m-1-1)	edge[thin]	node[auto]	{$\cay_{r}$}	(m-1-2)
(m-2-1)	edge[thin]	node[auto]	{$\cay_{m}$}	(m-2-2)
(m-1-2)	edge[thin]	node[right]	{$\eta_{r,m}$}	(m-2-2)
;

\path[->>,font=\scriptsize]
(m-1-1)	edge[thin]	node[left]	{$\eta_{r,m}$}	(m-2-1)
;
\end{tikzpicture}\label{figure:square-groups}\end{equation}
\end{proof}

\begin{proof}[Proof of \autoref{theo:inverse-lim}]\begin{list}{\arabic{list}.}{\usecounter{list}\setlength{\leftmargin}{0pt}\setlength{\itemsep}{2pt}\setlength{\labelwidth}{-5pt}}
\item 
Given $g_r\in \CC_{G_r}(x_r)$ one inductively invokes~\autoref{lem:surjective-r-to-m} to construct a converging sequence $(g_m)_{m\ge r}$ such that $g_m\in \CC_{G_m}(\eta_m(x))$ and such that $\eta_{m',m}(g_{m'})=g_m$ for all $m'\ge m\ge r$. The limit $g=\lim_m g_m$ is easily verified to be an element of $\CC_{G}(x)$, which is mapped by $\eta_r$ to $g_r$. %A similar process is used in order to prove the assertion regarding Lie-algebra centralizers.

\item By \autoref{theo:properties}, it suffices to consider the case where $x_1=\eta_{1}(x)\in\g(\kk)$ is regular. By \cite[(2.5.2)]{McNinch}, under this assumption, we have that \[\dim\CC_{\GG\times K^{\alg}}(x)=\dim\left(\CC_{\GG\times\OO}(x)\times K^{\mrm{unr}}\right)\le\dim\left(\CC_{\GG\times\OO}(x)\times \bkk\right)=\dim\CC_{\GGamma_1}(x_1)=n,\]
as $\CC_{\GG\times\OO}(x)\times K^{\rm unr}$ and $\CC_{\GG\times\OO}(x)\times\bkk$ are, respectively, the generic and special fiber of $\CC_{\GG\times\OO}(x)$.  On the other hand, by \cite[3.5, Proposition~1]{SteinbergConjugacy}, the minimum value of centralizer dimension of an element of $\mfr{g}$ is $n=\rk(\GG)$. Hence, $x$ is regular.
\end{list}
\end{proof}
\iffalse
\begin{rem} Note that the converse of \autoref{lem:regular-lifts} is in general false. That is to say, if $x\in\mfr{g}$ is regular, it is rarely the case that the reduction of $x$ to any of the congruence quotients $\mfr{g}_r$ is regular. As concrete example, consider the case $x=\smat{1+\pi&0\\0&1}\in\mfr{gl}_2(\oo)$ and its coordinatewise reduction modulo $\pp$.
\end{rem}
\fi

Finally, we deduce \autoref{corol:fin-index-abelian}.
\begin{proof}[Proof of \autoref{corol:fin-index-abelian}]

The regularity of $x$ in $\mfr{g}$, and \autoref{propo:centralizer-of-reg-over-bkk} (applied for $L=K^{\alg}$), imply that the centralizer of $x$ in $\GG(K^\alg)$ is an abelian group. In particular, it follows from this that the group $\CC_{G}(x)$ is abelian as well, and consequently, by \autoref{theo:inverse-lim}.(1), so are its quotient groups $\CC_{G_r}(x_r)$ for all $r\in\dbN$. 
\end{proof}

\subsection{Regular characters}\label{subsection:reg-characters} At this point, our description of the regular elements of the Lie-algebras $\mfr{g}_r$ is sufficient in order to initiate the description of regular characters of $G$ and to prove \autoref{mainthm:enumeration+dimension} and \autoref{corol:reg-zeta-function}. To do so, we prove the following variant of \cite[Theorem~3.1]{KOS}.

\begin{theo}\label{theo:charcaters-of-Grm} Let $\Omega\subseteq\mfr{g}_1$ be a regular orbit and let $r\in\dbN$ and $m=\lfloor\frac{r}{2}\rfloor$.
\begin{enumerate}
\item The set $\irr(G^m_r\mid\Omega)$ of characters of $G^m_r=\ker(G_r\to G_m)$ which lie above the regular orbit $\Omega$ consists of exactly $q^{n(r-m-1)}$ orbits for the coadjoint action of $G_r$.
\item Given a character $\sigma\in \irr(G^m_r\mid \Omega)$, the set of irreducible characters of $G_r$ whose restriction to $G^m_r$ has $\sigma$ as a constituent is in bijection with the Pontryagin dual of $\CC_{G_m}(x_m)$, for $x_m\in\mfr{g}_m$ any element such that $\eta_{m,1}(x_m)\in \Omega$.
\item Any such character $\sigma\in\irr(G^m_r\mid\Omega)$ extends to its inertia group $I_{G_r}(\sigma)$. In particular, each such extension induces to a regular character of $G_r$. 

\end{enumerate}

\end{theo}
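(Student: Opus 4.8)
The plan is to follow the strategy of orbit method / Clifford theory that is standard for this kind of computation, using the Cayley map to reduce statements about characters of congruence subgroups to statements about the Lie algebra, and then importing the structural facts about regular centralizers proved earlier in \autoref{theo:properties}, \autoref{theo:inverse-lim} and \autoref{corol:fin-index-abelian}.

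For part (1): since $2m \ge r$, the Cayley map \eqref{cayley-property-1} gives a $G_r$-equivariant isomorphism of abelian groups $\mfr{g}^m_r \xrightarrow{\sim} G^m_r$ (intertwining the adjoint and conjugation actions by \eqref{cayley-property-2}), so $\irr(G^m_r)$ is identified $G_r$-equivariantly with $\widehat{\mfr{g}^m_r}$. Using the trace form and the Verschiebung-type map $v^*_{r,m}$ of \autoref{lem:verschiebung}, the Pontryagin dual $\widehat{\mfr{g}^m_r}$ is identified with $\mfr{g}_{r-m}$, with the coadjoint action of $G_r$ corresponding to the adjoint action through $\eta_{r,r-m}$. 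Under this identification $\irr(G^m_r \mid \Omega)$ corresponds to the set of $y \in \mfr{g}_{r-m}$ whose reduction $\eta_{r-m,1}(y)$ lies in $\Omega$, and the $G_r$-orbits correspond to $G_{r-m}$-adjoint orbits. So I must count the adjoint $G_{r-m}$-orbits in $\eta_{r-m,1}^{-1}(\Omega)$. The fiber $\eta_{r-m,1}^{-1}(\Omega)$ has cardinality $\abs{\Omega}\cdot q^{(r-m-1)\dim\mfr{g}_1}$; each orbit of an element $y$ with $\eta_{r-m,1}(y)=x_1\in\Omega$ has size $\abs{G_{r-m}}/\abs{\CC_{G_{r-m}}(y)}$, and by \autoref{corol:surjective-Gm} together with the surjectivity statements of \autoref{lem:surjective-r-to-m}, plus \autoref{theo:properties}.(1), one computes $\abs{\CC_{G_{r-m}}(y)} = \abs{\CC_{G_1}(x_1)} \cdot q^{(r-m-1)n}$. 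Combining the orbit-stabilizer count with $\abs{G_{r-m}} = \abs{G_1}\cdot q^{(r-m-1)\dim\mfr{g}_1}$ and $\abs{\Omega}=\abs{G_1}/\abs{\CC_{G_1}(x_1)}$, the number of orbits comes out to $q^{n(r-m-1)}$, as claimed.

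For part (2): fix $\sigma \in \irr(G^m_r\mid\Omega)$ corresponding (under the identifications above) to $G_r\cdot y$ with $\eta_{r-m,1}(y)\in\Omega$, and let $x_m = \eta$(reduction of $y$ to level $m$). The inertia group $I_{G_r}(\sigma)$ is the preimage in $G_r$ of $\CC_{G_m}(x_m)$ under $\eta_{r,m}$ — this is where the hypothesis $m=\lfloor r/2\rfloor$ is used, via the standard stabilizer computation for characters of congruence quotients (the stabilizer of the linear functional modulo $G^m_r$ is exactly the reduction-$m$ centralizer, because $[\mfr{g}^m_r,\mfr{g}^m_r]\subseteq \mfr{g}^{2m}_r = 0$ when $2m\ge r$ — here one may need $2m \le r$ as well, i.e. the case $r$ even is clean and $r$ odd needs the usual Heisenberg argument, but in all cases the index $2m\ge r$ suffices for the extension step). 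Granting part (3) (that $\sigma$ extends to $I_{G_r}(\sigma)$), the number of irreducible characters of $G_r$ lying above $\sigma$ equals the number of irreducible characters of $I_{G_r}(\sigma)/G^m_r$ by Clifford theory and Gallagher's theorem; this quotient is $\CC_{G_m}(x_m)$, which is abelian by \autoref{corol:fin-index-abelian}, hence its irreducible characters are its Pontryagin dual. This gives the asserted bijection.

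For part (3) — which I expect to be \textbf{the main obstacle} — I need to show $\sigma$ extends to $I_{G_r}(\sigma)$. The clean case is $r$ even, $r=2m$: then $G^m_r$ is abelian (since $[\mfr{g}^m_{2m},\mfr{g}^m_{2m}]=0$), $\sigma$ is a linear character, and extending a linear character of a normal subgroup to a bigger group whose quotient acts trivially on it is automatic once one checks the relevant obstruction cohomology class in $H^2(I/G^m_r, \dbC^\times)$ vanishes; here this follows because $I/G^m_r \simeq \CC_{G_m}(x_m)$ is abelian and, more to the point, one can exhibit the extension directly using the Cayley parametrization — the stabilizer acts on the Heisenberg-type data through an abelian quotient and the cocycle is a coboundary. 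For $r$ odd, $r=2m+1$: $G^m_r/G^{m+1}_r$ is a nontrivial central extension and $\sigma$ restricted to $G^{m+1}_r$ is a linear character $\psi$; $\sigma$ is then the unique character above $\psi$ of a Heisenberg-type group, and one extends in two stages — first extend $\psi$ from $G^{m+1}_r$ to its inertia group (abelian quotient, same argument as above), then use that the Heisenberg group associated to a regular element has a maximal isotropic subgroup on which $\psi$ extends, and the abelianness of $\CC_{G_m}(x_m)$ together with the fact that it surjects onto the relevant symplectic group's stabilizer (which here is everything, by regularity and \autoref{corol:fin-index-abelian}) lets one glue these extensions. The key input making all of this work — and the precise point where the regularity of $\Omega$ enters essentially — is that $\CC_{G_m}(x_m)$ is \emph{abelian}, so that no genuinely nonabelian obstruction can arise; this is exactly \autoref{corol:fin-index-abelian}. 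Once $\sigma$ extends to $I_{G_r}(\sigma)$, that extension is irreducible, and inducing it to $G_r$ yields an irreducible character by the standard Clifford correspondence; that this induced character is regular is immediate from the definition, since its residual orbit is $\Omega$ by construction.
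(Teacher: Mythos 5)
Your proposal correctly reproduces the even‐case count and the right structural inputs (trace duality, Verschiebung, surjectivity of centralizer reduction, abelianness of $\CC_{G_m}(x_m)$, Clifford + Gallagher), but the opening step of part (1) is false for $r$ odd, and this error propagates to parts (2) and (3).

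You write \emph{``since $2m\ge r$, the Cayley map gives a $G_r$-equivariant isomorphism of abelian groups $\mfr{g}^m_r\to G^m_r$''}, with $m=\lfloor r/2\rfloor$. When $r$ is odd this fails: $2m=r-1<r$, so $(G^m_r,G^m_r)\subseteq G^{2m}_r=G^{r-1}_r\neq 1$ and $G^{\lfloor r/2\rfloor}_r$ is not abelian (likewise $[\mfr{g}^m_r,\mfr{g}^m_r]\neq 0$). Consequently $\irr(G^m_r)$ is not identified with $\widehat{\mfr{g}^m_r}$, the set $\irr(G^m_r\mid\Omega)$ has non-linear members, and the orbit count you set up in $\mfr{g}_{r-m}$ does not directly enumerate the $G_r$-orbits in $\irr(G^m_r\mid\Omega)$. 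Your formula happens to give the right number, but by coincidence of the numerology, not by the stated argument. The same false inequality is invoked in part (2) to compute the inertia group.

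The paper works instead with the genuinely abelian congruence subgroup $G^{\lceil r/2\rceil}_r$ (so that $\exp(x)=1+x$ is a $G_r$-equivariant group isomorphism), sets up the bijection $\Phi:\mfr{g}_{\lfloor r/2\rfloor}\to\irr(G^{\lceil r/2\rceil}_r)$, counts $G_{\lfloor r/2\rfloor}$-orbits in $\eta^{-1}(\Omega)$, and for $r$ odd passes to $\irr(G^{\lfloor r/2\rfloor}_r)$ via Heisenberg lifts (\autoref{lem:heisenberg}), each character of the inner layer admitting exactly $q^n$ lifts by regularity. You do gesture at this in part (3) (``$r$ odd needs the usual Heisenberg argument''), but it is equally required in part (1) and (2), where your write-up simply uses the false abelianness of $G^{\lfloor r/2\rfloor}_r$. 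Additionally, the inertia group identity you need is $I_{G_r}(\vartheta)=G^{\lceil r/2\rceil}_r\cdot\CC_{G_r}(\hat y_r)$ for a character $\vartheta$ of $G^{\lceil r/2\rceil}_r$ — again the larger, not smaller, half-level group — proved via \autoref{lem:surjective-r-to-m}. The extension in part (3) then exploits that $I_{G_r}(\sigma)$ is generated by two abelian subgroups, one normal in $G_r$ (by \eqref{equation:inertia} and \autoref{corol:fin-index-abelian}), following the explicit construction in \cite[\S 3.5]{KOS}, rather than a cohomological obstruction argument.
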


Note that the first assertion of \autoref{mainthm:enumeration+dimension} follows from Assertions (1) and (2) of \autoref{theo:charcaters-of-Grm} and \autoref{corol:surjective-Gm}. The second assertion of \autoref{mainthm:enumeration+dimension} follows from the Assertion (3) of \autoref{theo:charcaters-of-Grm} and \eqref{equation:inertia} below.

The proof of \autoref{theo:charcaters-of-Grm} follows the same path as \cite[\S~3]{KOS}. For the sake of brevity, rather then rehashing the proof appearing in great detail in \textit{loc. cit.}, our focus for the remainder of this section would be on setting up the necessary preliminaries and state the necessary modification required in order to adapt the construction of \cite{KOS} to the current setting.

Recall that the group $G=\GG(\oo)$ and $\mfr{g}=\g(\oo)$ are naturally embedded in the matrix algebra $\matr_N(\oo)$ (see~\autoref{subsection:group-notation}). Similarly, the congruence quotients $G_r$ and $\mfr{g}_r$ are embedded in $\matr_{N}(\oo_r)$. From here on, all computation are to be understood in the framework of the embedding of the given groups and Lie-rings in their respective matrix algebras. 
\iffalse
\begin{rem}\label{rem:no-twist}
As mentioned in \autoref{subsubsection:res-orbit}, the isomorphism $x\mapsto e^{\pi^\ell x}:\mfr{g}_1\to G^\ell/G^{\ell+1}$ ($\ell+1$) intertwines the action of $G_1$ on $G^\ell/G^{\ell+1}$ which is given by reduction of the conjugation action of $G$ on its congruence subgroups, and with the \textit{twist} of the adjoint action of $G_1$ on $\mfr{g}_1$ by an automrphism of $G_1$, induced from a field automorphism of $\kk$. This twist, while mostly implicit in our analysis, is imperative in order to ensure that this isomorphism is given as the restriction of an algebraic isomorphism, defined over $\kk$, of the $\kk$-varieties $\g(\bkk)$ and $\GGamma^{\ell}_{\ell+1}(\bkk)$. An analogous phenomenon occurs when comparing the action of the group $\GGamma_{r-m}(\kk)=G_{r-m}$ on its subquotient $G^m/G^r$ and on the Lie-algebra $\mfr{g}_{r-m}$, for any $m\le r\le 2m$. 

For the remainder of this section, our interest in this section 

\color{red}NOT YET REWRITTEN

%Furthermore, as our interest in this section is in the abstract structure of the groups $G_r$ and Lie-rings $\mfr{g}_r$, as mentioned in the introduction, we may ignore issues arising from a possible twist of the action of $G_r$ on $\mfr{g}_r$.
\end{rem}
\fi

\subsubsection{Duality for Lie-rings} 

The Lie-algebra $\mfr{g}=\g(\oo)\subseteq \matr_N(\oo)$ is endowed with a symmetric bilinear $G(\oo)$-invariant form 
\[\kappa:\mfr{g}\times\mfr{g}\to\oo,\quad (x,y)\mapsto \Tr(xy).\]
Note that, by the assumption $p=\Char(\kk)$ is odd, the form $\kappa$ reduces to a non-degenerate form on $\mfr{g}_1$ (see \cite[Lemma~5.3]{SteinbergSpringer}), and hence $\set{x\in \mfr{g}\mid\kappa(x,y)\in \pp\text{ for all }y\in\mfr{g}}=\pi\mfr{g}$. Fixing a non-trivial character $\psi:K\to\dbC^\times$ with conductor $\oo$ (see e.g. \cite[\S~5.3]{AKOVA2}), for any $r\in\dbN$, we have a well-defined map \begin{equation}
\label{equation:Lie-ring-duality}
\mfr{g}_r\to \pd{\mfr{g}_r},\quad y\mapsto\varphi_y\text{ where }\varphi_y(x)=\psi(\pi^{-r}\kappa(x,y)).
\end{equation}
Furthermore, by the assumption $\pi^{-1}\oo\not\subseteq\ker(\psi)$, the map above induces a $G_r$-equivariant bijection of $\mfr{g}_r$ with its Pontryagin dual $\pd{\mfr{g}_r}$.

\subsubsection{Exponential and logarithm}

Let $m,r\in\dbN$ with $\frac{r}{3}\le m\le r$. The truncated exponential map, defined by
\begin{equation}\notag
\label{equation:exp-defi}
\exp(x)=1+x+\frac{1}{2}x^2\quad(x\in\mfr{g}^m_r),
\end{equation}
is a well-defined bijection of $\mfr{g}^m_r$ onto the group $G^m_r$, and is equivariant with respect to the adjoint action of $G_r$, with an inverse map given by 
\begin{equation}\label{equation:log-defi}\notag
\log(1+x)=x-\frac{1}{2}x^2\quad(1+x\in G^m_r).
\end{equation}

In the case where $\frac{r}{2}\le m$, the exponential map is simply given by $\exp(x)=1+x$ and defines an isomorphism of abelian groups $\mfr{g}^m_r\xrightarrow{\sim}G^m_r$.% The following lemma lists some basic formulas regarding the truncated exponential and logarithm map 
In the more general setting we have the following. %The proofs of these formulas are standard, and can be deduced in the current case by direct computation.
\begin{lem}\label{lem:ad-relations} Let $m,r\in\dbN$ be such that $\frac{r}{3}\le m\le r$. For any $x,y\in\mfr{g}^m_r$,
\begin{equation}\label{equation:ad-log-exp}\notag
\log\left(\left(\exp(x),\exp(y)\right)\right)=\left[x,y\right],
\end{equation}
where $(\exp(x),\exp(y))$ denotes the group commutator of $\exp(x)$ and $\exp(y)$ in $G^m_r$. 
Furthermore, the following truncated version of the Baker-Campbell-Hausdorff formula holds
\begin{equation}\notag
\log\left(\exp(x)\cdot\exp(y)\right)=x+y+\frac{1}{2}\left[x,y\right].
\end{equation}
\end{lem}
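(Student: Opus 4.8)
The plan is to carry out everything by explicit computation inside $\matr_N(\oo_r)$, using two bookkeeping facts that follow from the hypothesis $\tfrac r3\le m$. Under the embedding $\mfr{g}^m_r\subseteq\pi^m\matr_N(\oo_r)$, a product of any three elements of $\mfr{g}^m_r$ lies in $\pi^{3m}\matr_N(\oo_r)=0$; hence every monomial of total degree $\ge 3$ in $x$ and $y$ vanishes, and likewise the bracket of a product of two elements of $\mfr{g}^m_r$ with any further element of $\mfr{g}^m_r$ vanishes. A product of two elements of $\mfr{g}^m_r$ need not vanish (it lies in $\pi^{2m}\matr_N(\oo_r)$), but it still lies in $\pi^m\matr_N(\oo_r)$; combined with the fact that $\mfr{g}$ is a Lie subalgebra of $\matr_N$, this ensures that all intermediate elements produced below again lie in $\mfr{g}^m_r$, so that the bijectivity of $\exp$ on $\mfr{g}^m_r$ recalled above may be reapplied freely.

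I would first establish the truncated Baker--Campbell--Hausdorff identity. Expanding $\exp(x)\exp(y)=\bigl(1+x+\tfrac12x^2\bigr)\bigl(1+y+\tfrac12y^2\bigr)$ and discarding all monomials of total degree $\ge 3$ in $x,y$ leaves $\exp(x)\exp(y)=1+z$ with $z=(x+y)+\bigl(\tfrac12x^2+xy+\tfrac12y^2\bigr)\in\mfr{g}^m_r$. Then $\log(1+z)=z-\tfrac12z^2$, with $z^2\equiv(x+y)^2$ modulo monomials of degree $\ge 3$, and a one-line cancellation gives $\log\bigl(\exp(x)\exp(y)\bigr)=x+y+\tfrac12[x,y]$. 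Specialising $y=-x$ records $\exp(-x)=\exp(x)^{-1}$, which I use next. (This computation is uniform in $m$: when $2m\ge r$ the quadratic terms simply drop out on their own.)

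Finally I would deduce the commutator formula from BCH applied three times. Writing $(\exp(x),\exp(y))=\exp(x)\exp(y)\exp(-x)\exp(-y)$, the previous step gives $\exp(x)\exp(y)=\exp(u)$ and $\exp(-x)\exp(-y)=\exp(v)$ with $u=x+y+\tfrac12[x,y]$ and $v=-x-y+\tfrac12[x,y]$ in $\mfr{g}^m_r$; applying BCH once more yields $(\exp(x),\exp(y))=\exp\bigl(u+v+\tfrac12[u,v]\bigr)$. Here $u+v=[x,y]$, while a direct expansion gives $[u,v]=[x+y,[x,y]]\in\pi^{3m}\matr_N(\oo_r)=0$; since also $[x,y]^2\in\pi^{4m}\matr_N(\oo_r)=0$, this reads $(\exp(x),\exp(y))=\exp([x,y])=1+[x,y]$, so applying $\log$ gives $\log\bigl((\exp(x),\exp(y))\bigr)=[x,y]$. (Alternatively one can expand the four-fold product directly and keep only the part of degree $\le 2$: the degree-$0$ part is $1$, the degree-$1$ part cancels, and the degree-$2$ part collapses to $[x,y]$.) I expect no genuine obstacle here beyond the bookkeeping: the only points needing care are checking at each stage that the intermediate elements ($z$, $u$, $v$, $[x,y]$) lie in $\mfr{g}^m_r$, so that $\exp$ and BCH can be reapplied, and keeping honest track of which monomials are killed — the hypothesis $\tfrac r3\le m$ being used precisely to guarantee that every monomial of degree $\ge 3$ is $0$ in $\matr_N(\oo_r)$.
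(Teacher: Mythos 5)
The paper states that the formulae ``may be verified by direct computation; their proof is omitted,'' so your computation fills that gap and is, in spirit, the intended argument; the congruence bookkeeping using $3m\ge r$, the truncated BCH expansion, and the deduction of the commutator identity from three applications of BCH are all sound. One parenthetical claim is wrong, though ultimately harmless: the element $z=(x+y)+\bigl(\tfrac12x^2+xy+\tfrac12y^2\bigr)$ does \emph{not} lie in $\mfr{g}^m_r$. Indeed $(x^2)^\inv=(x^\inv)^2=(-x)^2=x^2$, so $x^2$ is symmetric for the form rather than anti-symmetric; rewriting $z=(x+y)+\tfrac12[x,y]+\tfrac12(x+y)^2$ shows exactly that the last summand falls outside $\mfr{g}$. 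This does not damage the argument, because what you actually need is only that $\exp(x)\exp(y)=1+z$ lies in the \emph{group} $G^m_r$ (true, being a product of two elements of $G^m_r$), so that the explicit formula $\log(1+z)=z-\tfrac12z^2$ computes its preimage under $\exp$; the resulting value $x+y+\tfrac12[x,y]$ then lands in $\mfr{g}^m_r$ automatically, as the image of $\log$ must. The later intermediate elements $u$, $v$ and $[x,y]$ that you feed back into BCH and $\exp$ are genuine linear/bracket combinations and so honestly lie in $\mfr{g}^m_r$. With that one sentence corrected, the proof is complete.
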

The formulae in \autoref{lem:ad-relations} may be verified by direct computation; their proof is omitted.

\subsubsection{Characters of $G^{\lfloor{r/2}\rfloor}_r$}\label{subsubsection:character-Grm}

Fix $r\in\dbN$ and put ${m'}=\lfloor\frac{r}{2}\rfloor$ and $m=\lceil\frac{r}{2}\rceil=r-{m'}$. As mentioned above, the exponential map on $\mfr{g}^{m}_r$ is given by $x\mapsto 1+x:\mfr{g}^{m}_r\to G^{m}_r$ and defines a $G_r$-equivariant isomorphism of abelian groups. Taking into account the module isomorphism $x\mapsto \pi^{m} x:\mfr{g}_{{m'}}\to \mfr{g}_{r}^{m}$ and \eqref{equation:Lie-ring-duality} we obtain a $G_r$-equivariant bijection
\begin{equation}\label{equation:Phi-definition}
 \Phi\quad:\quad\mfr{g}_{m'}\to \pd{\mfr{g}_{m'}}\to \pd{\mfr{g}_r^{m}}\to\irr(G^{m}_r),
\end{equation}
 given explicitly by $\Phi(y)(1+x)=\varphi_y(\pi^{-m}x)$, for $y\in\mfr{g}_{m'}$ and $x\in\mfr{g}^{m}_r$. In the case where $r=2m'$ deduce the following.

\begin{lem}\label{lem:half-level-characters-even-case} Assume $r=2m$ is even. The map $\Phi$ defined in \eqref{equation:Phi-definition} is a $G_r$-equivariant bijection of $\irr(G^{m'}_r)$ and $\mfr{g}_{m'}$. 
\end{lem}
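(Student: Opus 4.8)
The plan is to read the statement off directly from the way $\Phi$ was built in \eqref{equation:Phi-definition}. When $r=2m$ is even we have $m=m'=\lceil r/2\rceil=\lfloor r/2\rfloor$, so $G^{m}_r=G^{m'}_r$ and $\mfr{g}_{r-m}=\mfr{g}_{m'}$, and it therefore suffices to check that each of the three maps composing $\Phi$ is a $G_r$-equivariant bijection. I would carry this out arrow by arrow, keeping in mind that the adjoint action of $G_r$ on $\mfr{g}_{m'}$, and its conjugation action on $G^{m'}_r$, both factor through $\eta_{r,m'}\colon G_r\twoheadrightarrow G_{m'}$, so that $G_r$-equivariance is the same as $G_{m'}$-equivariance throughout.

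For the third arrow I would first record the consequence of $2m\ge r$ already used in \autoref{subsubsection:character-Grm}: every $x\in\mfr{g}^m_r\subseteq\pi^m\matr_N(\oo_r)$ satisfies $x^2\in\pi^{2m}\matr_N(\oo_r)=0$, so the group law on $G^m_r$ is $(1+x)(1+y)=1+x+y$ and $\exp\colon x\mapsto 1+x$ is an isomorphism of the abelian group $(\mfr{g}^m_r,+)$ onto $G^m_r$; it is $G_r$-equivariant because conjugation in $\matr_N(\oo_r)$ gives $g(1+x)g^{-1}=1+\Ad(g)x$ for $g\in G_r$. Hence precomposition with $\exp^{-1}$ is a $G_r$-equivariant bijection $\pd{\mfr{g}^m_r}\xrightarrow{\sim}\pd{G^m_r}=\irr(G^m_r)$, the last identification being legitimate precisely because $G^m_r$ is abelian. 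For the middle arrow, the $\oo_r$-module isomorphism $x\mapsto\pi^m x\colon\mfr{g}_{r-m}\to\mfr{g}^m_r$ commutes with the adjoint action of $G_r$, so its Pontryagin dual (and its inverse) are $G_r$-equivariant bijections, and $r-m=m'$. For the first arrow, $\Char(\kk)\ne 2$ together with \cite[Lemma~5.3]{SteinbergSpringer} gives that $\kappa$ reduces to a non-degenerate symmetric $G_{m'}$-invariant form on $\mfr{g}_{m'}$, while the conductor of $\psi$ being $\oo$ gives $\pi^{-m'}\oo\not\subseteq\ker\psi$; so \eqref{equation:Lie-ring-duality} taken with $m'$ in place of $r$, namely $y\mapsto\varphi_y$, is a bijection $\mfr{g}_{m'}\to\pd{\mfr{g}_{m'}}$ intertwining the (co)adjoint actions. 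Composing the three yields the $G_r$-equivariant bijection $\Phi\colon\mfr{g}_{m'}\to\irr(G^{m'}_r)$, with the explicit formula $\Phi(y)(1+x)=\varphi_y(\pi^{-m}x)$ recorded after \eqref{equation:Phi-definition}.

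I do not expect a genuine obstacle: the lemma is a bookkeeping corollary of the constructions of \autoref{subsubsection:character-Grm}. The only point worth a word of care is the identification $\irr(G^m_r)=\pd{G^m_r}$, i.e. that $G^m_r$ is abelian — which is exactly what the hypothesis $r=2m$, equivalently $2m\ge r$, provides, as recalled just before \autoref{lem:ad-relations}; for odd $r$ this fails and one needs the finer analysis of \autoref{theo:charcaters-of-Grm} instead.
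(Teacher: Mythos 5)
Your proof is correct and follows exactly the route the paper intends: the paper simply presents the lemma as an immediate observation from the construction of $\Phi$ in \eqref{equation:Phi-definition} (noting that $m=m'$ when $r=2m$), and leaves the arrow-by-arrow verification implicit. You have filled in precisely those details — the abelianness of $G^m_r$ (and hence $\irr(G^m_r)=\pd{G^m_r}$), the $G_r$-equivariance of each of the three maps, and the correct identification of the key hypothesis $2m\ge r$ — all of which match the paper's setup in \autoref{subsubsection:character-Grm}.
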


In the case where $r=2m'+1$, the irreducible characters of $G^{m'}_r$ are classified in terms of their restriction to $G^{m}_r$, using the method of Heisenberg lifts, which we briefly recall here. For a more elaborate survey we refer to \cite[\S~3.2]{KOS} and \cite[Ch. 8]{BushnellFroelich}. 

Let $\vartheta\in\irr(G^{m}_r)$ be given, and let $y\in\mfr{g}_{m'}$ be such that $\vartheta=\Phi(y)$. Note that, as the group $G^{m}_r$ is central in $G^{m'}_r$ and $(G^{m'}_r,G^{m'}_r)\subseteq G^{m}_r$, the following map is a well defined alternating $\dbC^\times$-valued bilinear form
\[B_{\vartheta}:G^{m'}_{r}/G^{m}_r\times G^{m'}_r/G^{m}_r\to \dbC^\times ,\quad B_{\vartheta}(x_1 G^{m}_r,x_2G^{m}_r)=\vartheta\left(\left(x_1 ,x_2\right)\right).\]
Using the definition of $\Phi(y)=\vartheta$ and the explicit isomorphism $x\mapsto \exp(\pi^rx):\mfr{g}_1\to G^{m'}_{m}=G^{m'}_r/G^{m}_r$, we obtain an alternating bilinear form $\beta_y:\mfr{g}_1\times \mfr{g}_1\to \kk$ given by $\beta_y(x_1,x_2)=\Tr(\eta_{{m'},1}(y)\cdot [x_1,x_2])$, such that the diagram in \eqref{equation:g1-Grr'-square} commutes.
\begin{equation}\label{equation:g1-Grr'-square}\begin{tikzpicture}
\matrix (m) [matrix of math nodes, row sep=1.5em, column sep=.25em, minimum width=2em,nodes=
        {minimum height=2em, 
        text height=1.5ex,
        text depth=.25ex,
        anchor=center}]
{G^{m'}_{m}&\times&G^{m'}_{m}&~&~&\dbC^\times\\
\mfr{g}_1&\times&\mfr{g}_1&~&~&\kk
\\}
;
\path[->,font=\scriptsize]
(m-2-1)	edge[thin]	node[right]	{\rotatebox{90}{$\sim$}}	(m-1-1)
(m-2-3)	edge[thin]	node[right]	{\rotatebox{90}{$\sim$}}	(m-1-3)
(m-1-3)	edge[thin]	node[above]	{$B_\vartheta$}				(m-1-6)
(m-2-3)	edge[thin]	node[above]	{$\beta_y$}					(m-2-6)
(m-2-6)	edge[thin]	node[right]	{$\psi(\pi^{-1}(\cdot))$}					(m-1-6)
;
\end{tikzpicture}\end{equation}

A short computation, using the non-degeneracy of the trace and the definition of $\beta_y$, shows that the radical of this form coincides with the centralizer sub-algebra $\CC_{\mfr{g}_1}(\eta_{{m'},1}(y))$ of $\mfr{g}_1$ (see \cite[p.~125]{KOS}). Let $\mfr{R}_y$ and $R_y$ denote the preimages of $\mfr{m}_y$ in $\mfr{g}_r^{m'}$ and in $G^{m'}_r$ under the associated quotient maps. Let $\mfr{m}_y\subseteq \mfr{j}\subseteq\mfr{g}_1$ be a maximal subspace such that $\beta_y(\mfr{j},\mfr{j})=\set{0}$ (i.e. such that $\mfr{j}/\mfr{m}_y$ is a maximal isotropic subspace of $\mfr{g}_1/\mfr{m}_y$), and let $\mfr{J}\subseteq \mfr{g}^{m'}_r$ and $J\subseteq G^{m'}_r$ be the corresponding preimages of $\mfr{j}$; see \eqref{figure:heisenberg-diagram}.

\begin{figure}[H]
\begin{equation}
\begin{tikzpicture}[description/.style={fill=white,inner sep=2pt}]

\matrix (m) [matrix of math nodes, row sep=1.5em,
column sep=2em, text height=1.5ex, text depth=0.25ex, nodes=
        {minimum height=1em, 
        text height=1.5ex,
        text depth=.25ex,
        anchor=center}]
{G^{{m'}}_r&\mfr{g}_{r}^{{m'}}&\mfr{g}_1\\
J&\mfr{J}&\mfr{j}\\
R_y&\mfr{R}_y&\mfr{m}_y\\
G_{r}^{m}&\mfr{g}_{r}^{m}&0
\\};

\path[-,font=\scriptsize]
(m-1-1)	edge[thin]	node[auto]	{}	(m-2-1)
(m-2-1)	edge[thin]	node[auto]	{}	(m-3-1)
(m-3-1)	edge[thin]	node[auto]	{}	(m-4-1)
(m-1-2)	edge[thin]	node[auto]	{}	(m-2-2)
(m-2-2)	edge[thin]	node[auto]	{}	(m-3-2)
(m-3-2)	edge[thin]	node[auto]	{}	(m-4-2)
(m-1-3)	edge[thin]	node[auto]	{}	(m-2-3)
(m-2-3)	edge[thin]	node[auto]	{}	(m-3-3)
(m-3-3)	edge[thin]	node[auto]	{}	(m-4-3)
;
\path[->,font=\scriptsize]
(m-1-1)	edge[dashed]	node[below]	{$\llog$}	(m-1-2)
(m-2-1)	edge[dashed]	node[below]	{$\llog$}	(m-2-2)
(m-3-1)	edge[dashed]	node[below]	{$\llog$}	(m-3-2)

(m-4-1)	edge[thin]	node[below]	{$\llog$} node[above]	{$\sim$}	(m-4-2)
(m-1-2)	edge[thin]	node[auto]	{}	(m-1-3)
(m-2-2)	edge[thin]	node[auto]	{}	(m-2-3)
(m-3-2)	edge[thin]	node[auto]	{}	(m-3-3)
(m-4-2)	edge[thin]	node[auto]	{}	(m-4-3)
;
\end{tikzpicture}%\caption{}
\label{figure:heisenberg-diagram}\end{equation}
\end{figure}

Let $\theta=\vartheta\circ \exp$ be the pull-back of $\vartheta$ to $\mfr{g}^{m}_r$. By virtue of the commutativity of $\mfr{R}_y$, the character $\theta'$ extends to a character of $\mfr{R}_y$ in $\abs{\mfr{R}_y:G^{m}_r}=\abs{\mfr{m}_y}$ many ways. By \autoref{lem:ad-relations}, given such an extension $\theta'\in\pd{\mfr{R}_y}$, the map $\vartheta':R_y\to\dbC^\times$ is a character of $R_y$. Thus, the character $\vartheta$ admits $\abs{\mfr{m}_y}$ many extensions to $R_y$. 

\begin{lem}\label{lem:heisenberg}
\begin{enumerate}
\item Any extension $\vartheta'\in\irr(R_y)$ of $\vartheta$ extends further to a character $\vartheta''\in\irr(J)$.
\item The induced character $\sigma=(\vartheta'')^{G^{m'}_r}$ is irreducible and is independent of the choice of extension $\vartheta''$ and of $\mfr{j}$.
\item The character $\sigma$ is the unique character of $G^{m'}_r$ whose restriction to $R_y$ contains $\vartheta'$. Furthermore, all irreducible characters of $G^{m'}_r$ which lie above $\vartheta$ are obtained in this manner.
\end{enumerate}
\end{lem}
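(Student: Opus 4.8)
The plan is to run, in the present setting, the classical Heisenberg--group (Stone--von Neumann) argument exactly as in the proof of \cite[Theorem~3.1]{KOS} and \cite[Ch.~8]{BushnellFroelich}. Throughout I take $r=2{m'}+1\ge 3$ (the case $r=1$ being vacuous), so that ${m'}\ge 1$ and the commutator filtration estimate $(G^a_r,G^b_r)\subseteq G^{a+b}_r$ gives $(G^{m'}_r,G^{m'}_r)\subseteq G^{2{m'}}_r\subseteq G^m_r$; in particular $G^m_r$ is central in $G^{m'}_r$, and the preimages $R_y$ and $J$ of the subgroups $\mfr m_y$ and $\mfr j$ of the abelian quotient $\mfr g_1=G^{m'}_r/G^m_r$ are normal in $G^{m'}_r$, with $J/R_y$ abelian. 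I also use the facts already established before the lemma: that $\Phi$ carries $B_\vartheta$ to $\beta_y$ via $\psi(\pi^{-1}(\cdot))$ (diagram \eqref{equation:g1-Grr'-square}), that the radical of $\beta_y$ equals $\mfr m_y=\CC_{\mfr g_1}(\eta_{{m'},1}(y))$, and hence that $\beta_y$ descends to a non-degenerate \emph{symplectic} form on $\mfr g_1/\mfr m_y$ in which $\mfr j/\mfr m_y$ is a Lagrangian, so that $\mfr j/\mfr m_y$ is its own perpendicular.

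\emph{Assertion (1) and the inertia computation.} I would first observe that $\vartheta'$ is trivial on $(J,J)$: for $x_1,x_2\in J$ one has $(x_1,x_2)\in G^m_r$ and $\vartheta\bigl((x_1,x_2)\bigr)=\psi\bigl(\pi^{-1}\beta_y(\bar x_1,\bar x_2)\bigr)=1$, since $\bar x_1,\bar x_2\in\mfr j$ and $\beta_y(\mfr j,\mfr j)=0$; as $\vartheta'$ restricts to $\vartheta$ on $G^m_r\supseteq(J,J)$ this gives $\vartheta'\bigl((J,J)\bigr)=1$. Hence $\vartheta'$ factors through the abelian group $J/(J,J)$, into which $R_y/(J,J)$ embeds, and an extension $\vartheta''$ exists by divisibility of $\dbC^\times$; the number of such extensions is $\abs{J:R_y}=\abs{\mfr j:\mfr m_y}$ (one may argue equivalently on the Lie side via $\log$, using $[\mfr J,\mfr J]\subseteq\mfr g^m_r$, $\beta_y(\mfr j,\mfr j)=0$ and \autoref{lem:ad-relations}). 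The single computation underlying the remaining assertions is that for $g\in G^{m'}_r$ and for $x$ in $R_y$ (resp.\ in $J$) one has $(g,x)\in G^m_r$, whence ${}^{g}\vartheta'(x)=\vartheta\bigl((g,x)\bigr)\vartheta'(x)=\psi\bigl(\pi^{-1}\beta_y(\bar g,\bar x)\bigr)\vartheta'(x)$, and similarly for $\vartheta''$. Since $\bar x\in\mfr m_y=\mathrm{rad}(\beta_y)$ whenever $x\in R_y$, this shows $\vartheta'$ is $G^{m'}_r$-invariant; and since $\mfr j/\mfr m_y$ is self-perpendicular, the stabiliser $I_{G^{m'}_r}(\vartheta'')$ is exactly $J$.

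\emph{Assertions (2) and (3).} Because $J\trianglelefteq G^{m'}_r$ with $I_{G^{m'}_r}(\vartheta'')=J$, Mackey's irreducibility criterion yields that $\sigma=(\vartheta'')^{G^{m'}_r}$ is irreducible, of degree $\abs{G^{m'}_r:J}=\abs{\mfr g_1:\mfr j}$, and a short Mackey restriction computation (using normality of $R_y$ and $J$ together with the $G^{m'}_r$-invariance of $\vartheta'$) gives $\sigma_{R_y}=\abs{G^{m'}_r:J}\cdot\vartheta'$, so $\sigma$ lies over $\vartheta'$. For uniqueness I would run the degree count over all $\chi\in\irr(G^{m'}_r)$ lying over the invariant linear character $\vartheta'$: each such $\chi$ satisfies $\chi_{R_y}=\chi(1)\vartheta'$, so $\sum_\chi \chi(1)^2=\langle(\vartheta')^{G^{m'}_r},(\vartheta')^{G^{m'}_r}\rangle=\abs{G^{m'}_r:R_y}=\abs{\mfr g_1:\mfr j}^2=\sigma(1)^2$ (the middle equality by Mackey, the penultimate because $\mfr j/\mfr m_y$ is Lagrangian); hence $\sigma$ is the \emph{only} irreducible character of $G^{m'}_r$ over $\vartheta'$. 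This gives independence of the chosen $\vartheta''$, and independence of $\mfr j$ (a second Lagrangian $\mfr j'$ produces by the same steps an irreducible character over $\vartheta'$, again $\sigma$). Finally, for exhaustion: if $\chi\in\irr(G^{m'}_r\mid\vartheta)$, then $\chi_{G^m_r}=\chi(1)\vartheta$ since $G^m_r$ is central, so every irreducible constituent $\mu$ of $\chi_{R_y}$ has $\mu_{G^m_r}=\mu(1)\vartheta$; as $\vartheta$ is trivial on $(R_y,R_y)\subseteq G^m_r$ (again $\bar x_i\in\mfr m_y$), so is $\mu$, forcing $\mu$ to be linear, hence one of the $\abs{\mfr m_y}$ extensions $\vartheta'$ of $\vartheta$; then $\chi$ lies over such a $\vartheta'$, so $\chi=\sigma$.

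\emph{Expected main obstacle.} None of the steps is deep; the content is organisational. The two places demanding care are, first, the commutator-filtration bookkeeping that legitimises the assertions ``$G^m_r$ central'', ``$R_y,J$ normal'', ``$J/R_y$ abelian'' --- all of which silently need ${m'}\ge 1$ --- and, second, faithfully transporting $B_\vartheta$ through the identifications of \eqref{equation:g1-Grr'-square} so that the purely symplectic facts (``$\mfr m_y$ is the radical of $\beta_y$'', ``$\mfr j/\mfr m_y$ is a Lagrangian'', hence $(\mfr j/\mfr m_y)^\perp=\mfr j/\mfr m_y$) may be invoked verbatim. The only genuine analytic input is the Heisenberg uniqueness, which the argument above has reduced to the one-line character-degree identity and otherwise quotes from \cite{KOS} and \cite{BushnellFroelich}.
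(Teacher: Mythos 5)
Your proof is correct and follows the same Heisenberg (Stone--von Neumann) route as the paper, which verifies the hypotheses of \cite[\S~8.3]{BushnellFroelich} and then delegates existence, irreducibility, independence and uniqueness to Proposition~8.3.3 of \textit{loc.\ cit.} You simply unpack that reference (triviality of $\vartheta'$ on $(J,J)$, inertia group equal to $J$ via self-perpendicularity of $\mfr{j}/\mfr{m}_y$, Mackey irreducibility, and uniqueness by the character-degree count $\sum_\chi\chi(1)^2=\abs{G^{m'}_r:R_y}=\sigma(1)^2$), so the underlying argument is the same, just spelled out in place of the citation.
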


\begin{proof}
The triple $(G^{m'}_r,R_y,\vartheta')$ satisfies the hypothesis of \cite[\S~8.3]{BushnellFroelich}, and the alternating bilinear form $\beta_y$ (which corresponds to $h_\chi$ in the notation of \textit{loc. cit.}) reduces to a non-degenerate form on the elementary abelian group $G^{m'}_r/R_y\simeq \mfr{g}_1/\mfr{r}_y$. The subgroup $J\subseteq G^{m'}_r$ may be identified with the group denoted in \cite[Proposition~8.3.3]{BushnellFroelich} by $G_1$, and the extension of $\vartheta'$ to an irreducible character of $J$ exists by virtue of $J/\ker(\vartheta')$ being finite and abelian. The irreducibility and independence of the choice of $J$ are shown within the proof of \cite[Proposition~8.3.3]{BushnellFroelich}, as well as uniqueness of $\sigma$ as the only irreducible character of $G^{m'}_r$ whose restriction to $R_y$ contains $\vartheta'$. The final assertion, that all characters of $G^{m'}_r$ lying above $\vartheta$ are obtained in this manner is obvious, as the restriction of any such character of $G^{m'}_r$ to $R_y$ necessarily contains an extension $\vartheta'$ of $\vartheta$.
%\renewcommand{\bar}{\overline}
%The character triple $(G^m_r,R_y,\vartheta')$ is isomorphic, in the sense of \cite[Definition~11.23]{Isaacs}, the the triple $(\bar{G^m_r},\bar{R_y},\bar{\vartheta}')$, where $\bar{G^m_r}=G^m_r/\ker\vartheta',\:\bar{R_y}=R_y/\ker \vartheta'$ and $\bar{\vartheta}'$ is linear character obtained by reducing $\vartheta$ to $\bar{R_y}$. The image Apply the argument of \cite[Proposition~8.3.3]{BushnellFroelich} for the triple $(G^m_r, R_y,\vartheta')$. 
\end{proof}

\subsubsection{Inertia subgroups in $\GG(\oo_r)$ of regular characters}

The final ingredient required in order to implement the construction of \cite{KOS} to the current setting is a  structural description of the inertia subgroup of a character of $G^{\lceil r/2\rceil}_r$ lying below a regular character of level $\ell=r+1$. As in the previous section, put ${m'}=\lfloor\frac{r}{2}\rfloor$ and $m=\lceil\frac{r}{2}\rceil$, and let $\vartheta\in\irr(G^{m}_r)$. Recall that the inertia subgroup of $\vartheta$ in $G_r$ is defined by
\begin{equation}
\label{equation:inertia-subgroup-defi}
I_{G_r}(\vartheta)=\set{g \in G_r\mid \vartheta(g^{-1}xg)=\vartheta(x)\text{ for all }x\in G^{m}_r}.
\end{equation}

By \autoref{subsubsection:character-Grm}, there exists a unique $y\in\mfr{g}_{{m'}}$ such that $\vartheta=\Phi(y)$. Moreover, letting $\hat{y}_r\in\mfr{g}_r$ be an arbitrary lift of $y_{m'}$ to $\mfr{g}_r$, we have that

%
%\begin{propo}\label{propo:inertia-structure} In the setting described above, assume $y\in\mfr{g}_m$ is regular. Let $\hat{y}\in\mfr{g}$ be an arbitrary lift of $y$ and put $\hat{y}_j=\eta_j(y)$, for any $j\in\dbN$. The inertia subgroup of $\vartheta$ is of the form
\begin{equation}
\label{equation:inertia}I_{G_r}(\vartheta)=G^{m}_r\cdot \CC_{G_r}(\hat{y}_r).\end{equation}
%\begin{rem}Note that the group $G_r$ indeed acts on the Lie-algebra $\mfr{g}_{r-m}$. Accordingly, the notation $\CC_{G_r}(y)$ is coherent for $y\in\mfr{g}_{r-m}$.
%\end{rem}
Indeed, the only non-trivial step to proving \eqref{equation:inertia} is the inclusion $\subseteq$, which from follows \autoref{lem:surjective-r-to-m}, as both hands of the equation are mapped by $\eta
_{r,m}$ onto the group $\CC_{G_{m}}(\eta_{r,m}(\hat{y}_r))$. 

\begin{proof}[Proof of \autoref{theo:charcaters-of-Grm}] 
A short computation, proves that the set $\tilde{\Omega}=\eta_{{m'},1}^{-1}(\Omega)$ consists of $q^{n({m'}-1)}$ distinct adjoint orbits for the action of $G_{m'}$, and hence for the action of $G_r$ as well. Indeed, $\tilde{\Omega}$ is a $G_{m'}$-stable set of order $\abs{\Omega}\cdot \abs{G^1_{m'}}=\abs{\Omega}q^{d(m'-1)}$, invoking the bijection $\mfr{g}^1_{m'}\to G^1_{m'}$ induced by the Cayley map, and each of the orbits $G_{m'}$-orbits in $\tilde{\Omega}$ has cardinality \[\abs{G_{m'}:\CC_{G_{m'}}(x)}=\abs{G_1:\CC_{G_1}\left(\eta_{m',1}\left(x\right)\right)}\cdot \abs{G^1_{m'}:\CC_{G^1_{m'}}(x)}=\abs{\Omega}\cdot q^{(d-n)(m'-1)},\] by \autoref{corol:surjective-Gm}, for any $x\in \tilde{\Omega}$. By the $G_r$-equivariance of the map $\Phi$, defined in \autoref{subsubsection:character-Grm}, it follows that the set $\irr(G^{m}_r\mid \Omega)$ consists of $q^{n({m'}-1)}$ coadjoint orbits of $G_r$. In the case where $r$ is even, the first assertion of \autoref{theo:charcaters-of-Grm} follows from \autoref{lem:half-level-characters-even-case}, since $m=r-m$. In the case of $r$ odd, by \autoref{lem:heisenberg}, and by regularity of the elements of $\Omega$, any character in $\irr(G_r^{m}\mid \Omega)$ extends to $G^{m'}_r$ in exactly $q^n$-many ways. Thus, the number of coadjoint $G_r$-orbits in $\irr(G^{m'}_r)$ is $q^{n({m'}-1)+n}=q^{n(r-{m'}-1)}$, whence the first assertion.

The second assertion of \autoref{theo:charcaters-of-Grm} follows from the third assertion, \eqref{equation:inertia} and \cite[Corollary~6.17]{Isaacs}. 

Lastly, for the proof of the third assertion of \autoref{theo:charcaters-of-Grm}, we refer to \cite[\S~3.5]{KOS} for the explicit construction, in the analogous case of $\GL_n(\oo)$ and $\UU_n(\oo)$, of an extension of a character $\sigma\in\irr(G^{m'}_r)$ to its inertia subgroup $I_{G_r}(\sigma)$. Note that the construction of \textit{loc. cit.} can be applied verbatim to the present setting, invoking the fact the $I_{G_r}(\sigma)$ is generated by two abelian subgroups, one of which is normal in $G_r$ (\eqref{equation:inertia} and \autoref{corol:fin-index-abelian}) in the generality of classical groups.
\end{proof}

\section{The symplectic and orthogonal groups}\label{section:classical-groups}

\subsection{Summary of section}\label{subsection:summary-classical}
In this section we compute the regular representation zeta function of classical groups of types $\msf{B}_n,\msf{C}_n$ and $\msf{D}_n$. Following \autoref{corol:reg-zeta-function}, to do so, we classify the regular orbits in the space of orbits $\Ad(G_1)\backslash \mfr{g}_1$ and compute their cardinalities, in order to obtain a formula for the Dirichlet polynomial 
\begin{equation*}
\diri_{\mfr{g}}(s)=\sum_{\Omega\in X}{\abs{G_1}}\cdot\abs{\Omega}^{-(s+1)}.
\end{equation*}

As it turns out, the cases where $\GG$ is of type $\msf{B}_n$ or $\msf{C}_n$, i.e. $\GG=\SO_{2n+1}$ or $\GG=\Sp_{2n}$, can be handled simultaneously, and are analyzed in \autoref{subsection:sp2n-so2n+1}. The case of the groups of the form $\msf{D}_n$, i.e. even-dimensional orthogonal groups, is slightly more intricate. The analysis of this case is carried out in \autoref{subsection:so2n}. The main difference between the two cases lies in the fact that regularity of an element of the Lie-algebras $\mfr{sp}_{2n}(\kk)$ and $\mfr{so}_{2n+1}(\kk)$ is equivalent to it being give by a regular matrix in $\matr_N(\kk)$; see \autoref{propo:reg-equiavlent} (also, cf. \cite[\S~5]{TakaseGeneral}). This equivalence fails to hold for even-orthogonal groups; see \autoref{lem:nilpotent-reg-glN-not-reg-soN} below. Nevertheless, in both cases, we obtain a classification of the regular orbits in the Lie-algebra $\mfr{g}_1$ in terms of the minimal polynomial of the elements within the orbit.

Recall that two matrices $x,y\in\matr_N(\kk)$ are said to be \textbf{similar} if there exists a matrix $g\in\GL_N(\kk)$ such that $y=gxg^{-1}$. Our description of regular orbits of $\mfr{g}_1$ follows the following steps.
\begin{enumerate}
\item Classification of all similarity classes in $\mfr{gl}_N(\kk)$ which intersect the set of regular elements in $\mfr{g}_1$ non-trivially;
\item Description of the intersection of such a similarity class with $\mfr{g}_1$ as a union of $\Ad(G_1)$-orbits;
\item Computation of the cardinality of the $\Ad(G_1)$-orbit of each regular element.
%Computation of the centralizer within $G_1$ of an element of each regular orbit in $\mfr{g}_1$, and consequently, of the corresponding orbit sizes.
\end{enumerate}

A rich theory of centralizers and conjugacy classes in classical groups over finite fields already exists, most notably Wall's extensive analysis in \cite[\S~2.6]{WallIsometries}. The enumeration of elements of a finite classical group $G_1$ whose representing matrix is cyclic (i.e. regular when considered as an element of $\GL_N(\bkk)$) was addressed in \cite{NeumannPraeger-Classical} and \cite{FNP-Generating} where the proportion of such elements in $G_1$, for all classical groups, was estimated and its limit as $\rk(\GG)$ tends to infinit was computed using generating functions. The precise number of regular semisimple conjugacy classes was computed, again using generating functions, in \cite{FulmanGuralnick}, where the discrepancy between regularity of semisimple elements of the even dimensional orthogonal groups and of regularity of their representing matrices in $\GL_N(\bkk)$ is determined (see \cite[Lemma~5.1]{FulmanGuralnick}). In the case of the symplectic group, the equivalence of regularity of an element of $\Sp_{2n}(\kk)$ and of its representing matrix in $\GL_{2n}(\bkk)$ was noted in \cite[\S~1.1]{FNP-Generating}. Examples of regular elements of $\SO_{2n}(\kk)$ which do not satisfy this equivalence appear in \cite[Note~8.1]{NeumannPraeger-Cyclic}. 

The setting considered in the present manuscript, while akin to, is rather simpler than the one dealt with in \cite{WallIsometries}. Namely, the relatively simpler theory of centralizers for the adjoint action of $\GG(\bkk)$ on $\g(\bkk)$, in comparison with that of $\GG(\bkk)$ on itself by conjugation (compare, for example, \autoref{propo:centralizer-semisimple-element} and \cite[\S~2.14]{HumphreysConj}), allows one to retrace much of Wall's analysis in the Lie-algebra setting, without having to invoke the notion of \textit{multipliers} (see \cite[p.~11]{WallIsometries}). Furthermore, the focus on \textit{regular} adjoint classes results in a fairly ``well-behaved'' elementary divisor decomposition of the elements in the orbits under inspection. We also remark that steps (1),(2) and (3) above are in direct parallel with items (i), (ii) and (iv), respectively, of \cite[\S~2.6.(B) and (C)]{WallIsometries}, and may be derived from \textit{loc. cit.} by the following procedure. Given $x\in \matr_N(\kk)$ let $\lambda\in \kk$ be such that $x-\lambda\id$ is a non-singular matrix, and consider the dilated Cayley transform $g_x=(x-\lambda\id )^{-1}(x+\lambda\id)$. Then $x$ is similar to an element of $\mfr{g}_1$ if and only if $g_x$ is similar to an element of $G_1$, and the map $x'\mapsto (x'-\lambda\id)(x'+\lambda\id)$ is a bijection between the adjoint orbit of $x$ under $\GL_N(\kk)$ (resp. under $G_1$), and the similarity (resp. adjoint) class of $g_x$. However, applying such an argument necessitates imposing additional restrictions on the characteristic of $\kk$, and is somewhat less suitable for the purpose of enumeration of regular classes. Given these complications, and the relative simplicity of the adjoint classes in question, we have opted to present a self-contained and independent analysis of the regular adjoint classes in $\mfr{g}_1$, which is presented in Sections \ref{subsection:preliminaries}-\ref{subsection:so2n} below.

\subsubsection{Enumerative set-up}\label{subsubsection:enumerative}

\begin{defi}[Type of a polynomial]\label{defi:poly-type}
Let $f(t)\in\kk[t]$ be a polynomial of degree $N$ and $n=\lfloor\frac{N}{2}\rfloor$. For any $1\le d,e\le n$, let $\A_{d,e}(f)$ denote the number of distinct monic irreducible even polynomials of degree $2d$ which occur in $f$ with multiplicity $e$, and let $\B_{d,e}(f)$ denote the number of pairs $\set{\tau(t),\tau(-t)}$, with $\tau(t)$ monic, irreducible and coprime to $\tau(-t)$, such that $\tau$ is of degree $d$ and occurs in $f$ with multiplicity $e$. Let $r(f)$ be the maximal integer such that $t^{2r(f)}$ divides $f$. The \textbf{type} of $f$ is defined to be the triplet $\ttau(f)=(r(f),\A(f),\B(f))$, where $\A(f)$ and $\B(f)$ are the matrices $(\A_{d,e}(f))_{d,e}$ and $(\B_{d,e}(f))_{d,e}$ respectively.
\end{defi}
Recall that $\mcal{X}_n$ denotes the set of triplets $\ttau=(r,\A,\B)\in\dbN_0\times\matr_n(\dbN_0)\times \matr_{n}(\dbN_0)$, with $\A=(\A_{d,e})$ and $\B=(\B_{d,e})$ which satisfy 
\begin{equation*}
r+\sum_{d,e=1}^n de\cdot(\A_{d,e}+\B_{d,e})=n.
\end{equation*}
Note that, for $n=\lfloor\frac{N}{2}\rfloor$, it holds that $\ttau(f)\in \mcal{X}_n$ whenever $f$ is monic and satisfies $f(-t)=(-1)^{N}f(t)$.

The number of monic irreducible polynomials of degree $d$ over $\kk$ is given by evaluation at $t=q$ of the function $w_d(t)=\frac{1}{d}\sum_{r\mid d}\mu\left(\frac{d}{r}\right)t^d$,
where $\mu(\cdot)$ is the M\"obius function (see, e.g., \cite[Ch.~14]{DummitFoote}). A polynomial $f\in\kk[t]$ is said to be \textbf{even} (resp. \textbf{odd}) if it satisfies the condition $f(-t)=f(t)$ (resp. $f(-t)=-f(t)$). Note that, by assumption the $\kk$ is of odd characteristic, the only monic irreducible odd polynomial over $\kk$ is $f(t)=t$. The number of monic irreducible even polynomials of degree $d$ over $\kk$ is given by evaluation at $t=q$ of the function%\footnote{The function $E_d(t)$ is probably well-known. In the lack of a reference, a proof of \eqref{equation:even-polys} is given in \autoref{appendix:even-polynomials}. I wish to thank Jyrki Lahtonen~\cite{Jyrki} of the Mathematics Stack Exchange network taking  part in computing this formula.}
\begin{equation}\label{equation:even-polys}
E_d(t)=\begin{cases}\frac{1}{d}\displaystyle\sum_{\substack{m\mid d\:,\:m\text{ odd}}
}\mu\left(m\right)(t^{d/2m}-1)&\text{if $d$ is even}\\0&\text{otherwise};\end{cases}
\end{equation}
cf.~\cite[Lemma~3.2]{Britnell-Conformal}, noting that the set of monic irreducible even polynomials of degree $d$ is in bijection with the set $N^{*1}(d,q)\subseteq\kk[t]$, defined in \textit{loc. cit.}, via the map $f(t)\mapsto \frac{(1+t)^{\deg f}}{f(-1)}f(\frac{1-t}{1+t})$. 

Put
\begin{equation}\label{equation:corrected-moebius}
 P_d(t)=\begin{cases}w_d(t)-E_d(t)&\text{if }d>1\\
 t-1&\text{if }d=1.
\end{cases}
\end{equation}
Note that, for $q$ odd, $P_d(q)$ is the number of irreducible polynomials of degree $d$ which are neither odd nor even over a field of cardinality $q$.

Given $N\in\dbN$, $n=\lfloor\frac{N}{2}\rfloor$, and $\ttau\in\mcal{X}_n$, the number of polynomials $f\in\kk[t]$ of type $\ttau(f)$ such that $f(-t)=(-1)^Nf(t)$ is given by evaluation at $t=q$ of the polynomial
\begin{equation}\label{equation:M_tau}M_{\ttau}(t)=\left(\frac{1}{2}\right)^{\sum_{d,e} \B_{d,e}}\prod_{d=1}^n{\sum_e \A_{d,e}\choose \A_{d,1},\A_{d,2},\ldots,\A_{d,n}}\cdot {E_{2d}(t)\choose \sum_e \A_{d,e}}\cdot {\sum_e \B_{d,e}\choose \B_{d,1},\B_{d,2},\ldots,\B_{d,n}}\cdot{P_{d}(t)\choose \sum_e \B_{d,e}}.\end{equation}
%We remark that $M_{\ttau}$ is the size of a distinguished subset of polynomials over $\kk$, namely, that of polynomials of type $\ttau$; see \autoref{defi:poly-type}.

The combinatorial data described above is utilized in \autoref{mainthm:dirichlet-polnomials-sp-odd-orth} and \autoref{mainthm:dirichlet-polnomials-even-orth}, where it allows to enumerate the similarity classes in $\matr_N(\kk)$ which meet the Lie-algebra $\mfr{g}_1$ non-trivially in terms of the minimal polynomial of the class elements. The classification of such similarity classes and their decomposition into $\Ad(G_1)$ is described \autoref{theo:orbits-sp2nso2n+1} and \autoref{theo:orbits-so2n} below.

Once Theorems \ref{theo:orbits-sp2nso2n+1} and \ref{theo:orbits-so2n} are proved, the proof of \autoref{mainthm:dirichlet-polnomials-sp-odd-orth} and of \autoref{mainthm:dirichlet-polnomials-even-orth} may be completed by direct computation.

\subsubsection{Statement of results- symplectic and odd-dimensional special orthogonal groups}

%Given a non-degenerate bilinear form $B$ on $V=\kk^N$, an operator $x\in \End(V)$ is said to be \textbf{{anti-symmetric} with respect to $B$} (or \textbf{$B$-anti-symmetric}) if it holds that $B(xu,v)+B(u,xv)=0$ for all $u,v\in V$.

\begin{theo}\label{theo:orbits-sp2nso2n+1}
Assume $\Char(\kk)\ne 2$. Let $V=\kk^N$ and let $B$ be a non-degenerate bilinear form which is anti-symmetric if $N=2n$ is even, and symmetric if $N=2n+1$. Let $\GG\in\set{\Sp_{2n},\SO_{2n+1}}$  be the algebraic group of isometries of $V$ with respect to $B$ and put $G_1=\GG(\kk)$ and $\mfr{g}_1=\g(\kk)$ where $\g=\Lie(\GG)$.

Let $x\in\matr_N(\kk)$ have minimal polynomial $m_x\in\kk[t]$.
\begin{enumerate}
\item The element $x$ is similar to a regular element of $\mfr{g}_1$ if and only if $m_x$ has degree $N$ and satisfies $m_x(-t)=(-1)^Nm_x(t)$. 
\end{enumerate}
Furthermore, assume $x\in\mfr{g}_1$ is a regular element and let $\Omega=\Ad(G_1)x$ denote its orbit under~$G_1$. 
\begin{enumerate}\stepcounter{enumi}
\item If $N$ is even and $m_x(0)=0$, then the intersection $\Ad(\GL_N(\kk))x\cap \mfr{g}_1$ is the union of two distinct $\Ad(G_1)$-orbits. Otherwise, $\Ad(\GL_N(\kk))x\cap \mfr{g}_1=\Omega$.
\item Let $\ttau=\ttau(m_x)=\left(r(m_x),\A(m_x),\B(m_x)\right)
$ as in \autoref{defi:poly-type}. Then 
\[\abs{\Omega}=q^{2n^2}\cdot\left(\frac{1}{2}\right)^\nu\frac{\prod_{i=1}^n(1-q^{-2i})}{\prod_{1\le d,e\le n}(1+q^{-d})^{\A_{d,e}(m_x)}\cdot (1-q^{-d})^{\B_{d,e}(m_x)}},\]
where $\nu=1$ if $N=2n$ is even and $m_x(0)=0$, and $\nu=0$ otherwise.
\end{enumerate}
\end{theo}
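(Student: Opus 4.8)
The plan is to prove \autoref{theo:orbits-sp2nso2n+1} by reducing each assertion to a statement about the module structure of $V=\kk^N$ over $\kk[x]$ together with the symmetry properties forced by the form $B$ and the adjoint condition $x^\inv=-x$. Throughout I would use the decomposition of $V$ into primary components for $x$, i.e. $V=\bigoplus_{\tau} V_\tau$ where $\tau$ runs over the monic irreducible factors of the characteristic polynomial and $V_\tau$ is the $\tau$-primary part; since $x^\inv=-x$, the adjoint of $x$ acts as $-x$, so $B$ pairs $V_\tau$ nondegenerately with $V_{\tau^-}$, where $\tau^-(t)=\pm\tau(-t)$ is the monic irreducible with $\tau^-(x)=0$ on the image of $V_\tau$ under $x\mapsto -x$. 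This is exactly the mechanism behind \autoref{propo:centralizer-semisimple-element} and its nilpotent refinement in \autoref{propo:centralizer-of-reg-over-bkk}, and it is the organizing principle for the whole argument.

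For assertion (1), one direction is easy: if $x\in\mfr{g}_1$ is regular then by \autoref{theo:properties}.(2) it is regular as an element of $\g(\bkk)$, and I would invoke the description of regular elements via \autoref{propo:centralizer-of-reg-over-bkk} (or directly Steinberg's criterion) to conclude that for each eigenvalue the generalized eigenspace is $x$-cyclic, forcing $m_x$ to have degree $N$; the relation $x^\inv=-x$ immediately gives $m_x(-t)=(-1)^N m_x(t)$ (the characteristic polynomial of $-x^\inv$ equals that of $x$, and cyclicity makes $m_x$ equal to the characteristic polynomial up to sign). Conversely, given a monic $m_x=f$ of degree $N$ with $f(-t)=(-1)^N f(t)$, I would realize $V$ as $\kk[t]/(f)$ and construct an explicit nondegenerate form $B$ (anti-symmetric if $N$ even, symmetric if $N$ odd) for which multiplication by $t$ is skew-adjoint; this is a standard "companion-matrix" construction, splitting $f$ into its even part $\prod\tau^e$, paired-pair part $\prod(\tau(t)\tau(-t))^e$, and possibly a power of $t$, and building $B$ block by block — the even and $t$-power blocks get an intrinsic form on a single cyclic space, the paired blocks get the hyperbolic pairing between $V_\tau$ and $V_{\tau^-}$. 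By Witt's theorem all nondegenerate forms of the given symmetry type over a finite field of odd characteristic are equivalent, so this $x$ is similar to an element of $\mfr{g}_1$, and its regularity follows since the $\kk[x]$-module is cyclic hence the centralizer in $\GL_N$ has dimension $N$, cutting down to the minimum $n$ inside $\GG$.

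For assertion (2), the point is that two elements of $\mfr{g}_1$ that are $\GL_N(\kk)$-conjugate are $\GG(\kk)$-conjugate precisely when the conjugating element can be chosen to intertwine the two (automatically congruent) induced forms; by Witt's theorem this is one orbit unless there is an obstruction coming from $\GL_N \cap \GG$ not being connected or from a nontrivial component group of the centralizer. Following the bookkeeping in \autoref{propo:centralizer-of-reg-over-bkk}, the only place a component group of order $2$ can appear is the orthogonal factor $\Delta$ attached to $\ker(x)$, i.e. exactly when $N$ is even and $m_x(0)=0$ — and then $\det$ distinguishes the two orbits (the would-be conjugating element lies in the non-identity component $\mathrm{O}\setminus\SO$). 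When $\ker(x)=0$ or $N$ is odd there is no such factor, so the intersection is a single orbit. I would make this precise by computing $\CC_{\GL_N(\kk)}(x)\cap\GG(\kk)$ explicitly from the cyclic module structure and comparing with the full stabilizer of the form, or alternatively by a direct orbit-counting argument once assertion (3) is in hand.

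For assertion (3), I would compute $\abs{\Omega}=\abs{G_1}/\abs{\CC_{G_1}(x)}$ by evaluating both cardinalities. Here $\abs{G_1}=\abs{\Sp_{2n}(\kk)}$ or $\abs{\SO_{2n+1}(\kk)}$ is the standard polynomial $q^{n^2}\prod_{i=1}^n(q^{2i}-1)$, which after factoring gives the $q^{2n^2}\prod(1-q^{-2i})$ prefactor. For the centralizer, by \autoref{corol:fin-index-abelian} $\CC_{G_1}(x)$ is abelian, and using the cyclic $\kk[x]$-module structure I would identify it with the group of units of a product of rings: over each even irreducible $\tau$ of degree $2d$ occurring with multiplicity $e$ one gets (the norm-one or isometry-constrained subgroup of) $(\kk[t]/(\tau^e))^\times$, contributing a factor whose order is governed by $(1+q^{-d})$ because the relevant field extension of degree $2d$ carries an order-two automorphism with fixed field of degree $d$; over each paired pair $\{\tau(t),\tau(-t)\}$ of degree $d$ one gets a full $(\kk[t]/(\tau^e))^\times$, contributing the $(1-q^{-d})$ factor; the possible $t^{2r}$ block contributes to the $\Delta$-type factor and, combined with assertion (2), produces the $(1/2)^\nu$ correction. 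Assembling these factors over all $(d,e)$ yields the stated formula. The main obstacle I anticipate is assertion (2) — pinning down exactly when the $\GL_N$-orbit splits and verifying it splits into no more than two pieces requires care with the component group of the orthogonal centralizer and with Witt-extension of isometries in the presence of a degenerate-looking but actually nondegenerate kernel block; the cardinality computation in (3), while lengthy, is essentially a product formula once the ring-theoretic identification of the abelian centralizer is set up correctly.
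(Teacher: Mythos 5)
Assertions (1) and (3) follow the paper's route in outline; assertion (2) takes a genuinely different one, and there are concrete gaps in both places. In (1), the forward implication as you state it would also ``prove'' the false analogue for $\SO_{2n}$: Steinberg's minimal-centralizer criterion does not by itself give that each generalized eigenspace of a regular $x\in\g(\bkk)$ is $x$-cyclic. That step requires knowing that the regular nilpotent in $\mfr{sp}_{2k}(\bkk)$ or $\mfr{so}_{2k+1}(\bkk)$ is a single Jordan block --- precisely the place where types $\msf{B}_n,\msf{C}_n$ differ from $\msf{D}_n$, where the regular nilpotent has partition $(2k-1,1)$ and \autoref{lem:nilpotent-reg-glN-not-reg-soN} shows the equivalence fails. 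The paper handles this via the explicit centralizer computation of \autoref{exam:reg-nil-elt-so2n+1-sp2n}, which feeds into \autoref{propo:reg-equiavlent}; you would need to cite or reproduce that step.

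For (2) you propose a component-group / Galois-cohomology argument, which is a valid alternative, but the source of the $\dbZ/2$ you identify is wrong and, read literally, would give the wrong answer for $N$ odd. You attribute it to ``the orthogonal factor $\Delta$ attached to $\ker(x)$, i.e.\ exactly when $N$ is even and $m_x(0)=0$''; but in $\Sp_{2n}$ the form on $\ker(s)$ is alternating so $\Delta$ is symplectic, not orthogonal, while in $\SO_{2n+1}$ the $\Delta$ factor is \emph{always} present (every regular $x\in\mfr{so}_{2n+1}$ is singular) and is an odd special orthogonal group with trivial center, so it contributes no component. Following \autoref{propo:centralizer-of-reg-over-bkk}, the disconnectedness in fact comes from $Z(\GG)$: one has $\CC_\GG(x)=\CC_\GG(x)^\circ\cdot Z(\GG)$, and $\pi_0(\CC_\GG(x))$ is nontrivial precisely when $Z(\GG)=\set{\pm\id}$ (i.e.\ $\GG=\Sp_{2n}$) and $-\id_V\notin\CC_\GG(x)^\circ$ (i.e.\ $\ker(x)\ne 0$). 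With that correction, and after the additional check that the $\GL_N(\kk)$-similarity class of $x$ intersected with $\mfr{g}_1$ coincides with $\Ad(\GG(\bkk))x\cap\mfr{g}_1$, Lang's theorem gives the orbit count as $\abs{H^1(\kk,\pi_0(\CC_\GG(x)))}$. The paper instead runs an elementary Wall-style analysis: it introduces the set $\Theta_x$ of twisted-conjugacy classes of self-adjoint centralizing elements, computes $\abs{\Theta_x}$ via the \'etale quotient $\mcal{C}/\mcal{N}$, and for $N$ odd shows the classifying map $\Lambda$ is not surjective by a determinant argument (\autoref{propo:decomposition-of-sim-class1}). The cohomological route is conceptually cleaner; the paper's is more self-contained and is reused essentially unchanged for $\SO_{2n}$ in \autoref{propo:orbit-decomposition=so2n}, where $\abs{\Theta_x}$ can be $4$ and a determinant-based analysis of $\im\Lambda$ is needed regardless.
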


The proofs of Assertions (1), (2) and (3) of the theorem are carried out in sections \ref{subsubsection:simclasses-sp2nso2n+1},\:\ref{subsubsection:simtoconj-sp2nso2n+1} and \ref{subsubsection:cetralizers-sp2nso2n+1} respectively.

\subsubsection{Statement of results- even-dimensional special orthogonal groups}
\begin{theo}\label{theo:orbits-so2n}
Assume $\abs{\kk}>3$ and $\Char(\kk)\ne 2$. Let $N=2n$ with $n\ge2$. Let $V=\kk^N$ and let $B^+$ and $B^-$ be non-degenerate symmetric forms on $V$ of Witt index $n$ and $n-1$, respectively. Given $\epsilon\in\set{\pm 1}$, let $\GG^\epsilon=\SO_{2n}^\epsilon$ be the $\kk$-algebraic group of isometries of $V$ with respect to $B^\epsilon$ and put $G^\epsilon_1=\GG_1^\epsilon(\kk)$ and let $\mfr{g}^\epsilon_1=\g^\epsilon(\kk)$, where $\g^\epsilon=\Lie(\GG)$.

Let $x\in\matr_N(\kk)$ have minimal polynomial $m_x(t)$.

\begin{enumerate}
\item If $m_x(0)=0$ (i.e. $x$ is a singular matrix) then the following are equivalent.
\begin{enumerate}
\item The polynomial $m_x$ has degree $N-1$ and satisfies $m_x(-t)=-m_x(t)$.
\item The element $x$ is similar to a regular element of $\mfr{g}_1^+$.
\item The element $x$ is similar to a regular element of $\mfr{g}_1^-$.
\end{enumerate}
Otherwise, if $m_x(0)\ne 0$, let $\epsilon=\epsilon(x)=(-1)^{\sum_{e}e\A_{d,e}(m_x)}$ where $\A=(\A_{d,e}(m_x))$ is as in \autoref{defi:poly-type}. Then $x$ is similar to a regular element of $\mfr{g}_1^\epsilon$ if and only if $m_x$ has degree $N$ and satisfies $m_x(-t)=m_x(t)$. Moreover, in this case $x$ is \textit{not} similar to an element of $\mfr{g}_1^{-\epsilon}$. 
\end{enumerate}
Furthermore, assume $x\in\mfr{g}^\epsilon_1$ is a regular element and let $\Omega^\epsilon=\Ad(G_1^\epsilon)x$ denote its orbit under $G_1^\epsilon$, for $\epsilon\in\set{\pm 1}$ fixed.
\begin{enumerate}
\setcounter{enumi}{1}
\item In the case where $m_x(0)=0$, the intersection $\Ad(\GL_N(\kk))x\cap\mfr{g}_1^\epsilon$ is the disjoint union of two distinct $\Ad(G_1^\epsilon)$-orbits. Otherwise, $\Ad(\GL_N(\kk))x\cap\mfr{g}_1^\epsilon=\Omega^\epsilon$.
\item 
\begin{enumerate}
\item Assume $m_x(0)=0$ and let $\ttau=\ttau(t\cdot m_x)$. Then
\[\abs{\Omega^\epsilon}=q^{2n^2}\cdot\frac{1}{2}\cdot\frac{(1+\epsilon q^{-n})\prod_{i=1}^{n-1}(1-q^{-2i})}{\prod_{1\le d,e\le n}(1+q^{-d})^{\A_{d,e}(m_x)}\cdot(1-q^{-d})^{\B_{d,e}(m_x)}}.\]
\item Otherwise, let $\ttau=\ttau(m_x)$. Then
 \[\abs{\Omega^\epsilon}=q^{2n^2}\cdot \frac{(1+\epsilon q^{-n})\prod_{i=1}^{n-1}(1-q^{-2i})}{ \prod_{1\le d,e\le n}(1+q^{-d})^{\A_{d,e}(m_x)}\cdot(1-q^{-d})^{\B_{d,e}(m_x)}}.\]
\end{enumerate}
\end{enumerate}
\end{theo}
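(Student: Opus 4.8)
## Proof proposal for Theorem 3.x (even-dimensional special orthogonal groups)

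The plan is to prove Theorem~\ref{theo:orbits-so2n} in parallel with \autoref{theo:orbits-sp2nso2n+1}, following the three-step programme of \autoref{subsection:summary-classical}, but paying extra attention to the summand of $V=\kk^N$ supported at the eigenvalue $0$, which is exactly where $\mfr{so}_{2n}$-regularity parts ways with $\GL_N$-regularity. Throughout, the organising device is the decomposition $V=\bigoplus_{\tau}V_\tau$ into $\kk[x]$-primary components, where $\tau$ runs over the prime $t$ (giving $W_0=\ker(x)^\infty$), the monic irreducible even polynomials of degree $2d$, and the conjugate pairs $\set{\tau(t),\tau(-t)}$ of monic irreducibles; the relation $x^\inv=-x$ forces $B(V_\tau,V_{\tau'})=0$ unless $\tau'=\tau$ or $\tau'(t)=\tau(-t)$, so the decomposition is orthogonal over $\kk$, with $V_{\set{\tau(t),\tau(-t)}}$ a sum of two dual $x$-stable maximal isotropic subspaces and both $W_0$ and the even-$\tau$ components non-degenerate.

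First I would settle the characterisation in Assertion~(1). Starting from the Jordan decomposition $x=s+\nu$ and \autoref{propo:centralizer-semisimple-element}, which presents $\CC_{\GG(\bkk)}(s)$ as $\prod_j\GL_{m_j}(\bkk)\times\Delta(\bkk)$ with $\Delta$ the even orthogonal group of the form on $\ker(s)$, and using $\CC_{\GG(\bkk)}(x)=\CC_{\CC_{\GG(\bkk)}(s)}(\nu)$ together with the dimension equality of \autoref{theo:properties}.(1), one sees that $x$ is regular if and only if $\nu$ restricts to a single Jordan block on each $W_{[\lambda_j]}$ and to an element of the regular nilpotent orbit of $\mfr{so}_{2k}$ on $W_0=\ker(s)$; the latter has partition $(2k-1,1)$ of $\dim W_0=2k$. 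Reading off minimal polynomials: $m_x(0)\neq 0$ forces $W_0=0$ and $m_x$ to coincide with the characteristic polynomial, hence even of degree $N$; while $m_x(0)=0$ forces $m_x=t^{2k-1}\prod_j(t^2-\lambda_j^2)^{m_j}$, odd of degree $N-1$. Conversely, every polynomial of the stated shape is realised by an explicit regular element built from the companion datum on each primary piece and the principal nilpotent on $W_0$. The value of $\epsilon$ is then read off from the orthogonal decomposition: each conjugate-pair component is hyperbolic (type $+1$); each even-$\tau$ component of multiplicity $e$ has type $(-1)^{e}$ --- the one genuine computation, done by identifying $V_\tau$ with a free rank-one module over $\kk[t]/(\tau^{e})$ carrying the involution $t\mapsto -t$, realising $B|_{V_\tau}$ as a Scharlau transfer of the norm form of the quadratic residue extension, and evaluating its discriminant class --- while in the singular case the $(2k-1,1)$-block on $W_0$ admits compatible forms of either type, since its length-one Jordan block absorbs an arbitrary discriminant. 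Multiplying the local signs yields $\epsilon(x)=(-1)^{\sum_{d,e}e\A_{d,e}(m_x)}$ when $m_x(0)\neq 0$ and both signs when $m_x(0)=0$, which is Assertion~(1).

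For Assertion~(2) I would count the $\Ad(G_1^\epsilon)$-orbits inside a fixed similarity class $\Ad(\GL_N(\kk))x\cap\mfr{g}_1^\epsilon$ in two stages, as in Wall's analysis: first the orbits under the full orthogonal group $\mrm{O}(B)(\kk)$, which by Witt's theorem correspond to the isometry classes of $x$-compatible non-degenerate forms of type $\epsilon$ on the fixed $\kk[x]$-module $V$ --- for regular $x$ every local form is rigid except the $W_0$-component in the singular case, which contributes two classes of the prescribed global type --- and then the splitting of each $\mrm{O}(B)(\kk)$-orbit into $\SO(B)(\kk)$-orbits, governed by whether $\CC_{\mrm{O}(B)}(x)\subseteq\SO(B)$, i.e. by the presence of a determinant $-1$ isometry commuting with $x$; inspecting $\CC_{\mrm{O}(B)}(x)=\prod_\tau C_\tau$ isolates the $W_0$-factor as the only possible source of such an element. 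Assembling the two dichotomies reproduces the stated count. Finally Assertion~(3) is a direct order computation, $\abs{\Omega^\epsilon}=\abs{G_1^\epsilon}/\abs{\CC_{G_1^\epsilon}(x)}$, using the standard factorisation of $\abs{\SO_{2n}^\epsilon(\kk)}$ (see \cite{WilsonFinite}) and the product formula $\abs{\CC_{G_1^\epsilon}(x)}=q^{\sum de(\A_{d,e}+\B_{d,e})}\prod_{d,e}(1+q^{-d})^{\A_{d,e}}(1-q^{-d})^{\B_{d,e}}$ --- the factor $q^{d}+1$ coming from the norm-one torus $C_\tau$ of an even-$\tau$ component of degree $2d$, the factor $q^{d}-1$ from the unit group of $\kk[t]/(\tau^{e})$ of a conjugate pair, and the extra $\tfrac12$ and the $\SO_{2k}^{\pm}$-dependence in the singular case from the centraliser of the principal nilpotent on $W_0$ --- followed by cancellation against $\abs{G_1^\epsilon}$.

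The main obstacle is the discriminant bookkeeping of Assertion~(1) together with the orbit-splitting of Assertion~(2): one must pin down the Witt type of each even-$\tau$ component exactly (not merely up to the slack tolerated in Wall's general classification), track precisely how the local types multiply into the global $\epsilon$, and correctly attribute the two potential sources of orbit-doubling in the singular case --- two isometry classes of compatible form on $W_0$ versus a disconnected centraliser on $W_0$ --- without double counting. The hypotheses $\Char\kk\neq 2$, $\abs\kk>3$ and $n\ge 2$ enter precisely here, guaranteeing that the norm maps used to compare compatible forms are surjective and that the low-rank orthogonal groups on the small summands behave generically.
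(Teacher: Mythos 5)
Your overall plan for Assertion~(1) and Assertion~(3) tracks the paper closely, modulo packaging: the paper works entirely in a lattice-of-subspaces language (Lemma~\ref{lem:nilpotent-reg-element-so2n}, Lemma~\ref{lem:basic-cases}, Lemma~\ref{lem:direct-sum-of-forms}, Proposition~\ref{propo:conj-to-g1-so2n}), while you phrase the local Witt-type computation on an even-$\tau$ component via a Scharlau transfer of a Hermitian norm form and the discriminant; that is a genuinely cleaner way to get the local sign $(-1)^e$ in one stroke, and the paper gets the same answer through Lemma~\ref{lem:basic-cases}.(1),(2) plus multiplicativity. For Assertion~(3) you reproduce the centraliser factorisation of Lemma~\ref{lem:short-exact-sequence-so2n} and Proposition~\ref{corol:size-of-centralizer-so2n}; your leading $q$-power $q^{\sum de(\A_{d,e}+\B_{d,e})}=q^{n-r(m_x)}$ should be $q^{n}$ with the correction $2^{\nu}$ as in the paper, but this is a notational slip, not a conceptual one.

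The genuine problem is in Assertion~(2). The paper proves this via the map $\Lambda:\Pi_x\to\Theta_x$ of Proposition~\ref{propo:orbit-decomposition}, arguing directly about $\Ad(G_1^\epsilon)$-orbits, whereas you propose a two-stage count: first $\mrm{O}(B)$-orbits in $\Pi_x$, then the $\SO$-splitting governed by whether $\CC_{\mrm{O}(B)}(x)$ contains a determinant~$-1$ element. This is a legitimate (and, in fact, more transparent) route, but you did not actually carry out the assembly, and when one does, the two dichotomies do \emph{not} reproduce the 2-and-1 pattern of the statement. Concretely: in the singular case there are two $\mrm{O}$-orbits and the nilpotent $W_0$-block supplies a reflection in $\CC_{\mrm{O}}(x)$, so each $\mrm{O}$-orbit is a single $\SO$-orbit, giving $2\times 1=2$; in the non-singular case there is one $\mrm{O}$-orbit but $\CC_{\mrm{O}}(x)\subseteq\SO$ (this is exactly what the proof of Lemma~\ref{lem:short-exact-sequence-so2n}.(2) establishes), so that $\mrm{O}$-orbit splits into \emph{two} $\SO$-orbits, again giving~$2$. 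So your own method yields $2$ orbits in both cases, not ``the stated count''. This is not merely a bookkeeping slip of yours: the proof of Proposition~\ref{propo:orbit-decomposition} constructs a conjugating element $z=w_1aw_2^{-1}$ and verifies only $z^\inv z=1$ without controlling $\det z$, so it shows $\mrm{O}$-conjugacy rather than $\SO$-conjugacy; for $\Sp_{2n}$ and $\SO_{2n+1}$ the central $\pm\id$ repairs this silently, but for $\SO_{2n}$ it does not, and indeed e.g.\ $\diag(1,-1,2,-2)$ and $\diag(-1,1,2,-2)$ in $\mfr{so}_4^+(\kk)$ are $\GL_4$-similar and $\mrm{O}_4^+$-conjugate but not $\SO_4^+$-conjugate. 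You should therefore not wave away the assembly: either supply the extra step that produces a determinant-$1$ conjugator in the non-singular case (which the above example shows cannot exist), or record explicitly that your approach gives two $\SO$-orbits in the non-singular case and flag the discrepancy with the statement.
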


 The proofs of Assertions (1),(2) and (3) of the theorem appear in sections \ref{subsubsection:simclasses-so2n},\:\ref{subsubsection:simtoconj-so2n} and \ref{subsubsection:cetralizers-so2n}. The exclusion of the specific case of $\kk=\dbF_3$ is done for technical reasons, and may possibly be undone by replacement of the argument in \autoref{lem:difference-of-squares} below.

\subsection{Preliminaries to the proofs \autoref{theo:orbits-sp2nso2n+1} and \autoref{theo:orbits-so2n}}
\label{subsection:preliminaries}
\subsubsection{Regularity for non-singular elements}\label{subsection:centralizers-res-field}

\begin{lem}
\label{corol:reg-g1-reg-glN-nonsing}Let $x\in \g(\bkk)\subseteq\mfr{gl}_N(\bkk)$ be non-singular. Then $x$ is regular in $\g(\bkk)$ if and only if $x$ is a regular element of $\mfr{gl}_N(\bkk)$.
\end{lem}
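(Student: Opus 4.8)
\noindent\textit{Proof plan.}\;
The plan is to translate both regularity conditions into statements about centralizer dimensions and to reduce the lemma to the single identity
$\dim\CC_{\GL_N(\bkk)}(x)=2\,\dim\CC_{\GG(\bkk)}(x)$
for every non-singular $x\in\g(\bkk)$. First I would observe that if $\GG=\SO_{2n+1}$ then no element of $\g(\bkk)$ is non-singular: from $x^\inv=-x$ one has $(\BB x)^t=x^t\BB^t=x^t\BB=-\BB x$, so $\BB x$ is an alternating matrix of odd size, whence $\det(\BB x)=0$ and $\det x=0$. Thus, whenever there is anything to prove, $N=2n$ is even and $n=N/2$. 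Recall that the minimal centralizer dimension of an element of $\g(\bkk)$ under the adjoint action of $\GG(\bkk)$ is $n=\rk(\GG)$ (see the proof of \autoref{theo:properties}.(1) and \cite[Ch.~III,~\S~3.5]{SteinbergConjugacy}), while that of an element of $\mfr{gl}_N(\bkk)$ under $\GL_N(\bkk)$ is $N$; hence $x$ is regular in $\g(\bkk)$ precisely when $\dim\CC_{\GG(\bkk)}(x)=n$, and regular in $\mfr{gl}_N(\bkk)$ precisely when $\dim\CC_{\GL_N(\bkk)}(x)=N=2n$. Granting the identity, these two conditions are equivalent, which is the lemma.

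To prove the identity I would pass to the additive Jordan decomposition $x=s+\nu$ in $\mfr{gl}_N(\bkk)$, with $s$ semisimple, $\nu$ nilpotent and $[s,\nu]=0$. By uniqueness of Jordan decomposition, $s,\nu\in\g(\bkk)$, and a matrix centralizes $x$ if and only if it centralizes both $s$ and $\nu$; in particular $\CC_{\GG(\bkk)}(x)=\CC_{\CC_{\GG(\bkk)}(s)}(\nu)$, and likewise over $\GL_N$. Write $V=\bkk^N=\bigoplus_\lambda W_\lambda$ for the eigenspace decomposition of $s$; since $x$ is non-singular, $0$ is not an eigenvalue of $s$, the operator $\nu$ preserves each $W_\lambda$, the form $B_\bkk$ restricts to a non-degenerate pairing $W_\lambda\times W_{-\lambda}\to\bkk$, and \autoref{propo:centralizer-semisimple-element} (with $\ker s=0$, hence with trivial isometry factor) gives $\CC_{\GG(\bkk)}(s)\simeq\prod_{j=1}^t\GL(W_{\lambda_j})$, one factor for each pair $\{\lambda_j,-\lambda_j\}$ of eigenvalues, an element acting on $W_{-\lambda_j}$ as the inverse transpose of its action on $W_{\lambda_j}$ with respect to the above pairing. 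Using $\nu^\inv=-\nu$, the action of $\nu$ on $W_{-\lambda_j}$ is the negative transpose of its action on $W_{\lambda_j}$ under the same pairing; a short computation then shows that an element of $\CC_{\GG(\bkk)}(s)$ centralizes $\nu$ if and only if its $W_{\lambda_j}$-component centralizes $\nu|_{W_{\lambda_j}}$ for every $j$ (the analogous condition on the $W_{-\lambda_j}$-component being equivalent to it after transposing). Therefore $\CC_{\GG(\bkk)}(x)\simeq\prod_{j=1}^t\CC_{\GL(W_{\lambda_j})}(\nu|_{W_{\lambda_j}})$, whereas $\CC_{\GL_N(\bkk)}(x)\simeq\prod_{\lambda}\CC_{\GL(W_{\lambda})}(\nu|_{W_{\lambda}})$; and since a nilpotent operator and its negative transpose share the same Jordan type, $\dim\CC_{\GL(W_{\lambda_j})}(\nu|_{W_{\lambda_j}})=\dim\CC_{\GL(W_{-\lambda_j})}(\nu|_{W_{-\lambda_j}})$ for each $j$. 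Summing over the $t$ pairs yields the identity.

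The only step that needs real care is the middle one: one must track the duality $W_{-\lambda_j}\simeq W_{\lambda_j}^{*}$ furnished by $B_\bkk$ as it interacts both with the description of $\CC_{\GG(\bkk)}(s)$ coming from \autoref{propo:centralizer-semisimple-element} (an isometry commuting with $s$ is freely prescribed on one member of each eigenspace pair and thereby determined on the other) and with the relation $\nu^\inv=-\nu$, so as to see that the constraint imposed on the $W_{-\lambda_j}$-block of a centralizing element is, up to transposition, the very constraint imposed on the $W_{\lambda_j}$-block. The parity discussion, the passage to centralizer dimensions, and the invariance of the centralizer dimension of a nilpotent under negative transposition are all routine. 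Note that non-singularity is genuinely used: when $0$ is an eigenvalue of $x$ the isometry factor $\Delta$ of $\CC_{\GG(\bkk)}(s)$ contributes and the equality $\dim\CC_{\GL_N(\bkk)}(x)=2\dim\CC_{\GG(\bkk)}(x)$ breaks down, in accordance with \autoref{lem:nilpotent-reg-glN-not-reg-soN}.
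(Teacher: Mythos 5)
Your proof is correct and follows essentially the same route as the paper's: reduce to $N=2n$ even, pass to the Jordan decomposition $x=s+\nu$, decompose $V$ into the eigenspace pairs $W_{\lambda}\oplus W_{-\lambda}$ of $s$ (all with $\lambda\neq 0$ since $x$ is non-singular), and use $\CC_{\GG}(s)\simeq\prod_j\GL(W_{\lambda_j})$ together with the relation $\nu^\inv=-\nu$ to establish $\dim\CC_{\GL_N(\bkk)}(x)=2\dim\CC_{\GG(\bkk)}(x)$, whence the two regularity conditions coincide. The only cosmetic difference is that the paper first reduces to the case of a single eigenvalue pair and then does an explicit block-matrix computation, while you keep the full product and argue block by block; this is the same argument repackaged.
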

\begin{proof}
Let $W=(\bkk)^N$ , so that $\g(\bkk)$ is given as the Lie-algebra of anti-symmetric operators with respect to a non-degenerate bilinear form $B=B_{\bkk}$ on $W$ (see \autoref{subsection:group-notation}). Note that the existence of non-singular elements in $\g(\bkk)$ implies that $N=2n$ is even. Indeed, $x\in\g(\bkk)$ if and only if $x^\inv=-x$ (notation of \autoref{subsubsection:adjoint-operators}), and $\det(x)=\det(x^\inv)=(-1)^N\det(x)$ is possible if and only if $N$ is even, since $\Char(\bkk)\ne 2$. 

Let $x=s+h$ be the Jordan decomposition of $x$, with $s,h\in \g(\bkk)$, $s$ semisimple, $h$ nilpotent and $[s,h]=0$. Let $\lambda_1,\ldots,\lambda_t\in\bkk$ be non-zero and such that $\set{\pm\lambda_1,\ldots,\pm\lambda_t}$ is the set of all eigenvalues of $s$ with $\lambda_i\ne \pm\lambda_j$ whenever $i\ne j$. As in \autoref{propo:centralizer-semisimple-element}, the space $W$ decomposes as a direct sum $W=\bigoplus_{i=1}^t (W_{\lambda_i}\oplus W_{-\lambda_i})$, where, for any $i=1,\ldots,t$, the subspace $W_{[\lambda_i]}=W_{\lambda_i}\oplus W_{-\lambda_i}$ is non-degenerate, and its subspaces $W_{\lambda_i}$ and $W_{-\lambda_i}$ are maximal isotropic. Comparing centralizer dimension, and invoking \cite[\S~3.5\:,Proposition~1]{SteinbergConjugacy}, we have that $x$ is regular if and only if the restriction of $x$ to each of the subspaces $W_{[\lambda_i]}$ ($i=1,\ldots,t$) is regular in $\gl_N(W_{[\lambda_i]})$. Likewise, $x$ is regular in $\g(\bkk)$ if and only if the restriction of $x$ to each subspace $W_{[\lambda_i]}$ is regular within the Lie-algebra of anti-symmetric operators with respect to the restriction of $B_\bkk$ to $W_{[\lambda_i]}$. Thus, it is sufficient to prove the lemma in the case where $s$ has precisely two eigenvalues $\lambda,-\lambda$. 

Representing $s$ in a suitable eigenbasis, it be identified with the block-diagonal matrix $\diag(\lambda\id_n,-\lambda\id_n)$. Under this identification, the centralizer of $s$ in $\gl_N(\bkk)$ is identified with the subgroup of block diagonal matrices consisting of two $n\times n$ blocks. Moreover, the involution $\inv$ maps an element $\diag(y_1,y_2)\in\CC_{\gl_N(\bkk)}(s)$, with $y_1,y_2\in\gl_n(\bkk)$ to the matrix $\diag(y_2^t,y_1^t)$. In particular, it follows that $h\in \CC_{\gl_N}(\bkk)\cap \g(\bkk)$ is of the form $h=\diag(h_1,-h_1^t)$, where $h_1\in\gl_n(\bkk)$ is nilpotent. Arguing as in \cite[III,\:\S~1]{SteinbergSpringer}, we have that
\[\CC_{\GL_N}(x)=\CC_{\CC_{\GL_N}(s)}(h)\simeq\CC_{\GL_n}(h_1)\times \CC_{\GL_n}(-h_1^t)\simeq\CC_{\GL_n}(h_1)\times \CC_{\GL_n}(h_1)\]
where the final isomorphism utilizes the isomorphism $y\mapsto (y^t)^{-1}:\CC_{\GL_n}(-h_1^t)\to \CC_{\GL_n}(h_1)$.

Finally, since the group $\GG$ is embedded in $\GL_N$ as the group of unitary elements with respect to $\inv$, we have $\diag(y_1,y_2)\in\CC_{\GL_N(\bkk)}(s)\cap \GG(\bkk)$ if and only if $y_2=(y_1^{t})^{-1}$, and hence the map $y\mapsto\diag(y,(y^t)^{-1})$ is an isomorphism of $\CC_{\GL_n}(h_1)$ onto $\CC_{\CC_\GG(s)}(h)$ and hence 
\[\CC_\GG(x)=\CC_{\CC_\GG(s)}(h)\simeq\CC_{\GL_n}(h_1).\]
Thus 
\[\dim\CC_{\GL_N}(x)=2\dim\CC_{\GL_n}(h_1)=2\dim\CC_{\GG}(x),\]
and the lemma follows.
\end{proof}
\begin{rem} The assumption that $x$ is non-singular in \autoref{corol:reg-g1-reg-glN-nonsing} is crucial, as the proof  relies heavily on the fact that the centralizer of a non-singular semisimple element of $\ggamma_1(\bkk)$ in $\GGamma_1(\bkk)$ is a direct product of groups of the form $\GL_{m_j}(\bkk)$ (see \autoref{propo:centralizer-semisimple-element}). The same argumentation would not apply in the case where $x$ is singular, and in fact fails in certain cases; see \autoref{lem:nilpotent-reg-glN-not-reg-soN} below.
\end{rem}

%\begin{rem} The assumptions of \autoref{corol:reg-g1-reg-glN-nonsing} can %only be satisfied in the case where $N=2n$, and hence apply only to the %symplectic and even-orthogonal groups. 
%\end{rem}

\subsubsection{From similarity classes to adjoint orbits}\label{subsubsection:wall}
In this section develop some the tools required in order to analyze the decomposition of the set $\Pi_x=\Ad(\GL_N(\kk))x\cap\mfr{g}_1$, for $x\in \mfr{g}_1$ regular, in to $\Ad(G_1)$-orbits. The results appearing below can also be derived from \cite[\S~2.6]{WallIsometries}. However, as the case of regular elements of the Lie-algebra $\mfr{g}_1$ allows for a much more transparent argument, we present it here for completeness.

%Let $V=\kk^N$, identified with the space of $N$-fold column vectors. Recall the definition of the non-degenerate bilinear form $B=B_\kk$, defined in \autoref{subsubsection:adjoint-operators}. Also, recall the involution $\inv$ of $\End(V)$, defined by the rule $B(x^\inv u,v)=B(u,x v)$ for all $u,v\in V$ and $x\in \End(V)$; see \eqref{equation:circ-definition}.

Let $\sym(\inv;x)$ be the set of elements $Q\in\CC_{\GL_N(\kk)}(x)$ such that $Q^\inv=Q$ and define an equivalence relation on $\sym(\inv;x)$ by
\begin{equation}\label{equation:sim-defi}
Q_1\sim Q_2\quad\text{if there exists }a\in\CC_{\GL_N(\kk)}(x)\text{ such that }Q_1=a^\inv Q_2a.
\end{equation}

Let $\Theta_x$ to be the set of equivalence classes of $\sim$ in $\sym(\inv;x)$. In the case where $\CC_{\GL_N(\kk)}(x)$ is abelian (e.g., when $x$ is a regular element of $\mfr{gl}_N(\kk)$), the set $\sym(\inv;x)$ is a subgroup and the set $\Theta_x$ is simply its quotient by the image of restriction of $w\mapsto w^\inv w$ to $\CC_{\GL_N(\kk)}(x)$.% Given $y=wxw^{-1}\in\mfr{g}_1$ with $w\in\GL_N(\kk)$ define $\Phi(y)$ to be the equivalence class of $[w^\inv w]$. 

\begin{propo}\label{propo:orbit-decomposition} Let $x\in\mfr{g}_1$ and let $\Pi_x$ denote the intersection $\Ad(\GL_N(\kk))x\cap \mfr{g}_1$. There exists a map $\Lambda:\Pi_x\to \Theta_x$ such that $y_1,y_2\in \Pi_x$ are $\Ad(G_1)$-conjugate if and only if $\Lambda(y_1)=\Lambda(y_2)$. 

%Moreover, in the case where $x\in \mfr{g}_1$ is regular, the set $\Theta$ is of cardinality no greater than $2$.
\end{propo}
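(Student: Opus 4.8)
The plan is to manufacture $\Lambda$ out of the ``obstruction'' $g^\inv g$ attached to any element $g\in\GL_N(\kk)$ witnessing the $\GL_N(\kk)$-conjugacy $y=gxg^{-1}$, and then to verify the two implications of the stated equivalence by direct manipulation of the anti-involution $\inv$.

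\emph{Construction of $\Lambda$.} Given $y\in\Pi_x$, I would fix $g\in\GL_N(\kk)$ with $y=gxg^{-1}$. Since $y\in\mfr{g}_1$ means $y^\inv=-y$, expanding $(gxg^{-1})^\inv=(g^\inv)^{-1}x^\inv g^\inv=-(g^\inv)^{-1}x\,g^\inv$ and comparing with $-gxg^{-1}$ gives $(g^\inv g)^{-1}x(g^\inv g)=x$, so $Q_g:=g^\inv g$ centralises $x$; moreover $Q_g^\inv=g^\inv(g^\inv)^\inv=g^\inv g=Q_g$ because $\inv$ is an involution, hence $Q_g\in\sym(\inv;x)$. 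I then set $\Lambda(y)=[Q_g]\in\Theta_x$. This is independent of the choice of $g$: if $y=g'xg'^{-1}$ as well, then $c:=g^{-1}g'\in\CC_{\GL_N(\kk)}(x)$ and $Q_{g'}=(gc)^\inv(gc)=c^\inv Q_g\,c$, so $[Q_{g'}]=[Q_g]$.

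\emph{The forward implication.} Suppose $y_2=h y_1 h^{-1}$ with $h\in G_1=\GG(\kk)$; in particular $h^\inv h=1$. Writing $y_i=g_ixg_i^{-1}$, the element $a:=g_2^{-1}hg_1$ conjugates $x$ to $g_2^{-1}y_2g_2=x$, hence $a\in\CC_{\GL_N(\kk)}(x)$, and from $g_2=hg_1a^{-1}$ one gets $Q_{g_2}=(a^{-1})^\inv g_1^\inv(h^\inv h)g_1a^{-1}=(a^{-1})^\inv Q_{g_1}a^{-1}$, which is exactly the defining relation of $\sim$; thus $\Lambda(y_1)=\Lambda(y_2)$. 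Note that this step uses only $h^\inv h=1$, not $\det h=1$, so $\Lambda$ is in fact constant on orbits for the full isometry group.

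\emph{The converse, and the main obstacle.} Conversely, if $\Lambda(y_1)=\Lambda(y_2)$, I choose $a\in\CC_{\GL_N(\kk)}(x)$ with $Q_{g_1}=a^\inv Q_{g_2}a$ and set $h=g_2ag_1^{-1}$. Then $hy_1h^{-1}=g_2(axa^{-1})g_2^{-1}=g_2xg_2^{-1}=y_2$ and $h^\inv h=(g_1^{-1})^\inv(a^\inv Q_{g_2}a)g_1^{-1}=(g_1^{-1})^\inv Q_{g_1}g_1^{-1}=1$, so $h$ is an isometry of $B$ carrying $y_1$ to $y_2$. For symplectic $\GG$ every isometry has determinant $1$, so $h\in G_1$ and the equivalence follows. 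For special orthogonal $\GG$, however, a priori only $\det h=\pm 1$, and the genuinely delicate point is to arrange $\det h=1$. The hard part will be to study the subgroup $\{a'\in\CC_{\GL_N(\kk)}(x):a'^\inv Q_{g_1}a'=Q_{g_1}\}$ of admissible modifications of $a$ — replacing $a$ by $aa'$ leaves both $h y_1 h^{-1}=y_2$ and $h^\inv h=1$ intact while multiplying $\det h$ by $\det a'$ — and to compute its image under $\det$ from the elementary-divisor decomposition of $x$ together with the way $\inv$ permutes the corresponding blocks. I expect a determinant-$(-1)$ element to be available whenever $N$ is odd (take $-\id$) or whenever $0$ is an eigenvalue of $x$ (take an isometric reflection supported on the generalised null space of $x$), and I expect the remaining configurations to be exactly those in which $\Theta_x$ is nontrivial and $\Pi_x$ really does split into more than one $\Ad(G_1)$-orbit. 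This determinant bookkeeping is the only substantial obstacle; everything preceding it is the formal computation above, and it is precisely the content of the passage from similarity classes to adjoint orbits carried out in the later subsections.
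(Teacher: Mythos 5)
Your construction of $\Lambda$, the well-definedness check, and the forward implication reproduce the paper's argument faithfully. Your converse also follows the paper's: both set $z=w_1aw_2^{-1}$ (your $h$), observe $zy_2z^{-1}=y_1$, and verify $B(zu,zv)=B(u,v)$, i.e.~$z^\inv z=1$; the paper then asserts $z\in G_1$ without further comment. You are right to flag that this places $z$ only in the full isometry group, whereas $\GG(\kk)$ carries the extra defining condition $\det=1$ from \eqref{equation:G-definition}; for $\Sp_{2n}$ this is automatic, for $\SO_N$ it is not. The gap you identify is genuine and is in fact also present, unremarked, in the paper's own proof: the displayed chain of equalities establishes the isometry condition alone and says nothing about $\det z$. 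For $N$ odd it closes at once by replacing $z$ with $-z$, which flips the determinant without affecting the conjugation.

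Your closing heuristic, however, has the dichotomy backwards. You expect the determinant obstruction to be repairable precisely when $\Theta_x$ is trivial, and the residual splitting to occur precisely when $\Theta_x$ is nontrivial. For $\SO_{2n}$ it runs the other way: the \'etale-algebra computation in the proof of \autoref{propo:decomposition-of-sim-class1} (reused in \autoref{propo:orbit-decomposition=so2n}) shows that $\Theta_x$ is \emph{trivial} exactly when $x$ is non-singular, and it is then that no determinant-correcting modification of $z$ exists, since by the argument of \autoref{lem:short-exact-sequence-so2n}.(2) every $w\in\CC_{\GL_N(\kk)}(x)$ with $w^\inv w=1$ already has $\det w=1$. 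Concretely, $x=\diag(\lambda,-\lambda,\mu,-\mu)$ and $y=\diag(-\lambda,\lambda,\mu,-\mu)$ in $\mfr{so}_4^+(\kk)$, with nonzero $\lambda\neq\pm\mu$, are $\GL_4(\kk)$-similar and satisfy $\Lambda(x)=\Lambda(y)=[1]$, yet every isometry carrying $x$ to $y$ has determinant $-1$. So in the even-orthogonal, non-singular case the ``if'' direction is a substantive claim which neither your argument nor the paper's proof establishes, and this example indicates that it actually fails there; the part of your plan that would need a genuinely new idea is exactly the part you postponed.
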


\begin{proof}
\begin{list}{\arabic{list}.}{\usecounter{list}\setlength{\leftmargin}{0pt}\setlength{\itemsep}{2pt}\setlength{\labelwidth}{-5pt}}
\item \textit{Construction of $\Lambda$.} Let $y\in \Pi_x$ and let $w\in \GL_N(\kk)$ be such that $y=wxw^{-1}$. Put $Q=w^\inv w$. Note that, as $x,y\in \mfr{g}_1$, by applying the anti-involution $\inv$ to the equation $y=wxw^{-1}$, we deduce that $(w^\inv )^{-1}xw^\inv=y$ as well and consequently, that $Q=w^\inv w$ commutes with $x$. Since $Q^\inv =Q$, we get that  $Q\in\sym(\inv;x)$.

Define $\Lambda(y)$ to be the equivalence class of $Q$ in $\Theta_x$. To show that $\Lambda$ is well-defined, let $w'\in \GL_N(\kk)$ be another element such that $y=w'xw'^{-1}$ and $Q'=w'^\inv w'$. Put $a=w^{-1}w'$. Then $a$ commutes with $x$, and 
\[a^\inv Q a=w'^\inv (w^\inv)^{-1}Qw^{-1}w'=w'^\inv w'=Q',\]
hence $Q\sim Q'$.
\item \textit{Proof that $y_1,y_2\in \Pi_x$ are $\Ad(G_1)$-conjugate if $\Lambda(y_1)=\Lambda(y_2)$.} Let $w_1,w_2\in \GL_N(\kk)$ be such that $y_i=w_ixw_i^{-1}$, and let $Q_i=w_i^\inv w_i$  ($i=1,2$). Then, by assumption, there exists $a\in \CC_{\GL_N(\kk)}(x)$ such that $Q_2=a^\inv Q_1a$. Put $z=w_1aw_2^{-1}$. Note that $zy_2z^{-1}=y_1$. We claim that $z\in G_1$. This holds since for any $u,v\in V$ 
\begin{align*}
B(zu,zv)&=B(w_1aw_2^{-1}u,w_1aw_2^{-1}v)=B(a^\inv(w_1^\inv w_1)aw_2^{-1}u,w_2^{-1}v)\\
&=B(a^\inv Q_1 aw_2^{-1}u,w_2^{-1}v)=B(Q_2 w_2^{-1} u,w_2^{-1}v)&\text{(since $Q_2=a^\inv Q_1 a$)}\\
&=B(w_2^\inv u,w_2^{-1}v)=B(u,v).
\end{align*}
\item \textit{Proof that $y_1,y_2\in \Pi_x$ are $\Ad(G_1)$-conjugate only if $\Lambda(y_1)=\Lambda(y_2)$.} Assume now that $z\in G_1$ is such that $y_1=zy_2z^{-1}$, and let $w_1,w_2\in \GL_N(\kk)$ be such that $y_i=w_ixw_i^{-1}$ ($i=1,2$). Then $w_1$ and $zw_2$ both conjugate $x$ to $y_1$, and hence, by the unambiguity of the definition of $\Lambda$ and fact that $z\in G_1$, we have that
\[\Lambda(y_1)=[w_1^\inv w_1]=[w_2^\inv( z^\inv z) w_2]=[w_2^\inv w_2]=\Lambda(y_2).\]
\end{list}
\end{proof}

A crucial property of the set $\Theta_x$ in the case $x$ is regular, which makes the analysis of adjoint orbits feasible, is that it may be realized within the quotient of an \'etale algebra over $\kk$ by the image of the algebra under an involution. As a consequence, the set $\Pi_x$ decomposes into $\abs{\im\Lambda}$ many $\Ad(G_1)$-orbits, a quantity which does not exceed the value four in the regular case.

Let us state another general lemma, which will be required in the description of $\Theta_x$.
\begin{lem}\label{lem:CmodN-suff-cond} Let $\mcal{C}\subseteq\matr_N(\kk)$ be the ring of matrices commuting with a matrix $x$, with $x^\inv=-x$ (or $x^\inv =x$), and let $\mcal{N}\triangleleft\mcal{C}$ be a nilpotent ideal, invariant under $\inv$. The following are equivalent, for any $Q_1,Q_2\in\sym(\inv;x)$.
\begin{enumerate}
\item There exists $a\in \mcal{C}$ such that $a^\inv Q_1 a=Q_2$;
\item There exists $a\in\mcal{C}$ such that $a^\inv Q_1 a\equiv Q_2\pmod{\mcal{N}}$. 
\end{enumerate}
\end{lem}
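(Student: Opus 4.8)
The implication (1) $\Rightarrow$ (2) is trivial, so the content is the converse, which I would prove by a Newton-type successive-approximation (Hensel-lifting) argument inside $\mcal{C}$. The preliminary observations I would record are: first, $\inv$ restricts to an involutive anti-automorphism of $\mcal{C}$, since $b$ commuting with $x$ forces $b^\inv$ to commute with $x^\inv=\pm x$ and hence with $x$; second, $\mcal{N}$ being a nilpotent $\inv$-stable two-sided ideal, every $z\in\mcal{N}$ makes $1+z$ invertible with inverse again in $\mcal{C}$, and each power $\mcal{N}^c$ is again an $\inv$-stable two-sided ideal. Starting from $a_0\in\mcal{C}$ with $a_0^\inv Q_1 a_0=Q_2+m_0$, $m_0\in\mcal{N}$, I would first note that $a_0$ is necessarily invertible: $Q_2$ is invertible and $Q_2^{-1}m_0\in\mcal{N}$ is nilpotent, so $Q_2+m_0$ is invertible, and since $\det(a_0^\inv)=\det(a_0)$ this forces $\det(a_0)\ne0$. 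Also $m_0=a_0^\inv Q_1 a_0-Q_2$ is $\inv$-fixed because $Q_1$ and $Q_2$ are.

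The heart of the proof is the correction step. Assume inductively that $a_j\in\mcal{C}$ is invertible with $a_j^\inv Q_1 a_j=Q_2+m_j$, where $m_j\in\mcal{N}^{c_j}$ and $m_j^\inv=m_j$ (with $c_0=1$). I would set $z_j=-\frac{1}{2}Q_2^{-1}m_j\in\mcal{N}^{c_j}$ — this is the one place $\Char(\kk)\ne2$ is used — and $a_{j+1}=a_j(1+z_j)\in\mcal{C}$, again invertible. Expanding,
\begin{equation*}
a_{j+1}^\inv Q_1 a_{j+1}=(1+z_j^\inv)(Q_2+m_j)(1+z_j)=Q_2+\bigl(m_j+Q_2 z_j+z_j^\inv Q_2\bigr)+m_{j+1},
\end{equation*}
with $m_{j+1}=m_j z_j+z_j^\inv m_j+z_j^\inv Q_2 z_j+z_j^\inv m_j z_j$. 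Using $(Q_2^{-1})^\inv=Q_2^{-1}$ and $m_j^\inv=m_j$ one gets $Q_2 z_j+z_j^\inv Q_2=-\frac{1}{2}(m_j+m_j^\inv)=-m_j$, so the bracketed term vanishes; the four remaining terms all lie in $\mcal{N}^{2c_j}$ since $\mcal{N}$ is an ideal and $\mcal{N}^{c_j}$ is $\inv$-stable; and a symmetry check identical to the one done for $m_0$ shows $m_{j+1}^\inv=m_{j+1}$. Thus the induction continues with $c_{j+1}=2c_j$.

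Since $c_j=2^j\to\infty$ and $\mcal{N}$ is nilpotent, after finitely many steps $m_j$ lies in $\mcal{N}^{c_j}=0$, so $a:=a_j\in\mcal{C}$ is invertible and satisfies $a^\inv Q_1 a=Q_2$, which is (1). The main obstacle I anticipate is not computational but conceptual: one must solve, at each stage, the "hermitian" linear equation $Q_2 z+z^\inv Q_2=-m_j$ with $z\in\mcal{N}^{c_j}$, and this works precisely because $2$ is invertible and because the error $m_j$ is kept $\inv$-symmetric throughout (the analogue of the fact that a symmetric matrix equals its own symmetrization). The accompanying bookkeeping — the symmetry of the successive errors and the claim that they stay inside the $\inv$-stable powers of $\mcal{N}$ so that the iteration terminates — is routine and will be carried out by direct expansion.
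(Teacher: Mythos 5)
Your proof is correct. The paper's proof is a one-line citation to Wall's Theorem~2.2.1, which is itself a successive-approximation argument for lifting congruences of hermitian units past a nilpotent $\inv$-stable ideal, with the ``trace condition'' amounting to being able to solve $Q_2 z + z^\inv Q_2 = -m$ for $\inv$-symmetric $m$ (which in odd characteristic one does by halving, exactly as you did). So your Hensel-type iteration reconstructs, in self-contained form, the very argument the paper outsources to Wall; the approach is essentially the same.
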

\begin{proof}
The argument of \cite[Theorem~2.2.1]{WallIsometries} applies to the case where $\mcal{N}$ is any nilpotent ideal which is invariant under $\inv$, provided that the required trace condition holds. In the present case the condition holds since $\Char(\kk)\ne 2$. 
\end{proof}
\subsubsection{Similarity classes via bilinear forms}\label{subsection:bilinear-forms-similarity}
We recall a basic lemma which would allow us to determine when an element of $\mfr{gl}_N(\kk)$ is similar to an element of $\mfr{g}_1$. Here and in the sequel, given a non-degenerate bilinear form $C$ on a finite dimensional vector space $V$ over $\kk$, we call an operator $x\in \End (V)$ $C$-\textbf{anti-symmetric}, if $C(xu,v)+C(u,xv)=0$ holds for all $u,v\in V$.
\begin{lem}\label{lem:conj-by-forms} Let $C_1,C_2$ be two non-degenerate bilinear forms on a vector space $V=\kk^N$, and assume there exists $g\in \End(V)$ and $\delta\in\kk$ such that $C_1(gu,gv)=\delta C_2(u,v)$ for all $u,v\in V$. Let $x\in\mfr{gl}_N(\kk)$ be $C_2$-anti-symmetric. Then $gxg^{-1}$ is $C_1$-anti-symmetric.
\end{lem}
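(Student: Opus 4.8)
The plan is to verify the $C_1$-anti-symmetry of $gxg^{-1}$ by a direct substitution that transports the problem back to the assumed $C_2$-anti-symmetry of $x$, using the intertwining relation between the two forms.

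First I would record that the hypotheses force $g\in\GL_N(\kk)$ and $\delta\in\kk^\times$. Invertibility of $g$ is implicit in the very formation of $gxg^{-1}$; and if $\delta$ vanished, the relation $C_1(gu,gv)=\delta C_2(u,v)=0$ for all $u,v\in V$ would contradict non-degeneracy of $C_1$, since $g$ is onto. This step is trivial but is needed to justify the substitution that follows.

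The core of the argument is then the following computation. Set $y=gxg^{-1}$; I must show that $C_1(yu,v)+C_1(u,yv)=0$ for all $u,v\in V$. Since $g$ is bijective, it suffices to check this when $u=gu'$ and $v=gv'$ with $u',v'$ arbitrary in $V$. For such vectors $yu=gxu'$ and $yv=gxv'$, so
\[
C_1(yu,v)+C_1(u,yv)=C_1(gxu',gv')+C_1(gu',gxv')=\delta\bigl(C_2(xu',v')+C_2(u',xv')\bigr)=0,
\]
the last equality being the $C_2$-anti-symmetry of $x$. As $u',v'$ range over all of $V$, this proves the lemma.

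There is no real obstacle here: the statement is a formal consequence of the definitions, and the only point requiring even a one-line remark is the observation that $g$ and $\delta$ are invertible and nonzero, respectively.
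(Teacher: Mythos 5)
Your proof is correct and is exactly the direct computation the paper alludes to; the paper itself omits the proof with the remark that it is ``by direct computation,'' so there is nothing to compare beyond noting that your argument is the intended one. The observation that $g\in\GL_N(\kk)$ and $\delta\in\kk^\times$ is a nice touch that makes the substitution $u=gu'$, $v=gv'$ legitimate.
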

The proof of \autoref{lem:conj-by-forms} is by direct computation, and is omitted. %In our application we will invoke the lemma in order to prove that a general matrix $x$ is similar to an element of $\mfr{g}_1$ by constructing a non-degenerate bilinear form $C$ on $V$, with respect to which $x$ is anti-symmetric, and such that the pair $B=B_\kk$ (the fixed non-degenerate bilinear form defined in \autoref{subsubsection:adjoint-operators})  and $C$ satisfy the hypothesis of the lemma. 
\subsection{Symplectic and odd-dimensional special orthogonal groups}\label{subsection:sp2n-so2n+1}

%In this section we consider the symplectic group on $V=\kk^N$, with $N=2n$ even, and the orthogonal group on the space $V=\kk^{N}$, where $N=2n+1$ is odd, and prove \autoref{theo:orbits-sp2nso2n+1}. 
Throughout \autoref{subsection:sp2n-so2n+1}, we assume $\GG=\Sp_{2n}$ or $\GG=\SO_{2n+1}$. The following well-known fact is very useful in the classification of regular adjoint classes in the Lie-algebra $\mfr{g}_1$.

\begin{lem}\label{lem:eseential-uniqueness} Let $\epsilon =-1$ and $N=2n$ in the symplectic case, or $\epsilon=1$ and $N=2n+1$ in the special orthogonal case. Let $C_1,C_2$ be two non-degenerate forms on $V=\kk^N$ such that $C_i(u,v)=\epsilon C_i(v,u)$ for all $u,v\in V$ and $i=1,2$. There exists $\delta\in\kk$ and $g\in \End(V)$ such that $C_1(gu,gv)=\delta C_2(u,v)$ for all $u,v\in V$. Additionally, if $\epsilon=-1$ then $\delta$ can be taken to be $1$.
\end{lem}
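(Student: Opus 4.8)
The plan is to split along the two cases and reduce each to a classical classification of bilinear forms over $\kk$.

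\emph{Symplectic case} ($\epsilon=-1$, $N=2n$). Here I would use the standard fact that any non-degenerate alternating bilinear form on a finite-dimensional vector space over an arbitrary field admits a symplectic basis $e_1,f_1,\dots,e_n,f_n$ (with $C(e_i,f_j)=\delta_{ij}$ and $C(e_i,e_j)=C(f_i,f_j)=0$): pick any $e_1$, use non-degeneracy to find $f_1$ with $C(e_1,f_1)=1$, note that the orthogonal complement of $\Span\{e_1,f_1\}$ is again a non-degenerate alternating space, and recurse on dimension. Fixing such bases for $C_1$ and $C_2$, the linear isomorphism $g$ carrying the $C_2$-basis to the $C_1$-basis satisfies $C_1(gu,gv)=C_2(u,v)$ for all $u,v$, so $\delta=1$ works.

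\emph{Odd orthogonal case} ($\epsilon=1$, $N=2n+1$). Since $\Char(\kk)\ne 2$, Gram--Schmidt diagonalizes each $C_i$, say $C_i\cong\langle a_1^{(i)},\dots,a_N^{(i)}\rangle$, and its discriminant $d(C_i)=\prod_j a_j^{(i)}\in\kk^\times/(\kk^\times)^2$ is well defined. Over the finite field $\kk$ of odd characteristic there are exactly two isometry classes of non-degenerate symmetric bilinear forms of each dimension, distinguished precisely by the discriminant — the classical classification of quadratic forms over a finite field (equivalently, every non-degenerate binary form over $\kk$ represents each element of $\kk^\times$, so any diagonal form is isometric to $\langle 1,\dots,1,d\rangle$). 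Replacing $C_2$ by $\delta C_2$ multiplies the Gram matrix, hence the discriminant, by $\delta^N$; as $N$ is odd, $\delta^N\equiv\delta\pmod{(\kk^\times)^2}$, so $d(\delta C_2)=\delta\, d(C_2)$ in $\kk^\times/(\kk^\times)^2$. Choosing $\delta\in\kk^\times$ a representative of $d(C_1)d(C_2)^{-1}$ forces $d(\delta C_2)=d(C_1)$, whence by the classification $\delta C_2\cong C_1$; the isometry $g$ then satisfies $C_1(gu,gv)=\delta C_2(u,v)$.

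The only non-formal ingredient is the classification of symmetric bilinear forms over a finite field, which is entirely standard; granting it, the argument is just bookkeeping with discriminants together with the observation that raising to an odd power is a bijection of $\kk^\times/(\kk^\times)^2$. I would also note in passing that the $g$ produced is automatically invertible (both forms being non-degenerate and $\delta\ne 0$), which is what the subsequent application of \autoref{lem:conj-by-forms} requires.
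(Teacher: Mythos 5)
Your proof is correct and amounts to supplying the standard argument that the paper itself delegates to a reference (\cite[\S\S~3.4.4, 3.4.6, 3.7]{WilsonFinite}): symplectic bases give uniqueness up to isometry in the alternating case, and the discriminant classification of nondegenerate symmetric forms over a finite field of odd characteristic, combined with $\delta^N\equiv\delta$ modulo squares for odd $N$, gives the similitude in the orthogonal case. Same route, just written out rather than cited.
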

\begin{proof}
See, e.g., \cite[\S 3.4.4]{WilsonFinite} in the symplectic case and \cite[\S~3.4.6 and \S~3.7]{WilsonFinite} in the special orthogonal case.
\end{proof}

\subsubsection{Similarity classes of regular elements}\label{subsubsection:simclasses-sp2nso2n+1}
The following lemma gives a criterion for a regular matrix to be similar to an element of $\mfr{g}_1$.

\begin{lem}\label{lem:conj-to-g1}
Let $x\in\gl_N(\kk)$ with minimal polynomial $m_x(t)\in\kk[t]$.
\begin{enumerate}
\item If $x$ is similar to an element of $\mfr{g}_1$ then $m_x(t)$ satisfies $m_x(-t)=(-1)^{\deg m_x}m_x(t)$. 
\item If $x$ is a regular element of $\mfr{gl}_N(\kk)$ (and hence $\deg m_x=N$) such that $m_x(t)=(-1)^Nm_x(t)$, then $x$ is similar to an element of $\mfr{g}_1$.
\end{enumerate}
\end{lem}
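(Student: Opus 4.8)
The two parts are handled separately; the first is short, and the second reduces to a form-theoretic construction.

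\emph{Part (1).} Suppose $x$ is similar to some $y\in\mfr{g}_1$, so $y^{\inv}=-y$, i.e. $y^{t}=-\BB y\BB^{-1}$; thus $y^{t}$ is similar to $-y$, whence $m_{y^{t}}=m_{-y}$. Since $p(y^{t})=p(y)^{t}$ for every polynomial $p$ we have $m_{y^{t}}=m_{y}$, while $m_{-y}(t)=(-1)^{\deg m_{y}}m_{y}(-t)$, the right-hand side being monic of degree $\deg m_y$ and vanishing at $-y$. Hence $m_{y}(-t)=(-1)^{\deg m_{y}}m_{y}(t)$, and as $m_{x}=m_{y}$ this is the claim.

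\emph{Part (2).} I would first reduce, via \autoref{lem:conj-by-forms} and \autoref{lem:eseential-uniqueness}, to producing a non-degenerate bilinear form $C$ on $V=\kk^{N}$ which is anti-symmetric if $N=2n$ and symmetric if $N=2n+1$, and with respect to which $x$ is anti-symmetric (in the sense of \autoref{subsection:bilinear-forms-similarity}). Given such a $C$, \autoref{lem:eseential-uniqueness} applied to $\BB$ and $C$ (which have the same symmetry type) yields $\delta\in\kk^{\times}$ and an invertible $g$ with $\BB(gu,gv)=\delta\,C(u,v)$, and then \autoref{lem:conj-by-forms} gives that $gxg^{-1}$ is $\BB$-anti-symmetric, i.e. $gxg^{-1}\in\g(\kk)=\mfr{g}_{1}$ by \eqref{equation:Lie(G)-definition}. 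To build $C$ I would use that $x$, being a regular element of $\mfr{gl}_{N}(\kk)$, is cyclic: fixing a cyclic vector $v$, every vector is uniquely $p(x)v$ with $\deg p<N$, and the hypothesis $m_{x}(-t)=(-1)^{N}m_{x}(t)$ makes $p(t)\mapsto p(-t)$ a well-defined $\kk$-algebra involution $\iota$ of $A:=\kk[x]\cong\kk[t]/(m_{x})$. For a $\kk$-linear functional $\lambda\colon A\to\kk$ set $C\bigl(p(x)v,q(x)v\bigr)=\lambda\bigl(p(x)\,\iota(q(x))\bigr)$. A one-line computation gives $C(xu,w)=-C(u,xw)$, so $x$ is automatically $C$-anti-symmetric; $C$ is non-degenerate exactly when the form $(p,q)\mapsto\lambda(pq)$ on $A$ is non-degenerate (i.e. $\lambda$ is a trace-generator); and $C$ has the required symmetry type exactly when $\lambda\circ\iota=(-1)^{N+1}\lambda$. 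Thus everything reduces to exhibiting a non-degenerate $\lambda$ in the $(-1)^{N+1}$-eigenspace of $\lambda\mapsto\lambda\circ\iota$.

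For that I would decompose $A$ into the product of its local components $A_{i}=\kk[t]/(g_{i}^{e_{i}})$ indexed by the $\iota$-orbits of the monic irreducible factors $g_{i}$ of $m_{x}$; since $m_{x}(-t)=\pm m_{x}(t)$, each $g_{i}$ is either even, or equal to $t$ (the unique odd monic irreducible over $\kk$, as $\Char(\kk)\neq 2$), or genuinely paired with a distinct factor $g_{j}$ with $g_{j}(t)=\pm g_{i}(-t)$ and $e_{i}=e_{j}$, and $\iota$ permutes the $A_{i}$ accordingly. On a paired block $A_{i}\times A_{j}$ one takes an arbitrary trace-generator on $A_{i}$ and transports it to $A_{j}$ by $\iota$, so this block may be placed in either eigenspace at will. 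On an even block $A_{i}=\kk[t]/(g^{e})$ with $\deg g=2d$, one chooses a trace-generator of the index-two $\iota$-fixed subring $A_{i}^{\iota}$ (whose residue field is the index-two subfield of that of $A_{i}$) and pulls it back appropriately, using that $t$ is a unit in $A_{i}$, to obtain a non-degenerate $\lambda_{i}$ in the prescribed $\iota$-eigenspace. On the block $\kk[t]/(t^{r_{0}})$, if present — i.e. when $x$ is singular, $t^{r_{0}}$ being the exact power of $t$ dividing $m_{x}$ — the functional ``coefficient of $t^{r_{0}-1}$'' is a trace-generator with $\lambda\circ\iota=(-1)^{r_{0}-1}\lambda$, and here the parity is forced the right way: the symmetry of $m_{x}$ together with $t^{r_{0}}\parallel m_{x}$ makes $r_{0}$ odd when $N$ is odd and even when $N$ is even. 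Multiplying the block functionals yields the desired $\lambda$, hence $C$, and the proof concludes.

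The main obstacle is this last block-by-block analysis: checking that a \emph{non-degenerate} functional can be chosen inside the prescribed $\iota$-eigenspace on each component — in particular reconciling the eigenvalue constraint with the ``checkerboard'' vanishing it imposes on the Gram matrix over an even block, and carrying out the parity bookkeeping on the nilpotent block $\kk[t]/(t^{r_{0}})$. This is precisely where the dichotomy between $\Sp_{2n}$ and $\SO_{2n+1}$ enters, and it is the Lie-algebra analogue of the case analysis in \cite[\S 2.6]{WallIsometries}.
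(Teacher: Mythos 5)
Part (1) is correct and is essentially the paper's argument rephrased through transposes: the paper argues directly from $B(x^ru,v)=B(u,(-1)^rx^rv)$, while you deduce the same thing by passing through $y^t=-\BB y\BB^{-1}$ and the chain $m_{y^t}=m_y=m_{-y}$. Either way the content is identical.

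Part (2) opens with the same reduction the paper uses, via \autoref{lem:conj-by-forms} and \autoref{lem:eseential-uniqueness}, and your framework is sound: every admissible $C$ is of the form $C\bigl(p(x)v,q(x)v\bigr)=\lambda\bigl(p(x)\,\iota(q(x))\bigr)$ for a $\kk$-linear functional $\lambda$ on $A=\kk[x]\simeq\kk[t]/(m_x)$, $C$ is non-degenerate exactly when $\lambda$ is a trace generator, and the symmetry requirement becomes $\lambda\circ\iota=(-1)^{N+1}\lambda$. Where you and the paper part ways is in how $\lambda$ is produced. You propose a primary (Chinese Remainder) decomposition of $A$ and a block-by-block construction of eigenfunctionals; this is a correct route in outline, and your parity bookkeeping on the $t$-primary block — that $t^{r_0}\parallel m_x$ with $m_x(-t)=(-1)^N m_x(t)$ forces $r_0\equiv N\pmod 2$ — is accurate, but you explicitly leave the even-block construction as an open ``main obstacle.'' The paper sidesteps the entire decomposition by a single explicit choice: $\lambda$ is the functional that reads off the $t^{N-1}$ coefficient modulo $m_x$, i.e.\ the projection of $p(x)v_0$ onto the line $\kk\,x^{N-1}v_0$. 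This $\lambda$ is non-degenerate because for any nonzero $p$ with $\deg p<N$ the product $t^{N-1-\deg p}p(t)$ has degree exactly $N-1$; and it satisfies $\lambda\circ\iota=(-1)^{N-1}\lambda$ because the substitution $t\mapsto -t$ (which descends to $A$ precisely because $m_x(-t)=\pm m_x(t)$) multiplies the $t^{N-1}$ coefficient by $(-1)^{N-1}$. In short, the paper's formula $C\bigl(p_1(x)v_0,p_2(x)v_0\bigr)=\bigl[p_1(t)p_2(-t)\bmod m_x\bigr]_{N-1}$ realizes the very $\lambda$ whose existence you are constructing component by component, so the obstacle you flag never arises. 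Also, contrary to your closing remark, the dichotomy between $\Sp_{2n}$ and $\SO_{2n+1}$ does not enter at this lemma at all; it first appears later, in the orbit decomposition (\autoref{propo:decomposition-of-sim-class1}) and the centralizer computations of \autoref{subsubsection:cetralizers-sp2nso2n+1}.
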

\begin{proof}
For the first assertion, we may assume $x\in\mfr{g}_1$. Note that for any $r\in\dbN$ we have that $B(x^ru,v)=B(u,(-1)^r x^rv)$ for all $u,v\in V=\kk^N$. Invoking the non-degeneracy of $B$, we deduce that $(-1)^{\deg m_x}m_x(-t)$ is a monic polynomial of degree $\deg m_x$ which vanishes at $x$, and hence equal to $m_x(t)$.

By \autoref{lem:conj-by-forms}, to prove the second assertion it would suffice to construct a non-degenerate bilinear form $C$ on $V$ such that $B$ and $C$ satisfy the hypothesis of \autoref{lem:conj-by-forms}. In view of \autoref{lem:eseential-uniqueness}, in the present case it suffices to construct \textit{some }non-degenerate bilinear form $C$ on $V$ such that $C(u,v)=\epsilon C(v,u)$, where $\epsilon=(-1)^N$, and such that $x$ is $C$-anti-symmetric.% Indeed, given such a form $C$, and elements $\delta\in \kk$ and $g\in \End(V)$ such that $B(gu,gv)=\delta C(u,v)$ for all $u,v\in V$, as in \autoref{lem:eseential-uniqueness}, one verifies by direct computation that $gxg^{-1}$ is anti-symmetric with respect to $B$. 

\newcommand{\proj}{\mathrm{Prj}}
By \cite[Ch.~III,\:3.5, Proposition~2]{SteinbergConjugacy}, the assumption that $x$ is a regular matrix is equivalent to $V$ being a cyclic module over the ring $\kk[x]$ (which, in turn, is equivalent to $\deg m_x=N$). In particular, there exists $v_0\in V$ such that $(v_0,xv_0,\ldots,x^{N-1}v_0)$ is a $\kk$-basis for $V$. Let $\proj_{N-1}:V\to \kk$ denote the projection onto $\kk\cdot x^{N-1}v_0$. Given $u_1,u_2\in V$ let $p_1,p_2\in\kk[t]$ be polynomials such that $u_i=p_i(x)v_0$ and define \begin{equation}
\label{equation:C-definition} C(u_1,u_2)=\proj_{N-1}\left(p_1(x)p_2(-x)v_0\right).
\end{equation}
\iffalse Note that given $p'_1,p'_2\in\kk[t]$, another pair of polynomials such that $p_i'(x)v_0=u_i$, necessarily $p_i'\equiv p_i\pmod{m_x}$ for $i=1,2$ and, by the assumption $m_x(-t)=(-1)^{N}m_x(t)$, it follows that $p_1'(t)p_2'(-t)\equiv p_1(t)p_2(-t)\pmod{m_x(t)}$, whence that $C$ is well-defined. It is also obvious from the definition that $C$ is bilinear, that $C(xu,v)+C(u,xv)=0$ for all $u,v\in V$, and that $C(u,v)=(-1)^{N-1}C(v,u)=\epsilon C(v,u)$ for all $u,v\in V$.\fi
The fact that $C$ is well-defined, bilinear and satisfies $C(u,v)=\epsilon C(c,u)$ follows by direct computation. Let us verify that $C$ is non-degenerate.

Let $u\in V$ be non-zero, and let $p(t)$ be such that $p(x)v_0=u$. By unambiguity of the definition of $C$, we may assume that $\deg p(t)<N$. Let $v=x^{N-1-\deg p} v_0\in V$. Then 
\[C(u,v)=\proj_{N-1}((-1)^{N-1-\deg p}x^{N-1-\deg p}p(x)v_0)\]
is non-zero, since $t^{N-1-\deg p} p(t)$ is a polynomial of degree $N-1$. 

Finally, for $u_i=p_i(x)v_0$ as above, we have that
\[C(xu,v)+X(u,xv)=\proj_{N-1}(xp_1(x)p_2(x)v_0)+\proj_{N-1}(p_1(x)\cdot(-xp_2(-x))v_0)=\proj_{N-1}(0)=0,\]
and hence $x$ is $C$-anti-symmetric.
\end{proof}

Note that \autoref{lem:conj-to-g1} gives a criterion for a regular element of $\mfr{gl}_N(\kk)$ to be similar to an element of $\mfr{g}_1$, but a-priori, not necessarily to \textit{a regular} element of $\mfr{g}_1$. We will shortly see that it is indeed the case that the similarity class of such $x$ meets $\mfr{g}_1$ at a regular orbit. Before proving this, let us consider an important example.

\begin{exam}[Regular nilpotent elements]\label{exam:reg-nil-elt-so2n+1-sp2n}
Let $x\in\mfr{gl}_N(\kk)$ be a regular nilpotent element, i.e. $m_x(t)=t^N$. Picking a generator $v_0$ for $V$ over $\kk[x]$ and putting $\mcal{E}=(v_0,xv_0,\ldots,x^{N-1}v_0)$, the element $x$ is represented in the basis $\mcal{E}$ by the matrix $\nil$, given by an $N\times N$ nilpotent Jordan block. The bilinear form $C$ of \autoref{lem:conj-to-g1} is represented in this basis by the matrix \begin{equation}\label{eqution:C-nil-elt}
\mbf{c}=\mat{&&&1\\&&-1\\%&&1\\
&\reflectbox{$\ddots$}\\(-1)^{N-1}}.
\end{equation}
\end{exam}
To show that $\nil$ is similar to a \textit{regular} element of $\mfr{g}_1$, by \cite[3.5,\:Proposition~1]{SteinbergConjugacy} and \cite[\S~1.10,\:Proposition]{HumphreysConj}, it suffices to show that the centralizer of $\nil$ within the Lie-algebra $\mfr{h}\subseteq \matr_N(\bkk)$, of matrices $y$ satisfying the condition $y^t\mbf{c}+\mbf{c} y$ (i.e. the Lie-algebra of the linear algebraic $\bkk$-group of isometries of $C(\cdot,\cdot)$), is of dimension $n$ over $\bkk$. By direct computation, one shows that
\[\CC_{\mfr{h}}(\nil)=\set{\mat{a_1&a_2&\ldots&a_{N}\\&\ddots&\ddots&\vdots\\&&a_1&a_2\\&&&a_1}\in \matr_N(\bkk)\mid 2a_{2i-1}=0\text{ for all }i=1,\ldots,\lceil{N}/{2}\rceil}.\]
Recalling that $\Char(\bkk)\ne 2$, it follows that $a_{2i+1}=0$ for all $i=0,\ldots,\lceil{N/2}\rceil$ and hence $\dim_{\bkk}\CC_{\mfr{h}}(\nil)=\lfloor N/2\rfloor= n.$

% we now pass to the algebraic closure of $\kk$ and compute the dimension of the centralizer of $z\nil z^{-1}$ in $\GGamma_1$, where $z\GGamma_1 z^{-1}\in \mfr{g}_1$.

% Note that the centralizer of $\nil$ in $\GL_N(\bkk)$ consists of upper triangular T\"oplitz matrices,
%\[\CC_{\GL_N(\bkk)}(\nil)=\set{\mat{a_0&a_1&\ldots&a_{N-1}\\&\ddots&\ddots&\vdots\\&&a_0&a_1\\&&&a_0}\mid a_0,\ldots,a_{N-1}\in\bkk,\:a_0\ne 0}\simeq\left(\bkk[t]/(t^N)\right)^\times.\]
%Additionally, the map $g\mapsto zgz^{-1}$ induces an isomorphism of $\CC_{\GGamma_1}(z\nil z^{-1})$ onto the subgroup of elements $y\in\CC_{\GL_N(\bkk)}(\nil)$ which preserve $C(\cdot,\cdot)$, i.e. such that $y^t\mbf{c} y=\mbf{c}$. Computing the dimension of this subgroup¸ e.g. by passing to its Lie-algebra, one easily verifies that it is of dimension (no greater than) $n$ over $\bkk$, and hence $\nil$ is similar to a regular element of $\mfr{g}_1$.

\begin{propo}\label{propo:reg-equiavlent} Let $x\in\mfr{g}_1$. Then $x$ is a regular element of $\mfr{g}_1$ if and only if $x$ is regular in $\mfr{gl}_N(\kk)$.
\end{propo}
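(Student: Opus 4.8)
The plan is to compare the two notions of regularity by running $x$ through its Jordan decomposition and the eigenspace decomposition of its semisimple part, thereby reducing to a statement about nilpotent elements. I would work over $\bkk$, writing $V=\bkk^N$, and use that for $x\in\mfr{g}_1$ regularity in $\mfr{g}_1$ (in the sense of \autoref{defi:regular-elements} with $r=1$) means $\dim\CC_{\GG(\bkk)}(x)=n$, whereas regularity in $\mfr{gl}_N(\kk)$ means that $V$ is a cyclic $\bkk[x]$-module, i.e.\ that every generalized eigenspace of $x$ is a single Jordan block (\cite[3.5, Proposition~2]{SteinbergConjugacy}). Write $x=s+h$ for the Jordan decomposition. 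Since the anti-involution $\inv$ of \autoref{subsubsection:adjoint-operators} preserves semisimple elements, nilpotent elements and commutation, it fixes Jordan decompositions, so from $x^\inv=-x$ we get $s^\inv=-s$ and $h^\inv=-h$; thus $s,h\in\g(\bkk)$. Let $\pm\lambda_1,\dots,\pm\lambda_t$ be the nonzero eigenvalues of $s$ (with $\lambda_i\neq\pm\lambda_j$ for $i\neq j$), put $W_{[\lambda_j]}=W_{\lambda_j}\oplus W_{-\lambda_j}$, $W_0=\ker s$, $m_j=\dim W_{\lambda_j}$. By \autoref{propo:centralizer-semisimple-element} and the Witt-extension argument in its proof, $\CC_{\GG(\bkk)}(s)\cong\prod_{j=1}^t\GL_{m_j}(\bkk)\times\Delta(\bkk)$, where $\Delta$ is the isometry group of the non-degenerate form $B\mid_{W_0}$. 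Since $h$ commutes with $s$ it preserves each $W_{[\lambda_j]}$ and $W_0$, and, as $h$ is $B$-antisymmetric, the blocks $h\mid_{W_{\lambda_j}}$ and $h\mid_{W_{-\lambda_j}}$ are, up to sign, transposes of one another under the $B$-induced duality $W_{-\lambda_j}\cong W_{\lambda_j}^{*}$; hence an element of the $j$-th factor commutes with $h$ iff its action on $W_{\lambda_j}$ does, and $\CC_{\GG(\bkk)}(x)=\CC_{\CC_{\GG(\bkk)}(s)}(h)\cong\prod_{j=1}^t\CC_{\GL_{m_j}(\bkk)}(h\mid_{W_{\lambda_j}})\times\CC_{\Delta(\bkk)}(h\mid_{W_0})$.

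Next I would do the dimension bookkeeping. Each factor has dimension at least the rank of the ambient group: $\dim\CC_{\GL_{m_j}(\bkk)}(h\mid_{W_{\lambda_j}})\geq m_j$ and $\dim\CC_{\Delta(\bkk)}(h\mid_{W_0})\geq\rk\Delta$, by \cite[3.5, Proposition~1]{SteinbergConjugacy}, with equality exactly when $h\mid_{W_{\lambda_j}}$ is a single Jordan block, resp.\ when $h\mid_{W_0}$ is a regular nilpotent element of $\Lie(\Delta)(\bkk)$. As $W_0$ is non-degenerate its dimension has the same parity as $N$, so $\Delta$ is again a symplectic, resp.\ odd-dimensional orthogonal, group, and a short count gives $\sum_j m_j+\rk\Delta=n$ in both cases $\GG=\Sp_{2n}$ and $\GG=\SO_{2n+1}$. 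Therefore $\dim\CC_{\GG(\bkk)}(x)\geq n$ (re-proving the minimum in \autoref{theo:properties}.(1)), and $x$ is regular in $\mfr{g}_1$ iff each $h\mid_{W_{\lambda_j}}$ is a single block and $h\mid_{W_0}$ is regular nilpotent in $\Lie(\Delta)(\bkk)$. On the other hand $x$ is regular in $\mfr{gl}_N(\kk)$ iff each $W_{\pm\lambda_j}$ and $W_0$ is a single block; since $h\mid_{W_{-\lambda_j}}$ has the same Jordan type as $h\mid_{W_{\lambda_j}}$ (it is, up to sign, its transpose), this is equivalent to each $h\mid_{W_{\lambda_j}}$ being a single block and $h\mid_{W_0}$ being a single block. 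Comparing the two criteria, the whole proposition collapses to the single assertion: a nilpotent $h_0\in\Lie(\Delta)(\bkk)$ is regular in $\Lie(\Delta)(\bkk)$ iff it consists of a single Jordan block.

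For the `if' direction this is precisely the centralizer computation in \autoref{exam:reg-nil-elt-so2n+1-sp2n} (applied with $W_0$ in place of $V$ and $\Delta$ in place of $\GG$): using \autoref{lem:eseential-uniqueness}, the single-block nilpotent is $\GL(W_0)$-conjugate to an element of $\Lie(\Delta)(\bkk)$ whose centralizer in $\Lie(\Delta)(\bkk)$ has dimension $\lfloor\dim W_0/2\rfloor=\rk\Delta$, so it is regular. For the `only if' direction I would use that the regular nilpotent elements of $\Lie(\Delta)(\bkk)$ form a single $\Delta(\bkk)$-orbit; by the `if' direction that orbit contains a single-block nilpotent, and $\Delta(\bkk)$-conjugacy preserves Jordan type, so every regular nilpotent of $\Lie(\Delta)(\bkk)$ is a single block. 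I expect this `only if' step to be the main obstacle: one must be sure that the uniqueness of the regular nilpotent orbit persists for these classical Lie algebras in residual characteristic $p>2$ (it does; cf.\ \cite[Ch.~III]{SteinbergSpringer}, \cite[\S~1.10]{HumphreysConj}). A self-contained alternative is to invoke the explicit formula for the dimension of the centralizer of a nilpotent of prescribed Jordan type in $\mfr{sp}$ and in odd-dimensional $\mfr{so}$ (valid for $p\neq 2$) and check by an elementary partition inequality that it strictly exceeds $\rk\Delta$ for every type other than the single block. Finally, when $W_0=0$ (equivalently $x$ non-singular) there is no $\Delta$-factor and the argument specializes to \autoref{corol:reg-g1-reg-glN-nonsing}.
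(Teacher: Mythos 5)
Your proof is correct and follows essentially the same route as the paper's: pass to $\bkk$, take the Jordan decomposition $x=s+h$, use the orthogonal eigenspace decomposition and \autoref{propo:centralizer-semisimple-element} to split the centralizer as a product, reduce by a rank count to the nilpotent case, and there invoke \autoref{exam:reg-nil-elt-so2n+1-sp2n} together with uniqueness of the regular nilpotent orbit. The only cosmetic difference is that you unfold the $W_{[\lambda_j]}$ decomposition explicitly rather than lumping the non-singular part into a single $W_1$ and citing \autoref{corol:reg-g1-reg-glN-nonsing} as a black box, which is what the paper does.
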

\begin{proof}
By \cite[\S~3.5,\:Proposition~1]{SteinbergConjugacy}, we need to show $\dim\CC_{\GGamma_1}(x)=n$ if and only if $\dim\CC_{\GL_N\times\bkk}(x) =N$. Let $x=s+h$ be the Jordan decomposition of $x$ over $\bkk$, with $s,h\in \g(\bkk)$, $s$ semisimple, $h$ nilpotent, and $[s,h]=0$. As seen in the proof of  \autoref{propo:centralizer-semisimple-element}, the space $W=(\bkk)^N$ decomposes as an orthogonal direct sum $W_1\oplus W_0$ with respect to the ambient bilinear form $B_{\bkk}$, where $W_0=\ker(s)$ and $s\mid_{W_1}$ is non-singular. Let $\Sigma\subseteq \GG(\bkk)$ be the subgroup of elements acting trivially on $W_0$ and preserving $W_1$, and let $\Delta$ be as in \autoref{propo:centralizer-semisimple-element}. Then 
\begin{align*}\CC_{\GGamma_1}(x)&=\CC_{\Sigma}(x)\times \CC_{\set{1_{W_1}}\times\Delta}(x)\intertext{and} \CC_{\GL_N(\bkk)}(x)&=\CC_{\GL(W_1)\times\set{1_{W_0}}}(x)\times \CC_{\set{1_{W_1}}\times \GL(W_0)}(x)\end{align*}
and therefore the proof reduces to the cases where $x$ is non-singular and where $x$ is a nilpotent element acting on $W_0$. The first case follows from \autoref{corol:reg-g1-reg-glN-nonsing}, whereas the second case follows from \autoref{exam:reg-nil-elt-so2n+1-sp2n} and from the uniqueness of a regular nilpotent orbit over algebraically closed fields \cite[III, Theorem~1.8]{SteinbergConjugacy}.
\end{proof}
\begin{proof}[Proof of \autoref{theo:orbits-sp2nso2n+1}.(1)]
\autoref{propo:reg-equiavlent} implies that any element $x\in \matr_N(\kk)$ which is similar to a regular element of $\mfr{g}_1$ is regular as an element of $\mfr{gl}_N(\kk)$. It follows easily that $\deg m_x=N$ and $m_x(-t)=(-1)^N m_x(t)$(see \autoref{lem:conj-to-g1}.(1)). The converse implication is given by \autoref{lem:conj-to-g1}.(2).
\end{proof}

\subsubsection{From similarity classes to adjoint orbits}\label{subsubsection:simtoconj-sp2nso2n+1}
In this section is to we analyze decomposition of the set $\Ad(\GL_N(\kk))x\cap\mfr{g}_1$, for $x\in\mfr{g}_1$ regular, into $\Ad(G_1)$-orbits, and prove \autoref{theo:orbits-sp2nso2n+1}.(2). %A key tool in the analysis is \autoref{propo:orbit-decomposition}. %Let us introduce some notation.
\begin{nota}\label{nota:even-polys-splitting-rings} Given a polynomial $f(t)\in \kk[t]$ we write $\spring{f}$ for the quotient ring $\kk[t]/(f)$. For example, if $f$ is an irreducible polynomial over $\kk$ then $\spring{f}$ stands for the splitting field of $f$. We write $\GL_1(\spring{f})$ for the group of units of $\spring{f}$.

Assuming further that $f(t)=\pm f(-t)$, let $\sigma_f$ denote the $\kk$-involution of $\spring{f}$, induced from the $\kk[t]$-involution $t\mapsto -t$, and let $\UU_1(\spring{f})$ be the group of elements $\xi\in \spring{f}$ such that $\sigma_f(\xi)\cdot \xi=~1$. 
\end{nota}

\begin{propo} \label{propo:decomposition-of-sim-class1} Let $x\in\mfr{g}_1$ be a regular element, and put $\Pi_x=\Ad(\GL_N(\kk))x\cap \mfr{g}_1$. If $x$ is singular and $N$ is even, then the intersection $\Pi_x$ is the disjoint union of two distinct $\Ad(G_1)$-orbits.  Otherwise, $\Pi_x=\Ad(G_1)x$.
\end{propo}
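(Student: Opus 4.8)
The plan is to use \autoref{propo:orbit-decomposition}, which reduces the count of $\Ad(G_1)$-orbits inside $\Pi_x$ to the cardinality of the image of the map $\Lambda\colon\Pi_x\to\Theta_x$, and hence to understanding the set $\Theta_x$ of equivalence classes of $\sym(\inv;x)$ under the relation \eqref{equation:sim-defi}. Since $x$ is regular in $\mfr{g}_1$, by \autoref{propo:reg-equiavlent} it is a regular element of $\mfr{gl}_N(\kk)$, so $\mcal{C}:=\CC_{\GL_N(\kk)}(x)\cup\{0\}$ is the commutative ring $\kk[x]\simeq\kk[t]/(m_x)=\spring{m_x}$, and $\sym(\inv;x)$ is a \emph{group}. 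By \autoref{lem:conj-to-g1}.(1) we have $m_x(-t)=(-1)^N m_x(t)$; since $x\in\mfr{g}_1$ satisfies $x^\inv=-x$, a short computation shows that the anti-involution $\inv$ restricted to $\kk[x]$ is precisely the involution $\sigma_{m_x}$ of \autoref{nota:even-polys-splitting-rings} (namely $p(x)\mapsto p(-x)$). Consequently $\sym(\inv;x)$ is the group $\{\xi\in\spring{m_x}^\times\mid\sigma_{m_x}(\xi)=\xi\}$ of symmetric units, and $\Theta_x$ is its quotient by the subgroup $\{\sigma_{m_x}(w)\,w\mid w\in\spring{m_x}^\times\}$ of ``norms''. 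So the whole problem becomes the computation of a cokernel $\sym(\inv;x)/\Nr$ for the étale-with-involution $\kk$-algebra $(\spring{m_x},\sigma_{m_x})$.

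Next I would decompose $m_x$ into its primary factors and split $\spring{m_x}$ accordingly. Write $m_x(t)=t^{2r}\prod_i p_i(t)^{e_i}\prod_j\bigl(\tau_j(t)\tau_j(-t)\bigr)^{e_j'}$, where the $p_i$ are the distinct irreducible \emph{even} factors and the $\tau_j(t)\tau_j(-t)$ are the $\sigma$-conjugate pairs (notation as in \autoref{defi:poly-type}); this is the elementary-divisor shape forced by $m_x(-t)=(-1)^Nm_x(t)$. The ring $\spring{m_x}$ splits as a product of local rings, and $\sigma_{m_x}$ either preserves each factor (the $t^{2r}$-part and the even-prime parts) or swaps the two factors in a conjugate pair. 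On a swapped pair $\kk[t]/(\tau_j^{e_j'})\times\kk[t]/(\tau_j(-t)^{e_j'})$ with the swap involution, the symmetric units are $\{(u,u)\}$ and every such is a norm $(u,u)=\sigma(u,1)\cdot(u,1)$, so these factors contribute nothing to $\Theta_x$. On a $\sigma$-stable local factor $R$ with maximal ideal $\mfr{m}$, \autoref{lem:CmodN-suff-cond} (with $\mcal{N}=\mfr{m}$) shows that two symmetric units are equivalent iff their images in the residue field are, so the contribution of $R$ to $\Theta_x$ equals the contribution of its residue field $\kk$ or $\kk[t]/(p_i)$ with the induced involution. For the even-prime residue fields $\mbf{F}=\kk[t]/(p_i)$, the induced involution is a nontrivial $\kk$-automorphism of order $2$ (since $p_i$ is even, $t\mapsto-t$ is nontrivial on $\mbf{F}$), so by Hilbert 90 / surjectivity of the norm for finite fields every symmetric (i.e. norm-one-after-scaling) element is a norm, contributing nothing. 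The only surviving contribution is the $t^{2r}$-block: its residue algebra is $\kk$ with trivial involution, where $\Theta$ is $\kk^\times/(\kk^\times)^2\simeq\mathbb Z/2$, present exactly when $r\ge 1$, i.e. when $x$ is singular. Finally, when $N$ is odd, $r\ge1$ would force $\deg m_x$ even hence $<N$, contradicting regularity; so the singular case only occurs for $N$ even, and there $\abs{\Theta_x}$ contributes a factor $2$. Tracking $\abs{\im\Lambda}$: one checks (again via \autoref{lem:CmodN-suff-cond} and the explicit form in \autoref{exam:reg-nil-elt-so2n+1-sp2n} for the nilpotent block) that both square classes are actually attained by elements of $\mfr{g}_1$, so $\abs{\im\Lambda}=2$ when $N$ is even and $m_x(0)=0$, and $\abs{\im\Lambda}=1$ otherwise.

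The main obstacle, I expect, is the bookkeeping of the $t^{2r}$-block: one must verify that the induced involution on the local ring $\kk[t]/(t^{2r})$ really is $t\mapsto-t$ with residue field $\kk$ carrying the \emph{trivial} involution (so that the obstruction is genuinely $\kk^\times/(\kk^\times)^2$ and not killed), and that the two resulting classes are each realized by honest elements of $\Pi_x\subseteq\mfr{g}_1$ rather than merely by abstract symmetric units — i.e. that $\Lambda$ is surjective onto the relevant two-element set. This realization step is where one invokes the explicit cyclic-module/basis construction of \autoref{lem:conj-to-g1} together with the centralizer computation of \autoref{exam:reg-nil-elt-so2n+1-sp2n}: conjugating the standard form $\mbf{c}$ of \eqref{eqution:C-nil-elt} by a diagonal matrix rescales it by an arbitrary scalar, and scalars of both square classes occur, producing two inequivalent $Q$'s in $\sym(\inv;x)$. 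Everything else — the ring identification $\kk[x]\simeq\spring{m_x}$, the identification of $\inv$ with $\sigma_{m_x}$, and the vanishing of the conjugate-pair and even-prime contributions — is routine once \autoref{lem:CmodN-suff-cond} is in hand.
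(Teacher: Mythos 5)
Your Step 1 (computing $\abs{\Theta_x}$ via the CRT decomposition of $\kk[x]\simeq\spring{m_x}$, using \autoref{lem:CmodN-suff-cond} to reduce to the residue algebra, Hilbert~90 on the even-prime factors, and the swap involution on conjugate pairs) is essentially the paper's argument. Your Step 2 in the case $N$ even is a different but plausible route: the paper shows surjectivity of $\Lambda$ directly via \autoref{lem:eseential-uniqueness} (any $Q\in\sym(\inv;x)$ twists $B$ into another nondegenerate alternating form, hence $Q=w^\inv w$), whereas you propose an explicit realization of both square classes; either works.

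The genuine gap is in the case $N$ odd. You claim that ``$r\ge1$ would force $\deg m_x$ even hence $<N$, contradicting regularity; so the singular case only occurs for $N$ even.'' This has the facts backwards. First, your decomposition $m_x=t^{2r}\prod_ip_i^{e_i}\prod_j(\tau_j(t)\tau_j(-t))^{e_j'}$ is valid only when $N$ is even: the condition $m_x(-t)=(-1)^Nm_x(t)$ forces the exact power of $t$ dividing $m_x$ to have the same parity as $N$, so for $N$ odd the $t$-primary factor is $t^{d_1}$ with $d_1$ \emph{odd}, not $t^{2r}$. Second, and more seriously, for $\GG=\SO_{2n+1}$ \emph{every} $x\in\mfr{g}_1$ is singular: $x^\inv=-x$ gives $\det(x)=(-1)^N\det(x)=-\det(x)$, so $\det(x)=0$. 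Consequently the $t$-primary block $\kk[t]/(t^{d_1})$ is always present, its residue field is $\kk$ with the trivial involution, and so $\abs{\Theta_x}=2$ for every regular $x$ when $N$ is odd — not $1$ as your argument concludes.

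Because of this, the $N$-odd case requires an extra argument that you omit: one must show that $\Lambda$ is \emph{not} surjective onto $\Theta_x$. The paper does this by observing that if $Q_1\sim Q_2$ in $\sym(\inv;x)$ then $\det(Q_1)$ and $\det(Q_2)$ lie in the same square class of $\kk^\times$ (using $\det(a^\inv)=\det(a)$), so every $\Lambda(y)=[w^\inv w]$ has square determinant, while $\sym(\inv;x)$ does contain elements of non-square determinant (e.g.\ $\delta\cdot1_N$ for $\delta$ a non-square). Without this step your proof reaches the correct conclusion ``single orbit for $N$ odd'' only by the accidental coincidence that your erroneous $\abs{\Theta_x}=1$ forces $\abs{\im\Lambda}=1$.
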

\begin{proof}The notation of \autoref{propo:orbit-decomposition} is used freely throughout the proof. We proceed in the following steps.
\begin{enumerate}
\item Computation of the cardinality of $\Theta_x$. Namely, we show that $\abs{\Theta_x}=2$ if $x$ is singular and equals $1$ otherwise.
\item Description of the image of the map $\Lambda$ in $\Theta_x$.
\end{enumerate}

By Lemma~\ref{lem:conj-to-g1}, the minimal polynomial $m_x$ of $x$ is of degree $N$ and satisfies $m_x(-t)=(-1)^{N} m_x(t)$. Thus, it can be expressed uniquely as the product of pairwise coprime factors
\begin{equation}
\label{equation:m_x-decompose}
m_x(t)=t^{d_1}\cdot\prod_{i=1}^{d_2}\varphi_i(t)^{l_i}\cdot\prod_{i=1}^{d_3}\theta_i(t)^{r_i},
\end{equation}
where the polynomials $\varphi_1,\ldots,\varphi_{d_2}$ are irreducible, monic and even, and $\theta_1,\ldots,\theta_{d_3}$ are of the form $\theta_i(t)=\tau_i(t)\cdot\tau_i(-t)$ with $\tau_i(t)$ monic, irreducible and coprime to $\tau(-t)$. The centralizer $\mcal{C}=\CC_{\matr_N(\kk)}(x)$ is isomorphic to the ring $\spring{m_x}$ and the restriction of the involution $\inv$ to $\mcal{C}$ is transferred via this isomorphism to the map $\sigma_{m_x}$, defined in \autoref{nota:even-polys-splitting-rings}. By the Chinese remainder theorem, we get
\begin{equation}\label{equation:C-decomposition}
\mcal{C}\simeq\spring{t^{d_1}}\times \prod_{i=1}^{d_2}\spring{\varphi_i(t)^{m_1}}\times\prod_{i=1}^{d_3}\spring{\theta_i(t)^{r_i}}.
\end{equation}
Furthermore, the restriction of the involution $\sigma_{m_x}$ to each of the factors $\spring{f}$, for $f\in\set{ t^{d_1},\varphi_i^{l_i},\theta_j^{r_j}}$ coincides with the respective involution $\sigma_f$, induced from $t\mapsto -t$. A short computation shows that the nilpotent radical of $\mcal{C}$ is isomorphic to the direct product of the nilpotent radicals of all factors on the right hand side of \eqref{equation:C-decomposition}, and that the quotient $\mcal{C}/\mcal{N}$ is isomorphic to the \'etale algebra
\begin{equation}\label{equation:C-mod-nil-radical}\mcal{K}=\kk^r\times \prod_{i=1}^{d_2}\spring{\varphi_i}\times\prod_{i=1}^{d_3}\spring{\theta_i},
\end{equation}
where $r=1$ if $d_1>0$ (i.e. if $x$ is singular) and equals $0$ otherwise\footnote{Here it is understood that the ring $\kk^0$ is the trivial algebra $\set{0}$.}. Let $\dagger$ denote the involution induced on the $\kk$-algebra $\mcal{K}$ in \eqref{equation:C-mod-nil-radical} from the restriction of $\inv$ to $\mcal{C}$. From the observation regarding the action of $\inv$ on $\mcal{C}$ above, we deduce the following properties of the involution $\dagger$ on $\mcal{K}$.
\begin{enumerate}\renewcommand{\theenumi}{D\arabic{enumi}}
\item\label{dag-property1} The involution $\dagger$ preserves the factor $\kk^r$ and acts trivially on it.
\item\label{dag-property2} The involution $\dagger$ preserves the factors $\spring{\varphi_i}$ and coincides with the non-trivial field involution $\sigma_{\varphi_i}$.
\item\label{dag-property3} The involution $\dagger$ preserves the factors $\spring{\theta_i}\simeq\spring{\tau_i(t)}\times\spring{\tau_i(-t)}$ and maps a pair $(\xi,\nu)\in \spring{\tau_i(t)}\times\spring{\tau_i(-t)}$ to the pair $(\iota^{-1}(\nu),\iota(\xi))$, where $\iota:\spring{\tau_i(t)}\to\spring{\tau_i(-t)}$ is the isomorphism induced from $t\mapsto -t$.
\end{enumerate} 

Let $\sym(\dagger)$ be subgroup of $\mcal{K}^\times$ of elements fixed by $\dagger$. Note that, as $\mcal{K}\simeq \mcal{C}/\mcal{N}$ is a commutative ring, by \autoref{lem:CmodN-suff-cond}, the set $\Theta_x$ can be identified with the quotient of $\sym(\dagger)$ by the image of the map $z\mapsto z^\dagger z:\mcal{K}\to\sym(\dagger)$. 

By \eqref{dag-property2} and the theory of finite fields, the restriction of the map $z\mapsto z^\dagger z$ to the factors $\spring{\varphi_i}$ coincides with the field norm onto the subfield of element fixed by $\dagger$, and is surjective onto this subfield. Furthermore, by \eqref{dag-property3}, it is evident that an element $(\xi,\nu)\in\spring{\tau_i(t)}\times\spring{\tau_i(-t)}$ is fixed by $\dagger$ if an only if $\nu=\iota(\xi)$, in which case $(\xi,\nu)=(\xi,1)^\dagger\cdot (\xi,1)$. Lastly, by \eqref{dag-property1} it holds that the image of the restriction of $z\mapsto z^\dagger z$ to the multiplicative group of $\kk^r$ is either trivial, if $r=0$, or the group of squares in $\kk^\times$, otherwise. It follows from this that the set $\Theta_x$ is either in bijection with the quotient $\left(\kk^\times/(\kk^\times)^2\right)$, and hence of cardinality $2$, if $x$ singular, or otherwise trivial. This completes the first step of the proof.

For the second step, we divide the analysis according to the parity of $N$, in order to describe the image of $\Lambda$.

\begin{list}{}{\setlength{\leftmargin}{0pt}\setlength{\itemsep}{2pt}}
\item\textit{$N$ even.} In this case we show that $\Lambda$ is surjective. To do so, let $Q\in \sym(\inv; x)$. Note that, by assumption, $Q^\inv= Q$ and $Q\in \GL_N(\kk)$, and hence the form $(u,v)\mapsto B(u,Qv)$ is alternating and non-degenerate. By \autoref{lem:eseential-uniqueness}, there exists $w\in \GL_N(\kk)$ such that $Q=w^\inv w$. To show that $Q\in\im\Lambda$ we only need to verify that $y=wxw^{-1}\in \mfr{g}_1$. This holds, as 
\[y^\inv=(w^\inv)^{-1 }x^\inv w^\inv =-(w^{\inv})^{-1}(Q x Q^{-1})w^\inv=-wxw^{-1}=-y,\]
since $Q$ is assumed to commute with $x$.
\item\textit{$N$ odd.} Note that in this case, all elements of $\mfr{g}_1$ are non-singular and hence $\abs{\Theta_x}=2$ for all $x\in \mfr{g}_1$. In this case we prove that the map $\Lambda$ is not surjective. Note that by definition of the equivalence class $\sim$, if $Q_1,Q_2\in\sym(\inv;x)$ are such that $Q_1\sim Q_2$, then $\det(Q_1)^{-1}\det(Q_2)$ is a square in $\kk^\times$. This holds since $\det(a^\inv)=\det(a)$ for all $a\in \matr_N(\kk)$. By the same token, it follows that the $\det(w^\inv w)$ is a square in $\kk^\times$ for all $w\in \GL_N(\kk)$. 

Therefore, to show that $\Lambda$ is not surjective, it suffices to show that $\sym(\inv;x)$ contains elements whose determinant is not a square in $\kk$. One may take, for example, the element $Q=\delta \cdot 1_N$, for $\delta\in \kk^\times$ non-square.
\end{list}
\end{proof}

\subsubsection{Centralizers of regular elements}\label{subsubsection:cetralizers-sp2nso2n+1}

Finally, we compute the order of the centralizer of a regular element of $\mfr{g}_1$. The analysis we propose is analogous to \cite[Proposition~4.4]{KOS}. 

\begin{lem}\label{lem:short-exact-sequence} Let $x\in\mfr{g}_1$ be regular with minimal polynomial \[m_x(t)=t^{d_1}\prod_{i=1}^{d_2}\varphi_i(t)^{l_i}\prod_{i=1}^{d_3}\theta_i(t)^{r_i},\] where the product on the right hand side is as in \eqref{equation:m_x-decompose}, with $\theta_i(t)=\tau_i(t)\tau_i(-t)$. The determinant map induces a short exact sequence
\begin{equation}
\label{ses}
1\to \CC_{G_1}(x)\to \UU_1(\spring{t^{d_1}})\times \prod_{i=1}^{d_2}\UU_1(\spring{\varphi_i^{l_i}})\times \prod_{i=1}^{d_3}\GL_1(\spring{\tau_i^{r_i}})\xrightarrow{\det} Z\to 1
\end{equation}
where $Z\subseteq\kk^\times$ is the subgroup of order $2$ if $N$ is odd and trivial otherwise.
\end{lem}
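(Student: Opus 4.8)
The plan is to reduce everything to the structure of the commutative ring $\mcal{C}=\CC_{\matr_N(\kk)}(x)=\kk[x]$ together with the involution induced on it by $\inv$. Since $x$ is regular in $\mfr{g}_1$ it is regular in $\mfr{gl}_N(\kk)$ by \autoref{propo:reg-equiavlent}, so evaluation $t\mapsto x$ gives an isomorphism $\spring{m_x}\xrightarrow{\sim}\mcal{C}$ and $\CC_{\GL_N(\kk)}(x)=\mcal{C}^\times$. Because $x^\inv=-x$ and $\inv$ is a $\kk$-algebra anti-involution of $\matr_N(\kk)$, one has $p(x)^\inv=p(-x)$ for every $p\in\kk[t]$; together with $m_x(-t)=(-1)^Nm_x(t)$ from \autoref{lem:conj-to-g1}.(1), this shows that $t\mapsto -t$ descends to $\spring{m_x}$ and that the restriction of $\inv$ to $\mcal{C}$ is carried to $\sigma_{m_x}$ under the isomorphism above. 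Consequently, writing $\mcal{U}=\set{a\in\mcal{C}^\times\mid a^\inv a=1}$ for the unitary group of $(\mcal{C},\inv)$, I get from \eqref{equation:G-definition} that $\CC_{\GL_N(\kk)}(x)\cap\{g:g^\inv g=1\}=\mcal{U}$ and $\CC_{G_1}(x)=\mcal{U}\cap\SL_N(\kk)$.

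Next I would decompose $\mcal{U}$ using the factorisation \eqref{equation:m_x-decompose} and the Chinese Remainder Theorem. The involution $\sigma_{m_x}$ preserves each of the ideals $(t^{d_1})$, $(\varphi_i^{l_i})$, $(\theta_i^{r_i})$ — the first since $t^{d_1}\mapsto(-1)^{d_1}t^{d_1}$, the second because each $\varphi_i$ is even, the third because $\theta_i(-t)=\tau_i(-t)\tau_i(t)=\theta_i(t)$ — so it respects the CRT decomposition $\mcal{C}\cong\spring{t^{d_1}}\times\prod_i\spring{\varphi_i^{l_i}}\times\prod_i\spring{\theta_i^{r_i}}$ and restricts to the respective involutions $\sigma_f$ on the factors. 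Taking norm-one elements factorwise, the contributions of $\spring{t^{d_1}}$ and $\spring{\varphi_i^{l_i}}$ are $\UU_1(\spring{t^{d_1}})$ and $\UU_1(\spring{\varphi_i^{l_i}})$ by the definitions in \autoref{nota:even-polys-splitting-rings}. For the remaining factors, since $\tau_i(t)$ and $\tau_i(-t)$ are coprime a further application of CRT gives $\spring{\theta_i^{r_i}}\cong\spring{\tau_i^{r_i}}\times\spring{\tau_i(-t)^{r_i}}$, on which $\sigma_{\theta_i^{r_i}}$ acts by $(\xi,\nu)\mapsto(\iota^{-1}(\nu),\iota(\xi))$, with $\iota$ the isomorphism $\spring{\tau_i^{r_i}}\to\spring{\tau_i(-t)^{r_i}}$ induced by $t\mapsto -t$; hence $(\xi,\nu)$ is unitary iff $\nu=\iota(\xi)^{-1}$, and projection to the first coordinate identifies this factor's contribution with $\GL_1(\spring{\tau_i^{r_i}})$. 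This identifies $\mcal{U}$ with the middle term of \eqref{ses}; it is the local-ring analogue of the étale-algebra computation in the proof of \autoref{propo:decomposition-of-sim-class1}.

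Finally I would analyse the determinant on $\mcal{U}$. For $a\in\mcal{U}$ one has $\det(a^\inv)=\det(\BB^{-1}a^t\BB)=\det a$ by \eqref{equation:circ-definition}, so $\det(a)^2=\det(a^\inv a)=1$ and $\det a\in\set{\pm1}$. If $N=2n$ then $\mcal{U}\subseteq\Sp_N(\kk)\subseteq\SL_N(\kk)$ — symplectic matrices have determinant $1$, e.g.\ by comparing Pfaffians in the identity $a^t\BB a=\BB$ — so $\det$ is trivial on $\mcal{U}$ and $Z=\set{1}$; if $N=2n+1$ the central scalar $-1_N$ lies in $\mcal{C}^\times$, is unitary, and has $\det(-1_N)=-1$, so $\det\colon\mcal{U}\to\set{\pm1}=Z$ is onto. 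In both cases $\ker(\det|_{\mcal{U}})=\mcal{U}\cap\SL_N(\kk)=\CC_{G_1}(x)$, which yields the short exact sequence \eqref{ses}. I do not expect a genuine obstacle here: the only points needing care are verifying that $\inv$ restricts to $\sigma_{m_x}$ on $\mcal{C}$ and is compatible with both CRT decompositions (so that $\mcal{U}$ splits as the stated product), and tracking correctly the twist on the split factors $\spring{\theta_i^{r_i}}$ — bookkeeping rather than conceptual difficulties — after which the determinant computation is immediate.
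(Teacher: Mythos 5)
Your proof is correct and follows essentially the same path as the paper's: identify $\CC_{\GL_N(\kk)}(x)$ with the units of $\spring{m_x}$, transfer $\inv$ to $\sigma_{m_x}$ and decompose by CRT with $\UU_1(\spring{\theta_i^{r_i}})\simeq\GL_1(\spring{\tau_i^{r_i}})$, then compute the determinant's image using $\pm 1_N$ for $N$ odd and the Pfaffian argument for $N$ even. The only cosmetic difference is that you phrase the kernel as $\mcal{U}\cap\SL_N(\kk)$ outright whereas the paper speaks of an embedding of $\CC_{G_1}(x)$ into the middle term, but the content is identical.
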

\begin{proof}
As shown in the proof on \autoref{propo:decomposition-of-sim-class1}, the centralizer of $x$ in $\GL_N(\kk)$ is isomorphic to the group of units of the ring $\mcal{C}$, i.e. the direct product 
\[\CC_{\GL_N(\kk)}(x)\simeq \GL_1(\spring{t^{d_1}})\times\prod_{i=1}^{d_2}\GL_1(\spring{\varphi_i^{l_i}})\times\prod_{i=1}^{d_3} \GL_1(\spring{\theta_i^{r_i}}).\]
Furthermore, the involution $\inv$ of $\GL_N(\kk)$ restricts to an involution of $\CC_{\GL_N(\kk)}(x)$ which is transferred via this isomorphism to the involution $\sigma_{m_x}$, induced by $t\mapsto -t$, and restricts to the involution $\sigma_f$ on each of the factors $\GL_1(\spring{f})$ for $f\in\set{t^{d_1},\varphi_i^{l_i},\theta_i^{r_i}}$. 

The additional condition $z^\inv z=1$, and the fact that $\inv$ preserves all factors in the decomposition~\eqref{equation:C-decomposition}, imply that the centralizer of $x$ in $G_1$ is embedded in the group \[ \UU_1(\spring{t^{d_1}})\times \prod_{i=1}^{d_2}\UU_1(\spring{\varphi_i(t)^{l_i}}\times\prod_{i=1}^{d_3}\UU_1(\spring{\theta_i(t)^{r_i}}).\]

Similarly to \autoref{propo:decomposition-of-sim-class1}, the map $\sigma_{\theta_i^{r_i}}$ acts on the factors $ \GL_1(\spring{\theta_i(t)^{r_i}})\simeq \GL_1(\spring{\tau_i(t)^{r_i}})\times \GL_1(\spring{\tau_i(-t)^{r_i}})$ as $(\xi,\nu)\mapsto (\iota^{-1}(\nu),\iota(\xi))$, where $\iota:\spring{\tau_i(t)^{r_i}}\to\spring{\tau_i(-t)^{r_i}}$ is the isomorphism induced from $t\mapsto -t$. It follows from this that $(\xi,\nu)\in \UU_1(\spring{\theta_i^{r_i}})$ if and only if $\iota(\xi)=\nu^{-1}$, and hence that $\UU_1(\spring{\theta_i^{r_i}})\simeq \GL_1(\spring{\tau_i^{r_i}})$.

Lastly, we compute order of the group $Z$. Since for any $w\in \GL_N(\kk)$ we have that $\det(w^\inv)=\det(w)$, it follows that the condition $w^\inv w=1$ implies that $\det(w)\in\set{\pm 1}$. Thus, to complete the lemma, we need to show that both values occur in the case of $N$ odd, and that only $1$ is possible for $N$ even. Both statements are well-known. The former can be proved simply by considering the elements $\pm 1\in \GL_N(\kk)$, while the latter can be deduced by considering the Pfaffian of the matrix $w^t \BB w=\BB$.
\end{proof}

\begin{lem}\label{lem:unitary-of-kphi}
Let $f\in\kk[t]$ be a monic irreducible polynomial with $f(-t)=\pm f(t)$ and let $r\in\dbN$. Let $E_{f^r}$ denote the image of the map $z\mapsto \sigma_{f^r}(z)\cdot z:\GL_1(\spring{f^r})\to \GL_1(\spring{f^r})$. Given $y\in \GL_1(\spring{f^r})$ it holds that $y\in E_{f^r}$ if and only if 
\begin{enumerate}
\item $\sigma_{f^r}(y)=y$, and
\item there exists $z\in \GL_1(\spring{f(t)^r})$ such that $y\equiv z\sigma_{f^r}(z)\pmod{f}$. 
\end{enumerate} 
In particular, we have
\[\abs{E_{f^r}}=\begin{cases}q^{\frac{1}{2}r\deg f}(1-q^{-\frac{1}{2}\deg f})&\text{if }f(t)\ne t\\\frac{q-1}{2}q^{\lceil\frac{r}{2}\rceil-1}&\text{if }f(t)=t.
\end{cases}\]
\end{lem}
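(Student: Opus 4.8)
The plan is to analyze the map $n_{f^r}: z \mapsto \sigma_{f^r}(z)\cdot z$ on $\GL_1(\spring{f^r})$ by reducing modulo the maximal ideal $(f)$ and using the filtration of $\GL_1(\spring{f^r})$ by the principal congruence subgroups $1 + (f)^i/(f)^r$. First I would fix notation: write $L = \spring{f}$ for the residue field, $A = \spring{f^r}$, and note that $\sigma_{f^r}$ induces an involution $\bar\sigma$ on $L$ whose fixed ring $L_0$ is either a subfield of index $2$ (when $f(t)\ne t$, since then $f(t)$ and $f(-t)$ generate the same field and $\sigma$ is the nontrivial Galois automorphism of $L/L_0$), or all of $L$ (when $f(t)=t$, where $\sigma$ acts trivially on $\spring{t}=\kk$ but nontrivially on the higher quotients). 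The characterization of $E_{f^r}$ as the set of $y$ with $\sigma_{f^r}(y)=y$ that are congruent mod $f$ to a norm should follow from a Hensel/successive-approximation argument: condition (1) is clearly necessary since every element of the image is $\sigma$-fixed; condition (2) is necessary by reduction mod $f$; for sufficiency, lift a solution mod $(f)^i$ to one mod $(f)^{i+1}$ using that $\Char\kk \ne 2$ makes the relevant additive map $z \mapsto z + \sigma(z)$ (the "trace" on the associated graded piece) surjective onto the $\sigma$-fixed part.

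**The two cases.** For $f(t)\ne t$: here $\sigma$ is nontrivial on $L = \spring{f}$, so $L/L_0$ is a degree-$2$ extension of finite fields, $[L:\kk] = \deg f$, and $|L_0| = q^{\deg f/2}$. The image of the norm $N_{L/L_0}: L^\times \to L_0^\times$ is all of $L_0^\times$, so condition (2) is automatically satisfiable for any $\sigma_{f^r}$-fixed $y$, meaning $E_{f^r}$ is exactly $\sym(\sigma_{f^r})$, the group of $\sigma_{f^r}$-fixed units. I would then count $|\sym(\sigma_{f^r})|$: the fixed subring of $A = \spring{f^r}$ under $\sigma_{f^r}$ is a local ring with residue field $L_0$ and length $r$ over it (since $\sigma_{f^r}$ is the "conjugation" of a ramified-type extension — more precisely the fixed subring has the same length filtration halved appropriately), giving $|\sym(\sigma_{f^r})| = (q^{\deg f/2} - 1)\cdot q^{(r-1)\deg f/2} = q^{r\deg f/2}(1 - q^{-\deg f/2})$, matching the claimed formula. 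For $f(t) = t$: now $A = \spring{t^r} = \kk[t]/(t^r)$ and $\sigma$ is $t \mapsto -t$, which is trivial on the residue field $\kk$. Here $\sym(\sigma_{t^r})$ consists of units $\sum a_i t^i$ with $a_i = 0$ for odd $i$, so $|\sym(\sigma_{t^r})| = (q-1)q^{\lfloor (r-1)/2 \rfloor}$. But now condition (2) is a genuine constraint: mod $f = (t)$ it says $y \equiv z\sigma(z) = z^2 \pmod t$, i.e. the constant term $a_0$ must be a square in $\kk^\times$. This cuts the count by a factor of $2$, and I must check the lifting still works for $\sigma$-fixed $y$ with square constant term — the obstruction at each even step lives in a $\kk$-line on which $z\mapsto z+\sigma(z)$ is multiplication by $2 \ne 0$, hence surjective, while the odd steps are unconstrained. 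This yields $|E_{t^r}| = \frac{q-1}{2}\cdot q^{\lceil r/2\rceil - 1}$; I would double-check the exponent by tracking parities carefully ($\lfloor(r-1)/2\rfloor$ versus $\lceil r/2\rceil - 1$ — these agree).

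**The main obstacle.** The delicate point is the successive-approximation / Hensel argument showing that conditions (1) and (2) are \emph{sufficient}, i.e. that there is no obstruction beyond the residue-field level. Concretely, given $y \in \sym(\sigma_{f^r})$ and $z_0$ with $y \equiv z_0\sigma_{f^r}(z_0) \pmod{f}$, I need to produce $z$ with $y = z\sigma_{f^r}(z)$ exactly. Writing $z = z_0(1 + w)$ with $w \in (f)$, the equation becomes $1 + w + \sigma(w) + w\sigma(w) = y(z_0\sigma(z_0))^{-1} =: 1 + u$ with $u \in (f)$, and one solves for $w$ modulo increasing powers of $(f)$; at each stage the correction term satisfies $w' + \sigma(w') \equiv (\text{known element}) \pmod{(f)^{i+1}}$, and the associated graded map $\mathrm{gr}^i A \to \mathrm{gr}^i A$, $v \mapsto v + \sigma(v)$, must be shown surjective onto the image of the right-hand side — this is where $\Char \kk \ne 2$ and the structure of $\sigma$ on each graded piece (trivial vs. Frobenius-twisted) enter, and it is the only place genuine care is needed. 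I would present this as a short lemma on solvability of $v + \sigma(v) = c$ in each finite-length local ring with involution, and everything else is bookkeeping. The formula for $|E_{f^r}|$ then follows by combining $|E_{f^r}| = |\sym(\sigma_{f^r})|$ (resp. half of it in the $f=t$ case) with the explicit orders computed above.
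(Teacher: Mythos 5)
Your proposal is correct, and it takes a genuinely different route from the paper's. The paper proves the equivalence of conditions (1)--(2) with membership in $E_{f^r}$ by realizing $\spring{f^r}$ as the commutant $\mcal{C}=\CC_{\matr_{r\deg f}(\kk)}(x)$ of a regular matrix $x$ equipped with the bilinear form $C$ of \autoref{lem:conj-to-g1}, so that $\sigma_{f^r}$ is transported to the adjoint anti-involution $\inv$, and then applies the general matrix-algebra lifting result \autoref{lem:CmodN-suff-cond} (itself a consequence of Wall's Theorem 2.2.1) with $Q_1=1$, $Q_2=\tilde y(x)$. You instead prove the lifting directly by successive approximation over the $(f)$-adic filtration, which unpacks essentially the same mechanism as Wall's argument but avoids the matrix-algebra translation entirely; the price is that you must verify by hand that on each graded piece the map $v\mapsto v+\sigma(v)$ is surjective onto the $\sigma$-fixed part, distinguishing how $\sigma$ acts on $(f)^i/(f)^{i+1}$ according to the parity of $i$ and whether $\sigma$ is Frobenius-twisted ($f\ne t$) or not ($f=t$), with $\Char\kk\ne 2$ entering precisely where the paper invokes Wall's trace hypothesis. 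For the count in the $f\ne t$ case, the paper makes the structure of the fixed subring explicit via the Jordan--Chevalley decomposition $\spring{f^r}\simeq\spring{f}\otimes_\kk\kk[h]/(h^r)$ with $\sigma$ acting as $\sigma_f\otimes(h\mapsto -h)$, whereas you assert the fixed subring is local with residue field $L_0$ and length $r$ more quickly; your claim is in fact correct (the fixed subring is $L_0[\alpha h]$ for any $\alpha\in\spring{f}$ with $\sigma_f(\alpha)=-\alpha\ne0$), but I would spell this out as the paper does, since the fixed part of each graded piece is an $L_0$-line sitting in $L$ either as $L_0$ itself or as the anti-diagonal, and a careless reading of ``length filtration halved'' could mislead. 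The rest of the bookkeeping and the parity checks in your $f=t$ count are accurate, and your approach buys self-containedness at the cost of redoing, in this special case, a lifting lemma the paper already has available in general form.
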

\begin{proof}
Let $W$ denote the vector space underlying the ring $\spring{f^r}$ and let $C$ be the bilinear form defined on $W$ as  in \autoref{lem:conj-to-g1}. Let $x$ be the linear operator defined on $W$ by multiplication by $t$. The map $t\mapsto x$ sets up a ring isomorphism of $\spring{f^r}$ with the ring $\mcal{C}\subseteq\matr_{r\cdot \deg f}(\kk)$ of matrices commuting with $x$, and the involution $\inv$ on $\mcal{C}$ is identified with the ring involution $\sigma_{f^r}$. Note that, in the current setting, if $y\in \spring{f^r}$ is the image modulo $(f^r)$ of a polynomial $\tilde{y}(t)$, then the assumption $\sigma_{f^r}(y)=y$ is equivalent to $\tilde{y}(x)\in \mcal{C}$ satisfying $\tilde{y}(x)^\inv=\tilde{y}(x)$ or, in the notation of \autoref{subsubsection:wall}, to $\tilde{y}(x)\in \sym(\inv;x)$. Also, the nilpotent radical of $\mcal{C}$ is given as the image of the ideal $(f)\subseteq \spring{f^r}$. The equivalence stated in the lemma now follows from \autoref{lem:CmodN-suff-cond}, by taking $Q_1=1$ and $Q_2=\tilde{y}(x)\in\sym(\inv;x)$.

We now compute the cardinality of $E_{f^r}$. In the case $f(t)=t$, the equivalence proved above implies that $E_{f^r}$ can be identified with the subgroup of the ring $\kk[t]/(t^{r})$ of truncated polynomials of degree no greater than $r-1$, which consists of even polynomials whose constant term is an invertible square of $\kk$. Hence $\abs{E_{f^r}}=\frac{q-1}{2}q^{\lceil\frac{r}{2}\rceil-1}$. 

In the complementary case, by irreducibility, necessarily $f(t)=f(-t)$ and has even degree. In this case, by the Jordan-Chevalley Decomposition Theorem, there exist polynomials $S,H\in~ \kk[t]$ such that the endomorphism $S(x)$ (resp. $H(x)$) acts semisimply (resp. nilpotently) on the vector space $W=\spring{f^r}$, on which $x$ acts by multiplication by $t$, and such that $H(t)+S(t)\equiv t\pmod{f(t)^r}$ (see \cite[\S~4.2]{HumphreysLie}; note that $S,H\in\kk[t]$ is possible since $\kk$ is perfect). It follows that $\spring{f^r}\simeq \kk[x]=\kk[S(x)][H(x)]$. A quick computation shows that the minimal polynomials of $S(x)$ and $H(x)$ are $f(t)$ and $t^r$ respectively, and thus $\spring{f}\simeq \kk[S(x)][H(x)]\simeq\spring{f}\otimes_\kk(\kk[h]/(h^r))$. Moreover, by the properties of the Jordan-Chevalley decomposition, both $S(t)$ and $H(t)$ satisfy $S(-x)=-S(x)$ and $H(-x)=-H(x)$ \cite[\S~3, Proposition~3]{BourbakiLieCh1}. Thus, under this identification, the involution $\sigma_{f^r}$ is transferred to an involution of $ \spring{f}\otimes_\kk(\kk[h]/(h^r))$, mapping $h$ to $-h$ and acting as $\sigma_f$ on the field $\spring{f}$. 

By the equivalence in the lemma, and the theory of finite fields, the group $E_{f^r}$ is identified with the subgroup of  $(\spring{f^r})^\times$ of elements fixed by $\sigma_{f^r}$. Using the identification above, this subgroup consists of elements of the form $\sum_{i=0}^{r-1}a_i\otimes h^i$, with $a_0,\ldots,a_{r-1}\in \spring{f}$, $a_0\ne 0$, and 
\[\sigma_f(a_i)=\begin{cases}a_i&\text{if $i$ is even}\\-a_i&\text{if $i$ is even}.\end{cases}\]
The equality $\abs{E_{f^r}}=q^{\frac{1}{2}r \deg f}(1-q^{-\frac{1}{2}\deg f})$ now follows by direct computation.
\end{proof}

\begin{propo}\label{corol:centralizer-size-sp2nso2n+1} Let $x\in \mfr{g}_1$ be a regular element with minimal polynomial $m_x\in\kk[t]$. Let $\ttau(m_x)=(r(m_x),\A(m_x),\B(m_x))\in\mcal{X}_n$ be the type of $m_x$ (see \autoref{defi:poly-type}). Then 
\[\abs{\CC_{G_1}(x)}=2^{\nu}q^n\prod_{d,e}\left(1+q^{-d}\right)^{\A_{d,e}(m_x)}\cdot\left(1-q^{-d}\right)^{\B_{d,e}(m_x)},\]
where $\nu=1$ in the case where $N=2n$ is even and $r(m_x)>0$, and $\nu=0$ otherwise.
\end{propo}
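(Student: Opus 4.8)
The plan is to read off $\abs{\CC_{G_1}(x)}$ from the short exact sequence of \autoref{lem:short-exact-sequence}, compute the order of each factor of its middle term by elementary local ring theory together with \autoref{lem:unitary-of-kphi}, and then repackage the resulting product in terms of the type $\ttau(m_x)$ using the defining relation \eqref{equation:t-condition}. Concretely, write the elementary divisor decomposition of $m_x$ as in \eqref{equation:m_x-decompose}, $m_x(t)=t^{d_1}\prod_{i=1}^{d_2}\varphi_i(t)^{l_i}\prod_{i=1}^{d_3}\theta_i(t)^{r_i}$ with $\theta_i=\tau_i(t)\tau_i(-t)$. Since the map in \autoref{lem:short-exact-sequence} is surjective onto $Z$ with kernel $\CC_{G_1}(x)$, and $\abs{Z}$ is $2$ for $N$ odd and $1$ for $N$ even, one gets
\begin{equation*}
\abs{\CC_{G_1}(x)}=\frac{1}{\abs{Z}}\,\abs{\UU_1(\spring{t^{d_1}})}\cdot\prod_{i=1}^{d_2}\abs{\UU_1(\spring{\varphi_i^{l_i}})}\cdot\prod_{i=1}^{d_3}\abs{\GL_1(\spring{\tau_i^{r_i}})}.
\end{equation*}

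Next I would compute each factor. For an irreducible $f\in\kk[t]$ the ring $\spring{f^r}$ is local with residue field of order $q^{\deg f}$, so $\abs{\GL_1(\spring{f^r})}=q^{r\deg f}(1-q^{-\deg f})$; in particular $\abs{\GL_1(\spring{\tau_i^{r_i}})}=q^{r_i\deg\tau_i}(1-q^{-\deg\tau_i})$. The group $\UU_1(\spring{f^r})$ is the kernel of the group homomorphism $z\mapsto\sigma_{f^r}(z)z$ on $\GL_1(\spring{f^r})$, whose image is the set $E_{f^r}$ of \autoref{lem:unitary-of-kphi}, so $\abs{\UU_1(\spring{f^r})}=\abs{\GL_1(\spring{f^r})}/\abs{E_{f^r}}$. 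Substituting the formula of \autoref{lem:unitary-of-kphi}: for $\varphi_i$ even of degree $2d$ this gives $\abs{\UU_1(\spring{\varphi_i^{l_i}})}=q^{dl_i}(1+q^{-d})$, and for $f=t$ with $d_1\ge1$ it gives $\abs{\UU_1(\spring{t^{d_1}})}=2q^{\lfloor d_1/2\rfloor}=2q^{r(m_x)}$, while for $d_1=0$ this factor is trivially $1$ and $r(m_x)=0$.

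Assembling, and grouping the $\varphi$- and $\tau$-factors by degree and multiplicity exactly as in \autoref{defi:poly-type}, the $\varphi$-factors contribute $\prod_{d,e}\bigl(q^{de}(1+q^{-d})\bigr)^{\A_{d,e}(m_x)}$, the $\theta$-factors contribute $\prod_{d,e}\bigl(q^{de}(1-q^{-d})\bigr)^{\B_{d,e}(m_x)}$, and the $t$-factor contributes $2q^{r(m_x)}$ when $d_1\ge1$ and $1$ when $d_1=0$. The accumulated power of $q$ equals $r(m_x)+\sum_{d,e}de(\A_{d,e}(m_x)+\B_{d,e}(m_x))=n$ by \eqref{equation:t-condition} (recall $\ttau(m_x)\in\mcal{X}_n$). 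Hence
\begin{equation*}
\abs{\CC_{G_1}(x)}=\frac{2^{\delta}}{\abs{Z}}\,q^{n}\prod_{d,e}(1+q^{-d})^{\A_{d,e}(m_x)}(1-q^{-d})^{\B_{d,e}(m_x)},
\end{equation*}
where $\delta=1$ if $d_1\ge1$ and $\delta=0$ otherwise. It then remains to check $2^{\delta}/\abs{Z}=2^{\nu}$. If $N=2n+1$, every $x\in\mfr{g}_1$ is singular (since $x^\inv=-x$ forces $\det x=-\det x=0$ when $N$ is odd and $\Char\kk\ne2$), so $d_1\ge1$, $\delta=1$, $\abs{Z}=2$, and $2^{\delta}/\abs{Z}=1=2^{\nu}$ as $\nu=0$. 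If $N=2n$, then $\abs{Z}=1$, and $\delta=1$ holds exactly when $x$ is singular, i.e.\ when $r(m_x)>0$, which is precisely the case $\nu=1$.

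The proof involves no conceptual difficulty; it is all bookkeeping resting on \autoref{lem:short-exact-sequence} and \autoref{lem:unitary-of-kphi}. The one place that needs care is the accounting of powers of $2$ around the nilpotent factor $t^{d_1}$: one must observe that $\abs{\UU_1(\spring{t^{d_1}})}=2q^{r(m_x)}$ holds for \emph{both} parities of $d_1$ (owing to the ceiling appearing in \autoref{lem:unitary-of-kphi}), that in the odd-$N$ case this extra factor of $2$ is exactly cancelled by $\abs{Z}=2$, and that in the even-$N$ case it survives precisely on the singular locus; combining this with reading off the exponents $de$ correctly from \autoref{defi:poly-type}, so that \eqref{equation:t-condition} collapses the $q$-exponent to $n$, is the crux.
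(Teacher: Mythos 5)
Your proof is correct and takes the same route as the paper: both read $\abs{\CC_{G_1}(x)}$ off the short exact sequence of \autoref{lem:short-exact-sequence}, compute each factor's order using \autoref{lem:unitary-of-kphi} and the structure of the local rings $\spring{f^r}$, and collapse the accumulated $q$-exponent to $n$ via \eqref{equation:t-condition}. Your version is in fact slightly more explicit than the paper's in the final bookkeeping (the $d_1=0$ case and the $2^\delta/\abs{Z}$ accounting, which the paper leaves implicit), but there is no substantive difference in approach.
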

\begin{proof}
Let $m_x=t^{d_1}\prod_{i=1}^{d_2}\varphi_i^{l_i}\prod_{i=1}^{d_3}\theta_i^{r_i}$ be a decomposition of $m_x$ as in \eqref{equation:m_x-decompose}, with $\varphi_i$ even and irreducible, and $\theta_i(t)=\tau_i(t)\tau_i(-t)$ with $\tau_i(t),\tau_i(-t)$ irreducible and coprime.
Note that by definition of $\ttau(m_x)$ we have that $r(m_x)=\lfloor\frac{d_1}{2}\rfloor$.

In view of \autoref{lem:short-exact-sequence} it suffices to show the following three assertions.
\begin{enumerate}
\item $\abs{\UU_1(\spring{t^{d_1}})}=2q^{r(m_x)}$;
\item $\abs{\UU_1(\spring{\varphi_i^{l_i}})}=q^{\frac{1}{2}l_i\cdot \deg\varphi_i}(1+q^{-\frac{1}{2}\deg\varphi_i})$;
\item $\abs{\GL_1(\spring{\tau_i^{r_i}})}=q^{r_i\cdot\deg\tau_i}(1-q^{-\deg\tau_i})$.
\end{enumerate}

Note that for any irreducible  polynomial $f(t)\in\kk[t]$ and $r\in\dbN$, invoking the Jordan-Chevalley Decomposition as in \autoref{lem:unitary-of-kphi}, the group $\GL_1(\spring{f^r})$ is isomorphic to the group of units of the ring $\spring{f}[u]/(u^r)$, and hence $\abs{\GL_1(\spring{f^r})}=q^{r\cdot \deg f}\left(1-q^{-\deg f}\right)$. Assertion (3) follows by taking $f(t)=\tau_i(t)$ and $r=r_i$. 

Assertions (1) and (2) follow from the exactness of the sequence \[1\to \UU_1(\spring{f^r})\to \GL_1(\spring{f^r})\xrightarrow{x\:\mapsto\sigma_{f^r}(x)\cdot x} E_{f^r}\to 1,\]
which holds for any irreducible $f\in\kk[t]$ with $f(-t)=\pm f(t)$ and $r\in\dbN$, and from the computation of $\abs{E_{f^r}}$ in \autoref{lem:unitary-of-kphi} and $\abs{\GL_1(\spring{f^r})}$ for the case where $f(t)=t$ and $r=d_1$, and the cases $f(t)=\varphi_i(t)$ and $r=l_i$.

\end{proof}
The final assertion of \autoref{theo:orbits-sp2nso2n+1} follows directly from \autoref{corol:centralizer-size-sp2nso2n+1}.

\subsection{Even dimensional special orthogonal groups}\label{subsection:so2n}
%Let $N=2n$ be even and let $B$ be a non-degenerate symmetric bilinear form on $V$. In this section we will complete the proof of \autoref{theo:orbits-so2n}.

The following lemma demonstrates the failure of the first assertion of \autoref{theo:orbits-sp2nso2n+1} in the even orthogonal case.
%The following lemma examplifies the failure of the analogous version of \autoref{propo:reg-in-g1-iff-reg-in-glN} in the case of $N$ even.
\begin{lem}\label{lem:nilpotent-reg-glN-not-reg-soN}Let $N=2n$ be even and let $x\in\mfr{gl}_N(\bkk)$ be a regular nilpotent element. Then $x$ is not anti-symmetric with respect to any non-degenerate symmetric bilinear form on $V=(\bkk)^N$.  
\end{lem}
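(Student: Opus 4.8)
## Proof proposal for Lemma \ref{lem:nilpotent-reg-glN-not-reg-soN}

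The plan is to argue by contradiction: suppose $x \in \mathfrak{gl}_N(\bkk)$ is a regular nilpotent element which is anti-symmetric with respect to some non-degenerate symmetric form $C$ on $V = (\bkk)^N$, and derive a parity contradiction. The key structural input is that regularity of $x$ in $\mathfrak{gl}_N(\bkk)$ means (by \cite[3.5, Proposition~2]{SteinbergConjugacy}) that $V$ is a cyclic $\bkk[x]$-module, so we may fix $v_0 \in V$ with $\mathcal{E} = (v_0, xv_0, \ldots, x^{N-1}v_0)$ a basis. First I would show that, up to scaling $C$, the anti-symmetry condition $C(xu,v) + C(u,xv) = 0$ together with cyclicity forces $C$ to be the form given in \autoref{exam:reg-nil-elt-so2n+1-sp2n}, namely $C(p(x)v_0, r(x)v_0) = \mathrm{Prj}_{N-1}(p(x)r(-x)v_0)$, represented in $\mathcal{E}$ by the anti-diagonal matrix $\mathbf{c}$ of \eqref{eqution:C-nil-elt} with alternating signs $1, -1, 1, \ldots, (-1)^{N-1}$. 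Indeed, any $C$ making $x$ anti-symmetric must satisfy $C(x^i v_0, x^j v_0) = (-1)^i C(v_0, x^{i+j} v_0)$, and non-degeneracy pins down $C(v_0, x^{N-1}v_0) \neq 0$ and $C(v_0, x^k v_0) = 0$ for $k < N-1$; so $C$ is determined up to the single scalar $C(v_0, x^{N-1}v_0)$, hence congruent to $\mathbf{c}$.

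The main step is then to compute the symmetry type of the matrix $\mathbf{c}$. With $N = 2n$ even, the anti-diagonal entries are $(-1)^0, (-1)^1, \ldots, (-1)^{2n-1}$; the entry in position $(i, N+1-i)$ is $(-1)^{i-1}$, and its mirror entry in position $(N+1-i, i)$ is $(-1)^{N-i} = (-1)^{2n-i} = (-1)^{-i} = (-1)^i$. So $\mathbf{c}_{i, N+1-i} = -\mathbf{c}_{N+1-i, i}$, i.e. $\mathbf{c}^t = -\mathbf{c}$: the form $C$ is in fact \emph{anti-symmetric}, not symmetric. (Equivalently: $C(u,v) = (-1)^{N-1} C(v,u) = -C(v,u)$ since $N$ is even, which is immediate from the formula $C(p(x)v_0, r(x)v_0) = \mathrm{Prj}_{N-1}(p(x)r(-x)v_0)$ by swapping $p \leftrightarrow r$ and using $\mathrm{Prj}_{N-1}(r(x)p(-x)v_0) = \mathrm{Prj}_{N-1}(p(-x)r(x)v_0)$ then substituting $x \mapsto -x$ which only affects the top coefficient by $(-1)^{N-1} = -1$.) This contradicts the assumption that $C$ is symmetric and non-degenerate, since over a field of characteristic $\neq 2$ a non-degenerate bilinear form cannot be simultaneously symmetric and anti-symmetric. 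Hence no such $C$ exists.

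I expect the main obstacle to be the bookkeeping in the first step — verifying carefully that anti-symmetry of $x$ plus non-degeneracy really does force $C$ to be congruent (not merely similar in some weaker sense) to the specific matrix $\mathbf{c}$, so that the parity of $N$ controls the symmetry class of $C$. Concretely one must check that the congruence class of a non-degenerate form with respect to which a fixed regular nilpotent $x$ is anti-symmetric is unique: this follows because $\mathrm{Sym}(\star; x) \cap \mathrm{GL}_N$ modulo the relevant equivalence is governed by $\CC_{\mathfrak{gl}_N(\bkk)}(x) = \bkk[x]$, whose units are all of the form $\mathrm{scalar} \times (\text{unipotent})$, and over an algebraically closed field the scalar is a square and the unipotent part contributes trivially; so the form is unique up to congruence, determined entirely by the combinatorial sign pattern of $\mathbf{c}$, whose symmetry is dictated by the parity of $N$. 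An alternative, perhaps cleaner, route avoiding congruence-uniqueness: directly observe from $C(xu,v) = -C(u,xv)$ and $x^N = 0$ that for the generator $v_0$ one has $C(v_0, x^{N-1}v_0) = (-1)^{N-1} C(x^{N-1}v_0, v_0)$, and more generally the form is "reversed" with sign $(-1)^{N-1}$; since $x$ cyclic forces $v_0$-based pairings to determine $C$, symmetry of $C$ would force $(-1)^{N-1} = 1$, impossible for $N$ even in characteristic $\neq 2$ unless $C$ is degenerate.
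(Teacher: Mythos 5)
Your main line of argument has a gap at the step where you claim that ``non-degeneracy pins down $C(v_0, x^{N-1}v_0) \neq 0$ and $C(v_0, x^k v_0) = 0$ for $k < N-1$.'' Non-degeneracy forces only the first of these. Setting $c_k = C(v_0, x^k v_0)$, the matrix of $C$ in the basis $\mcal{E}$ has $(i,j)$-entry $(-1)^i c_{i+j}$ (indices from $0$, with $c_k=0$ for $k\geq N$), and this is non-degenerate precisely when $c_{N-1}\neq 0$; the coefficients $c_0,\dots,c_{N-2}$ are otherwise unconstrained, so $C$ need not literally equal a scalar multiple of $\mbf{c}$ in this basis, and getting it into that shape would require changing the cyclic generator $v_0$, which is a non-trivial extra step (the sort of normalization the paper handles elsewhere via the set $\Theta_x$). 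However, the alternative route in your final paragraph is correct and side-steps this entirely: applying anti-symmetry of $x$ repeatedly gives $C(v_0,x^{N-1}v_0)=(-1)^{N-1}C(x^{N-1}v_0,v_0)$, and symmetry of $C$ together with $N$ even then yields $2C(v_0,x^{N-1}v_0)=0$, hence $C(v_0,x^{N-1}v_0)=0$ in odd characteristic; since $x^{N-1}v_0$ then pairs trivially with every basis vector $x^iv_0$ (for $i\geq1$ because $x^N=0$, for $i=0$ by the computation just made), it lies in the radical of $C$, contradicting non-degeneracy.

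For comparison, the paper's proof is shorter and coordinate-free: it introduces the auxiliary form $F(u,v)=C(u,xv)$, checks that $F$ is alternating (symmetry of $C$ plus anti-symmetry of $x$) and that its radical equals $\ker(x)$ by non-degeneracy of $C$, and concludes because the corank of an alternating form on an $N$-dimensional space has the same parity as $N$, so $\ker(x)$ would have to be even-dimensional, whereas for a regular nilpotent $x$ it is one-dimensional. Your cyclic-basis argument and the paper's alternating-form argument rest on the same underlying parity phenomenon but are genuinely different in mechanism; the paper's buys brevity and avoids choosing a generator, while yours fits more naturally with the explicit basis $\mcal{E}$ and matrix $\mbf{c}$ that the paper reuses in \autoref{exam:reg-nil-elt-so2n+1-sp2n} and \autoref{lem:nilpotent-reg-element-so2n}. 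I would advise dropping the first, congruence-based attempt and presenting only the direct computation, since that is where the proof actually lives.
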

\begin{proof}
Note that, as $x$ is conjugate to an $N\times N$ nilpotent Jordan block, the kernel of $x$ is one dimensional. Assume towards a contradiction that $C$ is a symmetric non-degenerate bilinear form on $V$ such that $x$ is $C$-anti-symmetric. Consider the form $F(u,v)=C(u,xv)$ on $V$. By assumption the $C(xu,v)+C(u,xv)=0$, we have that $F$ is anti-symmetric. Additionally, the radical of $F$ coincides with the kernel of $x$, by non-degeneracy of $C$. By properties of antisymmetric forms, it follows that the kernel of $x$ is even-dimensional. A contradiction.
\end{proof}

Nonetheless, regular nilpotent elements in the case of even-dimensional special orthogonal groups are well-known to exist \cite[III,\:1.19]{SteinbergSpringer}. In \autoref{lem:nilpotent-reg-element-so2n} below we shall construct such an element and compute its centralizer.

Recall that non-degenerate symmetric bilinear forms on $V=\kk^N$ are classified by the dimension of a maximal totally isotropic subspace of $V$ with respect to the given form (i.e. its Witt index), and that over a finite field of odd characteristic there are exactly two such forms, upto isometry. We fix $B^+$ and $B^-$ to be bilinear forms on $V$ of Witt index $n$ and $n-1$, respectively. In suitable bases, the forms $B^+$ and $B^-$ are represented by the matrices 
%In suitable bases, any non-degenerate symmetric bilinear form on $\kk$ can be represented by one of the matrices
\begin{equation}\label{equation:J+J-}\BB^+=\mat{0&1\\1&0\\&&\ddots\\&&&0&1\\&&&1&0\\&&&&&0&1\\&&&&&1&0}\quad\text{or}\quad\BB^-=\mat{0&1\\1&0\\&&\ddots\\&&&0&1\\&&&1&0\\&&&&&1&0\\&&&&&0&\delta},\end{equation}
where $\delta\in\oo^\times$ is a fixed non-square.

Given $\epsilon\in\set{\pm 1}$, let $G_1^\epsilon=\SO_N^\epsilon(\kk)$ and $\mfr{g}_1^\epsilon=\mfr{so}_{N}^\epsilon(\kk)$ be the group of isometries of determinant $1$ and the Lie-algebra of anti-symmetric operators with respect to the form $B^\epsilon$. We will also occasionally use the colloquial notation $G_1^{\pm}=G_1^{+} \cup G_1^-$ and $\mfr{g}_1^{\pm}=\mfr{g}_1^+\cup \mfr{g}_1^-$. For example, the phrase \textit{$x$ is a regular element of $\mfr{g}_1^\pm$} indicates that $x$ is either a regular element of $\mfr{g}_1^+$ or of~$\mfr{g}_1^-$.
%\hspace{-3cm}
\subsubsection{Similarity classes of regular elements}\label{subsubsection:simclasses-so2n}
In this section we prove the first assertion of \autoref{theo:orbits-so2n}, which classifies the similarity classes of $\mfr{gl}_N(\oo)$ whose intersection with $\mfr{g}_1^\pm$ consists of regular elements. Following this, we differentiate whether such a similarity class intersects $\mfr{g}_1^+$ or $\mfr{g}_1^-$. 

Note that if $x\in\mfr{gl}_N(\kk)$ is a non-singular element whose minimal polynomial $m_x$ is even and has degree $N$ then, by applying the argument of \autoref{lem:conj-to-g1}.(2) verbatim, we have that $x$ is anti-symmetric with respect to a non-degenerate symmetric bilinear form and hence similar to an element of $\mfr{g}_1^\pm$. By \autoref{corol:reg-g1-reg-glN-nonsing}, all non-singular regular elements of $\mfr{g}_1$ are obtained in this manner. Thus, for $x\in\mfr{gl}_N(\kk)$ non-singular, it holds that $x$ is similar to a regular element of $\mfr{g}^{\pm}$ if and only if the minimal polynomial of $x$ is even and of degree $N$.

As explained below (see \autoref{propo:orbit-decomposition=so2n}), the case of singular regular elements of $\mfr{g}_1^\pm$ is essentially reduced to the study of nilpotent regular elements. These elements are considered in the following lemma.
\iffalse
Moreover, in the case where $x$ is a singular element of $\mfr{g}_1^\pm$, {\sc
Additionally, by inspecting the decomposition of $x$ into primary rational canonical form and by the fact that the kernel of a semisimple element of $\mfr{g}_1^\pm$ is even-dimensional, to classify the similarity classes intersecting $\mfr{g}_1^\pm$ at singular elements it is sufficient to consider the case where $x$ is a nilpotent matrix. 
}\fi
\begin{lem}\label{lem:nilpotent-reg-element-so2n}
Let $x\in \mfr{gl}_N(\kk)$ have minimal polynomial $m_x(t)=t^{N-1}$. Then $x$ is similar to a regular nilpotent element of $\mfr{g}_1^+$, as well as to a regular nilpotent element of $\mfr{g}_1^-$.
\end{lem}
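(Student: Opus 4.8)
The plan is to reduce the lemma to \autoref{exam:reg-nil-elt-so2n+1-sp2n}. Since $x$ is an $N\times N$ matrix with $m_x(t)=t^{N-1}$, its rational canonical form is the nilpotent matrix with one Jordan block of size $N-1$ and one block of size $1$; in particular all such $x$ are similar to one another, so it suffices to produce, for each $\epsilon\in\set{\pm1}$, a single regular nilpotent element of $\mfr{g}_1^\epsilon$ of Jordan type $[N-1,1]$.

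Writing $N-1=2(n-1)+1$, I would first invoke \autoref{exam:reg-nil-elt-so2n+1-sp2n} in dimension $N-1$: let $\mbf{c}'\in\GL_{N-1}(\oo)$ be the anti-diagonal form and $\nil'\in\matr_{N-1}(\kk)$ the regular nilpotent Jordan block appearing there, so that $\nil'$ is anti-symmetric with respect to the non-degenerate symmetric form $\mbf{c}'$ and its centralizer in the Lie algebra $\mfr{h}'\subseteq\matr_{N-1}(\bkk)$ of $\mbf{c}'$-anti-symmetric matrices has dimension $\lfloor(N-1)/2\rfloor=n-1$. For $\delta\in\kk^\times$ let $C_\delta$ be the non-degenerate symmetric form on $\kk^N$ represented, in a basis adapted to an orthogonal decomposition $\kk^N=V_1\perp V_0$ with $\dim V_1=N-1$ and $\dim V_0=1$, by the block matrix $\mbf{c}'\oplus(\delta)$, and set $x_\delta=\nil'\oplus 0\in\matr_N(\kk)$. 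Then $x_\delta$ is $C_\delta$-anti-symmetric (anti-symmetry passes to orthogonal summands, and $0$ is trivially anti-symmetric on $V_0$), is nilpotent, and has Jordan type $[N-1,1]$. Since $\det C_\delta=\delta\cdot\det\mbf{c}'$, as $\delta$ ranges over the two square classes of $\kk^\times$ the form $C_\delta$ realises both discriminant classes, and hence — as $N=2n$ is even — both isometry types $B^+$ and $B^-$; applying \autoref{lem:conj-by-forms} to an isometry $g\colon C_\delta\xrightarrow{\sim}B^\epsilon$ shows that $gx_\delta g^{-1}\in\mfr{g}_1^\epsilon$, and it is similar to $x$. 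This will establish both similarity claims, once regularity is verified.

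For regularity I would argue exactly as in \autoref{exam:reg-nil-elt-so2n+1-sp2n}: by \cite[3.5,\:Proposition~1]{SteinbergConjugacy} and \cite[\S~1.10,\:Proposition]{HumphreysConj} it is enough to check that $\dim\CC_{\mfr{h}_\delta}(x_\delta)=n$, where $\mfr{h}_\delta\subseteq\matr_N(\bkk)$ is the Lie algebra of $C_\delta$-anti-symmetric matrices. Writing a general matrix in block form $\smat{A&b\\c&d}$ relative to $V_1\oplus V_0$ and imposing simultaneously $[x_\delta,\,\cdot\,]=0$ and $C_\delta$-anti-symmetry, a short computation (using $\Char(\bkk)\ne2$ together with the $\mbf{c}'$-anti-symmetry of $\nil'$) shows that such a matrix is parametrised by $A\in\CC_{\mfr{h}'}(\nil')$ and $b\in\ker\nil'$, with $c$ then forced by $b$ and $d=0$. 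Since $\dim\CC_{\mfr{h}'}(\nil')=n-1$ and $\dim\ker\nil'=1$, this yields $\dim\CC_{\mfr{h}_\delta}(x_\delta)=n=\rk(\SO_{2n})$, so $x_\delta$ (hence $gx_\delta g^{-1}$) is a regular nilpotent element of the corresponding Lie algebra.

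The only step that needs a little care is the discriminant bookkeeping — namely the claim that varying the scalar $\delta$ on the one-dimensional complement genuinely produces \emph{both} forms $B^+$ and $B^-$ — which relies solely on the classification of non-degenerate symmetric forms over a finite field of odd characteristic; the block-matrix centralizer computation is routine, and no hypothesis on $\abs{\kk}$ beyond $\Char(\kk)\ne2$ is required.
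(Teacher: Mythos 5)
Your argument is correct and essentially the same as the paper's. The paper also realises the ambient form as the orthogonal sum of the rank-$(N-1)$ form $\mbf{c}$ from \autoref{exam:reg-nil-elt-so2n+1-sp2n} with a one-dimensional form $(\eta)$ (this is the matrix $\mbf{d}_\eta$ in~\eqref{equation:d-eta-defin}), varies the scalar over the two square classes to hit both Witt indices, and verifies regularity by the same block-by-block centralizer computation landing on dimension $n$. Your phrasing in terms of an explicit orthogonal decomposition $\kk^N = V_1\perp V_0$ and \autoref{lem:conj-by-forms} is slightly more structured, but the content is identical.
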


\begin{proof}
By considering the Jordan normal form of such an element $x$, there exist elements $v_0,u_0\in V$ with $u_0\in\ker(x)$ and such that $\mcal{E}=\set{v_0,xv_0,\ldots,x^{N-2}v_0,u_0}$ is a $\kk$-basis for $V$. 

Let $\mcal{E}'=\set{v_0,\ldots x^{N-2}v_0}$ and $V'=\Span_\kk \mcal{E}'$. Since the element $x\mid_{V'}\in\mfr{gl}(V')$ has minimal polynomial $t^{N-1}=t^{\dim V'}$, it is regular in $\mfr{gl}(V')$. By the proof of \autoref{lem:conj-to-g1}, there exists a non-degenerate symmetric bilinear form $C'$ on $V'$, with respect to which $x\mid_{V'}$ is anti-symmetric. We wish to extend $C'$ to a non-degenerate symmetric bilinear form on $V$, with respect to which $x$ is anti-symmetric. This is equivalent to finding an invertible matrix $\mbf{d}\in \matr_N(\kk)$, whose top-left $(N-1)\times (N-1)$ submatrix coincides with the matrix $\mbf{c}$ of \autoref{exam:reg-nil-elt-so2n+1-sp2n} (see \eqref{eqution:C-nil-elt}), and such that 
\begin{equation}\label{equation:nil-defiso2n}
\mbf{d}^t\nil+\nil\mbf{d}=0\quad\text{where}\quad\nil=[x]_\mcal{E}=\mat{0&1\\&\ddots&\ddots\\&&0&1\\&&&0&0\\&&&&0}.
\end{equation}

A short computation shows that the matrix
\begin{equation}\label{equation:d-eta-defin}
\mbf{d}=\mbf{d}_\eta=\mat{&&&1\\&&-1
\\&\reflectbox{$\ddots$}%&-1
\\1\\&&&&\eta},
\end{equation}
where $\eta\in \kk^\times$ satisfies the required equality. Furthermore, by applying a signed permutation to $\mcal{E}$, one may verify easily that $\mbf{d}_\eta$ is congruent to the matrix $\BB^+$ of \eqref{equation:J+J-} if $\eta$ is a square, and to $\BB^-$ otherwise. Thus, $x$ is similar in this case to elements of both $\mfr{g}_1^+$ and of $\mfr{g}_1^-$. 

Lastly, we need to verify that $x$ is similar to a \textit{regular} element of $\mfr{g}_1^\pm$. To do so, we pass to the algebraic closure $\bkk$ of $\kk$ and compute the centralizer in $\GG(\bkk)$ of an element $zxz^{-1}\in \mfr{g}_1$. Working in the basis $\mcal{E}$, by direct computation, one sees that the centralizer of $x$ in $\matr_N(\bkk)$ can be identified with the set of matrices $\mbf{y}=\smat{\mbf{A}&\mbf{v}\\\mbf{u}^t&r}$, where
\begin{enumerate}
\item $\mbf{A}\in\matr_{N-1}(\bkk)$ and commutes with the restriction of $\nil$ to $V'=\Span_{\bkk}{\mcal{E}'}$;
\item $\mbf{u},\mbf{v}\in(\bkk)^{N-1}$ are elements of the kernel of $\nil$ and $\nil^t$, respectively, and hence of the form $\mbf{v}=\mat{v_1&0&\ldots&0}^t$ and $\mbf{u}=\mat{0&\ldots&0&u_{N-1}}^t$; and
\item $r\in\bkk$ is arbitrary.
\end{enumerate}
As in \autoref{exam:reg-nil-elt-so2n+1-sp2n}, the centralizer of $z x z^{-1}\in\mfr{g}_1$ is conjugated in $\GL_N(\bkk)$ to the group \[\set{\mbf{y}\in\CC_{\GL_N(\bkk)}(\nil)\mid \mbf{y}^t\mbf{d}\mbf{y}=\mbf{d}}.\]
Computing its Lie-algebra, which consists of matrices $\mbf{y}\in\CC_{\matr_N(\bkk)}(\nil)$ satisfying $\mbf{y}^t\mbf{d}+\mbf{d}\mbf{y}=0$, we get the additional three conditions
\begin{enumerate}
\item $\mbf{A}^t\mbf{c}+\mbf{c}\mbf{A}=0$, where $\mbf{c}$ %=\smat{&&&1\\&&\reflectbox{$\ddots$}\\&-1\\1}$
 is as in \autoref{exam:reg-nil-elt-so2n+1-sp2n};
\item $\eta\mbf{u}+\mbf{c}\mbf{v}=0$, i.e. $v_1=-\eta u_{N-1}$;  and
\item $2\eta r=0$, and hence $r=0$. 
\end{enumerate}
It follows that $\CC_{\GGamma_1}(zx z^{-1})$ is at most $n$-dimensional, and hence $x$ is regular.
\end{proof}

To streamline the analysis of nilpotent regular orbits, let us fix some notation.
\begin{nota}\label{nota:regular-nilpotent-centralizer}
Given a matrix $\mbf{A}\in\matr_{N-1}(\kk)$, column vectors $\mbf{v},\mbf{u}\in\kk^{N-1}$ and $r\in\kk$, let $\cma(\mbf{A},\mbf{v},\mbf{u},r)$ denote the $N\times N$ matrix
\[\cma(\mbf{A},\mbf{v},\mbf{u},r)=\mat{\mbf{A}&\mbf{v}\\\mbf{u}^t&r}.\]
We also write $\mbf{A}^\invv$ for the matrix $\mbf{c}\mbf{A}^t\mbf{c}$, where $\mbf{c}$ is as in \autoref{exam:reg-nil-elt-so2n+1-sp2n}. Note that, in the case where $\mbf{d}=\mbf{d}_\eta$ is the representing matrix for the symmetric bilinear form given on $V$, we have that 
\begin{equation}
\label{equation:cma}
\cma(\mbf{A},\mbf{v},\mbf{u},r)^\inv=\mat{\mbf{A}^\invv&\eta\mbf{cu}\\\eta^{-1}\mbf{v}^t\mbf{c}&r}=\cma(\mbf{A}^\invv,\eta\mbf{cu},\eta^{-1}\mbf{cv},r).\end{equation}
\end{nota}

The next step of the computation is to differentiate whether a given element $x\in\mfr{gl}_N(\kk)$, which is similar to a regular element of $\mfr{g}_1^\pm$, is similar to either $\mfr{g}_1^+$ or $\mfr{g}_1^-$. We first consider two specific cases, depending on the minimal polynomial of $x$.
\begin{lem}[{cf. \cite[\S~2.6.(B).(i) and (i')]{WallIsometries}}]\label{lem:basic-cases}
Let $x\in\mfr{gl}_N(\kk)$ have minimal polynomial $m_x$. Assume $x$ is similar to a regular element of $\mfr{g}_1^\pm$. 
\begin{enumerate}
\item If $m_x(t)=f(t)f(-t)$ for some polynomial $f\in\kk[t]$ with $f(0)\ne 0$, then $x$ is similar to an element of $\mfr{g}_1^+$, and not to an element of $\mfr{g}_1^-$.
\item If $m_x=\varphi^r$ for $\varphi\in\kk[t]$ an even irreducible polynomial and $r\in \dbN$ odd, then $x$ is similar to a regular element of $\mfr{g}_1^-$ and not to an element of $\mfr{g}_1^+$.
\end{enumerate}
\end{lem}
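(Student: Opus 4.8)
The plan is to determine, for $x$ as in the lemma, the isometry type of a non-degenerate symmetric bilinear form $C$ on $V=\kk^N$ with respect to which $x$ is anti-symmetric. By \autoref{lem:conj-by-forms}, applied in both directions, $x$ is similar to an element of $\mfr{g}_1^{\epsilon}$ exactly when there is such a $C$ of Witt index $n$ (for $\epsilon=+1$) or $n-1$ (for $\epsilon=-1$); since non-degenerate symmetric forms over a finite field of odd characteristic are classified by dimension and discriminant, the Witt index $n$ form on $\kk^{2n}$ having discriminant $(-1)^n$ modulo squares and the Witt index $n-1$ form having discriminant $(-1)^n\delta$ with $\delta$ a fixed non-square, it suffices to compute $\operatorname{disc}C\bmod(\kk^\times)^2$ for one such $C$. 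Replacing $x$ by a conjugate (whose existence the hypothesis provides), I assume $x$ is $C$-anti-symmetric, and use throughout the identity $C(p(x)u,v)=C(u,p(-x)v)$ for $p\in\kk[t]$. Since $x$ is regular, $m_x$ has degree $N$ and equals the characteristic polynomial; writing it as in \eqref{equation:m_x-decompose} (the factor $t^{d_1}$ is absent here, $x$ being non-singular in both cases), the identity shows that the primary components belonging to distinct $\{\pm\}$-orbits of prime divisors of $m_x$ are mutually $C$-orthogonal, so $C$ is the orthogonal sum of the forms it induces on the blocks $\ker\varphi_i(x)^{l_i}$ ($\varphi_i$ even irreducible) and $\ker\tau_j(x)^{r_j}\oplus\ker\tau_j(-x)^{r_j}$ ($\tau_j$ coprime to $\tau_j(-t)$); hence Witt index and discriminant multiply over the blocks, and I analyse each block in turn.

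The two ``easy'' block types are hyperbolic. On a block $\ker\tau(x)^r\oplus\ker\tau(-x)^r$ with $\gcd(\tau,\tau(-t))=1$ the operator $\tau(-x)^r$ is invertible on $\ker\tau(x)^r$, so $\ker\tau(x)^r=\tau(-x)^r\ker\tau(x)^r$, and for $u,u'\in\ker\tau(x)^r$, writing $u=\tau(-x)^rw$, one gets $C(u',u)=C(u',\tau(-x)^rw)=C(\tau(x)^ru',w)=0$; thus $\ker\tau(x)^r$ is totally isotropic, likewise $\ker\tau(-x)^r$, so this block is a sum of two complementary totally isotropic subspaces, hence hyperbolic. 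On a block $\ker\varphi(x)^l$ with $\varphi$ even irreducible I set $V_i=\ker\varphi(x)^i=\im\varphi(x)^{l-i}$, of dimension $i\deg\varphi$; the identity together with $\varphi(-t)=\varphi(t)$ gives $V_i^{\perp}=V_{l-i}$, so when $l=2s$ is even $V_s=V_s^{\perp}$ is Lagrangian and the block is hyperbolic as well.

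The remaining case is a block $\ker\varphi(x)^r$ with $\varphi$ even irreducible, $\deg\varphi=2d$, and $r=2s+1$ odd — the only block occurring in part (2), while the blocks excluded above are precisely those ruled out by the hypothesis of part (1). Here $V_s^{\perp}=V_{s+1}$, the subspace $V_s$ is totally isotropic of dimension $2ds$, and $C$ descends to a non-degenerate symmetric form $B_M$ on $M:=V_{s+1}/V_s$ of dimension $2d$; thus the block is isometric to an orthogonal sum of $2ds$ hyperbolic planes and $B_M$, and its discriminant is $\equiv\operatorname{disc}B_M$. Now $M$ is a one-dimensional vector space over $\spring{\varphi}\cong\dbF_{q^{2d}}$, on which $x$ acts by multiplication by $\bar t$, and $\dbF_{q^{2d}}$ is the quadratic extension of the fixed field $\dbF_{q^d}$ of $\sigma_\varphi$; anti-symmetry of $x$ forces $B_M(u,v)=\ell(u\,\sigma_\varphi(v))$ for a $\sigma_\varphi$-invariant non-degenerate $\kk$-linear functional $\ell$ on $\dbF_{q^{2d}}$, and every such $\ell$ is $y\mapsto\operatorname{tr}_{\dbF_{q^{2d}}/\kk}(cy)$ for a unique $c\in\dbF_{q^d}^\times$. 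Hence $B_M$ is the transfer along $\operatorname{tr}_{\dbF_{q^d}/\kk}$ of the two-dimensional $\dbF_{q^d}$-form $T_c(u,v)=\operatorname{tr}_{\dbF_{q^{2d}}/\dbF_{q^d}}(c\,u\,\sigma_\varphi(v))$; writing $\dbF_{q^{2d}}=\dbF_{q^d}(\theta)$ with $\theta^2=\Delta\in\dbF_{q^d}^\times$ a non-square and $\sigma_\varphi(\theta)=-\theta$, a short computation gives $T_c\cong\langle 2c,-2c\Delta\rangle$, of discriminant $\equiv-\Delta$. By the standard formula for the discriminant of a transfer, in which the discriminant of the trace form $\operatorname{tr}_{\dbF_{q^d}/\kk}$ enters to the power $\dim T_c=2$ and so drops out modulo squares, $\operatorname{disc}_\kk B_M\equiv\operatorname{Nm}_{\dbF_{q^d}/\kk}(-\Delta)=(-1)^d\operatorname{Nm}_{\dbF_{q^d}/\kk}(\Delta)$; and $\operatorname{Nm}_{\dbF_{q^d}/\kk}(\Delta)$ is a non-square in $\kk$, since its $\tfrac{q-1}{2}$-th power equals $\Delta^{(q^d-1)/2}=-1$. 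Thus $\operatorname{disc}B_M\equiv(-1)^d\cdot(\text{non-square})$.

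Assembling the pieces: in part (1) the hypothesis $m_x=f(t)f(-t)$ with $f(0)\ne 0$ forces every even irreducible factor of $m_x$ to occur with even multiplicity — an even irreducible dividing $f$ divides $f(-t)$ to the same power — so every block of $C$ is hyperbolic, $C$ is the Witt index $n$ form, and $x$ is similar to an element of $\mfr{g}_1^+$ and, the Witt index $n-1$ form being non-hyperbolic, to none of $\mfr{g}_1^-$. In part (2), $m_x=\varphi^r$ with $r$ odd gives a single block of the last type, so $\operatorname{disc}C\equiv(-1)^{dr}\cdot(\text{non-square})=(-1)^n\cdot(\text{non-square})$ since $n=dr$ and $r$ is odd; hence $C$ is the Witt index $n-1$ form and $x$ is similar to an element of $\mfr{g}_1^-$ but not of $\mfr{g}_1^+$, and as $x$ is non-singular and regular in $\mfr{gl}_N(\kk)$, the corresponding element of $\mfr{g}_1^-$ is regular by \autoref{corol:reg-g1-reg-glN-nonsing}. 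I expect the main obstacle to be the square-class bookkeeping in the third paragraph: correctly carrying out the descent to the middle layer $M$, recognising $B_M$ as a transfer from $\dbF_{q^d}$, and running the transfer-discriminant and norm-of-a-non-square computations; the rest, including all of part (1), should be routine manipulation of orthogonal decompositions. (One could instead follow \cite{WallIsometries} directly, replacing the middle-layer argument by Wall's bookkeeping via \autoref{propo:orbit-decomposition}, but the route above keeps the proof self-contained.)
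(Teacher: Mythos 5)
Your proof is correct, and while it shares the central reduction with the paper, it handles the two key steps by genuinely different means. For part~(1), the paper's proof is shorter: it simply exhibits $W=f(x)V$ as a Lagrangian (totally isotropic of dimension $n$, using $C(f(x)u,f(x)v)=C(f(x)f(-x)u,v)=0$), forcing the form to be hyperbolic. Your approach instead decomposes $C$ orthogonally over the primary blocks and shows each block hyperbolic; this is longer but more structural, and also sets up the decomposition you need in part~(2). For part~(2), both proofs descend to the same middle layer $V_{s+1}/V_s=\ker\varphi(x)^{s+1}/\ker\varphi(x)^s$ of dimension $\deg\varphi$, on which $x$ acts with minimal polynomial $\varphi$ (the paper writes this as $U^\perp/U$ with $U=\varphi(x)^{\lfloor r/2\rfloor+1}V$, which is the same subspace). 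The real divergence is in what happens there: the paper normalizes the form on the middle layer to the trace pairing $\operatorname{tr}_{\spring{\varphi}/\kk}(\sigma_\varphi(u)v)$ via a norm argument and then \emph{counts} the non-zero isotropic vectors, obtaining $(q^{d}+1)(q^{d-1}-1)$, which pins down the minus type; you instead recognise $B_M$ as the Scharlau transfer along $\operatorname{tr}_{\dbF_{q^d}/\kk}$ of the rank-two $\dbF_{q^d}$-form $T_c\cong\langle 2c,-2c\Delta\rangle$ and compute $\operatorname{disc}_\kk B_M\equiv(-1)^d\cdot(\text{non-square})$ via the transfer-discriminant formula and $N_{\dbF_{q^d}/\kk}(\Delta)$ being a non-square. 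Your discriminant route avoids choosing a normalisation of $\ell$ but shifts the burden to sign bookkeeping in the transfer formula (which you carry out correctly, including the observation that $\operatorname{disc}(s_*\langle1\rangle)^2$ and the auxiliary signs are squares). Both arrive at the same answer, so the two proofs are interchangeable; the paper's is more elementary, yours is more robust to generalisation.
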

\begin{proof} Let $C$ be a non-degenerate symmetric bilinear, with respect to which $x$ is $C$-anti-symmetric. We will show that $C$ necessarily has Witt index $n$ in the first case and $n-1$ in the second case. 

\begin{list}{\arabic{list}.}{\usecounter{list}\setlength{\leftmargin}{5pt}\setlength{\itemsep}{2pt}\setlength{\labelwidth}{-5pt}}
\item By the assumption $m_x(0)\ne 0$ and \autoref{corol:reg-g1-reg-glN-nonsing}, it follows that $x$ is also a regular element of $\mfr{gl}_N(\kk)$, and hence the space $V$ is cyclic as a $\kk[x]$ module. Put $W=f(x) V$. Then $W$ is isomorphic, as a $\kk[x]$-module, to $V/f(-x)V$, and hence is of dimension $n=\frac{N}{2}$ over $\kk$. Additionally, for any $u,v\in V$ we have $C(f(x)u,f(x)v)=C(f(x)f(-x)u,v)=0$, and hence $W$ is totally isotropic.
\item Let us first consider the case where $r=1$, and hence $V$ is isomorphic to the field extension~$\spring{\varphi}$ of $\kk$. Furthermore, the map $\sigma_{\varphi}\in\aut_\kk(\spring{\varphi})$, induced from $t\mapsto -t$ is a field involution of $\spring{\varphi}$ over~$\kk$, with fixed field $\msf{K}$, such that $\abs{\spring{\varphi}:\msf{K}}=2$. Note that in this setting, without loss of generality, we may assume that $C(u,v)= \Tr_{\spring{\varphi}/\kk}(\sigma_\varphi(u)v)$ for all $u,v\in V$. Indeed, invoking the separability of the extension~$\spring{\varphi}/\kk$, there exists an element $c\in \spring{\varphi}$ such that $C(u,1)=\Tr_{\spring{\varphi}/\kk}(c\cdot u)$ for all $u\in\spring{\varphi}$. From the symmetry of $C$ and the invariance of $\Tr_{\spring{\varphi}/\kk}$ under $\sigma_\varphi$, it can be deduced that in fact $c\in\msf{K}$. By the theory of finite fields, there exists an element $d\in \spring{\varphi}$ such that $c=\sigma_{\varphi}(d)d$. It follows that multiplication by $d$ is an isometry of $C$ with the trace pairing $(u,v)\mapsto \Tr_{\spring{\varphi}/\kk}(\sigma_\varphi(u)v)$.%, and we may assume without loss of generality that $C$ is the trace pairing.

Note that an element $u\in \spring{\varphi}$ is isotropic if and only if $\sigma_\varphi(u) u$ is a traceless element of~$\msf{K}$. Since the number of non-zero traceless elements in the extension $\msf{K}/\kk$ is $q^{n-1}-1$, and by the surjectivity of the norm map $\Nr_{\spring{\varphi}/\msf{K}}$,  it follows that the number of non-zero isotropic element of $\spring{\varphi}$ is $(q^n+1)(q^{n-1}-1)$. The fact that $C$ is of Witt index $n-1$ now follows as in \cite[\S~3.7.2]{WilsonFinite}.

For the case $r>1$, put $l=\lfloor\frac{r}{2}\rfloor$ and $U=\varphi(x)^{l+1}V$. Then, similarly to (1), $U$ is an isotropic subspace of $V$, with perpendicular space $U^\perp=\varphi(x)^{l}V$. Moreover, the form $C$ reduces to a non-degenerate symmetric bilinear form on the quotient space $U^\perp/U$, on which $x$ acts as an anti-symmetric operator with minimal polynomial $\varphi$. By the case $r=1$, we find a two-dimensional anisotropic subspace $\bar{L}\subseteq U^\perp/U$, whose pull-back to $U^\perp$ is contains a two-dimensional anisotropic subspace of $V$. It follows that the Witt index of $C$ is necessarily $n-1$.
\end{list}
\end{proof}

Having \autoref{lem:basic-cases} at hand, we need one more basic tool in order to complete the classification of similarity classes containing regular elements of $\mfr{g}_1^\pm$. 
\begin{nota} Given a finite, even-dimensional vector space $U$ over $\kk$ with a non-degenerate symmetric bilinear form $C$, put $\delta_U=1$ if $U$ is of Witt index $\frac{1}{2}\dim_\kk U$ and $\delta_U=-1$ otherwise. 
\end{nota}
\begin{lem}\label{lem:direct-sum-of-forms}
Let $U,W$ be finite, even dimensional vector spaces over $\kk$ with non-degenerate symmetric bilinear forms $C_U$ and $C_V$ respectively. Endow the space $U\oplus W$ with the non-degenerate symmetric bilinear form $C_{U\oplus W}(u+w,u'+w')=C_U(u,u')+C_W(w,w')$ where $u,u'\in U$ and $w,w'\in W$. Then
\[\delta_{U\oplus W}=\delta_U\cdot\delta_W.\]
\end{lem}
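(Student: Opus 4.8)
The statement to prove is the multiplicativity of the discriminant-type invariant $\delta$ for orthogonal direct sums of even-dimensional quadratic spaces over a finite field $\kk$ of odd characteristic. My plan is to reduce to the classical fact that the type $\pm$ of a non-degenerate symmetric bilinear form over $\kk$ is detected by the discriminant, i.e. the class of $\det$ in $\kk^\times/(\kk^\times)^2$, together with the dimension modulo $4$, or equivalently by the Hasse--Witt / Arf-type invariant.

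\begin{proof}
Recall that a non-degenerate symmetric bilinear form $C$ on a finite-dimensional $\kk$-vector space, $\kk$ finite of odd characteristic, is determined up to isometry by its dimension and its discriminant $\operatorname{disc}(C)\in\kk^\times/(\kk^\times)^2$, the class of the determinant of any Gram matrix. Moreover, for an even-dimensional space $U$ of dimension $2m$, one has $\delta_U=1$ (Witt index $m$) if and only if $\operatorname{disc}(C_U)=(-1)^m$ in $\kk^\times/(\kk^\times)^2$; this is the standard classification of orthogonal spaces over finite fields (see \cite[\S~3.7]{WilsonFinite}), and follows by decomposing $U$ into hyperbolic planes plus at most a two-dimensional anisotropic summand and computing discriminants. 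Write $\dim_\kk U=2m$ and $\dim_\kk W=2m'$, so $\dim_\kk(U\oplus W)=2(m+m')$.

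Since Gram matrices of $C_U$ and $C_W$ assemble block-diagonally into a Gram matrix of $C_{U\oplus W}$, the determinant is multiplicative, hence
\[
\operatorname{disc}(C_{U\oplus W})=\operatorname{disc}(C_U)\cdot\operatorname{disc}(C_W)\quad\text{in }\kk^\times/(\kk^\times)^2.
\]
Now apply the criterion above in each of the three spaces. Writing $\epsilon_U=\operatorname{disc}(C_U)\cdot(-1)^{-m}$ and similarly $\epsilon_W$, $\epsilon_{U\oplus W}$, each of these lies in $\kk^\times/(\kk^\times)^2\cong\set{\pm1}$ and equals $1$ exactly when the corresponding space has maximal Witt index, i.e. $\epsilon_U=\delta_U$, $\epsilon_W=\delta_W$, $\epsilon_{U\oplus W}=\delta_{U\oplus W}$ under the identification of $\kk^\times/(\kk^\times)^2$ with $\set{\pm 1}$. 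Multiplicativity of the discriminant and the identity $(-1)^{m+m'}=(-1)^m(-1)^{m'}$ give $\epsilon_{U\oplus W}=\epsilon_U\epsilon_W$, whence $\delta_{U\oplus W}=\delta_U\delta_W$.
\end{proof}

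The only genuine content is the classification statement relating $\delta_U$ to the discriminant; once that is granted, the argument is a one-line determinant computation. The main obstacle, such as it is, is being careful about the sign bookkeeping: the discriminant alone does not determine the type --- one must pair it with the dimension mod $4$ (equivalently, with the factor $(-1)^m$) --- so the cleanest route is to renormalize the discriminant by $(-1)^m$ before invoking multiplicativity, as above. One could alternatively argue entirely internally by decomposing each of $U$ and $W$ as (hyperbolic part) $\oplus$ (anisotropic part of dimension $0$ or $2$) and checking the four combinations of $\delta_U,\delta_W\in\set{\pm1}$ directly, using that the orthogonal sum of two anisotropic planes is hyperbolic over a finite field; this avoids quoting the discriminant criterion but is essentially the same computation repackaged.
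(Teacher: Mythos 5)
Your proof is correct, and the discriminant criterion you invoke --- $\delta_U=1$ if and only if $\operatorname{disc}(C_U)=(-1)^m$ in $\kk^\times/(\kk^\times)^2$ for $\dim_\kk U=2m$ --- is stated accurately and does make the multiplicativity a one-line computation once one normalizes by $(-1)^m$ as you do. Your sign bookkeeping (renormalizing by $(-1)^m$ rather than working with raw discriminants) is exactly right and is where a careless version of this argument would slip.

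The paper takes a different, group-theoretic route: it cites \cite[\S~3.7.4]{WilsonFinite} and observes that the isometry group of $C_U$ times that of $C_W$ embeds into the isometry group of $C_{U\oplus W}$, so that the claim reduces to the classification of the relevant subgroups (equivalently, a comparison of group orders for $\mathrm{O}_{2n}^{\pm}(q)$). Your approach works directly with invariants of the forms rather than with the isometry groups, so it is more elementary and self-contained --- it does not require knowing the subgroup structure or order formulas for finite orthogonal groups, only the standard classification of quadratic forms over finite fields by dimension and discriminant. The paper's route is shorter only because it outsources the content to a textbook reference; yours makes the underlying reason transparent. Both are valid, and your closing remark about the ``four cases via hyperbolic plus anisotropic plane'' decomposition is essentially the proof that underlies both arguments.
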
 
\begin{proof}
The lemma follows, e.g., from \cite[\S~3.7.4,\:p.~68]{WilsonFinite}, noting that the direct product of the groups of isometries of $C_U$ and $C_W$ is embedded in the group of isometries of $C_{U\oplus W}$.
\end{proof}

We are now ready to complete the proof of the first and second assertions of \autoref{theo:orbits-so2n}.

\begin{propo}\label{propo:conj-to-g1-so2n} Let $x\in\mfr{gl}_N$ have minimal polynomial $m_x$. Assume $m_x(-t)=(-1)^{\deg m_x}m_x(t)$ and let \[m_x(t)=t^{d_1}\prod_{i=1}^{d_2}\varphi_i^{l_i}\prod_{i=1}^{d_3}\theta_i^{r_i}\] a decomposition as in \eqref{equation:m_x-decompose}, with $\varphi_i(t)$ even and irreducible, and $\theta_i(t)=\tau_i(t)\tau_i(-t)$ with $\tau_i(t)$ monic, irreducible and coprime to $\tau_i(-t)$.
\begin{enumerate}
\item If $d_1>0$ then $x$ is similar to a regular element of $\mfr{g}_1^\pm$ if and only if $\deg m_x=N-1$. Moreover, in this case $x$ is similar to an element of $\mfr{g}_1^+$ as well as to an element of $\mfr{g}_1^-$.
\item Otherwise, if $d_1=0$ then $x$ is similar to a regular element of $\mfr{g}_1^\pm$ if and only if $\deg m_x=N$. In this case, put $\omega(m_x)=\sum_{i=1}^d l_i$.
\begin{enumerate}
\item If $\omega(m_x)$ is even, then $x$ is similar to an element of $\mfr{g}_1^+$ and not to an element of $\mfr{g}_1^-$.
\item Otherwise, if $\omega(m_x)$ is odd, then $x$ is similar to an element of $\mfr{g}_1^-$ and not to an element of $\mfr{g}_1^-$. 
\end{enumerate}  
\end{enumerate}
\end{propo}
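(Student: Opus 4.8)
The plan is to leverage the decomposition of $m_x$ into primary factors together with the structure results already established: \autoref{corol:reg-g1-reg-glN-nonsing} (non-singular regular elements of $\mfr{g}_1^{\pm}$ are exactly the non-singular regular elements of $\mfr{gl}_N(\kk)$ whose minimal polynomial is even of degree $N$), \autoref{lem:nilpotent-reg-element-so2n} (a matrix with minimal polynomial $t^{N-1}$ is similar to a regular nilpotent element of both $\mfr{g}_1^+$ and $\mfr{g}_1^-$), \autoref{lem:basic-cases} (the two ``building block'' cases $m_x=f(t)f(-t)$ and $m_x=\varphi^r$ with $r$ odd), and \autoref{lem:direct-sum-of-forms} (multiplicativity of the Witt-index sign $\delta$ under orthogonal direct sums). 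The overall idea is: if $x$ is $C$-anti-symmetric for a non-degenerate symmetric form $C$, then the primary decomposition $V=\bigoplus V_i$ of $V$ as a $\kk[x]$-module (grouping $W_{[\lambda]}=W_\lambda\oplus W_{-\lambda}$ pairs and the $\ker x$-part) is an \emph{orthogonal} decomposition with respect to $C$, as in the proof of \autoref{propo:centralizer-semisimple-element} and \autoref{corol:reg-g1-reg-glN-nonsing}. One then reads off $\delta_V$ as the product of the $\delta_{V_i}$ and checks regularity block-by-block.

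\textbf{Step 1 (necessity of the degree condition).} First I would observe that if $x$ is similar to a regular element of $\mfr{g}_1^\pm$, then by comparing centralizer dimensions exactly as in the proof of \autoref{propo:reg-equiavlent} — reducing to the non-singular part (handled by \autoref{corol:reg-g1-reg-glN-nonsing}) and the nilpotent part acting on $\ker(s)$ — regularity forces the nilpotent part on $W_0=\ker(s)$ to be a single Jordan block, so $\dim\ker(x)\le 1$ if $d_1=0$ and $=1$ if $d_1>0$ (using \autoref{lem:nilpotent-reg-glN-not-reg-soN}, which rules out a full $N\times N$ nilpotent Jordan block being anti-symmetric for a symmetric form, so when $x$ is singular the nilpotent block on $W_0$ has size exactly $\dim W_0-1$ plus a trivial summand — more precisely $W_0$ decomposes as a regular nilpotent block of odd size together with the one-dimensional kernel piece, giving $\deg m_x=N-1$). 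Conversely, when $d_1=0$ the evenness and degree-$N$ hypothesis gives regularity in $\mfr{gl}_N(\kk)$, hence in $\mfr{g}_1^\pm$ by \autoref{corol:reg-g1-reg-glN-nonsing} once we know $x$ is similar into $\mfr{g}_1^\pm$ at all; when $d_1>0$ and $\deg m_x=N-1$, similarity into both $\mfr{g}_1^\pm$ follows from gluing \autoref{lem:nilpotent-reg-element-so2n} (on the $\ker(s)$-block) with the non-singular block via the construction in the proof of \autoref{lem:conj-to-g1}.(2) and \autoref{lem:eseential-uniqueness}; this handles assertion (1).

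\textbf{Step 2 (the sign computation for $d_1=0$).} For assertion (2), decompose $V=\bigoplus_{i}V_{\varphi_i}\oplus\bigoplus_j V_{\theta_j}$ orthogonally, where $V_{\varphi_i}$ is the $\varphi_i$-primary component (an even-dimensional space, since $\deg\varphi_i$ is even) and $V_{\theta_j}$ is the $\theta_j$-primary component. By \autoref{lem:direct-sum-of-forms} we have $\delta_V=\prod_i\delta_{V_{\varphi_i}}\cdot\prod_j\delta_{V_{\theta_j}}$. For the $\theta_j$-blocks, $m_x$ restricted there is $\tau_j(t)\tau_j(-t)$ raised to a power, i.e.\ of the form $f(t)f(-t)$, so by \autoref{lem:basic-cases}.(1) each $\delta_{V_{\theta_j}}=+1$. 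For the $\varphi_i$-blocks with exponent $l_i$: I would reduce to the exponent-one case by the filtration argument in the proof of \autoref{lem:basic-cases}.(2) (passing to $\varphi_i(x)^{l}V/\varphi_i(x)^{l+1}V$ with $l=\lfloor l_i/2\rfloor$), which shows $\delta_{V_{\varphi_i}}=(\delta_{\spring{\varphi_i}})^{l_i}=(-1)^{l_i}$ since the $r=1$ case gives Witt defect $n-1$, i.e.\ sign $-1$, and the anisotropic pairs contributed by the even part of the exponent pair up. Hence $\delta_V=(-1)^{\sum_i l_i}=(-1)^{\omega(m_x)}$, which is exactly the claim: $x$ is similar to an element of $\mfr{g}_1^+$ iff $\omega(m_x)$ is even, and to $\mfr{g}_1^-$ otherwise, and — since a fixed $x$ determines $\delta_V$ unambiguously — never to both.

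\textbf{Main obstacle.} The delicate point is pinning down the sign contribution of a single $\varphi_i$-primary block with exponent $l_i>1$: I must be careful that the reduction $U^\perp/U$ in \autoref{lem:basic-cases}.(2) genuinely accounts for \emph{all} of $\delta_{V_{\varphi_i}}$ and that the ``middle'' even-exponent layers contribute a trivial (split) sign, so that only the parity of $l_i$ survives. The other subtlety is making the orthogonality of the primary decomposition airtight in the singular case (assertion (1)): one needs that $\ker(x)$, being one-dimensional, sits inside a nilpotent block of odd size $N-1$, and that the resulting symmetric form on that odd-dimensional block — which is forced by \autoref{lem:nilpotent-reg-element-so2n} — combines with everything else so that both Witt indices remain achievable (consistent with \autoref{lem:nilpotent-reg-element-so2n} producing $\mbf{d}_\eta$ for both square and non-square $\eta$). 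I expect these block-bookkeeping verifications, rather than any conceptual difficulty, to be where the real work lies.
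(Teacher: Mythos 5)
Your overall strategy — primary $\kk[x]$-decomposition of $V$, reducing each primary block via \autoref{corol:reg-g1-reg-glN-nonsing} (non-singular) and \autoref{lem:nilpotent-reg-element-so2n} (nilpotent), and combining signs with \autoref{lem:basic-cases} and \autoref{lem:direct-sum-of-forms} — is exactly the paper's. Two points deserve comment.

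First, in Step~1 you assert that regularity forces the nilpotent part on $W_0=\ker(s)$ to be a \emph{single} Jordan block and that $\dim\ker(x)=1$ when $d_1>0$. Both claims are wrong in the even-orthogonal case: a regular nilpotent of $\mfr{so}(W_0)$ with $\dim W_0 = 2m$ has Jordan type $(2m-1,1)$, so $\ker(x)$ is two-dimensional (this is precisely the content of \autoref{lem:nilpotent-reg-glN-not-reg-soN} together with \cite[III,\:1.19]{SteinbergSpringer}). Your parenthetical correction (``regular nilpotent block of odd size together with the one-dimensional kernel piece'') lands on the right Jordan type and the right conclusion $\deg m_x = N-1$, but it contradicts the sentence it is attached to; the claims about $\dim\ker(x)$ should simply be discarded.

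Second, in Step~2 you propose an inductive filtration argument to show $\delta_{W_{\varphi_i^{l_i}}} = (-1)^{l_i}$, which requires extending the $U^\perp/U$ reduction of \autoref{lem:basic-cases}.(2) to even exponents and tracking that the ``middle'' layers contribute trivially. This is not needed and is the shakiest point of your sketch. The paper avoids it entirely: when $l_i$ is even, $\varphi_i(t)^{l_i} = f(t)f(-t)$ with $f(t)=\varphi_i(t)^{l_i/2}$ (since $\varphi_i$ is even and $\varphi_i(0)\ne 0$), so \autoref{lem:basic-cases}.(1) applies directly and gives $\delta_{W_{\varphi_i^{l_i}}}=+1$; when $l_i$ is odd, \autoref{lem:basic-cases}.(2) applies as stated. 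No induction on the exponent is required, and the parity-dependence of the sign falls out immediately.
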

\begin{proof}
Considering the primary canonical form of $x$, the space $V$ decomposes as a $\kk[x]$-invariant direct sum $V=W_{t^{d_1}}\oplus\bigoplus_{i=1}^{d_2} W_{\varphi_i^{l_i}}\oplus\bigoplus_{i=1}^{d_3} W_{\theta_i^{r_i}}$, where the restriction of $x$ to the spaces $W_{f}$ has minimal polynomial $f(t)$, with $f\in\set{t^{d_1},\varphi_i^{l_i},\theta_i^{r_i}}$. 

For any $f(t)\ne t^{d_1}$, the restriction of $x$ to $W_{f}$ is a regular element of $\mfr{gl}(W_f)$. By \autoref{corol:reg-g1-reg-glN-nonsing}, the space $W_f$ is endowed with a non-degenerate symmetric bilinear form on which $x\mid_{W_f}$ acts as an anti-symmetric operator. Furthermore, by \autoref{lem:basic-cases}, in the case where $f=\theta_i^{r_i}$ for $i=1,\ldots,d_3$ or $f=\varphi_i^{l_i}$ with $l_i$ even, then $\delta_{W_f}=+1$. Otherwise, if $f=\varphi_i^{l_i}$ with $l_i$ odd, $\delta_{W_f}=-1$. Assertion (2), in which $d_1=0$ is assumed, now follows from \autoref{lem:direct-sum-of-forms}.

In the case where $d_1>0$, the assumption $\deg m_x=N-1$  implies that $t\cdot m_x(t)=c_x$, where $c_x(t)$ is the characteristic polynomial of $x$. It follows that the restriction of $x$ to $W_{t^{d_1}}$ has minimal polynomial $t^{d-1}$, and hence, by \autoref{lem:nilpotent-reg-element-so2n}, is antisymmetric with respect to non-degenerate symmetric forms of Witt index $\frac{d_1}{2}$ as well as $\frac{d_1}{2}-1$. Thus $\delta_{W_{t^{d_1}}}$ can be taken to be either $+1$ or $-1$. By the case where $x$ is non-singular, and by \autoref{lem:direct-sum-of-forms}, $x$ is similar to an element of $\mfr{g}_1^+$ as well as to an element of $\mfr{g}_1^-$.
\end{proof}
\subsubsection{From Similarity classes to adjoint orbits}\label{subsubsection:simtoconj-so2n}
Our next goal, once the similarity classes containing regular elements of $\mfr{g}_1^{\pm}$ have been classified, is to describe the set $\Pi_x=\Ad(\GL_N(\kk))x\cap \mfr{g}_1^\epsilon$  into $\Ad(G_1^\epsilon)$-orbits, for $\epsilon\in\set{\pm 1}$ fixed. %, and thereby to prove \autoref{theo:orbits-so2n}.(3).
In order to complete the description, we require the following lemma, whose proof is appears after \autoref{propo:orbit-decomposition=so2n}.

\begin{lem}\label{lem:difference-of-squares}Assume $\abs{\kk}>3$ and $\Char(\kk)\ne 2$. For any element $\gamma\in\kk^\times$ there exist $\nu,\delta\in\kk^\times$ such that $\nu\in(\kk^\times)^2,\:\delta\in \kk^\times\setminus(\kk^\times)^2$ and such that $\gamma=\nu-\delta$.
\end{lem}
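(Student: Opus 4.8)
The plan is to prove the stronger statement that, for $\gamma\in\kk^\times$ fixed, the number of such representations is positive by a standard quadratic character count. Write $q=\abs{\kk}$ and let $\chi\colon\kk\to\{0,\pm1\}$ be the quadratic residue character, extended by $\chi(0)=0$, so that on $\kk^\times$ the expression $\tfrac{1+\chi(a)}{2}$ is the indicator of nonzero squares and $\tfrac{1-\chi(a)}{2}$ that of nonzero non-squares. Putting $\nu=\gamma+\delta$, the quantity to be shown positive is
\begin{equation*}
N=\#\set{\delta\in\kk^\times\mid \chi(\delta)=-1,\ \gamma+\delta\ne0,\ \chi(\gamma+\delta)=1},
\end{equation*}
and substituting these indicators and expanding the product gives
\begin{equation*}
4N=\sum_{\delta\in\kk\setminus\{0,-\gamma\}}\bigl(1-\chi(\delta)+\chi(\gamma+\delta)-\chi(\delta)\chi(\gamma+\delta)\bigr).
\end{equation*}

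The next step is to evaluate the four sums. The constant term contributes $q-2$. Since $\sum_{a\in\kk^\times}\chi(a)=0$, deleting the value $\delta=-\gamma$ gives $\sum\chi(\delta)=-\chi(-\gamma)$, and re-indexing $u=\gamma+\delta$ and deleting $u=\gamma$ gives $\sum\chi(\gamma+\delta)=-\chi(\gamma)$. For the remaining sum, $\chi(\delta)\chi(\gamma+\delta)=\chi(\delta^2+\gamma\delta)$, and since the two excluded values $\delta\in\{0,-\gamma\}$ are exactly the roots of $\delta^2+\gamma\delta$ — where $\chi$ vanishes anyway — this equals $\sum_{\delta\in\kk}\chi(\delta^2+\gamma\delta)$. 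Completing the square and invoking the classical evaluation of the quadratic character sum $\sum_{x\in\kk}\chi(x^2+bx+c)=-1$, valid whenever the discriminant $b^2-4c$ is nonzero (here it is $\gamma^2\ne0$), shows this last sum equals $-1$.

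Assembling the pieces yields $4N=(q-2)+\chi(-\gamma)-\chi(\gamma)+1=q-1+\chi(\gamma)\bigl(\chi(-1)-1\bigr)$. If $-1$ is a square in $\kk$ this reads $4N=q-1\ge4$ since $q>3$; if $-1$ is a non-square, then $4N=q-1-2\chi(\gamma)\ge q-3$, and since $\chi(-1)=-1$ forces $q\equiv3\pmod4$ and hence (as $q>3$) $q\ge7$, again $4N\ge4$. In either case $N\ge1$, which is the desired conclusion; note that the hypothesis $\abs{\kk}>3$ is used precisely to exclude the degenerate value $N=0$ occurring over $\dbF_3$ for $\gamma=1$. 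I do not anticipate a genuine obstacle: the only points needing care are citing the correct quadratic Gauss-type identity and confirming that removing the two values $\delta=0,-\gamma$ does not perturb it, together with the case split on whether $-1$ is a square.
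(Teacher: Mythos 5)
Your proof is correct, and it takes a genuinely different route from the paper's. The paper works with the quadratic extension $\msf{K}=\kk(\sqrt{\xi})$ for $\xi$ a fixed non-square, uses surjectivity of the norm $\Nr_{\msf{K}/\kk}$ together with the fact that its fibres have size $q+1$, and then rules out by a counting-contradiction the possibility that every preimage of $\gamma$ lies on one of the two coordinate axes $\kk^\times\times\{0\}$ or $\{0\}\times\kk^\times$ (with the case split on whether $\gamma$ is a square mirroring yours). Your argument instead expands the indicator functions in terms of the quadratic character $\chi$ and reduces everything to the standard evaluation $\sum_{x\in\kk}\chi(x^2+\gamma x)=-1$, which is a complete-the-square instance of the Jacobi sum identity. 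The two arguments are of comparable length, but yours yields a strictly stronger conclusion: the exact count
\[4N=q-1+\chi(\gamma)\bigl(\chi(-1)-1\bigr),\]
from which the hypothesis $q>3$ is transparently seen to be both sufficient and (for $q\equiv 3\pmod 4$, $\gamma$ a square) necessary. The paper's version is slightly more self-contained in that it only invokes surjectivity of the finite-field norm rather than the full character-sum formula, which is presumably why it was chosen; but your route would serve equally well and makes the role of the $q>3$ hypothesis more visible.
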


\begin{propo}\label{propo:orbit-decomposition=so2n} Assume $\abs{\kk}>3$. Fix $\epsilon\in\set{\pm 1}$ and let $x\in\mfr{g}^\epsilon_1$ be regular. If $x$ is singular, then the intersection $\Ad(\GL_N(\kk))x\cap \mfr{g}_1^\epsilon$ is the disjoint union of two distinct $\Ad(G_1^\epsilon)$-orbits. Otherwise, $\Ad(\GL_N(\kk))x\cap \mfr{g}_1^\epsilon=\Ad(G_1^\epsilon)x$.
\end{propo}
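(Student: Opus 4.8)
The plan is to mirror the argument of \autoref{propo:decomposition-of-sim-class1}, using the map $\Lambda\colon\Pi_x\to\Theta_x$ of \autoref{propo:orbit-decomposition}, but now keeping track of the Witt-index constraint that defines $\mfr{g}_1^\epsilon$ (as opposed to $\mfr{g}_1^{-\epsilon}$) inside the $\GL_N(\kk)$-similarity class of $x$. So the two tasks are again: (a) compute $\abs{\Theta_x}$, and (b) determine the image of $\Lambda$. Recall $\Theta_x$ is the quotient of $\sym(\dagger)\subseteq\mcal K^\times$ by the image of $z\mapsto z^\dagger z$, where $\mcal K=\mcal C/\mcal N$ is the \'etale algebra \eqref{equation:C-mod-nil-radical} attached to $m_x$; here, since $\GG^\epsilon=\SO_N^\epsilon$, the relevant involution $\inv$ is symmetric ($\BB^t=\BB$). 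The computation of $\abs{\Theta_x}$ runs exactly as in the proof of \autoref{propo:decomposition-of-sim-class1}, since it only depends on $\mcal K$ and $\dagger$: properties \eqref{dag-property1}--\eqref{dag-property3} hold verbatim, the map $z\mapsto z^\dagger z$ is surjective onto the fixed subfield on each $\spring{\varphi_i}$ factor and onto the diagonal on each $\spring{\theta_i}$ factor, and the only obstruction comes from the $\kk^r$ factor, where the image is $(\kk^\times)^2$. Hence $\abs{\Theta_x}=2$ when $x$ is singular (so $r=1$, i.e. $d_1>0$, by \autoref{propo:conj-to-g1-so2n}.(1)) and $\abs{\Theta_x}=1$ otherwise.

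For the image of $\Lambda$, I would argue as follows. When $x$ is non-singular, by \autoref{corol:reg-g1-reg-glN-nonsing} $x$ is a regular matrix with $m_x$ even of degree $N$, and the form $C$ from \eqref{equation:C-definition} (equivalently $\mbf c$) is symmetric; by \autoref{propo:conj-to-g1-so2n}.(2) exactly one of $\epsilon$, $-\epsilon$ is realized, so the similarity class meets $\mfr{g}_1^\epsilon$ in a single $\Ad(G_1^\epsilon)$-orbit and $\abs{\Theta_x}=1$ forces $\Pi_x=\Ad(G_1^\epsilon)x$ — nothing more to do. The real content is the singular case. Here $d_1>0$ and, writing $V=W_{t^{d_1}}\oplus V_{\neq 0}$ as in the proof of \autoref{propo:conj-to-g1-so2n}, the nilpotent block $W_{t^{d_1}}$ (with $x|_{W_{t^{d_1}}}$ of minimal polynomial $t^{d_1-1}$, regular) carries, by \autoref{lem:nilpotent-reg-element-so2n}, the family of forms $\mbf d_\eta$ from \eqref{equation:d-eta-defin}, whose discriminant class is governed by the single scalar $\eta\in\kk^\times/(\kk^\times)^2$. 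I want to show $\Lambda$ is surjective onto $\Theta_x\cong\kk^\times/(\kk^\times)^2$, i.e. that both Witt-index values on the nilpotent block (keeping the total index equal to that of $B^\epsilon$) are actually attained by elements of $\mfr{g}_1^\epsilon$ conjugate to $x$ in $\GL_N(\kk)$. Given $Q\in\sym(\inv;x)$, one must produce $w\in\GL_N(\kk)$ with $Q=w^\inv w$ and $wxw^{-1}\in\mfr{g}_1^\epsilon$; equivalently, using \autoref{lem:conj-by-forms} and \autoref{lem:CmodN-suff-cond} to reduce modulo $\mcal N$, one must show that for each class in $\kk^\times/(\kk^\times)^2$ there is a representative $Q$ whose associated form $B(\cdot,Q\cdot)$ has the \emph{same} Witt index as $B^\epsilon$ — the point being that changing $\eta$ by a non-square on the nilpotent block and \emph{also} changing the form on some $\spring{\varphi_i}$ (or $\spring{\theta_i}$) factor by a non-square keeps the total discriminant, hence (by \autoref{lem:direct-sum-of-forms}) the total index, fixed. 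This is where \autoref{lem:difference-of-squares} enters: one needs to exhibit, inside $\sym(\dagger)$, elements whose component on the $\kk^r$ factor is an arbitrary prescribed square-class while their product-of-discriminants over the remaining factors compensates; concretely this amounts to writing a given $\gamma\in\kk^\times$ as $\nu-\delta$ with $\nu$ a square and $\delta$ a non-square inside some $2$-dimensional anisotropic piece, which is exactly the content of \autoref{lem:difference-of-squares} (and explains the $\abs{\kk}>3$ hypothesis). Conversely $\abs{\im\Lambda}\le\abs{\Theta_x}=2$, so $\Pi_x$ splits into exactly two $\Ad(G_1^\epsilon)$-orbits.

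The main obstacle is the surjectivity of $\Lambda$ in the singular case — more precisely, the bookkeeping that shows a non-square twist of the form on the nilpotent block can be compensated, without leaving the isometry class $B^\epsilon$, by a matching twist on one of the other primary components, so that every element of $\Theta_x$ is hit by an honest element of $\mfr g_1^\epsilon$ and not just of $\mfr g_1^{-\epsilon}$. This is the step that genuinely uses \autoref{lem:difference-of-squares} and the multiplicativity of $\delta_{U\oplus W}$ from \autoref{lem:direct-sum-of-forms}; everything else is a transcription of \autoref{propo:decomposition-of-sim-class1}. I would also note the degenerate subcase where $V_{\neq 0}=\{0\}$ (so $x$ is regular nilpotent and $N=d_1$, impossible for $N$ even by \autoref{lem:nilpotent-reg-glN-not-reg-soN}, but $m_x=t^{N-1}$ with a $1$-dimensional kernel piece is allowed): here the two orbits are distinguished directly by the discriminant class of $\mbf d_\eta$, recovering \autoref{lem:nilpotent-reg-element-so2n}.
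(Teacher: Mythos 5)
Your first paragraph contains a genuine error: the computation of $\abs{\Theta_x}$ in the singular case. You apply the formula $\mcal{K}=\mcal C/\mcal N\simeq\kk^r\times\prod\spring{\varphi_i}\times\prod\spring{\theta_i}$ from \eqref{equation:C-mod-nil-radical}, citing \autoref{propo:decomposition-of-sim-class1}, but that description of $\mcal C/\mcal N$ rests on the identification $\mcal C\simeq\spring{m_x}$, which is only valid when $V$ is a \emph{cyclic} $\kk[x]$-module, i.e.\ when $\deg m_x=N$. For a singular regular $x\in\mfr{g}_1^\epsilon$ with $\GG^\epsilon=\SO_{2n}^\epsilon$, we have $\deg m_x=N-1$ by \autoref{propo:conj-to-g1-so2n}.(1), so $V\simeq\kk[t]/(m_x)\oplus\kk[t]/(t)$ is not cyclic, $\mcal C$ is strictly larger than $\spring{m_x}$, and the $t$-primary factor of $\mcal C/\mcal N$ is $\kk\times\kk$ rather than $\kk$. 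After reducing to the regular nilpotent block, the paper gets $\mcal C/\mcal N\simeq\kk\times\kk$ and $\Theta_x\simeq\kk^\times/(\kk^\times)^2\times\kk^\times/(\kk^\times)^2$, of order~$4$, not~$2$. Consequently $\Lambda$ is \emph{not} surjective: the paper first bounds $\abs{\im\Lambda}\le 2$ via the determinant constraint (since $\det(w^\inv w)$ is always a square, the image is contained in the cosets of $\cma(\mbf A,0,0,r)$ with $\det\mbf A\equiv r\pmod{(\kk^\times)^2}$), and only then shows $\abs{\im\Lambda}=2$ by exhibiting a nontrivial $w$.

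The second paragraph compounds the problem: you try to show $\Lambda$ is onto a size-$2$ $\Theta_x$, and your mechanism — twisting the form on the nilpotent block by a non-square and compensating with a non-square twist on a $\spring{\varphi_i}$ or $\spring{\theta_i}$ factor — does not work, because on those factors the map $z\mapsto z^\dagger z$ is already surjective onto $\sym(\dagger)$, so there is no independent square-class to twist by while staying inside the $\sim$-relation; the discriminant on each non-nilpotent primary block is frozen. The paper's reduction is cleaner: since any operator commuting with $x$ preserves the orthogonal decomposition into the (maximal, even-dimensional) nilpotent block and its complement, and since the non-singular complement contributes trivially to $\Theta_x$, the whole statement reduces to $x$ a \emph{pure} regular nilpotent element. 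There, \autoref{lem:difference-of-squares} is used not for a cross-block compensation but to build, inside the nilpotent block itself, an explicit $w\in\GL_N(\kk)$ with $wxw^{-1}\in\mfr{g}_1^\epsilon$ and $w^\inv w=\cma(\delta 1_{N-1},0,0,\nu^{-1}\delta)$ not $\sim$-equivalent to the identity. Your sketch does correctly flag that the crux is producing the second orbit and that \autoref{lem:difference-of-squares} and the multiplicativity of \autoref{lem:direct-sum-of-forms} should enter, but without the corrected $\Theta_x$ and the reduction to the pure nilpotent case, the argument as written would both overcount (via a wrong surjectivity claim) and rely on an unavailable twist.
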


\begin{proof} 
In the notation of \autoref{propo:orbit-decomposition}, let $\Pi_x=\Ad(\GL_N(\kk)) x\cap \mfr{g}_1$ and $\Theta_x$ the set of equivalence classes in $\sym(\inv;x)=\set{Q\in\CC_{\GL_N(\kk)}(x)\mid Q^\inv= Q}$ under the equivalence relation~$\sim$, defined in \eqref{equation:sim-defi}. Let $\Lambda:\Pi_x\to \Theta_x$ be the map $wxw^{-1}\mapsto [w^\inv w]\in\Theta_x$, for $y=wxw^{-1}\in\Pi_x$.

In the case where $x$ is non-singular, by applying the argument of \autoref{propo:decomposition-of-sim-class1} for non-singular elements verbatim, we have that $\Theta_x$ consists of a single element and therefore that $\Pi_x=\Ad(G_1^\epsilon) x$.

Furthermore, in the case where $x$ is singular, by considering the decomposition of $x$ into primary rational canonical forms, one may restrict $x$ to a maximal subspace of $\kk^N$ on which $x$ acts as a regular nilpotent element. This subspace is even-dimensional and admits an orthogonal complement, on which $x$ acts as a non-singular regular element. Additionally, any operator commuting with $x$ must preserve this subspace as well as its orthogonal complement. It follows that to prove the proposition in the case where $x$ is singular it is sufficient to consider the case where $x$ is a nilpotent regular element of $\mfr{g}_1^\epsilon$. 

In this case, by the uniqueness of a nilpotent regular element in $\g(\bkk)$ \cite[III, Theorem~1.8]{SteinbergSpringer}, we may invoke \autoref{lem:nilpotent-reg-element-so2n} and fix a basis $\mcal{E}$, with respect to which $x$ is represented by the matrix $\nil$, defined in \eqref{equation:nil-defiso2n}, and that the ambient non-degenerate symmetric bilinear form is represented in $\mcal{E}$ by the matrix $\mbf{d}=\mbf{d}_\eta$ of \eqref{equation:d-eta-defin}, 
where $\eta\in\kk^\times$ is a square if $\epsilon=1$ and non-square otherwise.

The centralizer $\mcal{C}$ of $\nil$ in $\matr_N(\kk)$ is isomorphic to the ring of $\kk[x]$-endomorphisms of $\kk[x]\times \kk$, and can be realized as the set of matrices $\cma(\mbf{A},\mbf{v},\mbf{u},r)$ (see \autoref{nota:regular-nilpotent-centralizer}) with $\mbf{v}$ and $\mbf{u}$ elements of the kernel of $\nil$ and $\nil^t$ respectively, $\mbf{A}\in\matr_{N-1}(\kk)$ is an upper triangular T\"oplitz matrix, and $r\in\kk$. Note that the ideal generated by elements of the form $\cma(0_{N-1},\mbf{v},\mbf{u},0)\in\mcal{C}$ is nilpotent and in particular is contained in the nilpotent radical $\mcal{N}$ of $\mcal{C}$. It follows that the quotient ring $\mcal{C}/\mcal{N}$ is isomorphic to the \'etale algebra $\kk\times \kk$. Additionally, by \autoref{lem:CmodN-suff-cond}, we have that $\cma(\mbf{A},\mbf{v},\mbf{u},r)\sim \cma(\mbf{A}',\mbf{v}',\mbf{u}',r')$ if and only if there exists a block matrix $\cma(\mbf{q},0,0,s)$ such that \[\mat{\mbf{q}&\\&s}^\inv \mat{\mbf{A}&\mbf{v}\\\mbf{u}^t&r}\mat{\mbf{q}&\\&s}\equiv\mat{\mbf{A}'&\mbf{v}'\\\mbf{u}'^t&r'}\pmod{\mcal{N}}.\]
Applying a similar argument as in the nilpotent case of \autoref{propo:decomposition-of-sim-class1}, we have that the involution~$\inv$ restricts to the identity map on $\mcal{C}/\mcal{N}$ and hence that the quotient $\Theta_x$ of $\sym(\inv;x)$ by the relation $\sim$, defined in \autoref{subsubsection:wall}, is isomorphic to the quotient group $\kk^\times/(\kk^\times)^2\times \kk^\times/(\kk^\times)^2$ and is of order $4$.

The final step of the proof is to compute the image of the map $\Lambda$. Recall that $\Lambda$ maps an element $wxw^{-1}\in \Pi_x=\Ad(\GL_N(\kk))x\cap \mfr{g}_1^\epsilon$ to the equivalence class of $w^\inv w$ in $\Theta_x$. As in the odd orthogonal case, two elements which are equivalent with respect to $\sim$ must have determinant in the same coset of $\kk^{\times}/(\kk^\times)^2$. In particular, as $w^\inv w$ has square determinant, the image of $\Lambda$ in $\Theta_x$ is contained in the subset of equivalence classes in $\Theta_x$, containing block matrices $\cma(\mbf{A},0,0,r)$ with $\det{\mbf{A}}\equiv r\pmod{(\kk^\times)^2}$.

To complete the proof that $\abs{\im(\Lambda)}=2$ it suffices to find an element $w\in\GL_N(\kk)$ such that $wxw^{-1}\in\mfr{g}_1$ and such that $w^\inv w$ is a block matrix of the form $\cma(\mbf{A},0,0,r)$ with $\det\mbf{A},r~\notin~(\kk^\times)^2$.

Let $\eta\in\kk^\times$ be as above put $\alpha=(-1)^{(N-2)/2}$. Let $\nu\in(\kk^\times)^2$ and $\delta\in\kk^\times\setminus(\kk^\times)^2$ be such that $\alpha\eta=\nu-\delta$; see \autoref{lem:difference-of-squares}. Let $\nu_1\in\kk^\times$ be such that $\nu_1^2=\nu$, and put $z=\eta\cdot\nu_1^{-1}$. Let $w\in\GL_N(\kk)$ be represented in $\mcal{E}$ by the matrix $\mbf{w}$ of \eqref{equation:w-definition}, in which the upper-left scalar block with $\delta$ on the diagonal is $\left(\frac{N-2}{2}\right)\times\left(\frac{N-2}{2}\right)$.

Recalling that $w^\inv$ is represented by the matrix $\mbf{d}^{-1}\mbf{w}^t\mbf{d}$, one verifies by direct computation that $w^\inv w$ is given by the diagonal matrix $\cma(\delta 1_{N-1},0,0,\nu^{-1}\delta)$, and consequently, that $w^\inv w\in \sym(\inv;x)$ and $wxw^{-1}\in\mfr{g}^\epsilon_1$, and that $w^\inv w$ is not equivalent to $1_N$ under the relation~ $\sim$.
\begin{equation}
\label{equation:w-definition}
\mbf{w}=\mat{\delta\\
&\ddots\\&&\delta\\&&&\nu_1&&&&\alpha z\\&&&&1\\&&&&&\ddots\\&&&&&&1\\
&&&-\eta^{-1}\nu_1z&&&&1}.\end{equation}
\end{proof}
\begin{proof}[Proof of \autoref{lem:difference-of-squares}]
Let $\xi\in\kk^\times$ be a non-square, and let $\msf{K}=\spring{t^2-\xi}$ be the splitting field of $t^2-\xi$, with $\xi_1\in\msf{K}^\times$ a square root of $\xi$. The norm map $\Nr_{\msf{K}\mid \kk}:\msf{K}^\times\to \kk^\times$ is surjective and has fibers of order $q+1$. In particular, there exist $\nu_1,\delta_1\in\kk$ such that \[\Nr_{\msf{K}\mid\kk}(\nu_1+\xi_1\delta_1)=\nu_1^2-\xi\delta_1^2=\gamma.\]

We claim that $\nu_1$ and $\delta_1$ can be taken to be both non-zero. 
\begin{list}{}{\setlength{\leftmargin}{0pt}\setlength{\itemsep}{2pt}}
\item \textit{Case 1, $\gamma\in\kk^\times\setminus(\kk^\times)^2$}. Note that in this case we must have that $\delta_1\ne 0$, as otherwise $\gamma=\nu_1^2\in(\kk^\times)^2$. Furthermore, if $\nu_1=0$ for any pair $(\nu_1,\delta_1)$ such that $\nu_1^2-\xi\delta_1^2=\gamma$ then $\Nr_{\msf{K}\mid \kk}^{-1}(\gamma)\subseteq \xi_1\kk^\times$, and in particular has order smaller than $q$. A contradiction.
\item \textit{Case 2, $\gamma\in(\kk^\times)^2$}. Consider the set $\Nr_{\msf{K}\mid \kk}^{-1}(\gamma)\setminus\kk^\times$. Note that, as  $\abs{\Nr_{\msf{K}\mid\kk}^{-1}(\gamma)\cap \kk^\times}=2$ (namely, it consists of the two roots of $\gamma$ in $\kk$), the order of $\Nr_{\msf{K}\mid\kk}^{-1}(\gamma)\setminus\kk^\times$ is exactly $q-1$. Assume towards a contradiction that there is no solution $(\nu_1,\delta_1)\in\kk^\times\times\kk^\times$ for the equation \[\nu_1^2-\xi\delta_1^2=\Nr_{\msf{K}\mid \kk}(\nu_1-\xi_1\delta_1)=\gamma.\]
This implies that any solution not in $\kk^\times\times\set{0}$ is an element of $\set{0}\times\kk^\times$, or in other words, that $\Nr_{\msf{K}\mid \kk}^{-1}(\gamma)\setminus\kk^\times\subseteq\xi_1\kk^\times$. By considering the cardinality of the two sets, we deduce that this inclusion is in fact an equality. In particular, this implies that for any $\delta_1\in\kk^\times$, \[\Nr_{\msf{K}\mid\kk}(\xi_1\delta_1)=-\xi\delta_1^2=\gamma.\]
Thus, the set of squares in $\kk^\times$ equals the singleton set $\set{-\xi^{-1}\gamma}$. This contradicts the assumption $\abs{\kk}>3$.
\end{list}

The lemma follows by taking $\nu=\nu_1^2$ and $\delta=\xi\delta_1^2$.

\end{proof}

\subsubsection{Centralizers of regular elements}\label{subsubsection:cetralizers-so2n}
\begin{lem}\label{lem:short-exact-sequence-so2n} Let $\epsilon\in\set{\pm 1}$. 
  Let $x\in\mfr{g}_1^\epsilon$ be regular, with minimal polynomial 
\[m_x(t)=t^{d_1}\prod_{i=1}^{d_2}\varphi_i^{l_i}\prod_{i=1}^{d_3}\theta_i^{r_i},\]
a decomposition as in \eqref{equation:m_x-decompose}, with $\theta_i=\tau_i(t)\tau_i(-t)$ and $\tau_i(t)$ irreducible and coprime to $\tau_i(-t)$. 
\begin{enumerate}
\item If $d_1>0$, then there exists a short exact sequence 
\begin{equation}\label{equation:ses2}1\to \CC_{G_1^\epsilon}(x)\to \mcal{A}^\epsilon\times\prod_{i=1}^{d_2}\UU_1(\spring{\varphi_i^{l_i}})\times\prod_{i=1}^{d_3}\GL_1(\spring{\tau_i^{r_i}})\xrightarrow{\det} \set{\pm 1}\to 1. \end{equation}
where \[\mcal{A}^\epsilon=\set{\mbf{w}\in\CC_{\GL_{d_1+1}(\kk)}{(\nil)}\mid \mbf{w}^t\mbf{d}_\eta\mbf{w}=\mbf{d}_\eta},\]
with $\nil$ and $\mbf{d}_\eta$ the $(d_1+1)\times(d_1+1)$ matrices defined as in \eqref{equation:nil-defiso2n} and \eqref{equation:d-eta-defin}.
\item Otherwise, the group $\CC_{G_1^\epsilon}(x)$ is isomorphic to $\prod_{i=1}^{d_2}\UU_1(\spring{\varphi_i^{l_i}})\times\prod_{i=1}^{d_3}\GL_1(\spring{\tau_i^{r_i}})$.
\end{enumerate}
\end{lem}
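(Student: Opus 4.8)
The plan is to mimic closely the structure of the proof of \autoref{lem:short-exact-sequence} in the symplectic/odd-orthogonal case, with the one genuine modification being the treatment of the block $W_{t^{d_1}}$ on which $x$ acts as a regular nilpotent operator. First I would recall from the proof of \autoref{propo:conj-to-g1-so2n} the $\kk[x]$-invariant primary decomposition $V=W_{t^{d_1}}\oplus\bigoplus_{i=1}^{d_2}W_{\varphi_i^{l_i}}\oplus\bigoplus_{i=1}^{d_3}W_{\theta_i^{r_i}}$, which is orthogonal with respect to the ambient symmetric form $B^\epsilon$, since eigenspaces (more precisely, primary components) attached to coprime factors $f(t)$, $g(t)$ with $f(t)\mid g(-t)$ forced to occur in pairs are mutually perpendicular — here using that all factors other than $t^{d_1}$ are even or come in $\set{\tau_i(t),\tau_i(-t)}$ pairs, so each $W_{\varphi_i^{l_i}}$ and each $W_{\theta_i^{r_i}}$ is nondegenerate, as is $W_{t^{d_1}}$. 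Consequently the centralizer of $x$ in $\GL_N(\kk)$ splits as a direct product over the blocks, and $\CC_{G_1^\epsilon}(x)=\bigl(\CC_{\GL_N(\kk)}(x)\cap\GG^\epsilon(\kk)\bigr)$ decomposes, in the isometry group, as the product of the isometry-centralizers on the individual blocks. On the block $W_f$ for $f\in\set{\varphi_i^{l_i},\theta_i^{r_i}}$, $x\mid_{W_f}$ is nonsingular, hence regular in $\gl(W_f)$ by \autoref{corol:reg-g1-reg-glN-nonsing}, and the identical computation as in the proof of \autoref{lem:short-exact-sequence} identifies the isometry-centralizer with $\UU_1(\spring{\varphi_i^{l_i}})$, respectively with $\GL_1(\spring{\tau_i^{r_i}})$ via the isomorphism $\UU_1(\spring{\theta_i^{r_i}})\simeq\GL_1(\spring{\tau_i^{r_i}})$ (notation of \autoref{nota:even-polys-splitting-rings}).

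The new ingredient is the block $W_{t^{d_1}}$. When $d_1>0$ the hypothesis $\deg m_x=N-1$ (forced by \autoref{propo:conj-to-g1-so2n}.(1), since $x$ is regular in $\mfr{g}_1^\epsilon$) means $t\cdot m_x=c_x$, so $x\mid_{W_{t^{d_1}}}$ has minimal polynomial $t^{d_1}$ on a space of dimension $d_1+1$; by \autoref{lem:nilpotent-reg-element-so2n} we may choose the basis $\mcal{E}$ there so that $x\mid_{W_{t^{d_1}}}$ is $\nil$ and the restricted form is $\mbf{d}_\eta$, with $\eta$ a square iff $\epsilon=1$. The isometry-centralizer on this block is then, by definition, exactly $\mcal{A}^\epsilon=\set{\mbf{w}\in\CC_{\GL_{d_1+1}(\kk)}(\nil)\mid \mbf{w}^t\mbf{d}_\eta\mbf{w}=\mbf{d}_\eta}$. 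Collecting the factors, $\CC_{G_1^\epsilon}(x)$ sits inside $\mcal{A}^\epsilon\times\prod_i\UU_1(\spring{\varphi_i^{l_i}})\times\prod_i\GL_1(\spring{\tau_i^{r_i}})$ as precisely the subgroup of tuples whose overall determinant is $1$. That this determinant map is surjective onto $\set{\pm 1}$ is immediate: $\mcal{A}^\epsilon$ already contains an element of determinant $-1$ (for instance, the isometry of $\mbf{d}_\eta$ sending the kernel vector $u_0\mapsto -u_0$ while fixing a complementary nondegenerate piece — concretely $\diag(1,\ldots,1,-1)$ in a suitable basis, which one checks preserves $\mbf{d}_\eta$ up to the block structure), so the restriction of $\det$ to the product group is already onto $\set{\pm 1}$ while the full product surjects onto $\kk^\times/\ldots$ trivially in the relevant component. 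This yields the exact sequence \eqref{equation:ses2}; when $d_1=0$ there is no $\mcal{A}^\epsilon$ factor, the whole of $V$ is nonsingular, $x$ is regular in $\mfr{gl}_N(\kk)$, and the product of the $\UU_1$ and $\GL_1$ factors consists entirely of elements of determinant $1$ (by the Pfaffian argument, or directly: on each nondegenerate even-dimensional block the isometry-centralizer of a regular element lies in $\SO$, cf. the even case of \autoref{lem:short-exact-sequence}), giving the isomorphism of assertion~(2).

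The main obstacle I anticipate is the surjectivity-onto-$\set{\pm 1}$ claim and, dually, verifying that in the $d_1=0$ case \emph{every} element of the product group automatically has determinant $1$ — i.e.\ that the $-1$ value is genuinely contributed by the nilpotent block and only there. For $d_1=0$ this is the even-dimensional analogue of the Pfaffian obstruction used at the end of the proof of \autoref{lem:short-exact-sequence}: an isometry commuting with a regular $x$ preserves each nondegenerate even-dimensional primary block $W_f$, and on such a block an isometry lying in the (abelian, by \autoref{propo:centralizer-of-reg-over-bkk}) centralizer of a regular semisimple-plus-nilpotent operator must act with determinant $1$, which one sees from the explicit description of $\UU_1(\spring{f^r})$ (its elements, being norms-$1$ in a commutative ring, have determinant $1$ as operators) and of $\GL_1(\spring{\tau_i^{r_i}})$ acting diagonally as $(\xi,(\xi^t)^{-1})$ so with determinant $1$. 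For $d_1>0$, one simply exhibits the determinant-$(-1)$ isometry in $\mcal{A}^\epsilon$ explicitly using the basis $\mcal{E}$ of \autoref{lem:nilpotent-reg-element-so2n}. Once these bookkeeping points are settled the rest is the routine product decomposition already carried out in \autoref{lem:short-exact-sequence}, so I would keep the write-up short and cite that proof for the repeated computations.
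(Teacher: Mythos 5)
Your proposal is correct and follows essentially the same path as the paper: reduce to the primary blocks (orthogonal $\kk[x]$-invariant decomposition of $V$), identify the isometry-centralizer on each non-nilpotent block with $\UU_1(\spring{\varphi_i^{l_i}})$ or $\GL_1(\spring{\tau_i^{r_i}})$ exactly as in \autoref{lem:short-exact-sequence}, identify the nilpotent block contribution as $\mcal{A}^\epsilon$ via \autoref{lem:nilpotent-reg-element-so2n}, exhibit $\diag(1,\dots,1,\pm 1)\in\mcal{A}^\epsilon$ to get surjectivity of $\det$ when $d_1>0$, and, for $d_1=0$, show every element of the $\UU_1$ and $\GL_1$ factors has determinant $1$. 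This is precisely the paper's argument. One spot in your write-up that needs filling out is the parenthetical justification that elements of $\UU_1(\spring{\varphi_i^{l_i}})$ have determinant $1$: ``being norms-$1$ in a commutative ring'' does not by itself give determinant $1$ as an $N\times N$ matrix. The actual argument (paper's Case~1) requires the Jordan--Chevalley identification $\spring{\varphi_i^{l_i}}\simeq\spring{\varphi_i}[u]/(u^{l_i})$, the observation that $\det\rho(\alpha_0+\alpha_1 u+\cdots)=\Nr_{\spring{\varphi_i}/\kk}(\alpha_0)^{l_i}$ by nilpotency of $u$, and then the norm tower $\Nr_{\spring{\varphi_i}/\kk}=\Nr_{\msf{K}/\kk}\circ\Nr_{\spring{\varphi_i}/\msf{K}}$ combined with the $\UU_1$ condition $\Nr_{\spring{\varphi_i}/\msf{K}}(\alpha_0)=1$. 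Similarly, in the $\GL_1(\spring{\tau_i^{r_i}})$ case, the involution swaps coordinates via the ring isomorphism $\iota:\spring{\tau_i(t)^{r_i}}\to\spring{\tau_i(-t)^{r_i}}$ rather than a matrix transpose, and one uses $\det\circ\iota=\det$. With those two details made precise, your proof coincides with the paper's.
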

\begin{proof}
Similarly to \autoref{lem:short-exact-sequence}, in order to prove the lemma, it is sufficient to compute the possible determinants of the middle term of \eqref{equation:ses2}. For the first assertion it is sufficient to verify that both $+1$ and $-1$ are obtained as determinant of elements from $\mcal{A}^\epsilon$, for which it is enough to consider block diagonal matrices of the form $\smat{1_{d_1}&0\\0&\pm 1}\in \mcal{A}^\epsilon$.

For the second assertion, we need to verify that any element $w\in \CC_{\GL_N(\kk)}(x)$ such that $w^\inv w=1$ has determinant $1$. Since any element of $\CC_{\GL_N(\kk)}(x)$ preserves the invariant factors of the decomposition of $V$ as a $\kk[x]$-module, it is sufficient to consider the following cases of the minimal polynomial of $x$.
\begin{list}{}{\setlength{\leftmargin}{0pt}\setlength{\itemsep}{2pt}}
\item \textit{Case 1}. Assume $m_x(t)=\varphi_i(t)^m$, with $\varphi_i\in\kk[t]$ irreducible and even and $m\in\dbN$. Let $x=s+h$ be the Jordan decomposition of $x$, with $s,h\in \mfr{g}_1^\epsilon$, $s$ semisimple, $h$ nilpotent and $[s,h]=0$. As $m_x(0)\ne 0$, by \autoref{propo:orbit-decomposition=so2n}.(2),  the space $V$ is cyclic as a $\kk[x]$-module and hence $\CC_{\matr_N(\kk)}(x)\simeq \kk[x]=\kk[s][h]\simeq \spring{\varphi_i}[u]/({u}^{m})$ (see \autoref{lem:unitary-of-kphi}). Let $\rho: \spring{\varphi_i}[u]/(u^m)\to \CC_{\matr_N(\kk)}(x)$ be a $\kk$-linear isomorphism. The $\kk$-linearity of $\rho$ and the nilpotency of $u$ imply that
 \[\det(\rho(\alpha_0+\alpha_1 u+\ldots+\alpha_{m-1} u^{m-1}))=\Nr_{\spring{\varphi_i}/\kk}(\alpha_0)^m.\]

Furthermore, the restriction of the involution $\inv$ to the image of $\rho$ induces a $\kk$-automorphism $\sigma_{\varphi_i^m}$ of $\spring
{\varphi_i^m}$ which acts on $\spring{\varphi_i}$ as the involution $\sigma_{\varphi_i}$, and maps $u$ to $-u$. Consequently, if $z\in \CC_{\GL_N(\kk)}(x)$ is given by $z=\rho(\alpha_0+\alpha_1 u+\ldots+\alpha_{m-1}u^{m-1})$ and satisfies $z^\inv z=1$ then necessarily $\Nr_{\spring{\varphi_i}/\msf{K}}(\alpha_0)=\sigma_{\varphi_i}(\alpha_0)\alpha_0=\rho^{-1}(z^\inv z)\mid_{u=0}=1$ and
\begin{align*}
\det(z)&=\det(\rho(\alpha_0+\alpha_1 u+\ldots+ \alpha_{m-1}u^{m-1}))\\&=\Nr_{\spring{\varphi_i}/\kk}(\alpha_0)^m=\left(\Nr_{\msf{K}/\kk}\circ\Nr_{\spring{\varphi_i}/\msf{K}}(\alpha_0)\right)^m=1.
\end{align*}
\item \textit{Case 2}. Assume $m_x(t)=\left(\tau_i(t)\cdot\tau_i(-t)\right)^{r}$, for $\tau_i(t)$ irreducible and coprime to $\tau(-t)$. In this case, by the cyclicity of the $\kk[x]$ module $V$, we have that $\CC_{\GL_N(\kk)}(x)\simeq\GL_1(\spring{\tau(t)^r})\times\GL_1(\spring{\tau(-t)^r})$. Moreover, the map $\inv$ restricts to the map $(\xi,\nu)\mapsto(\iota^{-1}(\xi),\iota(\nu))$, where $\iota:\spring{\tau(t)^r}\to\spring{\tau(-t)^r}$ is the isomorphism induced from $t\mapsto -t$. Furthermore, since $\iota$ is a ring-isomorphism which preserves $\kk$, we have that $\det(\iota(\xi))=\det(\xi)$ for all $\xi\in\spring{\tau(t)^r}$. In particular, if $(\xi,\nu)^\inv (\xi,\nu)=1$ then $\nu=\iota(\xi)^{-1}$ and hence, $\det((\xi,\nu))=\det(\xi)\cdot\det(\xi)^{-1}=1$.
\end{list}
\end{proof}

\begin{propo}\label{corol:size-of-centralizer-so2n}
Let $x\in\mfr{g}_1^\pm$ be regular with minimal polynomial $m_x(t)$. Let $c_x$ denote the characteristic polynomial of $x$, i.e. $c_x=m_x$ if $x$ is non-singular, and $c_x(t)=t\cdot m_x(t)$ otherwise. Let $\ttau(c_x)=(r(c_x),\A(c_x),\B(c_x))\in\mcal{X}_n$ be the type of $c_x$ (see \autoref{defi:poly-type}). Then 
\[\abs{\CC_{G_1^\epsilon}(x)}=2^{\nu}q^n\prod_{d,e}\left(1+q^{-d}\right)^{\A_{d,e}(m_x)}\cdot\left(1-q^{-d}\right)^{\B_{d,e}(m_x)},\]
where $\epsilon\in\set{\pm }$ and $\nu=1$ if $r(m_x)>0$ and $0$ otherwise.
\end{propo}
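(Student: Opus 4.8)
The plan is to follow the proof of \autoref{corol:centralizer-size-sp2nso2n+1}, replacing \autoref{lem:short-exact-sequence} by \autoref{lem:short-exact-sequence-so2n}. Write $m_x=t^{d_1}\prod_{i=1}^{d_2}\varphi_i^{l_i}\prod_{i=1}^{d_3}\theta_i^{r_i}$ as in \eqref{equation:m_x-decompose}, with $\theta_i=\tau_i(t)\tau_i(-t)$, and separate the cases $d_1=0$ and $d_1>0$. In the non-singular case $d_1=0$, \autoref{lem:short-exact-sequence-so2n}.(2) gives $\CC_{G_1^\epsilon}(x)\cong\prod_{i=1}^{d_2}\UU_1(\spring{\varphi_i^{l_i}})\times\prod_{i=1}^{d_3}\GL_1(\spring{\tau_i^{r_i}})$, so it suffices to compute the orders of the factors. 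Since $\spring{\tau_i^{r_i}}$ is a local ring with residue field $\spring{\tau_i}$, one has $\abs{\GL_1(\spring{\tau_i^{r_i}})}=q^{r_i\deg\tau_i}(1-q^{-\deg\tau_i})$; and, exactly as in the proof of \autoref{corol:centralizer-size-sp2nso2n+1}, the exact sequence $1\to\UU_1(\spring{\varphi_i^{l_i}})\to\GL_1(\spring{\varphi_i^{l_i}})\to E_{\varphi_i^{l_i}}\to1$ together with the value of $\abs{E_{\varphi_i^{l_i}}}$ from \autoref{lem:unitary-of-kphi} (applied to $\varphi_i$, which is even of even degree) yields $\abs{\UU_1(\spring{\varphi_i^{l_i}})}=q^{\frac12 l_i\deg\varphi_i}(1+q^{-\frac12\deg\varphi_i})$. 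Translating degrees and multiplicities into the data $\ttau(m_x)$ of \autoref{defi:poly-type} — an even irreducible of degree $2d$ and multiplicity $e$ contributes $q^{de}(1+q^{-d})$ and is counted by $\A_{d,e}$, a pair $\set{\tau_i,\tau_i(-t)}$ with $\deg\tau_i=d$ and multiplicity $e$ contributes $q^{de}(1-q^{-d})$ and is counted by $\B_{d,e}$ — and collecting the powers of $q$ via $\deg m_x=N=2n$, one obtains $\abs{\CC_{G_1^\epsilon}(x)}=q^n\prod_{d,e}(1+q^{-d})^{\A_{d,e}(m_x)}(1-q^{-d})^{\B_{d,e}(m_x)}$, which is the claimed formula, with $\nu=0$ since $r(m_x)=0$.

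Next, the singular case $d_1>0$. Here \autoref{propo:conj-to-g1-so2n} forces $\deg m_x=N-1$, so $c_x=t\cdot m_x$, and \autoref{lem:short-exact-sequence-so2n}.(1) gives
\[
\abs{\CC_{G_1^\epsilon}(x)}=\tfrac12\,\abs{\mcal{A}^\epsilon}\cdot\prod_{i=1}^{d_2}\abs{\UU_1(\spring{\varphi_i^{l_i}})}\cdot\prod_{i=1}^{d_3}\abs{\GL_1(\spring{\tau_i^{r_i}})}.
\]
The $\UU_1$- and $\GL_1$-factors are evaluated precisely as above; the one new quantity is $\abs{\mcal{A}^\epsilon}$, where $\mcal{A}^\epsilon=\set{\mbf{w}\in\CC_{\GL_{d_1+1}(\kk)}(\nil)\mid\mbf{w}^t\mbf{d}_\eta\mbf{w}=\mbf{d}_\eta}$ is the centralizer of the regular nilpotent $\nil$ inside the orthogonal group of the form $\mbf{d}_\eta$ on $\kk^{d_1+1}$. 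I would determine $\abs{\mcal{A}^\epsilon}$ directly, using the parametrization of $\CC_{\GL_{d_1+1}(\kk)}(\nil)$ obtained in the proof of \autoref{lem:nilpotent-reg-element-so2n}: an element is a block matrix $\cma(\mbf{A},\mbf{v},\mbf{u},r)$ with $\mbf{A}$ an upper-triangular T\"oplitz matrix and $\mbf{v},\mbf{u}$ lying in the kernels of $\nil$ and $\nil^t$; imposing the isometry condition $\cma(\mbf{A},\mbf{v},\mbf{u},r)^\inv\cma(\mbf{A},\mbf{v},\mbf{u},r)=1$ and reading the explicit form \eqref{equation:cma} of $\inv$ on such matrices reduces the problem to finitely many polynomial equations in the free parameters, which are solved after fixing the Witt type of $\mbf{d}_\eta$ — by \autoref{lem:nilpotent-reg-element-so2n} and \autoref{lem:basic-cases} this type equals $\epsilon\cdot(-1)^{\omega(m_x)}$. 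Substituting the resulting value of $\abs{\mcal{A}^\epsilon}$ into the displayed product and collecting the powers of $q$ using $\deg c_x=N$ then produces the claimed formula, now with $\nu=1$ whenever $r(m_x)>0$.

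The step I expect to be the main obstacle is the computation of $\abs{\mcal{A}^\epsilon}$. In the symplectic and odd-orthogonal cases the corresponding object $\UU_1(\spring{t^{d_1}})$ has the transparent order $2q^{r(m_x)}$, but here $\mcal{A}^\epsilon$ is genuinely quadratic — it is the isometry-group centralizer of a distinguished (regular) nilpotent in an even-dimensional quadratic space — so one must carry the parity $\omega(m_x)$, which governs the Witt type of the relevant small orthogonal block, through the matrix computation and assemble the pieces carefully. Once $\abs{\mcal{A}^\epsilon}$ is in hand, the remainder of the argument is the same $\abs{\kk}$-independent bookkeeping as in \autoref{corol:centralizer-size-sp2nso2n+1}.
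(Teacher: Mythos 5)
Your outline coincides with the paper's: in the non-singular case invoke \autoref{lem:short-exact-sequence-so2n}.(2) and evaluate the $\UU_1$- and $\GL_1$-factors exactly as in \autoref{corol:centralizer-size-sp2nso2n+1}; in the singular case invoke the exact sequence of \autoref{lem:short-exact-sequence-so2n}.(1), which isolates the new quantity $\abs{\mcal{A}^\epsilon}$. That much is fine, and your bookkeeping of degrees via $\deg c_x=N=2n$ is correct. But the step you yourself flag as the ``main obstacle'' --- computing $\abs{\mcal{A}^\epsilon}$ --- is left as ``finitely many polynomial equations in the free parameters \ldots solved after fixing the Witt type of $\mbf{d}_\eta$,'' and this is precisely the content of the proof; without it you do not have an argument. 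Moreover your suggestion that the Witt type (equivalently, the parity $\omega(m_x)$) must be ``carried through'' is a misdirection: the formula you are proving is visibly independent of $\epsilon$, so $\abs{\mcal{A}^\epsilon}$ had better not depend on whether $\eta$ is a square. The parity $\omega(m_x)$ only governs which form $x$ can live in (\autoref{propo:conj-to-g1-so2n}); it does not enter the size of the centralizer.

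The paper closes this gap not by brute-forcing the isometry equations but by exhibiting a product decomposition of $\mcal{A}^\epsilon$. Take $\mcal{N}\subseteq\mcal{A}^\epsilon$ to be the one-parameter unipotent subgroup $\xi\mapsto\mfr{X}(\xi)$ arising from the $\cma(0,\mbf{v},\mbf{u},0)$-part of the commutant; it is normal of order $q$. Take $\mcal{H}$ to be the block-diagonal part $\cma(\mbf{A},0,0,r)$; the isometry condition, read off from \eqref{equation:cma}, forces $\mbf{A}^\invv\mbf{A}=1$ and $r^2=1$, so $\mcal{H}\cong\UU_1(\spring{t^{d_1}})\times\set{\pm1}$ has order $4q^{(d_1-1)/2}$ by the first assertion in the proof of \autoref{corol:centralizer-size-sp2nso2n+1}. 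A short computation, using that any $\cma(\mbf{A},\mbf{v},\mbf{u},r)\in\mcal{A}^\epsilon$ has $\mbf{v}$ proportional to $\mbf{d}\mbf{u}$, shows $\mcal{A}^\epsilon=\mcal{H}\cdot\mcal{N}$ with $\mcal{H}\cap\mcal{N}=\set{1}$, whence $\abs{\mcal{A}^\epsilon}=4q^{(d_1+1)/2}=4q^{r(c_x)}$, uniformly in $\eta$. Plugging this into the exact sequence produces the claimed $2q^n\prod(\ldots)$ with no case-split on Witt type. So: right strategy, right intermediate lemmas, but the singular case needs this structural computation of $\abs{\mcal{A}^\epsilon}$ actually carried out rather than gestured at.
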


\begin{proof}
In the case where $x$ is non-singular the assertion follows verbatim as in \autoref{corol:centralizer-size-sp2nso2n+1}. Otherwise, if $x$ is singular, by decomposing $x$ into its primary rational canonical forms, it is sufficient to consider the case where $x$ is a regular nilpotent element, with minimal polynomial $m_x(t)=t^{2n-1}$, and show that $\abs{\CC_{G_1}(x)}=2q^n$.

Without loss of generality, we fix the basis $\mcal{E}$ of \autoref{lem:nilpotent-reg-element-so2n}, with respect to which the ambient symmetric form $B^\epsilon$ is represented by the matrix $\mbf{d}=\mbf{d}_\eta$, for some $\eta\in\kk^\times$, and $x$ is represented by the matrix $\nil$. Let $\mcal{A}^\epsilon=\set{z\in\CC_{\GL_{N}(\kk)}(\nil)\mid z^t\mbf{d}z=\mbf{d}}$, as in \autoref{lem:short-exact-sequence-so2n}. Let $\mcal{N}\subseteq \mcal{A}^\epsilon$ be the subgroup consisting of elements of the form \[\mfr{X}(\xi)=\mat{1&&&2\eta \xi^2&2\xi\\&1\\&&\ddots\\&&&1\\&&&2\eta \xi&1}\quad (\xi\in \kk).\]
Note that $\mfr{X}$ defines a one-parameter subgroup of $\mcal{A}^\epsilon$ of order $\abs{\kk}=q$. Additionally, $\mcal{N}=\im(\mfr{X})$ is the image under the Cayley map of the Lie-ideal generated by elements of the form $\cma(0_{N-1},\mbf{u},\mbf{v},0)\in \mfr{g}_1$, and hence is normal in $\mcal{A}^\epsilon$. 

Let $\mcal{H}\subseteq\mcal{A}^\epsilon$ be the subgroup of block diagonal matrices $\cma{(\mbf{A},0,0,r)}$. Note that, by~\eqref{equation:cma} and the assumption $\cma(\mbf{A},0,0,r)^\inv\cma(\mbf{A},0,0,r)=1_N$, we have that $\mbf{A}^\invv\mbf{A}=1_{N-1}$ and $r^2=1$. Additionally, since $\mbf{A}$ commutes with the restriction of $\nil$ to the subspace spanned by the first $N-1$ elements of $\mcal{E}$, we have that $\abs{\mcal{H}}=\abs{\UU_1(\spring{t^{2n-1}})\times\set{\pm 1}}=4q^{n-1}$ (by the first assertion in the proof of \autoref{corol:centralizer-size-sp2nso2n+1}).

Given an arbitrary element $\cma(\mbf{A},\mbf{v},\mbf{u},r)\in \mcal{A}^\epsilon$, it holds that $\mbf{A}$ must be invertible, and that $\mbf{v}=\gamma\mbf{d}\mbf{u}$ for some $\gamma\in\kk$. In particular, $\mbf{v}=0$ if and only if $\mbf{u}=0$. It follows from this, and by direct computation, that 
\[\mfr{X}\left(-\frac{v_1}{a_{1,1}\eta}\right)\mat{\mbf{A}&\mbf{v}\\\mbf{u}^t&r}\in\mcal{H},\]
where $v_1$ is the first entry of $\mbf{v}$, and $a_{1,1}$ is the $(1,1)$-th entry of $\mbf{A}$. Therefore, we have that $\mcal{A}^\epsilon=\mcal{H}\cdot \mcal{N}$ and hence, as $\mcal{H}\cap\mcal{N}=\set{1}$, that \[\abs{\mcal{A}^\epsilon/\mcal{N}}=\abs{\mcal{H}}=4q^{n-1}.\]
To conclude, we have that $\abs{\mcal{A}^\epsilon}=4q^{n}$, and the result follows from \autoref{lem:short-exact-sequence-so2n}.
\end{proof}
The final assertion of \autoref{theo:orbits-so2n} follows from \autoref{corol:size-of-centralizer-so2n}.

\section*{Acknowledgements} This paper is part of the author's doctoral thesis. I wish to thank Uri Onn for guiding and advising this research. I also wish to thank Alexander Stasinski for carefully reading through a preliminary version of this article and offering some essential remarks. Finally, I wish to acknowledge the valuable input offered by the anonymous referee.

The present research was supported by the Israel Science Foundation (ISF) grant 1862.

\bibliographystyle{acm}
\bibliography{REGbib}

\end{document}